\newcommand{\items}{\begin{itemize}[leftmargin=25pt,rightmargin=15pt]
  \setlength\itemsep{2pt}}
\newcommand{\stopitems}{\end{itemize}}
\newtheorem{thm}{Theorem} 
\newtheorem{theorem}{Theorem}[section] 
\newtheorem*{theorem*}{Theorem}
\newtheorem{lemma}[theorem]{Lemma}
\newtheorem{conjecture}[theorem]{Conjecture}
\newtheorem{question}[theorem]{Question}
\newtheorem*{conjecture*}{Conjecture}
\newtheorem*{question*}{Question}
\newtheorem*{lemma*}{Lemma}
\newtheorem{conj}{Conjecture} %
\newtheorem{ques}[conj]{Question} 
\newtheorem{proposition}[theorem]{Proposition}
\newtheorem{corollary}[theorem]{Corollary}
\newtheorem*{corollary*}{Corollary}
\theoremstyle{definition}
\newtheorem{definition}[theorem]{Definition}
\newtheorem{remark}[theorem]{Remark}
\newtheorem{assumption}[theorem]{Assumption}
\newtheorem{example}[theorem]{Example}
\newtheorem*{example*}{Example}
\newtheorem*{remark*}{Remark}
\newtheorem*{remarks*}{Remarks}
\newtheorem*{addenda*}{Addenda}
\newtheorem*{construction*}{Construction}
\DeclareMathOperator{\diff}{Diff}
\DeclareMathOperator{\grad}{gr}
\DeclareMathOperator{\redu}{red}
\DeclareMathOperator{\FSW}{FSW} 
\newcommand{\R}{\mathbb R}
\newcommand{\Z}{\mathbb Z}
\newcommand{\bp}{\mathbb P}
\renewcommand{\phi}{\varphi}
\DeclareMathOperator{\MCG}{MCG}
\newcommand{\ev}{\operatorname{ev}}
\newcommand{\unred}[1]{ \ignorespaces}  
\newcommand{\0}{\textup{0}}
\title{On four-dimensional Dehn twists and Milnor fibrations}
\author{Hokuto Konno}
\address{Graduate School of Mathematical Sciences, the University of Tokyo, 3-8-1 Komaba, Meguro, Tokyo 153-8914, Japan \\and\\
RIKEN iTHEMS, Wako, Saitama 351-0198, Japan}
\email{konno@ms.u-tokyo.ac.jp}
\author{Jianfeng Lin}
\address{Yau Mathematical Sciences Center, Tsinghua University, Beijing, 100871, China}
\email{linjian5477@mail.tsinghua.edu.cn}
\author{Anubhav Mukherjee}
\address{Department of Mathematics, Princeton University, Princeton, 08540, USA}
\email{anubhavmaths@princeton.edu}
\author{Juan Muñoz-Echániz}
\address{Simons Center for Geometry and Physics, State University of New York, Stony Brook, 11794, USA}
\email{jmunozechaniz@scgp.stonybrook.edu}
\begin{document}
\setlength{\headheight}{12.0pt}
\begin{abstract}
We study the monodromy diffeomorphism of Milnor fibrations of isolated complex surface singularities, by computing the family Seiberg--Witten invariant of Seifert-fibered Dehn twists using recent advances in monopole Floer homology. More precisely, we establish infinite order non-triviality results for boundary Dehn twists on indefinite symplectic fillings of links of minimally elliptic surface singularities.
Using this, we exhibit a wide variety of new phenomena in dimension four: (1) smoothings of isolated complex surface singularities whose Milnor fibration has monodromy with infinite order as a diffeomorphism but with finite order as a homeomorphism, (2) robust Torelli symplectomorphisms that do not factor as products of Dehn--Seidel twists, (3) compactly supported exotic diffeomorphisms of exotic $\mathbb{R}^4$'s and contractible manifolds.

\end{abstract}
\maketitle
\section{Introduction}\label{section:introduction}



In this article we use recent advances in monopole Floer theory (namely, the Morse-Bott model \cite{FLINthesis} and the family gluing theorem \cite{JLIN2022}) to study mapping class groups of 4-manifolds. We consider the isotopy problem for the $4$-dimensional Dehn twist diffeomorphism along an embedded 3-manifold, and we obtain new applications to natural questions in \textit{singularity theory} (monodromy of Milnor fibrations, see \S\ref{intro:singularities}), \textit{symplectic topology} (robust elements in the symplectic Torelli group, see \S\ref{intro:symplectic}) and \textit{low dimensional topology} (mapping class groups of exotic $\R^4$'s and contractible 4-manifolds, see \S\ref{intro: low-dimensional topology}).


One way in which the classical $2$-dimensional Dehn twist diffeomorphism of an annulus $A = [0,1] \times S^1$ along the oriented circle $c = \{ 1/2\} \times S^1$ can be generalised to $4$ dimensions is the following. Consider the $4$-dimensional cylinder $Z = [0,1] \times Y$ where $Y$ is a \textit{Seifert-fibered} $3$-manifold over a closed oriented orbifold surface $C$. The Seifert fibration $Y \rightarrow C$ gives a fibered family of circles, and the induced projection $[0,1] \times Y \rightarrow C$ gives a fibered family of annuli (both of these are ``fibered" in the orbifold sense). Performing the $2$-dimensional Dehn twist fiberwise yields a diffeomorphism $\tau$ of $Z$. Since the diffeomorphism $\tau$ is supported in the interior of $Z$ it can then be implanted into a given smooth oriented $4$-manifold $X$ along a tubular neighbourhood of a given smooth embedding $Y \hookrightarrow X$, giving what is called the \textit{Dehn twist diffeomorphism on $X$ along the Seifert-fibered $3$-manifold $Y$} (see Definition \ref{defn:dehntwist} for the precise definition). This and closely related diffeomorphisms have been studied from the smooth \cite{KM-dehn, baraglia-konno, LinK3K3, konno-mallick-taniguchi}, topological \cite{giansiracusa,KKdehntwist,orson-powell} and symplectic \cite{seidelgraded,WW,barth-geiges} points of view (in the symplectic literature the term ``fibered Dehn twist" is more often used).

For a given compact oriented smooth $4$-manifold $M$ with boundary, let $\diff(M,\partial)$ denote the topological group of orientation-preserving diffeomorphisms of $M$ that agree with the identity near $\partial M$ (equipped with the $C^{\infty}$ topology), and define the \textit{smooth mapping class group} of $M$ as $\MCG(M):=\pi_{0}(\diff(M,\partial))$. A non-trivial element of $\diff (M , \partial )$ is called \emph{exotic} if it is topologically isotopic to the identity fixing a neighbourhood of $\partial M$ pointwise. 
We are mostly interested in the case when $\partial M = Y$ is a Seifert-fibered $3$-manifold, and the diffeomorphism $\tau_M \in \MCG(M)$ which is obtained by implanting the $4$-dimenional Dehn twist inside a collar neighbourhood of the boundary of $M$. We refer to $\tau_M$ as the \textit{boundary Dehn twist} on $M$.

Recently, Mallick, Taniguchi and the first author \cite{konno-mallick-taniguchi} proved that $\tau_M$ has \textit{order at least} $2$ in $\MCG(M)$ when $M$ is one of the Milnor fibers $M(2,3,7)$ or $M(2,3,11)$, or a positive-definite manifold bounded by $Y = \Sigma (2,3,6n+7)$. These results motivated us to investigate the following natural question:

\begin{ques}\label{ques: dehn infinite order} Given $M$, is $\tau_{M}$ an infinite order element in $\MCG(M)$?
\end{ques}

Our main result, Theorem \ref{thm: main}, answers Question \ref{ques: dehn infinite order} affirmatively for large families of $M$'s carrying symplectic structures (with convex boundary). Further motivation for Question \ref{ques: dehn infinite order} comes from the study of the monodromy of surface singularities and will be discussed in \S\ref{intro:singularities} below. 
In order to state this we introduce the following notion: 

\begin{definition}\label{defi: Floer simple}
Let $Y$ be a closed oriented $3$-manifold equipped with a Seifert fibration $Y \rightarrow C$ over a closed oriented $2$-orbifold $C$, or equivalently: $Y = S(N)$ is the unit circle bundle associated to an orbifold complex line bundle $N$ over $C$. We say $Y$ is \textit{Floer simple} if the following condition holds:
\begin{enumerate}
    \item $Y$ a \textit{rational homology $3$-sphere}, or equivalently: $C$ has genus zero and $N$ has non-trivial orbifold degree, $\mathrm{deg}N \neq 0$. We assume $Y$ is \textit{negatively-oriented}, meaning $\mathrm{deg}N < 0$. 
    \item The canonical spin-c structure $\mathfrak{\mathfrak{s}}_c$ on $Y$ induced by the Seifert fibration is \textit{self-conjugate}, or equivalently: the pullback of the orbifold tangent bundle $TC$ along $Y \rightarrow C$ is a trivial vector bundle.
    
    \item the \textit{reduced monopole Floer homology of $(Y, \mathfrak{s}_c)$ has rank one}, i.e. $HM^{\redu}(Y,\mathfrak{s}_c)\cong\mathbb{Z}$.
\end{enumerate}
\end{definition}

An infinite family of Floer-simple Seifert-fibered $3$-manifold is obtained by taking $Y = S(TC)$ the unit tangent bundle of an orbifold hyperbolic surface $C$ of genus zero (i.e. topologically $C$ is a sphere and the orbifold Euler characteristic $\chi(C)$ is negative). More examples are discussed below. Our main result is the following





\begin{thm}\label{thm: main} Let $Y$ be a Floer simple Seifert-fibered 3-manifold with a (positive, co-oriented) contact structure $\xi$ such that $\mathfrak{s}_\xi = \mathfrak{s}_c$. Then for any (weak) symplectic filling $M$ of $(Y,\xi)$  with $b^{+}(M)>0$, the boundary Dehn twist $\tau_{M}$ is an infinite order
element in the smooth mapping class group $\MCG(M)$. 
\end{thm}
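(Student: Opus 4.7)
The plan is to associate to each iterate $\tau_M^n$ a family Seiberg--Witten invariant detecting its smooth isotopy class, and to compute this invariant using the Morse--Bott model of monopole Floer homology of \cite{FLINthesis} together with the family gluing theorem of \cite{JLIN2022}.

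\emph{Step 1 (Family setup).} For each $n\in\Z$, form the mapping torus $\pi_n\colon E_n\to S^1$ of $\tau_M^n$, with fiber $M$. Since $\tau_M$ is the identity near $\partial M=Y$, there is a canonical identification $\partial E_n\cong Y\times S^1$, and if $\tau_M^n$ is smoothly isotopic to $\id_M$ rel $\partial$, then $E_n\cong M\times S^1$ as smooth families of pairs. The symplectic form $\omega$ endows $M$ with a canonical $\spinc$ structure $\mathfrak{s}_\omega$ restricting to $\mathfrak{s}_\xi=\mathfrak{s}_c$ on $Y$; since the Seifert-fiber Dehn twist sits in the stabilizer of $\mathfrak{s}_c$ (it is supported in a collar where it acts by Reeb-like rotation), $\mathfrak{s}_\omega$ extends to a family $\spinc$ structure on $E_n$, which makes the relative family Seiberg--Witten invariant $\FSW(E_n)$ well-defined.

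\emph{Step 2 (Reduction via family gluing).} Using the family gluing theorem of \cite{JLIN2022}, express $\FSW(E_n)$ in Floer-theoretic terms on $Y$: schematically it takes the form $\rho^n(\Phi_M)$, where $\Phi_M$ is the relative monopole invariant of the filling $(M,\omega)$, valued in a suitable version of the monopole Floer homology of $(Y,\mathfrak{s}_c)$, and $\rho$ is the endomorphism of this Floer homology induced by the loop of diffeomorphisms of $Y$ obtained as the boundary restriction of the Dehn twist on a collar $[0,1]\times Y$ (geometrically, one full rotation along the Seifert fibers, i.e.\ the Reeb loop of the canonical contact structure).

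\emph{Step 3 (Input from Floer simplicity).} The hypothesis that $(M,\omega)$ is a weak symplectic filling with $b^{+}(M)>0$ kills wall-crossing ambiguities and ensures non-vanishing of the relative/contact invariant; under the Floer simple condition, $\Phi_M$ in fact generates $HM^\redu(Y,\mathfrak{s}_c)\cong\Z$.

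\emph{Step 4 (Non-triviality of the Reeb loop action).} Apply the Morse--Bott model of \cite{FLINthesis}. Because $\mathfrak{s}_c$ is self-conjugate and $\mathrm{deg}(N)<0$, the monopole Floer module of $(Y,\mathfrak{s}_c)$ admits an explicit description in terms of orbifold $U(1)$-connections on $N$, and the Seifert-fiber Reeb loop acts on this explicit model by a non-trivial grading shift (equivalently, by multiplication by a fixed nontrivial element in the $\Z[U]$-module structure) per iteration. Consequently $\rho^n(\Phi_M)\neq \Phi_M$ for every $n\neq 0$.

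\emph{Step 5 (Conclusion).} Steps 3 and 4 together yield $\FSW(E_n)\neq \FSW(M\times S^1)$ for all $n\neq 0$, so $E_n$ is not isomorphic to the trivial family, hence $\tau_M^n\neq \id_M$ in $\MCG(M)$, proving infinite order.

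\emph{Principal obstacle.} The crux is Step 4: performing the explicit Morse--Bott computation to pin down the action of the Seifert-fiber Reeb loop on the Floer module and showing that this action has \emph{infinite} order (rather than merely nontrivial or finite-order, as was enough to obtain $\tau_M^2\neq 1$ in \cite{konno-mallick-taniguchi}). This demands a careful matching of the orbifold geometry of the Seifert fibration with the moduli of $\spinc$ solutions, using self-conjugacy and $\mathrm{deg}(N)<0$ in an essential way; once this is in place, the family gluing theorem transfers the non-triviality cleanly to $\MCG(M)$.
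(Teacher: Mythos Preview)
Your overall architecture matches the paper's: family Seiberg--Witten invariants, the gluing theorem of \cite{JLIN2022} to factor through a cylinder-family map on $\widecheck{HM}(Y,\mathfrak{s}_c)$, the Morse--Bott model to compute that map, and Taubes' theorem applied to a capped-off filling to identify $\Phi_M$ as a generator of $HM^{\redu}(Y,\mathfrak{s}_c)\cong\Z$. Steps 1 and 3 are essentially correct.

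The genuine gap is in Steps 2, 4 and 5, where you mischaracterise both the algebraic nature of the cylinder-family map and the mechanism producing infinite order. The map $\rho=\widecheck{HM}(\widetilde{Z},\widetilde{\mathfrak{s}}_c)$ is \emph{not} an endomorphism of $\widecheck{HM}(Y,\mathfrak{s}_c)$ that one iterates: it has degree $+1$ (the base $S^1$ contributes one to the expected dimension), so it carries the generator of $HM^{\redu}$ in grading $-2h-1$ into the \emph{tower} in grading $-2h$. In particular it is not ``multiplication by a nontrivial element of $\Z[U]$'' (the $U$-action has degree $-2$), and ``$\rho^n(\Phi_M)\neq\Phi_M$'' is vacuous since these live in different gradings. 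What one must show is $\FSW(\tau_M^n)\neq 0$ in $\widecheck{HM}_{-2h}(Y,\mathfrak{s}_c)\cong\Z$, and the infinite-order mechanism is \emph{additivity} of the family invariant, $\FSW(\tau_M^n)=n\cdot\FSW(\tau_M)$, rather than iteration of $\rho$.

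The paper's Step 4 is accordingly different from what you sketch. Using the Morse--Bott chain model one finds $\widecheck{HM}_{-2h-1}$ is generated by a cycle $[c]-[\bar c]+\ell$, where $c,\bar c$ are the canonical irreducible monopoles and $\ell$ is an arc in the bottom reducible critical manifold $\mathfrak{C}_0\cong\mathbb{P}^1$ joining the two endpoints of the unique flowlines from $c,\bar c$. The cylinder-family map kills $[c],[\bar c]$ (their images are negligible chains) and sends $\ell$ to $m\cdot[\mathfrak{C}_0]$, because the Seifert $S^1$-action rotates $\mathfrak{C}_0$ with degree $m=\chi(C)/\deg N$; this weight computation (Proposition \ref{proposition:weights}) is the heart of the matter, and it is a statement about the $S^1$-weights on a single Dirac eigenspace rather than a module-theoretic grading shift. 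Thus $\FSW(\tau_M)=\pm m\,\check{1}_Y$ and $\FSW(\tau_M^n)=\pm nm\,\check{1}_Y\neq 0$ for $n\neq 0$.
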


When $M$ is simply-connected then $\tau_M$ is \textit{topologically} isotopic to the identity on $M$ (rel. $\partial M$, i.e. fixing a neighbourhood of the boundary of $M$ pointwise) by work of Orson--Powell \cite{orson-powell}; thus, in this situation Theorem \ref{thm: main} shows that $\tau_M$ is an \textit{exotic diffeomorphism of infinite order}.

In plain terms, the condition $\mathfrak{s}_\xi = \mathfrak{s}_c$ means that as a co-oriented $2$-plane field $\xi$ can be homotoped to one which is positively transverse to the fibers of $Y \rightarrow C$. Because $Y$ is negatively-oriented then there is, up to isotopy, a canonical contact structure $\xi = \xi_c$ on $Y$ with $\mathfrak{s}_\xi = \mathfrak{s}_c$, namely an $S^1$-invariant contact structure. More contact structures $\xi$ with this property are obtained by Eliashberg--Thurston pertubation \cite{confoliations} of transverse (hence taut) foliations on $Y$, whose existence was established by Lisca--Stipsicz \cite{lisca-stipsicz} provided $HM^{\redu}(Y) \neq 0$. Furthermore, contact $3$-manifolds $(Y, \xi )$ obtained by perturbation of taut foliations admit strong symplectic fillings $M$ with arbitrarily large $b^{\pm}(M)$ invariants, and which can be taken to be simply-connected \cite{EMM}.  

From another point of view, since $Y$ is a negatively-oriented Seifert-fibered $3$-manifold then it can be realised as the \textit{link of an isolated normal surface singularity} $(V, p )$ of quasi-homogeneous type. When the link $Y$ is a rational homology $3$-sphere, a sufficient condition for it to be Floer simple is that $(V, p)$ is a \textit{minimally elliptic surface singularity} \cite{nemethiplumbed}. Thus, an important class of symplectic fillings $M$ of such links $(Y, \xi_c)$ is given by the \textit{Milnor fibers} of smoothings of $(V, p )$, which have $b^{+}(M) = 2$ for the minimally elliptic singularities with rational homology sphere link \cite{durfee,steenbrink}. On the other hand, the \textit{minimal resolution} of any isolated normal surface singularity $(V, p )$ gives another symplectic filling $M$ of $(Y, \xi_c )$ but $b^{+}(M) = 0$, and one can show that the boundary Dehn twist $\tau_M$ is trivial in $\MCG (M)$ (see Proposition \ref{proposition:minres}); thus the $b^{+}(M) > 0$ hypothesis in Theorem \ref{thm: main} is sharp.

\subsection{Monodromy of surface singularities} \label{intro:singularities}

We now explain our motivation for Question~\ref{ques: dehn infinite order} from the study of the monodromy of Milnor fibrations of hypersurface singularities (see also \S\ref{section:backgroundsection} for a general discussion beyond the hypersurface case). This topic has been extensively investigated (see e.g. \cite{dimca,arnold}) following pioneering work of Brieskorn \cite{brieskorn} and Milnor \cite{milnor}, and lies at the heart of the study of singularities by topological methods. 

Let $f: (\mathbb{C}^3 , 0 )\rightarrow (\mathbb{C}, 0 )$ be a (germ of) complex-analytic function with an isolated critical point at $0$, and consider the associated (germ of) $2$-dimensional isolated hypersurface singularity $V = \{ f = 0\}$. For $0 < \epsilon \ll 1$ one obtains a smooth $3$-manifold $Y = V \cap S_{\epsilon} (\mathbb{C}^3 , 0 )$ called the \textit{link} of the singularity $(V,0)$. The link determines the topological type of the singularity, as $V$ is homeomorphic to the cone over $Y$. In turn, the fibers $V_t = f^{-1}(t)$ are non-singular for small $t \neq 0$. By removing the singular fiber and making all fibers compact we obtain the \textit{Milnor fibration} of $f$
\begin{align}
f^{-1}(B_{\delta}(\mathbb{C}, 0 ) \setminus 0 )\cap B_{\epsilon}(\mathbb{C}^3 , 0 ) \xrightarrow{f} B_{\delta} (\mathbb{C}, 0 )\setminus 0 \label{milnorfibrationv1}
\end{align}
which defines a smooth fiber bundle provided $0 < \delta \ll \epsilon \ll 1$ \cite{milnor}. A fiber $M$ of (\ref{milnorfibrationv1}) is a smooth $4$-manifold with boundary $\partial M \cong Y$, called a \textit{Milnor fiber} of $f$. 
Furthermore, the Milnor fibration is equipped with a canonical (up to homotopy) boundary trivialisation, i.e. (\ref{milnorfibrationv1}) is a principal bundle with structure group $\mathrm{Diff}(M , \partial )$). A fundamental invariant of $f$ is the \textit{monodromy} of the Milnor fibration, namely the diffeomorphism $\psi \in \MCG (M) \cong \pi_1 B\mathrm{Diff}(M, \partial )$ that classifies (\ref{milnorfibrationv1}) as a $\mathrm{Diff}(M, \partial)$-bundle. 

The classical topic of study has been the automorphism $\psi_\ast$ of the intersection form on the middle homology $H_2 (M , \mathbb{Z} )$ of the Milnor fiber induced by the monodromy, but considerably less is known about the monodromy in the smooth mapping class group.

We will consider the case when $f$ is a \textit{weighted-homogeneous} polynomial, meaning that there exist positive integers $(w_1 , w_2 , w_3 , d )$ such that $f(\lambda^{w_1} x_1 , \lambda^{w_2}x_2 , \lambda^{w_3}x_3 ) = \lambda^d f(x_1 , x_2 , x_3 )$ for all $\lambda \in \mathbb{C}^\ast$ (and without loss of generality, we can assume that $\mathrm{gcd}(w_1 , w_2 , w_3 ) =1 $). Then $V$ is called a $2$-dimensional \textit{weighted-homogeneous isolated hypersurface singularity}. We will assume that the link $Y$ is a rational homology $3$-sphere, which is equivalent to the invertibility of $1- \psi_\ast$ over $\mathbb{Q}$ \cite{milnor}. In this situation, the action of $\psi_\ast$ on $H_2 (M , \mathbb{Z} )$ has finite order \cite{milnor}, and by a result of Orson--Powell \cite{orson-powell} $\psi$ has \textit{finite order in the topological mapping class group} $\pi_0 \mathrm{Homeo}(M , \partial )$. It is natural to ask:

\begin{ques}\label{ques : mono infinite order} 
Let $V = \{f = 0\}$ be a $2$-dimensional weighted-homogeneous isolated hypesurface singularity, whose link $Y$ is a rational homology 3-sphere. Does the monodromy $\psi$ of the Milnor fibration (\ref{milnorfibrationv1}) of $f$ have infinite order in the \textbf{smooth} mapping class group $\MCG (M)$?
\end{ques}

A basic invariant of an isolated singularity $V$ is its \textit{geometric genus}, denoted $p_g (V) \in \mathbb{Z}_{\geq 0}$. 
The $2$-dimensional isolated hypersurface singularities $V = \{f = 0\}$ with $p_g (V) = 0$ are precisely the \textit{ADE singularities}. These are all weighted-homogeneous and with rational homology $3$-sphere link. A fundamental fact about the ADE singularities, 
established by Brieskorn \cite{brieskornADE} and first observed by Atiyah \cite{atiyah} for the $A_1$ singularity, is the \textit{simultaneous resolution} property. 
This has the particular consequence that \textit{Question \ref{ques : mono infinite order} has a negative answer when $p_g (V) = 0$}. In turn, when $p_g (V) > 0$ the simultaneous resolution property doesn't hold (see Proposition \ref{prop:nosimresolution}), which suggests the intriguing possibility that this might have a \textit{differential-topological explanation}: namely, that $\psi$ has infinite order in the smooth mapping class group. We establish this in the next simplest case:

\begin{corollary} \label{cor:infiniteordermonodromy}
The answer to Question \ref{ques : mono infinite order} is affirmative when $p_g (V) = 1$. 
\end{corollary}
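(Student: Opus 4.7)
The plan is to reduce Corollary \ref{cor:infiniteordermonodromy} to Theorem \ref{thm: main} by (a) verifying the Floer-simple and filling hypotheses when $p_g(V) = 1$, and (b) relating the monodromy $\psi$ to the boundary Dehn twist $\tau_M$.

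For step (a), since $V = \{f = 0\}$ is a hypersurface singularity, $V$ is Gorenstein (its canonical bundle is trivial by adjunction in $\mathbb{C}^3$). By Laufer's characterization, a Gorenstein surface singularity with $p_g = 1$ is \emph{minimally elliptic}. The link $Y$ is a rational homology $3$-sphere by assumption and is a negatively-oriented Seifert-fibered $3$-manifold since $f$ is weighted-homogeneous. The canonical spin-c structure $\mathfrak{s}_c$ is self-conjugate because the triviality of $K_V$ translates, via the Seifert bundle $Y \to C$, to the triviality of the pullback of the orbifold tangent bundle $TC$. The condition $HM^{\redu}(Y, \mathfrak{s}_c) \cong \mathbb{Z}$ then holds by N\'emethi's computation for minimally elliptic singularities with rational homology sphere link, as cited in the excerpt. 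Thus $Y$ is Floer simple in the sense of Definition \ref{defi: Floer simple}. The Milnor fiber $M$ carries a natural Stein (hence strong symplectic) filling structure of $(Y, \xi_c)$ with $\mathfrak{s}_{\xi_c} = \mathfrak{s}_c$, and satisfies $b^+(M) = 2 > 0$ as noted in the excerpt for minimally elliptic singularities with rational homology sphere link. Theorem \ref{thm: main} then gives that $\tau_M$ has infinite order in $\MCG(M)$.

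For step (b), the weighted $\mathbb{C}^*$-action on $\mathbb{C}^3$ with weights $(w_1, w_2, w_3)$ satisfies $f(\lambda \cdot z) = \lambda^d f(z)$, so its restriction to $S^1 \subset \mathbb{C}^*$ covers the degree-$d$ self-map on the base of the Milnor fibration. The element $\lambda = e^{2\pi i/d}$ acts as a diffeomorphism of the Milnor fiber representing the monodromy, and on the boundary $\partial M = Y$ it acts as rotation by angle $2\pi/d$ in the Seifert direction. Iterating $d$ times, the $d$-th power acts by a full Seifert period on the boundary; comparing this $\mathbb{C}^*$-trivialization of the fibration with the standard Milnor boundary trivialization shows that the relative difference is precisely a full Seifert twist along the boundary collar. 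Hence $\psi^d = \tau_M$ in $\MCG(M)$, and combined with step (a) this implies that $\psi$ has infinite order.

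The main obstacle is the careful verification of the relation $\psi^d = \tau_M$ in step (b): one has to match orientation conventions, track the boundary trivialization of the Milnor fibration in a manner compatible with the collar used to define $\tau_M$, and confirm that the Seifert circle action induced by the weighted $\mathbb{C}^*$-action coincides (up to reparametrization and sign) with the Seifert structure used in the definition of the boundary Dehn twist. Once this identification is in place, the infinite-order conclusion for $\psi$ is immediate from Theorem \ref{thm: main}.
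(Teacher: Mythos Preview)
Your proposal is correct and follows essentially the same route as the paper: verify that $Y$ is Floer simple (via Laufer's identification of $p_g=1$ hypersurface singularities as minimally elliptic, together with N\'emethi's computation) so that Theorem~\ref{thm: main} applies to the Milnor fiber, and then relate a power of the monodromy to the boundary Dehn twist via the weighted $\mathbb{C}^*$-action. The paper packages your step (b) as Proposition~\ref{prop:dehnsmoothing}, which gives $\psi^d = \tau_M^{-1}$ rather than $\psi^d = \tau_M$; this sign discrepancy is exactly the orientation-matching issue you flag, and is of course immaterial for the infinite-order conclusion.
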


This result follows from Theorem \ref{thm: main} together with the fact that a suitable non-trivial power of $\psi$ becomes smoothly isotopic (fixing $\partial M$ pointwise) to the boundary Dehn twist on $M$ (Proposition \ref{prop:dehnsmoothing}). The isolated hypersurface singularities with $p_g (V) = 1$ are precisely the \textit{minimally elliptic singularities} with degree $\leq 3$ classified by Laufer \cite{laufer}, among which 42 of them are weighted-homogeneous with rational homology $3$-sphere link (see Figure \ref{figure:table}). For instance, these include the Brieskorn singularities $x^2 + y^3 + z^l = 0$ for $7 \leq l \leq 11$. We refer to Corollary \ref{cor:infiniteordermonodromy2} for a generalisation of the above result, going beyond the hypersurface case. Based on the above, we feel compelled to put forward the following conjecture: 



\begin{conjecture}[special case of Conjecture \ref{conj:infiniteordermono2}]\label{conj:infiniteordermono}
The answer to Question \ref{ques : mono infinite order} is affirmative when $p_g (V) > 0$.
\end{conjecture}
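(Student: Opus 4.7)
The plan is to reduce Conjecture~\ref{conj:infiniteordermono} to an infinite-order statement about the boundary Dehn twist $\tau_{M}$ on the Milnor fiber, and then to extend Theorem~\ref{thm: main} beyond the Floer simple hypothesis. Since $f$ is weighted-homogeneous with rational homology sphere link, the link $Y$ carries a Seifert fibration and the geometric monodromy can be taken to be orbifold-periodic on the link. By Proposition~\ref{prop:dehnsmoothing}, some non-trivial power $\psi^{N}$ is smoothly isotopic (rel $\partial M$) to a non-trivial power $\tau_{M}^{k}$ of the boundary Dehn twist; this is exactly the reduction used for $p_g=1$ in Corollary~\ref{cor:infiniteordermonodromy}. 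Thus it suffices to prove that $\tau_{M}$ has infinite order in $\MCG(M)$ whenever $p_g(V)>0$.

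Next, I would check that all of the analytic and topological inputs of Theorem~\ref{thm: main} persist in this generality. The Milnor fiber $M$ is a Stein, hence strong symplectic, filling of the canonical contact structure $(Y,\xi_c)$, with $\mathfrak{s}_{\xi_c}=\mathfrak{s}_c$, so the contact-geometric hypotheses are automatic. By Durfee--Steenbrink, $b^{+}(M)=2p_g(V)>0$, giving the positivity of $b^{+}$. Conditions (1) and (2) of Definition~\ref{defi: Floer simple} also hold: (1) is the standing QHS-link assumption, and (2) is a classical property of the weighted-homogeneous setup (the Seifert fibration is compatible with the canonical spin-c structure, and $TC$ pulled up to $Y$ is trivial for QHS Seifert-fibered links). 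The \emph{only} part of Definition~\ref{defi: Floer simple} that may fail is condition (3): for $p_g(V)>1$ the rank of $HM^{\redu}(Y,\mathfrak{s}_c)$ exceeds one (the $p_g=1$ case being precisely the minimally elliptic case covered in Corollary~\ref{cor:infiniteordermonodromy}).

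The main obstacle is therefore to generalise Theorem~\ref{thm: main} to arbitrary rank of $HM^{\redu}(Y,\mathfrak{s}_c)$. My plan is to carry out the family Seiberg--Witten computation for $\tau_{M}^{n}$ directly, along the same lines as Theorem~\ref{thm: main}: by the family gluing theorem~\cite{JLIN2022} and the Morse-Bott model~\cite{FLINthesis}, the family SW invariant of $\tau_{M}^{n}$ is obtained by pairing the relative Bauer--Furuta-type invariant of $M$ (nonzero because $M$ is a positive symplectic filling with $b^{+}>0$) with the $n$-th power of the endomorphism of $HM^{\redu}(Y,\mathfrak{s}_c)$ induced by the Seifert-fibered Dehn twist $\tau$. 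The core technical step is then to show that this endomorphism acts with infinite order in a way that survives the relative pairing. I would approach this using Nemethi's lattice/plumbed description of $HM^{\redu}$ for Seifert rational homology spheres, together with the orbifold-periodicity of $\tau$: even though $\tau$ acts by a finite-order automorphism on $HM^{\redu}$ itself, the twist interacts non-trivially with the $U$-action and absolute gradings, producing a grading-shift family invariant whose iterates do not cancel when $p_g(V)>0$.

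The hardest part will be the last step: pinning down the explicit action of $\tau$ on $HM^{\redu}(Y,\mathfrak{s}_c)$ and verifying non-cancellation in the relative pairing for all $n$. In the Floer simple case the rank-one assumption collapses the computation to a single integer; in general, the different summands of $HM^{\redu}$ could conspire to cancel. One expects that a finer family Bauer--Furuta refinement, or an appeal to positivity of symplectic cobordism maps à la Taubes/Kronheimer--Mrowka (applied to the mapping torus cobordism of $\tau$), can rule this out, and I would try that route first. Intermediate milestones along the way would be (i) singularities with small second Betti number where the action of $\tau$ on $HM^{\redu}$ is directly computable, and (ii) the subclass of Brieskorn singularities $x^{a}+y^{b}+z^{c}$, where the Seifert geometry and the classical monodromy eigenvalues are explicit.
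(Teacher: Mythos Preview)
The statement you are attempting to prove is presented in the paper as an open \emph{conjecture}, not a theorem; the paper does not claim to prove it and offers no argument beyond the $p_g = 1$ case (Corollary~\ref{cor:infiniteordermonodromy}). So there is nothing to compare your proposal against.

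More importantly, your main technical strategy is explicitly obstructed by results in the paper. You propose to show that the family Seiberg--Witten invariant $\mathrm{FSW}(\tau_M^n, \mathfrak{s}_M)$ is nonzero for all $n \neq 0$. But the paper observes (see the end of \S1, ``Outline and Comments'') that when $p_g(V) > 1$ one has $b^+(M) = 2p_g > 2$, and the monodromy $\psi$ factors as a product of Dehn twists along Lagrangian $(-2)$-spheres; Proposition~\ref{prop: FSW vanishing for Dehn-Seidel twists} (the engine behind Theorem~\ref{thm: factorization}) then forces $\mathrm{FSW}(\psi, \mathfrak{s}_M) = 0$. Since $\tau_M$ is a power of $\psi$ and additivity (\ref{eq: FSW additive}) holds for $b^+ > 2$, one also gets $\mathrm{FSW}(\tau_M^n, \mathfrak{s}_M) = 0$ for all $n$. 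Thus the very invariant you plan to compute vanishes identically in the range $p_g > 1$, and no amount of analysing the $\tau$-action on $HM^{\redu}(Y,\mathfrak{s}_c)$ will rescue it: the cancellation you worry about in your last paragraph is not a hypothetical danger but a certainty. A genuinely new invariant (not $\mathrm{FSW}$ in the canonical spin-c structure) would be needed.

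A secondary issue: your claim that condition~(2) of Definition~\ref{defi: Floer simple} holds automatically for QHS Seifert-fibered links is not correct. Self-conjugacy of $\mathfrak{s}_c$ means $\pi^\ast K_C$ is trivial, i.e.\ $K_C \cong N^{-m}$ for some integer $m$; by Lemma~\ref{lemma:pullbackline} this is a nontrivial constraint on the Seifert data when $Y$ is only a rational (not integral) homology sphere.
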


It is worth comparing Corollary \ref{cor:infiniteordermonodromy} with the following result in higher dimensions (see Proposition \ref{highdim}): \textit{the monodromy $\psi \in \MCG(M)$ of an $n$-dimensional weighted-homogeneous isolated hypersurface singularity $V = \{f = 0\} \subset \mathbb{C}^{n+1}$ whose link $Y$ is an integral homology $(2n-1)$-sphere has finite order in $\MCG(M)$ if $n > 4$.} We do not know what occurs in the cases $n = 3$ or $4$. Thus, at least when the link is an integral homology sphere, the phenomenon exhibited by Corollary \ref{cor:infiniteordermonodromy} is special to singularities of complex dimension $n = 2$ 
and perhaps also $n = 3$ or $4$.

\subsection{Obstructions to Dehn twist factorisations}

Returning now to an isolated hypersurface singularity $V = \{ f = 0\} \subset \mathbb{C}^3$ of dimension $n = 2$, one can always ``morsify" $f$ by adding in additional linear terms, and a choice of vanishing paths for the new function then exhibits a factorisation $\psi = \delta_{S_1} \circ \cdots \circ \delta_{S_n}$ of the monodromy $\psi \in \MCG (M)$ of the hypersurface singularity $V = \{f = 0\}$ as a \textit{composition  of Dehn twists on embedded Lagrangian spheres} $S_i$ (this is the familiar Picard--Lefschetz transformation, which gives an element $\delta_{S_i} \in \MCG(M)$ with order $2$), see e.g. \cite[\S 2.1]{arnold}. In particular, the boundary Dehn twist $\tau_M$ factorises in $\MCG(M)$ as a composition of Dehn twists on Lagrangian $2$-spheres. Of course, embedded Lagrangian spheres are examples of smoothly embedded spheres $S$ with $S \cdot S = -2$ whose pairing $K \cdot S$ with the canonical class vanishes (other examples are symplectic $(-2)$-spheres). It is interesting to investigate factorisation questions for more general symplectic fillings, and to this end we establish the following counterpart to the aforementioned classical fact:

\begin{thm}\label{thm: factorization}
Let $Y$ be a Floer simple Seifert-fibered $3$-manifold equipped with a contact structure $\xi$ such that $\mathfrak{s}_\xi = \mathfrak{s}_c$, and let $(M,\omega)$ be a compact symplectic $4$-manifold with (weakly) convex contact boundary $(Y, \xi )$ and $b^+ (M) > 2$. Then no non-trivial power of the boundary Dehn twist $\tau_{M}$ admits a factorisation in $\MCG(M)$ as a composition $\delta_1 \circ \cdots \delta_n$ where each $\delta_i \in \MCG(M)$ is a diffeomorphism supported in a tubular neighbourhood of a $2$-sphere $S_i$ smoothly embedded in the interior of $M$ such that $S_i \cdot S_i < 0$ and $K \cdot S_i  = 0$, where $K$ is the canonical class of $(M, \omega )$.
\end{thm}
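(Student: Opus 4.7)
The plan is to construct a homomorphism $\Phi \colon \MCG_{\mathfrak{s}_c}(M) \to \mathbb{Z}$ on the stabiliser of the canonical spin-c structure $\mathfrak{s}_c$ (with $c_1(\mathfrak{s}_c) = -K$), arising from the family Seiberg--Witten invariant with coefficient $\mathfrak{s}_c$, and to verify that $\Phi(\tau_M) \neq 0$ while $\Phi(\delta_i) = 0$ for every sphere-supported $\delta_i$ of the specified type. Additivity of $\Phi$ then forces any factorisation $\tau_M^k = \delta_1 \circ \cdots \circ \delta_n$ to yield $k\Phi(\tau_M) = \sum_i \Phi(\delta_i) = 0$, hence $k = 0$.

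First I would verify that $\tau_M$ and each $\delta_i$ preserve $\mathfrak{s}_c$, so that $\Phi$ is defined on them. The boundary Dehn twist $\tau_M$ is supported in a collar of $\partial M$ and acts trivially on $H^{\ast}(M;\mathbb{Z})$; a sphere-supported $\delta_i$ induces an automorphism of $H^2(M;\mathbb{Z})$ whose only non-trivial action is on the class $[S_i]$, and the condition $K \cdot S_i = 0$ ensures that $c_1(\mathfrak{s}_c) = -K$ is preserved. Under the hypothesis $b^+(M) > 2$, the family Seiberg--Witten invariant upgrades to a cohomology class $\Phi \in H^1(B\mathrm{Diff}_{\mathfrak{s}_c}(M,\partial);\mathbb{Z})$: the $b^+ > 2$ condition controls wall-crossing in generic $2$-parameter families, which is precisely what forces $\Phi$ to satisfy the $1$-cocycle condition on $2$-simplices. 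Pairing with $\pi_1$ then gives the homomorphism $\Phi$.

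The non-vanishing $\Phi(\tau_M) \neq 0$ is essentially the content of Theorem \ref{thm: main}: via the family gluing theorem of \cite{JLIN2022} combined with the Morse--Bott Floer model of \cite{FLINthesis}, $\Phi(\tau_M)$ reduces to the action of the Seifert-fibered Dehn twist on $HM^{\redu}(Y,\mathfrak{s}_c) \cong \mathbb{Z}$ paired with the relative invariant of $M$ and the contact invariant of $(Y, \xi)$, and Floer-simplicity renders this non-zero.

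The crux is to show $\Phi(\delta_i) = 0$. I would neck-stretch along $\partial \nu(S_i) \cong L(n_i, 1)$, where $n_i = -S_i \cdot S_i$, and apply the family gluing theorem. Since $\delta_i$ is supported in $\nu(S_i)$, the outer piece $S^1 \times (M \setminus \nu(S_i))$ is a constant family that contributes only the ordinary relative invariant, so $\Phi(\delta_i)$ equals a Floer-theoretic pairing between this and the parameterised relative invariant of the inner family $E_{\delta_i|_{\nu(S_i)}} \to S^1$. The condition $K \cdot S_i = 0$ forces $c_1(\mathfrak{s}_c|_{\nu(S_i)}) \cdot [S_i] = 0$, so on the disk bundle $\nu(S_i) = D(\mathcal{O}(-n_i))$ the spin-c structure $\mathfrak{s}_c|_{\nu(S_i)}$ has vanishing rational first Chern class; for such a spin-c structure the parameterised relative invariant vanishes, either by an orientation-reversing charge-conjugation symmetry (available because $\mathfrak{s}_c|_{\nu(S_i)}$ is rationally self-conjugate) or by a $U(1)$-equivariant dimension count exploiting the $S^1$-symmetry of the disk bundle.

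The hard part is this last vanishing step: rigorously controlling the parameterised Seiberg--Witten moduli space on $\nu(S_i)$ and its Floer-theoretic gluing across the lens-space boundary is delicate, and the symmetry/equivariance argument that kills the relative count must be compatible with the gluing data on $L(n_i,1)$. The $b^+ > 2$ hypothesis is essential throughout, both to guarantee that $\Phi$ is an honest homomorphism (rather than a virtual one) and for the family gluing theorem to apply in the $2$-parameter regime needed to detect factorisations.
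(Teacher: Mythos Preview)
Your overall architecture matches the paper's exactly: define the invariant $\FSW(M,\mathfrak{s}_M,-)\in\widecheck{HM}(Y,\mathfrak{s}_c)$ (which in the relevant degree is $\mathbb{Z}$), use additivity (valid because $b^+(M)\geq 3$) to reduce to a single $\delta_i$, and neck-stretch along $\partial\nu(S_i)\cong L(p,1)$ with $p=-S_i\cdot S_i$ via the family gluing formula. The non-vanishing on $\tau_M$ is precisely equation~(\ref{eq: FSW tau nonzero}).

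Where you diverge from the paper is in the mechanism that kills the inner contribution. You propose charge-conjugation symmetry or an equivariant dimension count, and you flag this as the delicate step. The paper's argument is considerably simpler and avoids both: after gluing, $\FSW(M,\mathfrak{s}_M,\delta_i)$ factors through the element $\widehat{HM}(\widetilde{N}_i^{\circ},\mathfrak{s}_{\widetilde{N}_i^{\circ}})(\hat{1})\in\widehat{HM}_*(L(p,1),\mathfrak{s}_{Y_i})$, and a \emph{grading computation} shows this element lives in degree $\tfrac{1}{4}$ (using $c_1(\mathfrak{s}|_{N_i})=0$, $\sigma(N_i)=-1$, and the $+1$ shift from the $S^1$-family). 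But $L(p,1)$ is an $L$-space whose $\widehat{HM}$ in the self-conjugate spin-c structure is supported in degrees $-\tfrac{3}{4}+2\mathbb{Z}$, which misses $\tfrac{1}{4}$; hence the element vanishes. No symmetry or moduli-space analysis on $\nu(S_i)$ is needed.

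So your proposal is not wrong, but the ``hard part'' you identify dissolves once you compute the grading shift and invoke the $L$-space property of lens spaces. Your charge-conjugation idea might be made to work, but it is more work than necessary; the equivariant dimension count is too vague as stated.
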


The assumption that $b^{+}(M) > 2$ is essential. As discussed above, the conclusion of Theorem \ref{thm: factorization} does not hold when $M$ is the Milnor fiber of an isolated hypersurface singularity $V = \{f = 0\}$ with $p_g = 1$ and rational homology $3$-sphere link; indeed $b^{+}(M) =2$ in this case \cite{durfee}.

\subsection{Robust symplectomorphisms and Donaldson's question} \label{intro:symplectic}

We now discuss some symplectic-topological aspects of Theorem \ref{thm: main} and Theorem \ref{thm: factorization}. 

A phenomenon which is rather special to dimension $4$, and which is intimately bound up with the simultaneous resolution phenomenon, is the existence of \textit{fragile} symplectomorphisms. Let us recall this notion (see \cite{seidel}). We consider a compact symplectic $4$-manifold $(M,\omega)$ with convex contact boundary $(Y,\xi)$ (i.e. $(M, \omega)$ is a strong symplectic filling of $(Y, \xi )$) and let $\mathrm{Symp}(M, \omega, \partial )$ denote the group of symplectomorphisms of $(M, \omega)$ which agree with the identity near $\partial M = Y$. The \textit{symplectic Torelli group} is the subgroup $\mathrm{TSymp}(M,\omega , \partial) \subset \mathrm{Symp}(M , \omega , \partial )$ whose elements act trivially on $H_{*}(M,\mathbb{Z})$. An element $f \in \mathrm{TSymp}(M,\omega)$ is called \emph{fragile} if the following three conditions hold:
\begin{itemize}
    \item there exists a path of symplectic forms $\{\omega_{s}\}_{s\in [0,1)}$ on $M$ such that $\omega_0=\omega$, and $\omega_s=\omega$ near $\partial M = Y$ for all $s$
    \item there exists a path $\{f_{s}\}_{s\in[0,1)}$ in $\diff(M,\partial)$ such that $f^*_{s}(\omega_s)=\omega_s$.
    \item $f_{s}$ belongs to the identity component of $\mathrm{TSymp}(M,\omega_s)$ when $s>0$ but it does not when $s=0$. 
\end{itemize}
Previously known fragile elements in $\mathrm{TSymp}(M,\omega, \partial)$ are given by \textit{squared Dehn--Seidel twists} $(\delta_{S})^2$ along embedded Lagrangian spheres $S \subset M$ \cite{seidel}. It is natural  to search for \textit{robust} elements  $f \in \mathrm{TSymp}(M,\omega, \partial)$, i.e. $f$ is non-fragile and not in the identity component of $\mathrm{TSymp}(M, \omega , \partial)$. We are able to exhibit a large supply of Torelli symplectomorphisms all of whose non-trivial powers are robust (see also \cite{buse} for robust higher-dimensional parametric families of symplectomorphisms).

We suppose that the convex contact boundary $(Y, \xi ) $ of $(M, \omega)$ is a Seifert manifold of negative degree, and $\xi = \xi_c$ is the canonical contact structure. Then one can choose a contact form $\eta$ for $\xi_c $ which is preserved by the Seifert $S^1$-action. Choosing a collar neighborhood of the boundary symplectomorphic to $((0,1]\times Y, d(e^{t}\eta))$ it is straightforward to verify that the boundary Dehn twist $\tau_{M}$ preserves $\omega$ (see Definition (\ref{defn:dehntwist})); hence we have $\tau_M \in \pi_{0}\mathrm{TSymp}(M,\omega)$. Of course, fragile symplectomorphisms and their products are trivial in $\MCG(M)$. The following is an immediate consequence of Theorems \ref{thm: main} and \ref{thm: factorization}

\begin{corollary}\label{symplectic torelli}
Let $Y$ be a Floer simple Seifert $3$-manifold, and $(M,\omega)$ a compact symplectic $4$-manifold with convex contact boundary $(Y,\xi_{c})$ and $b^{+}(M) > 0$, and let $f \in \mathrm{TSymp}(M, \omega , \partial )$ be any Torelli symplectomorphism that is smoothly isotopic rel. $\partial M$ to $\tau_{M}^{k}$ for some $k \neq 0$ (e.g. $f = \tau_{M}^{k}$). Then $f$ is a robust element in $\mathrm{TSymp}(M, \omega , \partial )$. In addition:
\begin{enumerate}
    \item $f$ is not symplectically isotopic (rel. $\partial M$) to a product of \textbf{squared} Dehn--Seidel twists along Lagrangian $2$-spheres embedded in the interior of $M$.
    \item If $b^+(M)>2$, then  $f$ is not symplectically isotopic (rel. $\partial M$) to a product of Dehn--Seidel twists along Lagrangian $2$-spheres embedded in the interior of $M$. 
\end{enumerate}
\end{corollary}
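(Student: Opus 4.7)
The plan is to derive all three assertions directly from Theorems~\ref{thm: main} and~\ref{thm: factorization}, pivoting on the single fact that $[f] = [\tau_M^k]$ has infinite order in $\MCG(M)$ for $k\neq 0$, which is the content of Theorem~\ref{thm: main}. Once this is isolated, each of the three conclusions reduces to a short smooth-topology argument, and no genuine obstacle is anticipated; the point of the corollary is simply to repackage the consequences of the main theorems in symplectic language.

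For robustness, I would first argue that $f$ is not in the identity component of $\mathrm{TSymp}(M,\omega,\partial)$: any symplectic isotopy rel.\ $\partial M$ from $f$ to the identity would in particular be a smooth isotopy, forcing $[\tau_M^k] = 1$ in $\MCG(M)$ and contradicting Theorem~\ref{thm: main}. For non-fragility, suppose towards contradiction that $f$ is fragile, witnessed by paths $\{\omega_s\}_{s\in[0,1)}$ of symplectic forms and $\{f_s\}_{s\in[0,1)}$ in $\diff(M,\partial)$ with $f_0 = f$. For any fixed $s > 0$, the hypothesis that $f_s$ lies in the identity component of $\mathrm{TSymp}(M,\omega_s,\partial)$ provides a smooth path in $\diff(M,\partial)$ from $f_s$ to the identity, and concatenating with the path $\{f_t\}_{t\in[0,s]}$ yields a smooth isotopy rel.\ $\partial M$ from $f$ to the identity, again contradicting Theorem~\ref{thm: main}.

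For part (1), I would invoke the well-known fact that for a Lagrangian $2$-sphere $S\subset(M,\omega)$ the squared Dehn--Seidel twist $\delta_S^2$ is \emph{smoothly} isotopic rel.\ $\partial$ to the identity; this follows from the simultaneous resolution of the $A_1$ singularity (and is precisely the origin of Seidel's fragility phenomenon). Consequently any product of such squared twists along interior Lagrangian $2$-spheres is smoothly isotopic rel.\ $\partial M$ to the identity, so a symplectic isotopy of $f$ to such a product would realize $[f] = 1$ in $\MCG(M)$, contradicting $[f] = [\tau_M^k]$. Note that this argument does not require Theorem~\ref{thm: factorization} and hence works in the full range $b^+(M)>0$.

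For part (2), where $b^+(M)>2$, I would observe that an embedded Lagrangian $2$-sphere $S\subset\interior(M)$ satisfies $S\cdot S = -2$ by the Lagrangian neighborhood theorem and $K\cdot S = 0$ by the adjunction formula $2g - 2 = S\cdot S + K\cdot S$, while the Dehn--Seidel twist $\delta_S$ is compactly supported in a tubular neighborhood of $S$. Hence a symplectic factorization of $f$ as $\delta_{S_1}\circ\cdots\circ\delta_{S_n}$ rel.\ $\partial M$ along interior Lagrangian $2$-spheres, passed to $\MCG(M)$, yields a factorization of $\tau_M^k$ of exactly the form forbidden by Theorem~\ref{thm: factorization} (the hypotheses $S_i\cdot S_i<0$ and $K\cdot S_i = 0$ being satisfied), a contradiction.
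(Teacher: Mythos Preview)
Your proposal is correct and matches the paper's approach: the paper states the corollary as an ``immediate consequence of Theorems~\ref{thm: main} and~\ref{thm: factorization}'' and simply notes that fragile symplectomorphisms are trivial in $\MCG(M)$, leaving the details you have supplied implicit. Your justification of $K\cdot S = 0$ for a Lagrangian $2$-sphere is perhaps more directly seen from $TM|_S \cong TS \oplus T^*S$ (so $c_1(TM)|_S = 0$) than from adjunction, but the conclusion and the overall argument are the same.
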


Alternatively, we can complete $(M , \omega )$ by gluing in a symplectisation end $([1, \infty)\times Y , d(e^t \eta) ) $, thus obtaining a non-compact symplectic manifold $(\widehat{M}, \widehat{\omega})$; Corollary \ref{symplectic torelli} can be phrased instead in terms of the compactly-supported Torelli symplectomorphism group of $(\widehat{M}, \widehat{\omega} )$.

The following question is attributed to Donaldson in the literature \cite{sheridan-smith,li-li-wu,arabadji-baykur}: \textit{for a closed simply--connected symplectic $4$-manifold $(M, \omega )$, is every Torelli symplectomorphism of $(M, \omega )$ symplectically isotopic to a product of squared Dehn--Seidel twists?}
The answer is known to be affirmative for positive rational surfaces \cite{li-li-wu}, but otherwise remains widely open. In a similar vein, one could also ask whether every symplectomorphism of $(M, \omega )$ is symplectically isotopic to a product of Dehn--Seidel twists. If one drops the simple--connectivity assumption on $M$ then examples exist for which both answers are negative \cite{arabadji-baykur,Smirnov23}. Corollary \ref{symplectic torelli} provides infinitely many examples where both answers are negative when one drops the closed condition (instead, $(M, \omega )$ has convex boundary or is non-compact with the end modelled on a symplectisation). 

It is worth comparing the different behaviours that \textit{symplectic monodromies} of complex surface singularities can exhibit. Namely, consider again a weighted-homogeneous isolated hypersurface singularity in $\mathbb{C}^3$ with rational homology $3$-sphere link. The restriction of the standard symplectic form on $\mathbb{C}^3$ onto the Milnor fibers makes the Milnor fibration (\ref{milnorfibrationv1}) into a symplectic fiber bundle over $S^1$, or more precisely: a principal bundle over $S^1$ with structure group $\mathrm{Symp}(M, \omega, \partial )$; we still denote its monodromy by $\psi \in \pi_0 \mathrm{Symp}(M , \omega , \partial )$, which can be factorised into a composition of Dehn--Seidel twists on Lagrangian spheres (see \cite[\S 4c]{seidelgraded}). When the geometric genus $p_g = 1$, choosing $d \neq  0$ such that $f := \psi^d$ is smoothly isotopic to $\tau_M$ (see Proposition \ref{prop:dehnsmoothing}) we obtain by Corollary \ref{symplectic torelli} that $f$ and all its non-trivial powers are robust Torelli symplectomorphisms which do not factor as products of squared Dehn--Seidel twists. When $p_g = 0$ (i.e. an ADE singularity) then Seidel proved that $\psi$ has infinite order in $\pi_0 \mathrm{Symp}(M , \partial )$ \cite[\S 4c]{seidelgraded}. However, by the simultaneous resolution property for ADE singularities we may find some $d \neq 0$ for which $f := \psi^d$ and all its non-trivial powers are fragile Torelli symplectomorphisms, and which also factor as products of squared Dehn--Seidel twists (Proposition \ref{prop:ADE}).





\subsection{Exotic Dehn twists on open, contractible, and closed manifolds}\label{intro: low-dimensional topology}

By combining Theorem~\ref{thm: main} with the gluing formula for family Seiberg--Witten invariants proved by the second author \cite{JLIN2022}, we are able to obtain several intriguing applications in low-dimensional topology. 

Our first application concerns the mapping class groups of open manifolds. For a non-compact smooth $4$-manifold $\mathcal{R}$ (without boundary), we define the \textit{compactly-supported mapping class group} $\operatorname{MCG}_{c}(\mathcal{R})$ to be the group of compactly-supported diffeomorphisms of $\mathcal{R}$ modulo compactly-supported isotopy. We call a non-trivial element of $\MCG_{c}(\mathcal{R})$ a \textit{compactly-supported exotic diffeomorphism} of $\mathcal{R}$ if it is isotopic to the identity through compactly-supported homeomorphisms. For $\mathcal{R}=\mathbb{R}^4$, one has 
\[
\MCG_{c}(\mathbb{R}^4)\cong \MCG(D^4)\cong \MCG(S^4).
\]
Watanabe \cite{watanabe} has proved that $\mathrm{Diff}(D^4, \partial)$ is not contractible (thus disproving the four-dimensional Smale conjecture), but the following important question remains open: \textit{is $\MCG_c (\mathbb{R}^4 )$ trivial?} We prove the following relevant result:
\begin{thm}\label{thm: MCG exotic R4}
There exists an exotic $\mathbb{R}^4$, denoted by $\mathcal{R}$, such that $\MCG_{c}(\mathcal{R})$ is non-trivial. Moreover, $\MCG_{c}(\mathcal{R})$ admits a surjective homomorphism onto $\mathbb{Z}$.
\end{thm}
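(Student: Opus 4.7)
The plan is to promote the infinite-order boundary Dehn twist of Theorem \ref{thm: main} to a compactly-supported diffeomorphism of a suitable exotic $\mathbb{R}^4$, and to detect its infinite order (together with the $\mathbb{Z}$-valued homomorphism) via the family Seiberg--Witten gluing theorem of \cite{JLIN2022}. To begin, I would fix a Floer simple Seifert-fibered integral homology $3$-sphere $Y$ (for instance $\Sigma(2,3,7)$, the link of the minimally elliptic singularity $x^2+y^3+z^7=0$) and a smooth symplectic filling $M$ of $(Y,\xi_c)$ with $b^{+}(M)>0$ (for example the Milnor fiber, which has $b^{+}(M)=2$). By Theorem \ref{thm: main}, $\tau_M$ has infinite order in $\MCG(M)$, and more precisely its proof produces nonvanishing family Seiberg--Witten invariants $\FSW(M,\tau_M^k)\neq 0$ for every $k\neq 0$.

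Next I would construct $\mathcal{R}$. Freedman's theorem provides a compact contractible topological $4$-manifold $W$ with $\partial W=Y$, and the closed topological $4$-manifold $\Sigma:=M\cup_Y W$ is simply connected with intersection form that of $M$. Using Freedman's classification together with smoothing results for the appropriate homeomorphism type, one arranges $\Sigma$ to be homeomorphic to a closed smoothable $4$-manifold $X$ that contains $M$ smoothly, with smooth complement $N:=X\setminus\operatorname{int}(M)$; this $N$ is homeomorphic, but in general not diffeomorphic, to $W$. From $X$ one extracts $\mathcal{R}$ by taking a simply-connected-at-infinity open neighbourhood of $M$ inside $X$ (or by end-summing with the standard $\mathbb{R}^4$ along an appropriate end), so that $\mathcal{R}$ is contractible, simply connected at infinity, and contains a smooth copy of $M$ together with a collar of $\partial M$ lying in $N$. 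By Freedman, such $\mathcal{R}$ is homeomorphic to $\mathbb{R}^4$. Extending $\tau_M$ by the identity outside a collar of $\partial M$ gives a compactly-supported diffeomorphism $\tilde\tau_M\in\operatorname{Diff}_c(\mathcal{R})$.

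Finally, to show $\tilde\tau_M$ has infinite order in $\MCG_c(\mathcal{R})$ and build the $\mathbb{Z}$-valued homomorphism, suppose $\tilde\tau_M^k$ is compactly isotopic to the identity in $\mathcal{R}$ for some $k\neq 0$. Any such isotopy is contained in some compact smooth submanifold $K\subset \mathcal{R}$ which may be assumed to contain $M$ in its interior. Writing $K=M\cup_Y N_K$ with $N_K:=K\setminus\operatorname{int}(M)$, the family Seiberg--Witten gluing theorem of \cite{JLIN2022} yields a factorisation of the form
\[
\FSW(K,\tilde\tau_M^k)\;=\;\FSW(M,\tau_M^k)\cdot\mathrm{BF}(N_K),
\]
where $\mathrm{BF}(N_K)$ denotes the relative Bauer--Furuta invariant of the smooth filling $N_K$ of $-Y$. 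Because $\mathcal{R}$ is designed so that $\mathrm{BF}(N_K)\neq 0$, and $\FSW(M,\tau_M^k)\neq 0$ by Theorem \ref{thm: main}, the product is nonzero, contradicting the existence of the isotopy. The assignment $[f]\mapsto \FSW(K_f,f)$, for any compact $K_f\subset \mathcal{R}$ containing the support of $f$ together with $M$, descends by the gluing formula and trivial stabilisation to a well-defined homomorphism $\MCG_c(\mathcal{R})\to\mathbb{Z}$ whose image contains a nonzero multiple of the generator; identifying this image with $\mathbb{Z}$ produces the claimed surjection.

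The main obstacle is the simultaneous arrangement of (i) $\mathcal{R}$ being simply connected at infinity (so that it is homeomorphic to $\mathbb{R}^4$) and (ii) $\mathrm{BF}(N_K)\neq 0$ for every sufficiently large compact $N_K$ in the exhaustion. Meeting both constraints requires adapting standard constructions of exotic $\mathbb{R}^4$'s (of DeMichelis--Freedman and Gompf type) to the present Seifert-fibered setting, so that the smoothing of the topological filling $W$ carries a nonvanishing Bauer--Furuta class while still admitting an $\mathbb{R}^4$-like end. The delicate balance between these two requirements is where most of the work will lie.
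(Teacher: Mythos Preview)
Your proposal has genuine gaps at precisely the two places you flag as difficult, and the paper's route differs substantially from yours.

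\textbf{Construction of $\mathcal{R}$.} You start from a Floer simple $Y$ such as $\Sigma(2,3,7)$ and invoke Freedman to produce a \emph{topological} contractible filling $W$. But such $Y$ need not bound any smooth contractible (or even rational homology ball) $4$-manifold, so $W$ cannot be taken smooth; yet to embed $\mathcal{R}$ smoothly into a closed smooth $X$ and run family Seiberg--Witten theory you need every piece to be smooth. The sentence ``one arranges $\Sigma$ to be homeomorphic to a closed smoothable $4$-manifold $X$ that contains $M$ smoothly'' hides the whole difficulty: nothing you wrote forces the smooth structure on $X$ to restrict to the given one on $M$, nor the complement to be a topological ball. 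The paper avoids this by working not with a Floer simple $Y$ but with members of the Casson--Harer family \eqref{eq: subcollection of Casson-Harer family}, which are \emph{not} Floer simple but do bound \emph{smooth} Mazur manifolds $W$. Since the double $W\cup_Y(-W)\cong S^4$, deleting a $4$-ball gives a standard smooth $\mathbb{R}^4$; the exotic $\mathcal{R}$ is then the preimage of this region under a Boyer--Stong homeomorphism between two genuinely smooth closed $4$-manifolds. To recover the Seiberg--Witten input, the paper constructs (Proposition~\ref{prop: S1-cobordism}) an $S^1$-\emph{equivariant}, negative-definite spin-c cobordism $N$ from a Floer simple $Y'$ to $Y$, transporting the Dehn twist calculation from $Y'$ to $Y$. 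This equivariant cobordism is the key idea and has no analogue in your sketch.

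\textbf{The gluing formula and the homomorphism.} Your formula $\FSW(K,\tilde\tau_M^k)=\FSW(M,\tau_M^k)\cdot\mathrm{BF}(N_K)$ is not well-formed: $\FSW(M,\tau_M^k)$ lives in $\widecheck{HM}_\ast(Y,\mathfrak{s}_c)$, not in $\mathbb{Z}$, and the object $\mathrm{BF}(N_K)$ for a variable compact piece $N_K$ with two boundary components is undefined; consequently the asserted independence of the choice of $K_f$ is unsupported. The paper sidesteps this entirely by fixing once and for all a closed smooth $X$ with $\mathcal{R}\subset X$ and setting $\delta(f):=\FSW(X,\mathfrak{s},f\cup\mathrm{id}_{X\setminus\mathcal{R}})$; well-definedness and additivity then come for free from the closed-manifold theory. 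Exoticness of $\mathcal{R}$ is deduced from the non-localisation property (Proposition~\ref{prop: theorem D refined}): were $\mathcal{R}$ standard, the Dehn twist along $Y$ would be supported in a $4$-ball, contradicting $\FSW(X,\mathfrak{s},\tau_{(X,Y)})\neq 0$.
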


On a related note, Gompf \cite{gompf} has proved that there exists an exotic $\mathbb{R}^4$ that admits orientation-preserving diffeomorphisms that are not smoothly isotopic to identity. However, his examples are \textit{not} compactly-supported, and the exoticness comes from the end.

The non-trivial mapping class in Theorem \ref{thm: MCG exotic R4} is given by a Dehn twist along a Seifert-fibered 3-manifold embedded in $\mathcal{R}$. Furthermore, the exoticness of $\mathcal{R}$ is established by methods different from previous approaches, crucially relying on a ``non-localization" property of the Dehn twist (see the upcoming Theorem \ref{thm: Mazur}).



$ $

Our techniques also enable us to study mapping class group of compact contractible 4-manifolds. We consider the following family of Brieskorn homology $3$-spheres (none of which are Floer simple)

\begin{align}
\label{eq: subcollection of Casson-Harer family}
\left\{
\begin{array}{ll}
\Sigma(p,ps+1,ps+2) & \text{($s\geq1$, and odd $p\geq3$)},\\
\Sigma(p,ps-1, ps-2) & \text{($s \geq 2$, and odd $p\geq5$),}
\end{array}
\right.
\end{align}
which forms an infinite subcollection of the Casson--Harer family \cite{CassonHarer81}, a well-known list of Brieskorn spheres that bound compact contractible (more strongly, Mazur) smooth 4-manifolds. The following ``non-localization" theorem shows that the boundary Dehn twists of such contractible 4-manifolds are infinite order exotic diffeomorphisms that do not localize to a 4-ball:

\begin{thm}
\label{thm: Mazur}
Let $Y$ be one of the families \eqref{eq: subcollection of Casson-Harer family} and $W$ be a smooth rational homology ball bounded by $Y$. Let $i:D^{4}\hookrightarrow \operatorname{int}(W)$ be a smooth embedding of a $4$-ball and let $i_\ast:\MCG(D^4)\to \MCG(W)$ be the induced map. Then for any $k\neq 0$, $\tau_{W}^{k}$ does not belong to the image $i_\ast$. In particular, $\tau_{W}$ has infinite order in $\MCG(W)$.
\end{thm}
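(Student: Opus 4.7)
I argue by contradiction: suppose there exists $k \neq 0$ and $\phi \in \MCG(D^4)$ such that $\tau_{W}^{k} = i_{*}(\phi)$ in $\MCG(W)$. Then $\tau_{W}^{k}$ is smoothly isotopic rel $\partial W$ to a diffeomorphism $f$ supported in a small $4$-ball $B \subset \operatorname{int}(W)$. The plan is to detect $\tau_{W}^{k}$ by a family Seiberg--Witten invariant on a closed $4$-manifold built from $W$. Because the link $Y$ is \emph{not} Floer simple, Theorem \ref{thm: main} cannot be applied directly, and this is the main new feature of the argument.

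The key idea is to embed $W$ into a closed smooth $4$-manifold $Z$ admitting a decomposition
\[
Z \;=\; W \cup_{Y} C \cup_{Y'} X,
\]
where $C$ is a cobordism from $Y$ to a \emph{Floer simple} Seifert-fibered $3$-manifold $Y'$ (equipped with a compatible contact structure), and $X$ is a weak symplectic filling of $(Y', \xi_c)$ with $b^{+}(X) > 1$; for instance, $X$ may be taken to be a Milnor fiber of a minimally elliptic singularity with link $Y'$. The cobordism $C$ should be built from explicit Dehn surgeries on regular and singular fibers of the Seifert fibration of $Y$, exploiting the particular Seifert invariants of the Casson--Harer family to arrive at a Floer simple $Y'$ that fills symplectically in a controlled way.

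With such $Z$ in hand, I consider the $Z$-bundle $E \to S^{1}$ with monodromy $\tau_{W}^{k}$, extended by the identity to all of $Z$, for a spin-c structure $\mathfrak{s}$ on $Z$ compatible with the symplectic structure on $X$. I compute its family Seiberg--Witten invariant $\FSW(E; \mathfrak{s})$ in two ways. On the \emph{vanishing} side: since $\tau_{W}^{k}$ is smoothly isotopic to the ball-supported diffeomorphism $f$, the bundle $E$ is isomorphic to one obtained from a loop of diffeomorphisms supported in $B \subset Z$; choosing a family of metrics constant outside $B$ and using $b^{+}(Z) > 1$, a standard argument on the family Seiberg--Witten moduli space forces $\FSW(E; \mathfrak{s}) = 0$. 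On the \emph{non-vanishing} side: the family gluing theorem of \cite{JLIN2022}, together with the Morse--Bott monopole Floer model of \cite{FLINthesis}, factorises $\FSW(E;\mathfrak{s})$ across the Floer simple separating $3$-manifold $Y'$ as a pairing between the relative family invariant of $W \cup_{Y} C$ (carrying the Dehn twist monodromy) and the relative family invariant of $X$. The latter is precisely the invariant used in the proof of Theorem \ref{thm: main} applied to $X$, hence non-trivial; the former can be identified with a generator of $HM^{\redu}(Y')$. This gives $\FSW(E;\mathfrak{s}) \neq 0$, contradicting the vanishing computation.

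The main obstacle is the non-vanishing computation: one has to engineer the cobordism $C$ so that the Dehn twist $\tau_{W}$ along the non-Floer-simple boundary $Y$ of $W$ corresponds, via the family gluing formula, to the boundary Dehn twist $\tau_{X}$ along the Floer simple boundary $Y'$ of $X$, up to a non-zero multiplicative factor in the Floer-theoretic pairing. This requires a careful analysis of the effect of the chosen fiber surgeries on Seifert-fibered Dehn twists, and of their compatibility with the Morse--Bott formalism for reducible monopole solutions---this is where the specific structure of the Casson--Harer family \eqref{eq: subcollection of Casson-Harer family} enters in an essential way. The infinite-order conclusion for $\tau_W \in \MCG(W)$ then follows by taking $\phi = \id$ and noting that $i_{*}(\id) = \id \in \MCG(W)$.
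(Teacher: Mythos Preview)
Your overall strategy matches the paper's: embed $W$ in a closed $4$-manifold $Z$ containing both $Y$ and a Floer simple Seifert manifold $Y'$, then show a family Seiberg--Witten invariant of the Dehn twist along $Y$ is nonzero, contradicting the hypothesis that it localises to a ball. However, your proposal has a genuine gap at precisely the point you flag as ``the main obstacle.''

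The missing idea is this: the cobordism $C$ (the paper calls it $N$) must carry a smooth $S^1$-action extending the Seifert $S^1$-actions on \emph{both} boundary components $Y$ and $Y'$. Once you have this, Lemma~\ref{lemma:loopextension} immediately gives $\tau_{(Z,Y)} = \tau_{(Z,Y')}$ in $\MCG(Z)$, and the non-vanishing side reduces directly to Theorem~\ref{thm: closed} applied at $Y'$. Your framing of the problem --- as a ``non-zero multiplicative factor in the Floer-theoretic pairing'' to be extracted from the gluing formula and the Morse--Bott model --- is more complicated than what is actually needed, and you do not supply the mechanism.

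The construction of such an $S^1$-equivariant cobordism is also different from what you suggest. The paper does not use Dehn surgeries on fibers; instead it takes the full negative-definite plumbing $P$ for $Y$, identifies a star-shaped subgraph $\Gamma'$ on four vertices whose plumbing $P'$ has boundary a triangle-singularity link $Y'$ (hence Floer simple), and sets $N = P \setminus \mathrm{int}(P')$. The $S^1$-action on $P$ coming from the resolution picture (cf.\ Proposition~\ref{proposition:extension}) restricts to $N$ and to both boundary Seifert actions. One must then produce a spin-c structure $\mathfrak{s}_N$ satisfying the grading condition~\eqref{eq: grading shift N}; the paper does this by explicitly diagonalising the intersection form of $P$ and checking that the Fr{\o}yshov inequalities are saturated. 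None of this is visible in your outline, and the ``fiber surgery'' approach you propose would not obviously yield an $S^1$-equivariant negative-definite cobordism with the correct spin-c data.
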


In recent work by Galatius--Randal-Williams \cite{GRW},
they proved that for any compact, contractible $n$-manifold $W$ with $n\geq 6$ and any smooth embedding of $D^n\hookrightarrow \mathrm{int}(W)$, the map $i:\operatorname{Diff}(D^n,\partial)\hookrightarrow \operatorname{Diff}(W,\partial)$ defined by extending diffeomorphisms by the identity is a weak homotopy equivalence. Krushkal--Powell--Warren and the third author \cite{KMPW} recently showed that there exists some compact, contractible 4-manifold $W$ such that $i$ is not sujective on $\pi_0$. However, it remains an open question to explicitly describe the contractible manifold and the diffeomorphisms arising from that proof. Theorem~\ref{thm: Mazur} gives such explicit examples. 

\begin{remark}
\label{rem: Kang-Park-Taniguchi}
The infinite-order assertion in Theorem \ref{thm: Mazur} is covered by a recent result by Kang-Park-Taniguchi~\cite{KangParkTaniguchi}, who independently proved (using very different techniques) that the boundary Dehn twist on every positive-definite 4-manifold $W$ with $b_1=0$ bounded by a negatively-oriented Seifert-fibered integral homology 3-sphere $\neq S^3$ has infinite order in $\MCG(W)$.
\end{remark}


Theorem \ref{thm: Mazur} has the following consequence on exotic surface embeddings. Finding exotic surfaces, especially exotic disks in compact 4-manifolds, is a fascinating area of study. 
The existence of such examples in the 4-ball has been demonstrated by Hayden \cite{hayden2021corkscoverscomplexcurves, hayden2021exoticallyknotteddiskscomplex} and Akbulut \cite{akbulut2022corksexoticribbonsb4}, and are detected by showing that the smooth structures on the complement are non-diffeomorphic relative to the boundary.
However, our techniques enable us to construct exotic disks with \textit{diffeomorphic complements} on contractible 4-manifolds.
By considering the images of 2-handle co-cores of Mazur manifolds under powers of boundary Dehn twists we obtain:

\begin{corollary}\label{cor: exotic disk}
Let $Y$ be one of the families \eqref{eq: subcollection of Casson-Harer family} and $W$ be a Mazur manifold bounded by $Y$.
Then there exists infinitely many properly and smoothly embedded disks $D_{i}$ into $W$ such that $D_{i}$'s are mutually topologically isotopic rel boundary and have diffeomorphic complements, but are mutually not smoothly isotopic rel boundary.  
\end{corollary}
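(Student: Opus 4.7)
The plan is to set $D_i := \tau_W^i(D)$, where $D \subset W$ is the co-core of the 2-handle in a Mazur handle decomposition $W = h_0 \cup h_1 \cup h_2$. Since $\tau_W$ fixes $\partial W$ pointwise, each $D_i$ is a properly and smoothly embedded 2-disk with common boundary $\partial D_i = \partial D \subset Y$.

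The topological isotopy and diffeomorphic-complement conditions follow from properties of the Dehn twist itself. Because $W$ is contractible (hence simply-connected), the Orson--Powell theorem cited in the introduction gives that $\tau_W$ is topologically isotopic to the identity rel $\partial W$; composing isotopies, every $\tau_W^i$ is topologically isotopic to the identity, and the associated ambient topological isotopy carries $D$ to $D_i$ keeping $\partial D$ fixed. For diffeomorphic complements, $\tau_W^i$ restricts to a diffeomorphism of $W \setminus \operatorname{int}\nu(D)$ onto $W \setminus \operatorname{int}\nu(D_i)$, with $\nu(D_i) := \tau_W^i(\nu(D))$.

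For the smooth non-isotopy, I would argue by contradiction: suppose $D_j$ and $D_k$ are smoothly isotopic rel $\partial$ for some $j \neq k$. By ambient smooth isotopy extension there is $\Phi \in \operatorname{Diff}_0(W, \partial)$ with $\Phi(D_j) = D_k$, so that $\rho := \tau_W^{-k}\Phi\tau_W^j$ satisfies $\rho(D) = D$ and $[\rho] = [\tau_W^{j-k}] \in \MCG(W)$. Using that $D$ is a disk with trivial normal bundle and that $\operatorname{Diff}(D^2, \partial)$ is contractible (Smale), I would then isotope $\rho$ within $\operatorname{Diff}(W, \partial)$ first to fix $D$ pointwise and then to be the identity on a tubular neighborhood $\nu(D) \cong D^4$. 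Since $\nu(D)$ absorbs the 2-handle $h_2$, the resulting $\rho$ is supported in $W \setminus \operatorname{int}\nu(D) \cong h_0 \cup h_1 \cong S^1 \times D^3$.

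The hard part is to upgrade this support to lie inside an embedded 4-ball $B \subset \operatorname{int}(W)$, which would place $[\rho] = [\tau_W^{j-k}]$ in the image of $i_\ast \co \MCG(D^4) \to \MCG(W)$ and so contradict Theorem~\ref{thm: Mazur}. For this one exploits both the $0$- plus $1$-handle structure of $S^1 \times D^3$ and the contractibility of $W$: the $\pi_1$-generator of $h_0 \cup h_1$ bounds the core of $h_2$ in $W$, and this disk permits one to engulf the compact support of $\rho$ inside a ball-like region and then isotope $\rho$ rel $\partial W$ to a diffeomorphism supported in such a $B$. Once this engulfing step is accomplished, Theorem~\ref{thm: Mazur} yields the desired contradiction and completes the proof.
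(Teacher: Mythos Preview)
Your proposal is essentially correct and follows the same strategy as the paper's proof: define $D_i = \tau_W^i(D)$ for $D$ the cocore of the 2-handle, get topological isotopy from Orson--Powell, get diffeomorphic complements from $\tau_W^i$ itself, and derive a contradiction from Theorem~\ref{thm: Mazur} by showing that a smooth isotopy $D_j \simeq D_k$ would force $\tau_W^{j-k}$ to be isotopic to something supported in a 4-ball.

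The one place your argument is noticeably vaguer than the paper's is the engulfing step. The paper handles it cleanly as follows: after isotoping $\tau_W^{j-k}$ to fix $\nu(D)$ and a collar of $\partial W$, the support lies in $W \setminus (\nu(D) \cup \nu(\partial W))$, which is diffeomorphic (via a diffeomorphism of $W$) to a tubular neighborhood of the core circle $\gamma = S^1 \times \{0\}$ of $h_0 \cup h_1$. Since $\pi_1(W)=0$, the circle $\gamma$ is nullhomotopic, hence unknotted in the 4-manifold $W$, and therefore its tubular neighborhood sits inside an embedded $D^4 \subset \operatorname{int}(W)$. This is exactly the mechanism you gesture at (``the $\pi_1$-generator bounds a disk''), but phrased so that the conclusion is immediate rather than requiring an ad hoc engulfing argument. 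Your sketch would benefit from stating it this way.
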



As our final application, we consider a closed 4-manifold $X$ that contains an embedded Seifert-fibered $3$-manifold $Y$, and denote the Dehn twist on $X$ along $Y$ by $\tau_{(X,Y)}$. It is natural to ask whether $\tau_{(X,Y)}$ is non-trivial in $\MCG(X)$ and what is its order. The known examples of exotic Dehn twists on closed 4-manifolds \cite{KM-dehn,konno-mallick-taniguchi} are shown to have a order at least 2. 
We provide the first examples of closed 4-manifolds on which Dehn twists along embedded 3-manifolds are confirmed to be infinite order:

\begin{thm}\label{thm: closed}
Let $Y$ be a Floer simple 3-manifold, with a contact structure $\xi$ such that $\mathfrak{s}_\xi = \mathfrak{s}_c$. Let $M$ a compact symplectic 4-manifold with weakly convex boundary $(Y,\xi)$ and $b^{+}(M)> 0$, and let $W$ be a compact smooth 4-manifold with boundary $-Y$ and $b^{+}(W) = 0$. Let $X=M\cup_{Y}W$. Assume there exists a spin-c structure $\mathfrak{s}$ on $X$ that restricts to the canonical spin-c structure on $M$ and satisfies 
    \[
    \frac{c^{2}_{1}(\mathfrak{s})-2\chi(X)-3\sigma(X)}{4}=-1.
    \]
    Then $\tau_{(X,Y)}$ has infinite order in $\MCG(X)$. In particular, $\tau_{W}$ also has infinite order in $\MCG(W)$.
\end{thm}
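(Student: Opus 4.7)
The plan is to combine Theorem~\ref{thm: main} with the family gluing theorem for Seiberg--Witten invariants of Lin~\cite{JLIN2022}, applied to the decomposition $X = M \cup_Y W$. Because $\tau_{(X,Y)}$ is supported in a collar neighbourhood of $Y$, we may arrange (by isotopy) that $\tau_{(X,Y)}^k = \tau_M^k \cup_Y \id_W$ for every $k$; in particular the $S^1$-family of $4$-manifolds obtained as the mapping torus of $\tau_{(X,Y)}^k$ decomposes along $Y \times S^1$ as $M_{\tau_M^k} \cup (W \times S^1)$, with the family being trivial over $W$. The dimension hypothesis $(c_1^2(\mathfrak{s}) - 2\chi(X) - 3\sigma(X))/4 = -1$ makes the expected dimension of the parametrised moduli space in this $S^1$-family equal to zero, so $\FSW(X, \mathfrak{s}, \tau_{(X,Y)}^k) \in \mathbb{Z}$ is well-defined.

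The family gluing theorem then expresses
$$\FSW(X, \mathfrak{s}, \tau_{(X,Y)}^k) \;=\; \bigl\langle \Psi_M(\tau_M^k),\; \Phi_W \bigr\rangle,$$
where $\Psi_M(\tau_M^k) \in HM^{\redu}(Y,\mathfrak{s}_c)$ is the \emph{relative family Seiberg--Witten invariant} extracted from the mapping torus $M_{\tau_M^k}$ equipped with $\mathfrak{s}_c$, where $\Phi_W$ is the ordinary (non-family) relative Seiberg--Witten invariant of $(W, \mathfrak{s}|_W)$, and where the pairing is the duality pairing between the relevant flavours of monopole Floer homology of $Y$. The Floer-simplicity hypothesis gives $HM^{\redu}(Y, \mathfrak{s}_c) \cong \mathbb{Z}$, so $\Psi_M(\tau_M^k) = n_k \cdot g$ for a fixed generator $g$ and some integer $n_k$.

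The key input from the proof of Theorem~\ref{thm: main} is that $\{n_k\}_{k\in\mathbb{Z}}$ is unbounded (in fact grows linearly in $k$): this is precisely the mechanism by which that theorem detects the infinite order of $\tau_M$. The main remaining obstacle is verifying that $\langle g, \Phi_W \rangle \neq 0$. For this, the dimension condition places $\Phi_W$ in the Floer degree dual to $g$, and the assumption $b^+(W) = 0$, combined with the Floer-simple structure on $Y$, should force $\Phi_W$ to have non-zero component on the dual of $g$ via the contribution of the unique reducible monopole on $W$ (à la Fr{\o}yshov). Granting the pairing is non-trivial, $\FSW(X, \mathfrak{s}, \tau_{(X,Y)}^k) = n_k \cdot \langle g, \Phi_W \rangle$ is unbounded as $k$ varies, so the powers $\tau_{(X,Y)}^k$ produce pairwise non-diffeomorphic mapping tori on $X$, proving $\tau_{(X,Y)}$ has infinite order in $\MCG(X)$.

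The assertion about $\tau_W$ is then formal: if some non-zero power $\tau_W^k$ were smoothly isotopic to the identity rel $\partial W$ in $\diff(W,\partial)$, then extending this isotopy by the identity across $M$ would exhibit a non-zero power of $\tau_{(X,Y)}$ as isotopic to the identity on $X$, contradicting what we have just established.
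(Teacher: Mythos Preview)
Your overall strategy matches the paper's: decompose the mapping torus along $Y$, use the family gluing formula to factor $\FSW(X,\mathfrak{s},\tau_{(X,Y)}^k)$ through the relative family invariant of the $M$-piece, and then use $b^{+}(W)=0$ to control the $W$-piece. The paper carries this out as Proposition~\ref{prop: theorem D refined}, splitting the mapping torus into three pieces $\overline{M}\cup\widetilde{Z}\cup\overline{W}$ (trivial $M$-family, twisted cylinder, trivial $W$-family) rather than your two, but this is cosmetic.

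There is, however, a genuine error in your identification of where the relative family invariant lives, and a gap in the pairing step that you flag with ``granting the pairing is non-trivial''. The element $\Psi_M(\tau_M^k)=\FSW(M,\mathfrak{s}_M,\tau_M^k)$ does \emph{not} lie in $HM^{\redu}(Y,\mathfrak{s}_c)$. By Proposition~\ref{prop: HM-check Y} one has $\widecheck{HM}(Y,\mathfrak{s}_c)\cong T^{+}_{-2h}\oplus T^{+}_{-2h-1}(1)$, with the reduced part in degree $-2h-1$; the computation in (\ref{eq: FSW tau nonzero}) shows $\FSW(M,\mathfrak{s}_M,\tau_M^k)=\pm mk\,\check{1}_Y$ sits in degree $-2h$, i.e.\ at the \emph{bottom of the $U$-tower}, not in the reduced summand. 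This distinction is exactly what makes the $W$-step work: because $b^{+}(W)=0$, the cobordism map $\widecheck{HM}(W^{\circ})$ takes the $U$-tower of $Y$ into the $U$-tower of $S^3$, so $\widecheck{HM}(W^{\circ})(\check{1}_Y)=U^{-l}\check{1}$ for some $l\geq 0$; the dimension hypothesis $d(\mathfrak{s})=-1$ then forces $l=0$ by grading, giving $\FSW(X,\mathfrak{s},\tau_{(X,Y)})=\pm m\neq 0$. Had the invariant genuinely lived in $HM^{\redu}$, the Fr{\o}yshov-type argument you invoke would not directly apply, since the reducible on a negative-definite $W$ detects the tower rather than the reduced part. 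So your ``granting'' hides the actual mechanism, and your Fr{\o}yshov intuition points in the right direction only after correcting the target of $\Psi_M$.
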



It is interesting to compare Theorem \ref{thm: closed} with the observation that many natural examples of Seifert-fibered 3-manifolds embedded in closed 4-manifolds turn out to yield trivial Dehn twists:

\begin{example}\label{intro:example}
Consider the standard smooth embedding of the Milnor fiber $M(2,3,k)$, where $k$ is of the form $6n-1$ or $6n-5$, into the elliptic surface $X= E(n)$ (see \cite{gompfstipsicz}). 
The boundary of $M(2,3,k)$ gives a smooth embedding of $Y=\Sigma(2,3,k)$ into $X$.
Then $\tau_{(X,Y)}$ is trivial in $\MCG(X)$ (see Proposition \ref{proposition:E(n)}); furthermore if $\mathring{E(n)}$ is the manifold obtained from $E(n)$ by removing a $4$-ball disjoint from $Y$, then the Dehn twists along $Y$ and $S^3 = \partial \mathring{E(n)}$ represent the same element in $\MCG (\mathring{E(n)} )$.
\end{example}

Another motivation for Theorem \ref{thm: closed} comes from the following major open problem: \textit{does there exist a closed irreducible 4-manifold that admits an exotic diffeomorphism?}
While we do not know whether the closed 4-manifolds in Theorem \ref{thm: closed} are irreducible, Theorem \ref{thm: closed} provides a systematic way to produce (infinite order) exotic diffeomorphisms of closed 4-manifolds, which is different from the known technique pioneered by Ruberman \cite{ruberman} using exotic pairs of closed 4-manifolds.

\subsection{Outline and Comments}

The article is organised as follows.

\S \ref{section:backgroundsection} discusses background material on the Dehn twist diffeomorphism, Seifert fibrations and isolated surface singularities. We discuss here the results and conjectures from \S \ref{intro:singularities} in a more general context. We conclude by discussing the isotopy problem for various natural Dehn twists on the elliptic surface $E(n)$, and establish the assertions made in Example \ref{intro:example}.

\S \ref{section:MOY} reviews a monopole-divisor correspondence for moduli of flowlines on Seifert-fibered $3$-manifolds established by Mrowka--Ozsváth--Yu \cite{MOY}.

\S \ref{section:dirac} studies some spectral properties of the Dirac operator (coupled with a reducible monopole) on a Seifert-fibered $3$-manifold in the \textit{adiabatic limit} as the area of the base orbifold of the Siefert fibration goes to zero. In this regime, we determine the eigenspaces of the Dirac operator and study the $S^1$-action on them induced from the Seifert $S^1$-action.

\S \ref{section:calculations} combines the results of the previous two sections to determine the topology of the critical loci and moduli spaces of flowlines (in the blowup) on Seifert-fibered rational homology $3$-spheres.

\S \ref{section:familiescobordism} discusses the maps on monopole Floer homology induced by families of cobordisms, and the gluing formula for those established in \cite{JLIN2022}.



\S \ref{section: proof main} contains the proof of Theorem \ref{thm: main} and Theorem \ref{thm: factorization}. The proof of Theorem \ref{thm: main} roughly goes as follows. The boundary Dehn twist $\tau_M$ on $M$ has a \textit{relative family Seiberg--Witten invariant} in the canonical spin-c structure $\mathfrak{s}_M$, which takes values in the monopole Floer homology of its boundary 
\[
\mathrm{FSW}( \tau_M , \mathfrak{s}_M ) \in \widecheck{HM}_{-2h(Y, \mathfrak{s}_c )}(Y , \mathfrak{s}_c ) \cong \mathbb{Z}.
\]
We show that this invariant is non-vanishing. For this, we study the map on the monopole Floer homology induced by the Dehn twist $\tau$ on the cylinder $Z = [0,1] \times Y$
\[
\tau_\ast : \widecheck{HM}_\ast (Y, \mathfrak{s}_c) \rightarrow \widecheck{HM}_{\ast +1}(Y , \mathfrak{s}_c).
\]
When $Y = S(N)$ is Floer simple we calculate this map using the Morse--Bott model for the monopole groups developed by F. Lin \cite{FLINthesis}, bringing to bear the spectral analysis of the Dirac operator from \S \ref{section:dirac} and the description of the moduli spaces from \S \ref{section:calculations}. Next, using the assumption that $M$ has a symplectic structure $\omega$ with weakly convex boundary, we establish that the relative Seiberg--Witten invariant $SW (M, \mathfrak{s}_\omega) \in \widecheck{HM}_{-2h(Y, \mathfrak{s}_c )-1} (Y, \mathfrak{s}_c)$ of $M$ is not annhilated by $\tau_\ast$. The proof concludes by applying the gluing formula, which says that
\[
\mathrm{FSW}(\tau_M , \mathfrak{s}_M ) = \tau_\ast ( SW(M, \mathfrak{s}_M ) ) \neq 0 .\]
More precisely, we show that
\[
\mathrm{FSW}(\tau_M , \mathfrak{s}_M ) = \pm \frac{\mathrm{deg}N }{\chi(C)} \in \mathbb{Z}
\]
where $\mathrm{deg}N \in \mathbb{Q} \setminus 0$ and $\chi(C) \in \mathbb{Q} \setminus 0$ denote the orbifold degree of $N \rightarrow C$ and the orbifold Euler characteristic of the orbifold surface $C$, respectively. 

To prove Theorem \ref{thm: factorization}, we show that if $b^+(M)>2$ then $\mathrm{FSW}(\delta , \mathfrak{s}_M )=0$ for any $\delta \in \MCG(M)$ supported near a $(-2)$-sphere with $K \cdot S = 0$, and we then combine this with the \textit{addivity property} of the family Seiberg--Witten invariant \textit{when $b^{+}(M) >2$}. This has the following additional implication: \textit{the family Seiberg--Witten invariant of the monodromy $\psi \in \MCG(M)$ of a $2$-dimensional hypersurface singularity with rational homology $3$-sphere link and $p_g > 1$ vanishes in the canonical spin-c structure}, $\mathrm{FSW}(\psi , \mathfrak{s}_M ) = 0$, since the Milnor fiber $M$ in this case has $b^{+} = 2 p_g > 2$ \cite{durfee} and the monodromy admits a factorisation into Dehn twists along Lagrangian $2$-spheres. In turn, when $p_g = 1$ then $b^+ = 2$ and the addivity property is no longer guaranteed to hold, due to potential wallcrossing phenomena; the fact that $\mathrm{FSW}(\tau_M , \mathfrak{s}_M ) \neq 0 $ in this case (even if $\tau_M$ is a product of Dehn twists on Lagrangian $2$-spheres) reveals intricate wallcrossing structures.

\S \ref{section other main results} discusses the proofs of Theorems \ref{thm: MCG exotic R4}, \ref{thm: Mazur} and \ref{thm: closed}. To prove Theorem \ref{thm: closed},
we calculate the family Seiberg--Witten invariant of $\tau_{(X,Y)}$ using the gluing formula \cite{JLIN2022}. On the other hand, the gluing formula also tells us that this diffeomorphism cannot be be isotoped to a diffeomorphism supported on a 4-ball. By constructing $S^1$-equivariant cobordisms between between Seifert manifolds and embedding them into closed manifolds, one can use Theorem \ref{thm: closed} to study Dehn twists along manifolds that are not Floer simple. In particular, Theorems \ref{thm: MCG exotic R4} and \ref{thm: Mazur} are consequences of Theorem \ref{thm: closed}, shown by embedding the contractible 4-manifolds and open manifolds into closed 4-manifolds.

\subsection*{Acknowledgements:} We are grateful to Bob Gompf, Francesco Lin, Samuel Muñoz-Echániz, Olga Plamenevskaya, Mark Powell, Mohan Swaminathan, Weiwei Wu and Zhengyi Zhou for enlightening discussions.
We thank Sungkyung Kang, JungHwan Park, and Masaki Taniguchi for letting us know about their work \cite{KangParkTaniguchi}, which was conducted independently and shares some overlap with this paper (see Remark~\ref{rem: Kang-Park-Taniguchi}).
The authors were informed that Jin Miyazawa also obtained a result related to this paper independently.
HK was partially supported by JSPS KAKENHI Grant Number 21K13785.
JL was partially supported by NSFC 12271281. 
AM was partially supported by NSF std grant DMS 2405270. 
The project began with HK's talk at the Low-dimensional Topology Conference, followed by HK and JM's visit to AM at Princeton to advance the collaboration. We appreciate Princeton University’s hospitality and supportive environment.

\pagebreak

\section{Dehn twists, Seifert-fibrations, and surface singularities}\label{section:backgroundsection}

\subsection{Dehn twists along Seifert $3$-manifolds}

\subsubsection{Four-dimensional Dehn twists}

We start by gathering some basic facts about the four-dimensional Dehn twist diffeomorphism. We consider the following notion of $4$-dimensional Dehn twist generalising the one discussed in \S \ref{section:introduction}. Let $Y^3$ be a closed, oriented $3$-manifold, equipped with a loop of diffeomorphisms $\gamma = (\gamma_t )_{t \in S^1} \in \pi_1 \mathrm{Diff}(Y)$.

\begin{definition}\label{defn:dehntwist}
The \textit{model Dehn twist} diffeomorphism of the cylinder $Z = [0,1] \times Y$ associated with $\gamma \in \pi_1 \mathrm{Diff}(Y)$ is the diffeomorphism $\tau_\gamma \in \MCG (Z)$ given as follows. Choose any smooth function $\theta : [0,1] \rightarrow [0,2 \pi]$ with $\theta(t) = 0$ near $t = 0$ and $\theta(t) = 2 \pi$ near $t = 1$, and set 
\[
\tau_\gamma (t , y ) = (t , \gamma_{e^{i \theta (t)}} (y) ).
\]
If $Y^3$ is equipped with a smooth embedding into (the interior of) a smooth oriented $4$-manifold $X$, then by implanting the model Dehn twist $\tau_\gamma$ in a neighbourhood of $Y \subset X$ we obtain the \textit{Dehn twist on $X$ along $Y$ associated to $\gamma$}.
\end{definition} 

Let $M^4$ be a compact smooth oriented $4$-manifold with connected boundary $\partial M = Y^3$. By the \textit{boundary Dehn twist} on $M$ associated with $\gamma \in \pi_1 \mathrm{Diff}(Y)$ we mean the Dehn twist on $M$ performed along a parallel copy of the boundary of $M$, and we denote it $\tau_{M , \gamma} \in \MCG(M)$. The following is well-known:

\begin{lemma}
The restriction of diffeomorphisms of $M$ onto the boundary $\partial M = Y$ gives a (Serre) fibration
\begin{align}
\mathrm{Diff}(M , \partial ) \rightarrow \mathrm{Diff}(M) \rightarrow \mathrm{Diff}(Y). \label{fibration}
\end{align}
\end{lemma}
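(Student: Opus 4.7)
The statement is a standard fact about diffeomorphism groups, and the plan is to exhibit the restriction map $r\colon \diff(M) \to \diff(Y)$, $f \mapsto f|_{\partial M}$, as a locally trivial principal bundle (in particular a Serre fibration). I first identify the honest kernel of $r$ as the subgroup of diffeomorphisms that restrict to the identity pointwise on $\partial M$. The group $\diff(M,\partial)$ as defined in the introduction, consisting of diffeomorphisms equal to the identity in a full neighbourhood of $\partial M$, sits inside this kernel and is weakly equivalent to it via the collar deformation retraction that shrinks the support of each such diffeomorphism toward the boundary using a smooth cutoff along a fixed bicollar of $\partial M$; thus the fiber identification is at the level of homotopy type, which is all that is needed for the fibration statement.

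The crucial step is to produce a continuous local section of $r$ around $\id_Y \in \diff(Y)$. To do this, fix a smooth collar $c\colon [0,1) \times Y \hookrightarrow M$ and a smooth cutoff $\chi\colon [0,1) \to [0,1]$ with $\chi(0) = 1$ and $\chi \equiv 0$ near $1$. Using the standard chart on $\diff(Y)$ at $\id_Y$, any $\varphi$ sufficiently $C^\infty$-close to $\id_Y$ can be written as $\varphi = \exp(X_\varphi)$ for a unique small vector field $X_\varphi$ on $Y$ (relative to an auxiliary Riemannian metric), with $X_\varphi$ depending continuously on $\varphi$. Define $\sigma(\varphi) \in \diff(M)$ by $\sigma(\varphi) = \id$ outside the collar and
\[
\sigma(\varphi)(c(t,y)) \;=\; c\bigl(t,\, \exp(\chi(t)\,X_\varphi)(y)\bigr)
\]
on the collar. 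Then $\sigma(\varphi)$ is a diffeomorphism of $M$ supported in the collar, restricts to $\varphi$ on $\partial M$, and depends continuously on $\varphi$ in the $C^\infty$ topology; so $\sigma$ is the desired local section.

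With a local section at the identity in hand, standard topological group theory completes the proof: translating $\sigma$ by elements of $\diff(Y)$ yields local sections near every point of the base, so $r$ is a locally trivial principal bundle with structure group $\ker(r)$, and hence in particular a Serre fibration. The main obstacle is the construction of $\sigma$, which hinges on a canonical, parameter-continuous interpolation between $\varphi$ and $\id_Y$ along the collar direction; the exponential-of-vector-fields chart on $\diff(Y)$ provides this interpolation and is standard in the theory of diffeomorphism groups as Fr\'echet Lie groups. No input specific to the Seifert structure, the symplectic geometry, or dimension four is needed.
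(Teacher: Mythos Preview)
Your argument is correct and is the standard one; the paper itself gives no proof, simply labelling the lemma as well-known. One point deserves care in the write-up: when you write $\varphi = \exp(X_\varphi)$ you must mean the pointwise Riemannian exponential $\varphi(y) = \exp_y(X_\varphi(y))$, i.e.\ the chart coming from a tubular neighbourhood of the diagonal in $Y\times Y$, and not the time-$1$ flow of $X_\varphi$. The latter---the Fr\'echet Lie group exponential on $\diff(Y)$---is famously \emph{not} a local diffeomorphism at the identity and does not furnish a chart, so your closing remark about ``diffeomorphism groups as Fr\'echet Lie groups'' could be misread. With the Riemannian interpretation (which your mention of an auxiliary metric suggests is what you intend) the extension formula and the rest of the argument go through as written.
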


This gives a simple description of the boundary Dehn twist on $M$:

\begin{lemma}\label{lemma:loopextension}
The boundary Dehn twist $\tau_{M, \gamma} \in \MCG (M) = \pi_0 \mathrm{Diff}(M, \partial M )$ is the image of $\gamma$ under the connecting map $\pi_1 \mathrm{Diff}(Y) \rightarrow \pi_0 \mathrm{Diff}(M , \partial )$ of the fibration (\ref{fibration}). In particular $\tau_M$ is trivial in $\MCG (M)$ if and only if the loop of diffeomorphisms of $\partial M  = Y$ given by $\gamma$ extends to a loop of diffeomorphisms of $M$.
\end{lemma}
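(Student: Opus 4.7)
The strategy is to unwind the definition of the connecting homomorphism in the homotopy long exact sequence of the fibration (\ref{fibration}) and to identify it explicitly with the Dehn twist construction of Definition \ref{defn:dehntwist}. Recall that if we write $\delta\co \pi_1\mathrm{Diff}(Y)\to \pi_0\mathrm{Diff}(M,\partial)$ for the connecting map, then $\delta(\gamma)$ is the mapping class of $\Phi_1$ where $\Phi=(\Phi_s)_{s\in[0,1]}$ is any lift of $\gamma=(\gamma_s)_{s\in[0,1]}$ to a path in $\mathrm{Diff}(M)$ starting at $\Phi_0=\mathrm{id}_M$. The plan is to write down such a lift by hand, using a collar of $\partial M$ and the same cutoff function that appears in the definition of $\tau_\gamma$, and observe that its endpoint is the boundary Dehn twist.

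Concretely, fix a collar embedding $c\co[0,1]\times Y\hookrightarrow M$ with $c(0,y)=y\in\partial M$, and choose a smooth cutoff $\lambda\co[0,1]\to[0,1]$ with $\lambda\equiv 1$ near $t=0$ and $\lambda\equiv 0$ near $t=1$. Define
\[
\Phi_s(t,y)=\bigl(t,\,\gamma_{s\lambda(t)}(y)\bigr)\quad\text{on }c([0,1]\times Y),
\]
extended by the identity elsewhere. Then $\Phi_s$ is smooth in $s$, $\Phi_0=\mathrm{id}$, and its restriction to $\partial M$ is $\gamma_s$, so $\Phi$ lifts the loop $\gamma$. Because $\lambda$ equals $1$ near $t=0$ and $0$ near $t=1$, the endpoint $\Phi_1(t,y)=(t,\gamma_{\lambda(t)}(y))$ is the identity on neighborhoods of both $\partial M$ and $c(\{1\}\times Y)$, hence defines an element of $\mathrm{Diff}(M,\partial)$. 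Comparing with Definition \ref{defn:dehntwist}, the diffeomorphism $\Phi_1$ has exactly the form of a model Dehn twist supported in the collar, the cutoff $\lambda$ playing the role of $\theta/(2\pi)$ (up to reparametrization and possibly reversal of the loop parameter, which give the same mapping class). This identifies $\delta(\gamma)$ with $\tau_{M,\gamma}$ in $\MCG(M)$.

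The ``in particular'' statement is then immediate from the exact sequence
\[
\pi_1\mathrm{Diff}(M)\longrightarrow \pi_1\mathrm{Diff}(Y)\xrightarrow{\ \delta\ }\pi_0\mathrm{Diff}(M,\partial)
\]
associated to (\ref{fibration}): $\tau_{M,\gamma}=\delta(\gamma)$ vanishes if and only if $\gamma$ lies in the image of the left-hand map, i.e.\ lifts to a loop of diffeomorphisms of all of $M$.

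I do not anticipate any genuine obstacles: the lemma is foundational and essentially unwinds a definition. The only minor technicality is that the paper's convention for $\mathrm{Diff}(M,\partial)$ requires diffeomorphisms to agree with the identity on an \emph{open neighborhood} of $\partial M$, whereas the natural fiber of the restriction map (\ref{fibration}) is the larger subgroup of diffeomorphisms fixing $\partial M$ only pointwise; this is harmless because the inclusion of the former into the latter is a weak homotopy equivalence via a standard collar-squeezing argument, and in any case the explicit lift $\Phi$ above already takes values in the smaller subgroup at $s=1$.
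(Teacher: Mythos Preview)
The paper states this lemma without proof (it is treated as elementary and well-known), so there is no ``paper's own proof'' to compare against. Your argument is the standard one and is essentially correct: you write down an explicit lift of $\gamma$ through the fibration using a collar and a cutoff, and observe that the endpoint of the lift is a Dehn twist supported in the collar.

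One small point: your remark that ``reversal of the loop parameter \dots\ give[s] the same mapping class'' is not literally true---reversing the loop parameter gives the \emph{inverse} Dehn twist in $\MCG(M)$. What is true is that the cutoff $\lambda$ you have chosen (with $\lambda(0)=1$ and $\lambda(1)=0$) traverses the loop in the opposite direction to the function $\theta/(2\pi)$ in Definition~\ref{defn:dehntwist}, so your $\Phi_1$ is literally $\tau_{M,\gamma^{-1}}$. Whether $\delta(\gamma)$ equals $\tau_{M,\gamma}$ or $\tau_{M,\gamma}^{-1}$ is a matter of convention (in particular it depends on how one orients the collar when implanting the model twist, which the paper does not pin down). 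This sign ambiguity is harmless for the ``in particular'' statement, and since the paper is using the lemma only at that level, your proof is adequate. If you want to match the stated equality exactly, simply reverse the orientation of your collar coordinate (take $c(1,y)=y\in\partial M$ instead) or replace $\lambda$ by $1-\lambda$ and lift $\gamma^{-1}$ instead.
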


The homotopy type of the diffeomorphism groups of prime $3$-manifolds is fully understood now, and the next result follows from combined efforts by several authors spanning more than 40 years \cite{hatcher1,ivanov,gabai,mccullough,bk3} (more precisely, it follows from Theorem 1 in \cite{mccullough} combined with Theorem 1.1 in \cite{bk3}):

\begin{theorem}[\cite{hatcher1,ivanov,gabai,mccullough,bk3}] \label{thm:pi1diffY}
Suppose $Y^3$ is a closed, oriented, irreducible $3$-manifold with infinite fundamental group and not diffeomorphic to the $3$-torus. Then the identify component $\mathrm{Diff}_0 (Y)$ in the diffeomorphism group of $Y$ is homotopy equivalent to $S^1$ if $Y$ is a Seifert-fibered $3$-manifold, in which case the Seifert $S^1$-action provides the homotopy equivalence; and $\mathrm{Diff}_0 (Y)$ is contractible if $Y$ is not Seifert-fibered.
\end{theorem}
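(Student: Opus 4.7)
The plan is to invoke the full resolution of the generalized Smale conjecture for closed $3$-manifolds and combine it with geometrization. Under our hypotheses ($Y$ closed, oriented, irreducible, $\pi_1(Y)$ infinite, $Y \not\cong T^3$), Perelman's theorem gives $Y$ a unique Thurston geometric structure among $\mathbb{H}^3$, $\mathrm{Sol}$, $\widetilde{SL_2\mathbb{R}}$, $\mathbb{H}^2 \times \mathbb{R}$, $\mathrm{Nil}$, or a Euclidean geometry distinct from $T^3$ (the spherical and $S^2 \times \mathbb{R}$ geometries are ruled out since $\pi_1 Y$ is infinite). Among these, $\mathbb{H}^3$ and $\mathrm{Sol}$ are the only two geometries that do not support a Seifert fibration, while the remaining four always do. This splits the theorem into the two dichotomous cases of the statement.

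I would first handle the non-Seifert cases. For $Y$ hyperbolic, Gabai's theorem \cite{gabai} (the Smale conjecture for hyperbolic $3$-manifolds) identifies $\mathrm{Diff}(Y)$ with $\mathrm{Isom}(Y)$ up to homotopy; Mostow rigidity forces $\mathrm{Isom}(Y)$ to be finite, hence $\mathrm{Diff}_0(Y) \simeq \ast$. For $Y$ a Sol manifold, the corresponding Smale-type statement is contained in \cite{bk3}, and the identity component of the isometry group of a closed Sol manifold is trivial, yielding the same conclusion.

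For the Seifert-fibered cases, I would appeal to Theorem~1 of \cite{mccullough}, which reduces the homotopy type of $\mathrm{Diff}_0(Y)$ to the identity component of the isometry group of any Thurston geometric structure on $Y$, provided the Smale conjecture holds for that geometry. The remaining Smale conjectures (for $\widetilde{SL_2\mathbb{R}}$, $\mathbb{H}^2 \times \mathbb{R}$, $\mathrm{Nil}$, and the Euclidean Seifert manifolds other than $T^3$) have now been fully established: contributions of Hatcher \cite{hatcher1} and Ivanov \cite{ivanov} settled many cases, and the complete picture is recorded in \cite{bk3}. Under our hypotheses the identity component of the isometry group is precisely the circle generated by the Seifert $S^1$-action, so $\mathrm{Diff}_0(Y) \simeq S^1$ with the homotopy equivalence realized by this action, as claimed.

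The one subtle point requiring attention is the potential non-uniqueness of the Seifert fibration on $Y$ (e.g.\ certain Nil or flat manifolds admit multiple inequivalent Seifert structures), but under our irreducibility and infinite-$\pi_1$ assumptions the different Seifert $S^1$-actions all lie in a single conjugacy class inside $\mathrm{Diff}_0(Y)$, so the statement is unambiguous. The essential difficulty of the argument is of course the generalized Smale conjecture itself across the various geometries, which is precisely what is packaged into the cited references; the role of the proof here is to verify that the cited inputs assemble into the required trichotomy between hyperbolic, Sol, and Seifert types.
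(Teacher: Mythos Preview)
Your proposal is correct and aligns with the paper's own treatment: the paper does not supply a proof but simply attributes the result to the cited references, noting specifically that it follows from Theorem~1 in \cite{mccullough} combined with Theorem~1.1 in \cite{bk3}. Your sketch is a faithful unpacking of precisely this combination, with the added detail of sorting through the Thurston geometries case by case.
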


We recall the following

\begin{definition}
A \textit{Seifert-fibered $3$-manifold} $Y$ is a closed oriented $3$-manifold $Y$ which is the total space of an orbifold principal $S^1$-bundle over an orbifold closed oriented surface $C$. We denote the projection map by $Y \xrightarrow{\pi} C$. Equivalently, $Y$ is presented as the unit sphere bundle $S(N)$ associated with an orbifold hermitian line bundle $N \xrightarrow{\pi} C$.

The $S^1$-action on $Y$ induced from a Seifert-fibered structure is referred to as the \textit{Seifert $S^1$-action}, and the associated loop of diffeomorphisms $\gamma_Y \in \pi_1 \mathrm{Diff}(Y)$ is referred to as the \textit{Seifert loop}.
\end{definition}

The $4$-dimensional Dehn twist along a Seifert-fibered $3$-manifold using the Seifert loop $\gamma = \gamma_Y$ is a natural generalisation of the familiar $2$-dimensional Dehn twist diffeomorphism on the $2$-dimensional annulus $A = [0,1] \times S^1$ along the curve $c = \{1/2\} \times S^1$. Namely, the $4$-dimensional cylinder $Z = [0,1] \times Y$ fibers (in an orbifold sense) over $C$ with annulus fibers; the Dehn twist diffeomorphism on $Z$ along $Y$ is given by performing the $2$-dimensional Dehn twist simultaneously on each annulus fiber.

From now on, whenever a four-dimensional Dehn twist is performed along a Seifert-fibered $3$-manifold, \textit{it will be understood that $\gamma = \gamma_Y$ is taken to be the Seifert loop}, unless otherwise noted.

\subsubsection{Seifert fibrations, orbifold surfaces and line bundles }

We assume familiarity with the ``classical'' theory of orbifolds. We refer to \cite{ruan,furutasteer,MOY} for the basic definitions and to \cite{boyer} for a treatment of differential and complex geometry in the orbifold context. We will only consider \textit{effective} orbifolds throughout this article. We will now review a few basic notions with the purpose of setting up notation (for detailed definitions see the references above).

Let $C$ be an orbifold oriented closed surface. It has an underlying smooth oriented closed surface $|C|$, called the \textit{desingularisation} of $C$, containing marked points $x_i \in |C|$ for $i = 1 , \ldots , n$, called \textit{orbifold points} of $C$. The orbifold $C$ is modelled near each $x_i$ on the orbifold chart $B(\mathbb{C} , 0 ) / (\mathbb{Z}/\alpha_i )$ for some $\alpha_i \geq 2$ (here $\mathbb{Z}/\alpha_i \subset S^1$ acts on the unit ball $B(\mathbb{C} , 0 )$ via the standard weight one $S^1$ action), and away from the orbifold points $C$ is modelled on a smooth chart. The \textit{orbifold Euler characteristic} of $C$ is the rational number
\[
\chi (C) = 2- 2g + \sum_{i =1}^n \big( \frac{1}{\alpha_i} - 1  \big) \in \mathbb{Q}.
\]

Let $N \rightarrow C$ be an orbifold complex line bundle. Over an orbifold chart $B(\mathbb{C} , 0 ) / (\mathbb{Z}/\alpha_i ) $ of $C$ the orbifold line bundle is given by $ ( \mathbb{C} \times B(\mathbb{C} , 0 ) )/(\mathbb{Z}/\alpha_i ) \xrightarrow{\pi}  B(\mathbb{C}, 0 )/ (\mathbb{Z}/\alpha_i )$ where $\pi (x,y) = y$ and $\mathbb{Z}/\alpha_i$ acts on the fiber factor $\mathbb{C}$ with some integer weight $0 \leq \beta_i < \alpha_i$, and over a smooth chart it is an ordinary complex line bundle. If for each orbifold chart we instead set the weights $\beta_i = 0$, the same gluing data of $N$ produces now an ordinary complex line bundle $|N|$ over $|C|$, called the \textit{desingularisation} of $N \rightarrow C$.
The \textit{background degree} of $N$ is the degree of the desingularisation $|N| \rightarrow |C|$, which we denote by $e \in \mathbb{Z}$. 
The \textit{orbifold degree} of $N$ is the rational number
\[
\mathrm{deg} N = e + \sum_{i = 1}^{n} \frac{\beta_i}{\alpha_i} \in \mathbb{Q} .
\]

It is easy to see that, in order for the unit sphere $Y= S(N)$ of an orbifold hermitian line bundle $N \rightarrow C$ to be a smooth $3$-manifold (and hence $Y$ is a Siefert-fibered $3$-manifold over $C$), it is necessary and sufficient that $\alpha_i$ be coprime with $\beta_i$ for all $i = 1 , \ldots , n$.

\begin{definition}
The \textit{Seifert data} of an orbifold complex line bundle $N$ over an orbifold closed oriented surface $C$ is the collection of integers $( e ; ( \alpha_1 , \beta_1 ) , \ldots , ( \alpha_n , \beta_n )  )$. When the $\alpha_i$'s are understood we might just denote the Seifert data by $( e ; \beta_1 , \ldots , \beta_n )$, or by $(e ; \frac{\beta_1}{\alpha_1} , \ldots , \frac{\beta_n}{\alpha_n})$ when $\alpha_i$ and $\beta_i$ are coprime for each $i$.
\end{definition}

\begin{example}Given an orbifold holomorphic structure on an orbifold closed oriented surface $C$, there is a preferred complex line bundle given by the \textit{canonical line bundle} $K_C := T^\ast C$. If $C$ has orbifold points $x_1 , \ldots , x_n$ with isotropy order $\alpha_i$ at $x_i$, then the Seifert data of $K_C$ is given by $(2g-2 , \alpha_1 -1 , \ldots , \alpha_n -1 )$ where $g$ is the genus of $|C|$. The orbifold degree of $K_C $ is given by $- \chi (C)$.
\end{example}

We now record some basic facts that will be used throughout the article. Let $\mathrm{Pic}^t (C)$ stand for the \textit{topological Picard group} of isomorphism classes of orbifold complex line bundles over $C$. 

\begin{lemma}[\cite{furutasteer}] \label{lemma:seifertdata}
The map $\mathrm{Pic}^t (C) \rightarrow \mathbb{Q} \times \prod_{i} \mathbb{Z} / \alpha_i \mathbb{Z}$ given by $N \mapsto (\mathrm{deg}N , \beta_1 , \ldots , \beta_n )$ is an injective group homomorphism. Its image is the subgroup consisting of all $(r , \beta_1 , \ldots , \beta_n )$ such that $r  = \sum_{i} \beta_i / \alpha_i \, \mathrm{mod} \, \mathbb{Z}$. In particular, if the $\alpha_i$ are pairwise coprime then $\mathrm{Pic}^t (C) \cong \mathbb{Z}$ is infinite cyclic.
\end{lemma}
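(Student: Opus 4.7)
The plan is to regard the invariant $(\mathrm{deg}N,\beta_1,\ldots,\beta_n)$ as encoding, respectively, the topological type of the desingularisation $|N|\to|C|$ and the local isotropy weights of $N$ at the orbifold points, and then to reconstruct $N$ from this data. The first step is to verify that the map is a group homomorphism. Under tensor product of orbifold line bundles, the local $\mathbb{Z}/\alpha_i$-action on the fiber $\mathbb{C}$ of $N\otimes N'$ has weight $\beta_i+\beta'_i$ modulo $\alpha_i$, and background degrees add; hence both components are additive. The fact that the image lies in the stated subgroup is immediate from the definition $\mathrm{deg}N=e+\sum_i\beta_i/\alpha_i$ with $e\in\mathbb{Z}$.

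For injectivity I would argue as follows. Suppose $\beta_i\equiv 0\pmod{\alpha_i}$ for every $i$. Then near each orbifold chart, the action of $\mathbb{Z}/\alpha_i$ on the fiber is trivial, so after a suitable local change of trivialisation the orbifold chart of $N$ near $x_i$ descends to an ordinary line bundle chart on $|C|$; globally this exhibits $N$ as the orbifold pullback of its desingularisation $|N|\to|C|$. If in addition $\mathrm{deg}N=0$ then $\mathrm{deg}|N|=0$, and since $|C|$ is a closed oriented surface the ordinary line bundle $|N|$ is topologically trivial, hence so is $N$.

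For surjectivity, given a tuple $(r,\beta_1,\ldots,\beta_n)$ satisfying the compatibility $e:=r-\sum_i\beta_i/\alpha_i\in\mathbb{Z}$, I would construct $N$ by hand: take a smooth complex line bundle $L\to|C|$ of degree $e$, delete small discs around each $x_i$, and glue in the orbifold model $(\mathbb{C}\times B(\mathbb{C},0))/(\mathbb{Z}/\alpha_i)$ with fiber weight $\beta_i$ using the fact that the quotient map sends the boundary circle of the orbifold chart diffeomorphically onto the boundary circle of the removed disc, on which both bundles have a trivialisation. The resulting orbifold line bundle has background degree $e$ and weights $(\beta_i)$, so its orbifold degree is $e+\sum_i\beta_i/\alpha_i=r$.

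Finally, in the pairwise coprime case, the Chinese Remainder Theorem identifies $\prod_i\mathbb{Z}/\alpha_i\mathbb{Z}\cong\mathbb{Z}/(\prod_i\alpha_i)\mathbb{Z}$, and the compatibility condition shows that $r$ alone determines $(\beta_1,\ldots,\beta_n)$: the fractional part of $r$ modulo $\mathbb{Z}$ lives in $\tfrac{1}{\prod_i\alpha_i}\mathbb{Z}/\mathbb{Z}$ and recovers the tuple via CRT, while the integer part recovers $e$. Thus $\mathrm{Pic}^t(C)$ is identified with $\tfrac{1}{\prod_i\alpha_i}\mathbb{Z}\cong\mathbb{Z}$, generated by an orbifold line bundle of orbifold degree $1/\prod_i\alpha_i$. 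The main obstacle I expect is the bookkeeping in the surjectivity step, namely ensuring that the cut-and-paste construction produces a globally well-defined orbifold line bundle; this comes down to checking compatibility of the transition function on the link of each $x_i$ with the $\mathbb{Z}/\alpha_i$-action, which is a routine but fiddly computation in orbifold clutching coordinates.
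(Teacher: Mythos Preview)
The paper does not give its own proof of this lemma; it simply cites \cite{furutasteer} and moves on. Your argument is a correct reconstruction of the standard proof one finds in that reference (or in \cite{MOY}): the map is a homomorphism because weights and background degrees add under tensor product; injectivity reduces to the classification of ordinary line bundles on the surface $|C|$ once all isotropy weights vanish; surjectivity is the clutching construction you describe; and the coprime statement is the Chinese Remainder Theorem applied to the fractional part of the degree. There is nothing to compare against in the paper itself, and your sketch is sound.
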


The following examples will come up later on:

\begin{example}\label{seifertdatafamily1}
Consider the family of orbifold closed oriented surfaces $C = S^2 (2,3,6n-5 )$ (i.e. obtained by putting three orbifold points on a $2$-sphere, with isotropies of orders $2,3$ and $6n-5$), $n \geq 2$. The unique generator of $\mathrm{Pic}^t (C) \cong \mathbb{Z}$ with positive orbifold degree has Seifert data $(-2 ; 1, 2 , 5n-4 )$.
\end{example}

\begin{example}\label{seifertdatafamily2}
Consider the family of orbifold closed oriented surfaces $C = S^2 (2,3,6n-1 )$, $n \geq 2$. The unique generator of $\mathrm{Pic}^t (C) \cong \mathbb{Z}$ with positive orbifold degree has Seifert data $(-1 ; 1, 1 , n )$.\end{example}

\begin{lemma}[\cite{furutasteer}]\label{lemma:pullbackline}
Let $Y  = S(N) \xrightarrow{\pi} C$ be a Seifert-fibered $3$-manifold. The kernel of the map $\pi^\ast  : \mathrm{Pic}^t (C) \rightarrow \mathrm{Pic}^{t}(Y) = H^{2}(Y , \mathbb{Z})$, defined by pulling back orbifold complex line bundles from $C$ onto $Y = S(N)$, is generated by $N$. Thus, (cf. Lemma \ref{lemma:seifertdata}) the kernel is infinite cyclic precisely when $\mathrm{deg} N \neq 0$. The torsion subgroup of $\mathrm{Pic}^{t}(Y) = H^{2}(Y, \mathbb{Z})$ is the group $\pi^\ast \mathrm{Pic}^t (C) \cong\mathrm{Pic}^t (C) / \mathbb{Z}[N]$.
\end{lemma}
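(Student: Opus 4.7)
The plan is to deduce both assertions from the (integral) orbifold Gysin sequence associated to the principal $S^1$-bundle $\pi : Y = S(N) \to C$. Under the standard identification of the topological Picard group with degree-two (orbifold) integral cohomology, namely $\mathrm{Pic}^t (C) \cong H^2_{\mathrm{orb}}(C, \mathbb{Z})$ via the orbifold first Chern class and $\mathrm{Pic}^t (Y) = H^2 (Y , \mathbb{Z})$ (as $Y$ is an ordinary smooth manifold), the map $\pi^\ast$ becomes the usual pullback in cohomology. By construction the Euler class of the circle bundle is $[N] \in H^2_{\mathrm{orb}}(C, \mathbb{Z})$, and the orbifold Gysin sequence reads
\[
\cdots \to H^{k-2}_{\mathrm{orb}}(C, \mathbb{Z}) \xrightarrow{\cup [N]} H^k_{\mathrm{orb}}(C, \mathbb{Z}) \xrightarrow{\pi^\ast} H^k(Y , \mathbb{Z}) \xrightarrow{\pi_\ast} H^{k-1}_{\mathrm{orb}}(C, \mathbb{Z}) \to \cdots .
\]

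First I would extract the $k = 2$ portion. Since $H^0_{\mathrm{orb}}(C, \mathbb{Z}) = \mathbb{Z}$ is generated by $1$ and $\cup [N] : H^0 \to H^2$ sends $1 \mapsto [N]$, exactness at $H^2_{\mathrm{orb}}(C, \mathbb{Z})$ immediately gives $\ker \pi^\ast = \mathbb{Z}[N]$, proving the first assertion. The characterisation of when this kernel is infinite cyclic then follows directly from Lemma \ref{lemma:seifertdata}, which shows that $[N] \in \mathrm{Pic}^t (C)$ has infinite order precisely when $\deg N \neq 0$.

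For the torsion identification, exactness of the Gysin sequence at $H^2(Y , \mathbb{Z})$ yields a short exact sequence
\[
0 \to \mathrm{Pic}^t(C) / \mathbb{Z}[N] \xrightarrow{\pi^\ast} H^2(Y , \mathbb{Z}) \to \ker\bigl(\cup [N] : H^1_{\mathrm{orb}}(C, \mathbb{Z}) \to H^3_{\mathrm{orb}}(C , \mathbb{Z})\bigr) \to 0 .
\]
The crucial input is that $H^1_{\mathrm{orb}}(C, \mathbb{Z}) \cong H^1(|C|, \mathbb{Z})$ is torsion-free (cyclic isotropy groups contribute to integral orbifold cohomology only in even degrees), so the rightmost term is a subgroup of a free abelian group and hence free. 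Meanwhile, when $\deg N \neq 0$, Lemma \ref{lemma:seifertdata} shows that $\mathrm{Pic}^t (C)/\mathbb{Z}[N]$ is finite, since $[N]$ generates a rank-one subgroup of the rank-one group $\mathrm{Pic}^t (C)$. The extension therefore splits as finite-torsion plus free, identifying the torsion subgroup of $H^2(Y, \mathbb{Z})$ with $\pi^\ast \mathrm{Pic}^t(C) \cong \mathrm{Pic}^t(C)/\mathbb{Z}[N]$.

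The main technical point is the careful set-up of the integral orbifold Gysin sequence and the computation of $H^\ast_{\mathrm{orb}}(C, \mathbb{Z})$ around the orbifold points. This can be done using the Leray spectral sequence for the coarse moduli map $C \to |C|$, or equivalently via a groupoid/classifying-space model; the key takeaway is that $H^1_{\mathrm{orb}}(C, \mathbb{Z})$ agrees with $H^1(|C|, \mathbb{Z})$, ensuring the torsion-freeness of the cokernel needed above. Once this machinery is in place (as developed in Furuta--Steer), both assertions follow by direct inspection of the exact sequence.
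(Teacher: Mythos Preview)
The paper does not give its own proof of this lemma; it simply records the statement with a citation to Furuta--Steer. Your Gysin-sequence argument is the natural one and is correct: the identification $\ker \pi^\ast = \mathbb{Z}[N]$ drops out of exactness at $H^2_{\mathrm{orb}}(C,\mathbb{Z})$, and for the torsion assertion your short exact sequence together with the freeness of $H^1_{\mathrm{orb}}(C,\mathbb{Z}) \cong H^1(|C|,\mathbb{Z})$ gives the splitting you want. One small remark: you explicitly invoke $\deg N \neq 0$ to conclude that $\mathrm{Pic}^t(C)/\mathbb{Z}[N]$ is finite, and indeed the torsion statement as phrased only makes sense in that case (otherwise $\pi^\ast \mathrm{Pic}^t(C)$ has a free summand); since the paper works throughout with $\deg N < 0$, this is the relevant regime and your restriction is appropriate rather than a gap.
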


\subsection{Monodromy of surface singularities}\label{subsection: monodromy}

In this subsection we generalise the discussion from \S \ref{intro:singularities}.

\subsubsection{Basic definitions}

We start by recalling some basic notions on singularities of surfaces. See \cite{popescupampu} for an introductory reference, and \cite{nemethibook,greuel-book} for further details. Throughout this subsection, $V$ will denote a complex-analytic surface (i.e. $\mathrm{dim}_\mathbb{C} V = 2$) with a normal isolated complex-analytic surface singularity at a point $p \in V$ (an ``isolated surface singularity" in short). For our purposes we only care about the \textit{germ} of the pointed complex-analytic space $(V, p )$, and all notions are considered up to germ-equivalence in the discussion that follows. The germ of the singularity $(V,p)$ can be embedded into affine space $(\mathbb{C}^n , 0 )$, where it is given by the vanishing locus of a finite collection of (germs of) holomorphic functions on $(\mathbb{C}^n , 0 )$. 

One can always find a resolution of singularities $\widetilde{V} \xrightarrow{\pi} V$. Furthermore, by blowing up further if needed, it can be ensured that the exceptional divisor $ \pi^{-1}(p) \subset \widetilde{V}$ is a  normal crossings divisor and its irreducible components are smooth (such a resolution is called \textit{good}). The combinatorics of a good resolution is recorded by the \textit{resolution graph} $\Gamma$, given by: one vertex for each irreducible component $E$ of $\pi^{-1}(p)$; whenever two components $E$ and $E^\prime$ intersect there is an edge of multiplicity $E \cdot E^\prime \geq 1$ connecting the vertices $E$ and $E^\prime$, and otherwise no edge; each vertex $E$ is weighted by the genus $g(E) \geq 0$ and self-intersection $E \cdot E <0$ numbers. A neighbourhood of $\pi^{-1}(p) \subset \widetilde{V}$ is diffeomorphic to the $4$-manifold obtained as the \textit{plumbing} of $D^2$-bundles over closed oriented surfaces specified by $\Gamma$. It is a basic fact that $\Gamma$ is negative-definite (i.e. fixing any ordering $E_i$ of the components of $\pi^{-1}(p)$, the matrix with entries $E_i \cdot E_j$ is negative definite). Conversely, any negative-definite such graph $\Gamma$ arises as the good resolution of an isolated surface singularity \cite{grauert}.

Fixing an embedding $(V,p) \subset (\mathbb{C}^n , 0 )$, for  $0 <\epsilon \ll 1$ the intersection $Y = V \cap S_{\epsilon} (\mathbb{C}^n )$ is a $3$-manifold called the \textit{link} of the singularity $(V,p)$, and it carries a (positive, co-oriented) \textit{contact structure} given by the complex tangencies $\xi = TY \cap i TY$. The contact $3$-manifold $(Y, \xi )$ is an invariant of the singularity germ $(V,p)$ \cite{CNP}, i.e. it is independent of the choice of embedding $(V, p ) \subset (\mathbb{C}^n , 0 )$ and $\epsilon \ll 1$. The link $Y$ is the boundary of the plumbing specified by the graph $\Gamma$ associated to a good resolution; furthermore, it is an irreducible $3$-manifold and its homeomorphism type determines uniquely the resolution graph of the minimal good resolution of $(V, p )$ \cite{neumann}.

An important analytic invariant of $(V, p )$ is its \textit{geometric genus} $p_g (V, p ) \in \mathbb{Z}$, which can be defined as the dimension of the complex vector space $H^{1}(\widetilde{U}, \mathcal{O}_{\widetilde{U}})$, where $\widetilde{U}$ is a suitably chosen open neighbourhood of the exceptional divisor of a resolution of singularities of $(V, 0 )$ (see \cite{popescupampu} for the precise definition). In analogy with the Enriques--Kodaira classification of surfaces, an isolated surface singularity $(V, p )$ is called \textit{rational} when $p_g (V, p ) = 0$. 
The isolated surface singularity $(V, p )$ is \textit{Gorenstein} if (up to passing to a smaller germ) there exists a nowhere-vanishing holomorphic $2$-form $\Omega$ defined on the smooth locus $V \setminus p$. If $V = \{ f = 0\} \subset \mathbb{C}^3$ is an isolated hypersurface then $(V, 0 )$ is Gorenstein, with the required $2$-form given by
\[
\Omega = \frac{dx_1 \wedge dx_2 }{\partial f / \partial x_3} =  \frac{ dx_2 \wedge dx_3 }{\partial f / \partial x_1} = \frac{dx_3 \wedge dx_1}{\partial f / \partial x_2} .\]
The \textit{ADE singularities} are the quotient singularities of the form $V = \mathbb{C}^2 / \Gamma$ where $\Gamma$ is a finite subgroup of $SU(2)$. The Gorenstein rational singularities are precisely the ADE singularities \cite{popescupampu}. Beyond rational surface singularities, the next ``simplest" class of isolated surface singularities are the \textit{minimally elliptic} surface singularities introduced by Laufer \cite{laufer}, which can be defined as those Gorenstein singularities with $p_g (V, p ) = 1$. 

It is a remarkable fact that whether or not an isolated surface singularity $(V, p )$ is rational or minimally elliptic only depends on the link $Y$ and not the analytic structure of $(V, p )$, and there exist combinatorial algorithms due to Laufer \cite{laufer-rational,laufer} which determine whether or not a given a resolution graph of a given $(V, p )$ is rational or minimally elliptic. Looking ahead, when $Y$ is a rational homology $3$-sphere, it is known that $(V, p )$ is a rational singularity if and only if $Y$ is an \textit{L-space} \cite{nemethi17,bp} (i.e. $HM^{\redu}(Y) = 0$), and that it is minimally elliptic only if $HM^{\redu}(Y) \cong \mathbb{Z}$ (the latter is proved using lattice homology in \cite{nemethiplumbed}; for the isomorphisms between lattice homology and monopole Floer homology see \cite{zemke,KLT,CGH,taubesECH}) and conjecturally also `if'.

The minimally elliptic hypersurfaces were classified in \cite{laufer} (see Figure \ref{figure:table} for the quasi-homogeneous ones with rational homology $3$-sphere link, which are of special relevance to us) and the resolution graphs of minimally elliptic complete intersections are classified in \cite{degree4}. A simple class of minimally elliptic singularities, which in general are neither hypersurfaces nor complete intersections, is given by the following:

\begin{example}\label{example:S(TC)}
Consider the $3$-manifold $Y = S(TC)$ given by the unit tangent bundle of an orbifold hyperbolic surface $C$ (i.e. with negative orbifold Euler characteristic $\chi (C) < 0$) with genus $0$ (i.e. topologically $C$ is a $2$-sphere). It can be described as the boundary of the plumbing according to the graph shown in Figure \ref{fig:polygon} (see \cite{neumann-raymond} for how to obtain the plumbing description of a Seifert-fibered $3$-manifold). Here the orbifold sphere $C$ has $n$ orbifold points of weights denoted $\alpha_i \geq 2$, and the $\chi(C) < 0$ condition says $\sum_{i = 1}^{n} \frac{1}{\alpha_i} < n-2$. The genus of every vertex of the graph is zero, and all edges have multiplicity $1$. The graph is negative-definite, and hence $Y$ is the link of a isolated surface singularity, which can be verified to be minimally elliptic (see e.g. \cite{nemethibook}). In the case when $C$ is a sphere with three orbifold points we obtain the links $Y$ of a famous class of surface singularities called \textit{Dolgachev's Triangle singularities} (see \cite{looijenga} for their construction). Their links are given by $Y = \widetilde{SL(2, \mathbb{R})} / \Gamma$ where $\Gamma = \Gamma_{\alpha_1,\alpha_2,\alpha_3}$ is the centrally-extended triangle group associated to a hyperbolic triangle with angles $\frac{\pi}{\alpha_i}$ (and $1/\alpha_1 + 1/\alpha_2 + 1 / \alpha_3 < 1$ ), and the universal abelian cover of $Y$ is the Brieskorn sphere $\Sigma (\alpha_1 , \alpha_2 , \alpha_3 ) \cong \widetilde{SL(2, \mathbb{R})}/[\Gamma , \Gamma]$ \cite{milnorbrieskorn}.

\begin{figure}[h!]\label{fig:polygon}

    \centering
 \includegraphics[scale=0.5]{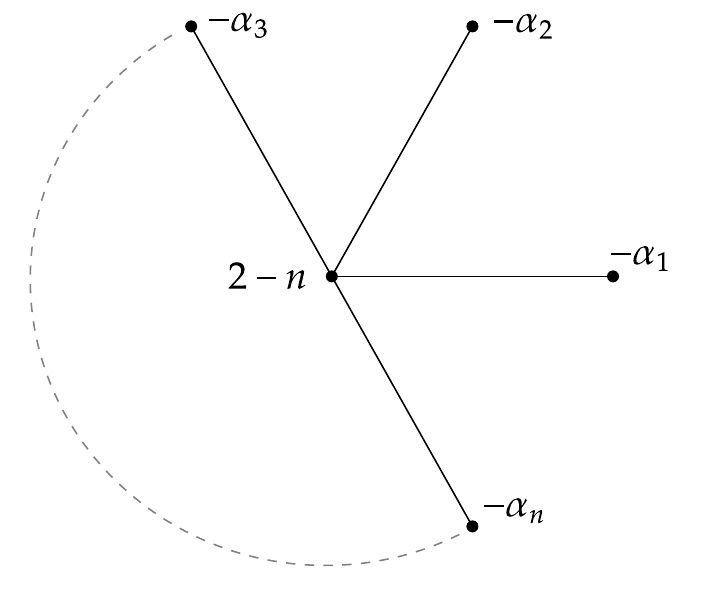}
\caption{Plumbing whose boundary is $Y = S(TC)$}
  \label{fig:polygon}
\end{figure}

\end{example}

\subsubsection{Milnor fibrations}

A deformation of $V$ is a (germ of) flat morphism $f : (\mathcal{V}, p ) \rightarrow (\mathbb{C}, 0 )$ with an identification $V =f^{-1}(0)$. A deformation of $V$ is a \textit{smoothing} when $f^{-1}(t)$ is smooth for sufficiently small $t \neq 0$. Associated to a smoothing is its \textit{Milnor fibration} (\ref{milnorfibration1}). To construct it one fixes an embedding of germs $(\mathcal{V}, p ) \subset (\mathbb{C}^n , 0 )$ and chooses constants $0 < \delta \ll \epsilon \ll 1$; then the mapping   
\begin{align}
f^{-1}(B_{\delta}(\mathbb{C}, 0 ) \setminus 0) \cap B_\epsilon (\mathbb{C}^n , 0 ) \xrightarrow{f}  B_{\delta}(\mathbb{C}, 0 ) \setminus 0\label{milnorfibration1}
\end{align}
is a smooth fiber bundle whose fibers are compact $4$-manifolds with boundary, called the \textit{Milnor fibers} of the smoothing. Milnor \cite{milnor} established this result in the case of a hypersurface singularity, and Lê \ established the more general result \cite{le} (see Hamm \cite{hamm}). Furthermore, the boundary fibration
\begin{align}
f^{-1}(B_\delta (\mathbb{C}, 0 ) )  \cap \partial B_{\epsilon}(\mathbb{C}^n , 0 ) \xrightarrow{f} B_{\delta}(\mathbb{C}, 0 ) \label{boundaryfibration}
\end{align}
is a smooth fibre bundle, whose fiber is diffeomorphic to the link $Y$ of $(V, p)$. It follows that (\ref{boundaryfibration}) has a \textit{canonical trivialisation} up to homotopy because $B_\delta (\mathbb{C}, 0 )$ is contractible. Thus, the Milnor fibration (\ref{milnorfibration1}) is a principal $\mathrm{Diff}(M , \partial )$-bundle over $B_{\delta}(\mathbb{C}, 0 )\setminus 0 \simeq S^1$, where $M$ is any fixed Milnor fiber. We refer the reader to \cite{seade} for a more detailed discussion on the Milnor Fibration Theorem in its various guises.

It is known that $b_1(M) = 0$ \cite{greuel-steenbrink}. The coarse invariants of the intersection form on $H_2(M,\mathbb{R})$ reflect analytic invariants of the singularity or its smoothing. For instance, one has $b^{+}(M) + b^{null} (M) = 2 p_g (V,p)$ \cite{durfee,steenbrink}, where $b^{null}(M)$ is the dimension of the kernel of the intersection form, and $b_1(Y) = b^{null}(M)$ (see also \cite{steenbrink} for a formula for $b^{-}(M)$ in the Gorenstein case).

\subsubsection{$\mathbb{C}^\ast$-actions and Dehn twists}


We discuss the relation between the boundary Dehn twist and the Milnor fibration. This is closely-related to the notion of a $\mathbb{C}^\ast$-smoothing introduced below. 

We first recall some basic definitions, see \cite{neumann-jenkins,orlik-wagreich,neumann-raymond,pinkham} for further details. If $V$ is an affine algebraic variety, then an algebraic $\mathbb{C}^\ast$-\textit{action} can be regarded as a \textit{grading} of its ring of functions $A = \oplus_{i \in \mathbb{Z}} A_i$. We say $V$ is \textit{weighted-homogeneous} if it carries an algebraic $\mathbb{C}^\ast$-action with $A_i = 0$ for $i < 0$ and $A_0 = \mathbb{C}$ (and without loss of generality, we assume that the action is in ``lowest terms", i.e. the greatest common divisor of the collection of integers $i $ with $A_i \neq 0$ is $1$). It follows that the point $p$ associated to the maximal ideal $\oplus_{i > 0}A_i$ is a fixed point and lies in the closure of every orbit of the action (such actions are called ``good''). The basic case is that of a weighted-homogeneous hypersurface, discussed in \S \ref{intro:singularities}. If a germ $(V, p )$ of complex-analytic space with analytic $\mathbb{C}^\ast$-action fixing $p$ is equivalent to the germ of an affine variety with good $\mathbb{C}^\ast$-action, then $(V, p )$ is called \textit{quasi-homogeneous}. 

For a normal isolated quasi-homogeneous surface singularity germ $(V, p )$ the link $Y $ can be identified as $Y = (V \setminus p)/\mathbb{R}_{+}$. The remaining $S^1$-action on $Y$ is fixed-point free; thus $Y$ is a \textit{Seifert-fibered} $3$-manifold over the orbifold Riemann surface $C = (V \setminus p ) / \mathbb{C}^\ast$. Conversely, every Seifert-fibered $3$-manifold of \textit{negative degree} (i.e. $Y = S(N)$ and $\mathrm{deg}N <0$) is the link of a normal isolated quasi-homogeneous surface singularity \cite{neumann-raymond}. For a given Seifert-fibered $3$-manifold $Y = S(N) \rightarrow C$ of negative degree with a given holomorphic structure on the orbifold surface $C$, there is a one-to-one correspondence between normal isolated quasi-homogeneous surface singularities $(V, p )$ with link and underlying orbifold Riemann surface given by $Y$ and $C$ respectively, and holomorphic structures on the orbifold line bundle $N \rightarrow C$, see \cite{neumann-jenkins,pinkham}.

Let $V$ be a weighted-homogeneous affine surface with an isolated singularity at $p$, and consider an algebraic smoothing $f : \mathcal{V} \rightarrow \mathbb{C}$ such that the base $\mathbb{C}$ carries an algebraic $\mathbb{C}^\ast$-action fixing $0$, the total space $\mathcal{V}$ is an affine variety with algebraic $\mathbb{C}^\ast$-action extending the given one on $V = f^{-1}(0 )$, and $f$ is equivariant with respect to these. More generally, if $(V,p)$ is a germ of quasi-homogeneous singularity, by an \textit{equivariant smoothing} of $(V, p )$ (see e.g. \cite{pinkhamdeformations,wahlsmoothings}) 
we mean a complex-analytic smoothing with $\mathbb{C}^\ast$-action equivalent to an algebraic one, as above considered. For instance, every weighted-homogeneous isolated hypersurface or complete intersection singularity admits an equivariant smoothing.

\begin{example}
For integers $n \geq 3$ and $\alpha_1 , \ldots , \alpha_n \geq 2$, the Brieskorn $3$-manifold $Y = \Sigma(\alpha_1 , \ldots , \alpha_n )$ is defined as the link of the following weighted-homogeneous surface singularity $V$. For generic choices of coefficients $a_{ij} \in \mathbb{C}, b_{i} \in \mathbb{C}$, where $1 \leq i \leq n-2$ and $1 \leq j \leq n$ the variety $\mathcal{V} \subset \mathbb{C}^{n}\times \mathbb{C}$ (with variables denoted $x_1 , \ldots , x_n , t $) defined by the equations
\begin{align*}
a_{i1}x_{1}^{a_1} + \ldots + a_{in}x^{a_n} = b_i t \quad , \quad 1 \leq i \leq n-2 
\end{align*}
is a weighted-homoegenous $3$-fold with an isolated singularity at $0$. The projection $f : \mathcal{V} \rightarrow \mathbb{C}$ to the $t$ variable defines an equivariant smoothing of the central fiber $V := f^{-1}(0)$. Here the weights of the $\mathbb{C}^\ast$-action on the variables $x_1, \ldots , x_n$ can be taken to be $w_i = \mathrm{lcm}(a_1 , \ldots , a_n ) / a_i$ and the weight $d$ on $t$ (i.e. the weight on the base of the smoothing) is $d = \mathrm{lcm}(a_1 , \ldots , a_n )$.
\end{example}

The following is a generalisation of the classical fact that the monodromy of the plane curve singularity $\{xy = 0\}$ is a $2$-dimensional Dehn twist on its Milnor fiber $ \approx [0,1] \times S^1 $ along the curve $\{1/2\} \times S^1$,

\begin{proposition}\label{prop:dehnsmoothing}
For any $\mathbb{C}^\ast$-smoothing of an isolated quasi-homogeneous surface singularity $(V, p )$, the $d^{th}$ iterate of the monodromy $\psi \in \MCG (M)$ of its Milnor fibration is smoothly isotopic to the inverse of the boundary Dehn twist on $M$ rel. $\partial M$, i.e. $\psi^{d} = (\tau_M )^{-1}$ in $\MCG(M)$, where $d > 0$ is the weight of the $\mathbb{C}^\ast$-action on the base of the smoothing.
\end{proposition}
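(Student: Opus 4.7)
The plan is to exploit the equivariance of the smoothing $f : \mathcal{V} \to \mathbb{C}$ with respect to the $\mathbb{C}^\ast$-action to produce an explicit isotopy that lifts $d$-times the monodromy loop through an almost trivial path of diffeomorphisms of $\mathcal{V}$, with the non-triviality concentrated on the boundary and equal to the Seifert loop. Then the identity $\psi^d = \tau_M^{-1}$ will fall out of the fibration sequence of Lemma~\ref{lemma:loopextension}.

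First I would fix an embedding $(\mathcal{V}, p) \hookrightarrow (\mathbb{C}^n, 0)$ realising the $\mathbb{C}^\ast$-action linearly with integer weights $w_1, \ldots, w_n \geq 0$, so that the subgroup $S^1 \subset \mathbb{C}^\ast$ preserves every sphere $\partial B_\epsilon(\mathbb{C}^n)$. By the equivariance hypothesis, $f(\lambda \cdot x) = \lambda^d f(x)$ for all $\lambda \in \mathbb{C}^\ast$. Choose Milnor data $0 < \delta \ll \epsilon \ll 1$, fix a base point $t_0 \in B_\delta(\mathbb{C}, 0) \setminus 0$, and set $M = f^{-1}(t_0) \cap B_\epsilon$. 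For $\theta \in [0, 2\pi d]$, define
\[
g_\theta : M \longrightarrow f^{-1}(e^{i\theta} t_0) \cap B_\epsilon,\qquad g_\theta(x) := e^{i\theta/d} \cdot x.
\]
Then $g_0 = \mathrm{id}_M$ and $g_{2\pi d}(x) = e^{2\pi i} \cdot x = x$, so $g_{2\pi d} = \mathrm{id}_M$ as well. Crucially, for every $\theta$ the restriction $g_\theta|_{\partial M}$ is the action of $e^{i\theta/d} \in S^1$ on the link $Y$ via the Seifert $S^1$-action, so as $\theta$ traverses $[0, 2\pi d]$ the boundary restriction traces out the Seifert loop $\gamma_Y \in \pi_1 \mathrm{Diff}(Y)$ exactly once.

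Next I would compare $g_\theta$ with a family $h_\theta : M \to f^{-1}(e^{i\theta} t_0) \cap B_\epsilon$ realising the canonical trivialisation of the Milnor fibration near the boundary, so that $h_0 = \mathrm{id}$ and $h_\theta|_{\partial M} = \mathrm{id}_Y$ for every $\theta$. By the definition of the monodromy of the Milnor fibration, $h_{2\pi d} = \psi^d \in \mathrm{Diff}(M, \partial M)$. Set
\[
\phi_\theta := h_\theta^{-1} \circ g_\theta : M \longrightarrow M, \qquad \theta \in [0, 2\pi d].
\]
Then $\phi_0 = \mathrm{id}$, $\phi_{2\pi d} = \psi^{-d}$, and $\phi_\theta|_{\partial M}$ realises the Seifert loop $\gamma_Y$ traversed once. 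Hence $\phi_\theta$ is a lift, starting at the identity, of $\gamma_Y$ through the fibration $\mathrm{Diff}(M, \partial) \to \mathrm{Diff}(M) \to \mathrm{Diff}(Y)$. By Lemma~\ref{lemma:loopextension}, the connecting homomorphism sends $[\gamma_Y]$ to $\tau_M$, but by construction it also sends $[\gamma_Y]$ to $[\psi^{-d}]$. Therefore $\tau_M = \psi^{-d}$ in $\MCG(M)$, which is the claimed identity $\psi^d = \tau_M^{-1}$.

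The key point---and the only real subtlety---is the interplay between the $\mathbb{C}^\ast$-trivialisation of the Milnor fibration, which is identity-valued at $\theta = 2\pi d$ on the interior but traverses the Seifert loop on the boundary, and the canonical boundary trivialisation used to define the monodromy as an element of $\mathrm{Diff}(M, \partial)$. A minor technical issue is that the identity $\phi_{2\pi d}|_{\partial M} = \mathrm{id}_{\partial M}$ holds pointwise but not necessarily on a collar neighbourhood of $\partial M$; this is easily remedied by a standard further isotopy supported in a collar, which does not change the mapping class.
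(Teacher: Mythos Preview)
Your proof is correct and follows essentially the same approach as the paper. Both arguments use the $S^1$-action to produce a trivialisation of the $d$-fold pullback of the Milnor fibration as a $\mathrm{Diff}(M)$-bundle, and then observe that the discrepancy with the canonical boundary trivialisation is exactly the Seifert loop; the paper phrases this as a comparison of two trivialisations, while you phrase it as computing the connecting homomorphism of the fibration sequence via Lemma~\ref{lemma:loopextension}, but these are two descriptions of the same computation.
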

\begin{proof}
It suffices to consider the case of an algebraic smoothing $\mathcal{V} \rightarrow \mathbb{C}$ of an isolated weighted-homogeneous affine singularity $V$. 

We first explain how to make the Milnor fibration of $f$ into an $S^1$-equivariant bundle. Fix an affine embedding $V \subset \mathbb{C}^{n-1}$ sending $p$ to $0$. We can realise $\mathcal{V}$ as an embedded deformation, i.e. we can find an embedding of $\mathcal{V}$ in affine space $\mathbb{C}_{x}^{n-1} \times \mathbb{C}_{t}$ extending $V \subset \mathbb{C}^{n-1}_x$ and such that $f$ agrees with the projection to $\mathbb{C}_t$ (see \cite{greuel-book}, Proposition 1.5). Since the $\mathbb{C}^\ast$-action on $\mathcal{V}$ is algebraic then, after a suitable change of coordinates, it can be assumed to be induced by a $\mathbb{C}^\ast$-action on $\mathbb{C}_{x}^{n-1} \times \mathbb{C}_{t}$ in the standard form (see \cite{orlik-wagreich}, Proposition 1.1.3)
\begin{align}
(x_1 , \ldots , x_{n-1} , t ) \mapsto (\lambda^{w_1} x_1 , \ldots , \lambda^{w_{n-1}} x_{n-1} , \lambda^{d} t )\label{eq:C*action}
\end{align}
for some weights $w_i, d \in \mathbb{Z}$. The total space $\mathcal{V} \subset \mathbb{C}^n$ is thus the vanishing locus of polynomial equations $q_{1}(x,t) = 0 , \ldots , q_{l}(x,t ) = 0$ of homogeneous weighted-degree (with respect to the weights $w_i , d$), and the equations for $V \subset \mathbb{C}^{n-1}$ are $p_j (x) := q_j (x,0) = 0$. Since the $\mathbb{C}^\ast$-action on $V$ is good, then we can further assume $w_i > 0$. Since $f : \mathcal{V} \rightarrow \mathbb{C}$ is a smoothing then we must also have $d > 0$. 
With this in place, the ball $B_{\epsilon}(\mathbb{C}^{n} , 0 ) \subset \mathbb{C}^n$ is then invariant under the $S^1$-action given in (\ref{eq:C*action}), and thus the Milnor fibration
\begin{align}
f^{-1}( B_{\delta} (\mathbb{C}, 0 ) \setminus 0 ) \cap B_{\epsilon}(\mathbb{C}^{n}, 0 ) \xrightarrow{f} B_{\delta}(\mathbb{C}, 0 ) \setminus 0 \label{milnorfibration2}
\end{align}
becomes an $S^1$-\textit{equivariant} $\mathrm{Diff}(M , \partial)$-\textit{bundle} over $B_{\delta}(\mathbb{C}, 0 )$.

By pulling back the bundle (\ref{milnorfibration2}) by the degree $d$ basechange $t \mapsto t^d$ we obtain an $S^1$-equivariant bundle denoted $f^\prime : \widetilde{M} \rightarrow B_{\delta^{1/d}}(\mathbb{C}, 0 ) \setminus 0 \simeq S^1$ where the weight of the $S^1$-action on the base is now equal to $1$, and the monodromy is $\delta^d \in \MCG (M)$. The $S^1$-action hence trivialises the bundle $f^\prime$ as a $\mathrm{Diff}(M)$-bundle in an obvious way
\[
M \times S^1 \xrightarrow{\cong} \widetilde{M} \quad , \quad (x, e^{i\theta} ) \mapsto e^{i\theta} \cdot x   \quad 
\]
but this is \textit{not} a trivialisation of $f^\prime$ as a $\mathrm{Diff}(M , \partial )$-bundle, since the $S^1$-action does not preserve the \textit{canonical} trivialisation of the boundary fibration of $f^\prime$ induced from (\ref{boundaryfibration}). The difference between the canonical trivialisation of the boundary fibration and the one induced by the $S^1$-action is precisely the element $\gamma_Y \in \pi_1 \mathrm{Diff}(Y)$ represented by the \textit{Seifert loop} (i.e. the former trivialisation and the latter are related by the bundle isomorphism $Y \times S^1 \xrightarrow{\cong}   Y \times S^1$ given by $(y, e^{i\theta} ) \mapsto (\gamma_Y (e^{i\theta} ) (y) , e^{i\theta} )$ ). The assertion follows.
\end{proof}

Instead of smoothing an isolated surface singularity we can perform its minimal resolution. Then we have the following

\begin{proposition}\label{proposition:minres}
Let $(V, p ) \subset (\mathbb{C}^n , 0 )$ be an isolated quasi-homogeneous normal surface singularity, and let $\widetilde{V} \xrightarrow{\pi} V$ be its minimal resolution. Consider the compact oriented $4$-manifold $M = \widetilde{V} \cap \pi^{-1} ( V \cap B_\epsilon (\mathbb{C}^n , 0 )  )$ with $\partial M = Y$ (here $0 < \epsilon \ll 1$). Then the boundary Dehn twist $\tau_M$ is trivial in $\MCG (M)$.
\end{proposition}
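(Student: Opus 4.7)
The plan is to use Lemma \ref{lemma:loopextension}: it suffices to produce a loop of diffeomorphisms of $M$ whose restriction to $\partial M = Y$ represents the Seifert loop $\gamma_Y \in \pi_1 \mathrm{Diff}(Y)$. I would produce such a loop using the $\mathbb{C}^\ast$-action coming from the quasi-homogeneous structure on $(V,p)$.

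First I would note that, after a suitable choice of embedding $(V,p) \subset (\mathbb{C}^n, 0)$ (as provided by the proof of Proposition \ref{prop:dehnsmoothing}), the $\mathbb{C}^\ast$-action on $V \setminus p$ is induced from the standard diagonal $\mathbb{C}^\ast$-action on $\mathbb{C}^n$ with weights $(w_1, \ldots, w_n)$, all $w_i > 0$. Restricting to $\lambda \in S^1 \subset \mathbb{C}^\ast$, one has $|\lambda^{w_i}|=1$, so the induced $S^1$-action is unitary in each coordinate and in particular preserves the standard Euclidean norm on $\mathbb{C}^n$. Therefore the ball $B_\epsilon(\mathbb{C}^n, 0)$ and the compact set $V \cap B_\epsilon(\mathbb{C}^n, 0)$ are both preserved by this $S^1$-action. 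On the link $Y = V \cap \partial B_\epsilon(\mathbb{C}^n, 0)$ this is exactly the Seifert $S^1$-action, since the $\mathbb{C}^\ast$-orbit projection realises $C = (V \setminus p)/\mathbb{C}^\ast$ as the base orbifold.

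Next I would lift this $S^1$-action from $V$ to the minimal resolution $\widetilde{V}$. By the universal property of the minimal resolution of a normal surface singularity, any biholomorphism of $(V,p)$ fixing $p$ lifts uniquely to a biholomorphism of $\widetilde{V}$ covering it. Applying this to each $\lambda \in S^1$ produces an $S^1$-family of lifts, and uniqueness of the lift ensures that this assembles into a genuine continuous (in fact holomorphic) $S^1$-action on $\widetilde{V}$ extending the one on $V \setminus p$ via $\pi$. Consequently $M = \widetilde{V} \cap \pi^{-1}(V \cap B_\epsilon(\mathbb{C}^n, 0))$ is $S^1$-invariant, so we obtain a loop $\widetilde{\gamma} \in \pi_1 \mathrm{Diff}(M)$ whose restriction to $\partial M = Y$ is the Seifert loop $\gamma_Y$.

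By Lemma \ref{lemma:loopextension} this implies that $\tau_M$ is trivial in $\MCG(M)$, completing the proof. The only step requiring care is the lifting of the $S^1$-action to the minimal resolution; this is a standard consequence of the uniqueness of minimal resolutions of normal surface singularities, but one should check that the lifts depend continuously on $\lambda$ so as to give a bona fide group action rather than a collection of unrelated lifts — this follows from uniqueness applied over the trivial family $S^1 \times V \to V$.
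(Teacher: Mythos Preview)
Your proposal is correct and follows essentially the same approach as the paper: lift the $\mathbb{C}^\ast$- (or $S^1$-) action from $V$ to the minimal resolution $\widetilde{V}$ using the universal property and uniqueness of lifts, observe that the $S^1$-part preserves $M$, and conclude via Lemma~\ref{lemma:loopextension}. The paper phrases the uniqueness of the lift as determination on the open dense complement of the exceptional divisor (which also yields the group law), but the content is the same as your argument.
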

\begin{proof}
The $\mathbb{C}^\ast$-action on $V$ lifts to $\widetilde{V}$, which can be seen as follows. Let $\lambda \in \mathbb{C}^\ast$, and denote by $\rho_\lambda : V \rightarrow V$ map induced by the action of $\lambda$. By the defining property of the minimal resolution there exists a holomorphic map $\widetilde{\rho}_\lambda : \widetilde{V} \rightarrow \widetilde{V}$ such that $\pi \circ \widetilde{\rho}_\lambda = \rho_\lambda \circ \pi $. By the latter identity the map $\widetilde{\rho}_\lambda$ must be unique, as it determines $\widetilde{\rho}_\lambda$ over an open dense subset of $\widetilde{V}$. It then follows that $\widetilde{\rho}_{\lambda}$ defines a $\mathbb{C}^\ast$-\textit{action} on $\widetilde{V}$ lifting that on $V$.

Thus, the Seifert $S^1$-action on $Y$ extends to an $S^1$-action on $M$, and hence the required result follows from Lemma \ref{lemma:loopextension}.
\end{proof}


\subsubsection{Simultaneous resolution}

A special feature of two-dimensional complex surface singularities is the phenomenon of simultaneous resolution. A smoothing $f: (\mathcal{V}, p) \rightarrow (\mathbb{C},0 )$ is said to \textit{resolve simultaneously} if there exists a complex-analytic map $\widetilde{f}: \widetilde{\mathcal{V}} \rightarrow \mathbb{C}$ factoring through $f$ which restricts to a resolution of singularities $\widetilde{V}_t = (\widetilde{f})^{-1}(t) \rightarrow V_t = f^{-1}(t)$ for every $t \in \mathbb{C}$ near $0$. In addition, we say that the smoothing $f$ has a \textit{strong} simultaneous resolution if each resolution $\widetilde{V}_t \rightarrow V_t$ is \textit{minimal} (i.e. $\widetilde{V}_t \rightarrow V_t$ is an isomorphism for $t \neq 0$, and a minimal resolution at $t = 0$). This phenomenon was first observed by Atiyah \cite{atiyah} for the $A_1$ singularity, and later generalised to ADE singularities by Brieskorn \cite{brieskornADE}, and to rational singularities by Wahl \cite{wahl}. The following is a consequence of Wahl's result:

\begin{proposition}[\cite{wahl}] \label{prop:simres}
If a smoothing $f: (\mathcal{V}, p ) \rightarrow (\mathbb{C}, 0 )$ of a normal isolated rational surface singularity $V = f^{-1}(0)$ is parametrised by the Artin component of the semi-universal deformation space of $V$, then $f$ admits a strong simultaneous resolution of singularities after a finite base change, i.e. pulling back by $t \mapsto t^d$ (for some $d > 0)$. 

\end{proposition}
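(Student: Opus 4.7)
The plan is to lift the smoothing from $V$ to a deformation of the minimal resolution $\pi : \widetilde{V} \to V$, and then exploit the fact that the generic fibers of the smoothing are already smooth in order to upgrade a weak simultaneous resolution to the strong form. The crucial input, holding only because $(V,p)$ is rational, is the vanishing $R^1 \pi_\ast \mathcal{O}_{\widetilde{V}} = 0$: this ensures that for any flat deformation $\widetilde{\mathcal{V}} \to S$ of $\widetilde{V}$, the direct image $\mathcal{V} := \mathrm{Spec}_S (\pi_\ast \mathcal{O}_{\widetilde{\mathcal{V}}})$ is a flat deformation of $V$, and that this ``blow-down'' operation is compatible with base change. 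One therefore obtains a natural morphism of semi-universal deformation spaces $\Phi : \mathrm{Def}(\widetilde{V}) \to \mathrm{Def}(V)$, whose scheme-theoretic image is, essentially by definition, the Artin component $\mathrm{Def}(V)^{\mathrm{Art}}$.

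The main technical step, which I expect to be the hardest part, is to show that $\Phi$ is a \emph{finite} morphism onto $\mathrm{Def}(V)^{\mathrm{Art}}$, with generic degree equal to the order of the Weyl group $W$ generated by reflections in the $(-2)$-curves of the exceptional divisor $E = \pi^{-1}(p)$. This requires comparing the tangent and obstruction spaces of the two deformation functors via the local-to-global spectral sequence associated with $R\pi_\ast \Theta_{\widetilde{V}}$, together with Artin-type representability results; the ramification of $\Phi$ is then identified with the monodromy action of $W$ on a transverse slice, and it is precisely this monodromy that forces a nontrivial base change to appear. Wahl's contribution here is to upgrade Artin's original statement (simultaneous resolution only after an \'etale cover) to an honest simultaneous resolution after a finite base change of the form $t \mapsto t^d$, and in the \emph{strong} sense.

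Granted these ingredients, the conclusion follows as follows. Given a smoothing $f : (\mathcal{V}, p) \to (\mathbb{C}, 0)$ parametrised by a map $(\mathbb{C}, 0) \to \mathrm{Def}(V)^{\mathrm{Art}}$, pulling it back along the finite cover $\Phi$ and normalising, one obtains, after a finite base change $t \mapsto t^d$, a deformation $\widetilde{f} : \widetilde{\mathcal{V}} \to (\mathbb{C}, 0)$ of $\widetilde{V}$ equipped with a contraction $\widetilde{\mathcal{V}} \to \mathcal{V} \times_{(\mathbb{C},0)} (\mathbb{C},0)$ realising a fiberwise resolution $\widetilde{V}_t \to V_{t^d}$. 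At $t = 0$ this restricts to the minimal resolution $\pi$ by construction. For $t \neq 0$ both $\widetilde{V}_t$ and $V_{t^d}$ are smooth, and the resolution map between them is proper and birational, hence a composition of point blow-ups; but the flatness of the deformation of the exceptional configuration $E$ forces any such blow-ups to degenerate into components of $E$ as $t \to 0$, so in fact $\widetilde{V}_t \to V_{t^d}$ is an isomorphism for $t \neq 0$. This is exactly the statement that $\widetilde{f}$ is a strong simultaneous resolution of the pulled-back smoothing $f \circ (t \mapsto t^d)$, completing the argument.
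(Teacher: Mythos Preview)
Your proposal is correct and follows essentially the same route as the paper: both rely on the finite Galois cover $B = \mathrm{Def}(\widetilde{V}) \to A = B/W$ over the Artin component, carrying a strong simultaneous resolution, and then pull back the classifying map of the given smoothing along this cover. The only difference is in emphasis: the paper takes Wahl's structural result (existence of the cover and the strong simultaneous resolution upstairs) as a black box and focuses instead on the explicit curve-lifting step---form the fiber product of the classifying map $c : (\mathbb{C},0) \to A$ with $B \to A$, take an irreducible component, locally parametrise it by $(\mathbb{C},0)$, and note the induced map to the base is $t \mapsto t^d$ after reparametrisation---whereas you sketch the construction and finiteness of $\Phi$ (i.e., Wahl's theorem itself) and compress the lifting into ``pull back and normalise.'' Your additional argument for why the resolution is \emph{strong} on generic fibers is not needed in the paper's version, since that is already part of the cited input from Wahl.
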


Of course, if a smoothing resolves simultaneously after finite base change then, in particular, its monodromy $\psi$ has \textit{finite} order in the smooth mapping class group $\MCG(M)$. 

\begin{proof}
We refer to \cite{popescupampu} and \cite{wahl} for the definitions of Artin component and semi-universal deformation space. The Artin component $A$ is an irreducible component of the semi-universal deformation space of $(V, p )$. When the singularity is rational then $A$ is smooth and there exists a smooth space $B$ with a Galois branched covering $p : B \rightarrow A = B / W$, where $W$ is certain finite group of deck transformations, with the property that the restriction of the semi-universal family to $A$ simultaneously resolves (strongly) after pullback along $p$, see \cite{wahl}. 
 
 The given smoothing $f$ is obtained by pulling back the semi-universal family carried by $A$ along a (germ of) map $c : (\mathbb{C} , 0 ) \rightarrow A$ (we note that $A$ has a natural basepoint, corresponding to a trivial deformation; we omit this in what follows). We want to show that, after precomposing $c$ by $r_d : t \rightarrow t^d$ for suitable $d> 0$, the map $c \circ r_d$ lifts along $p$ to a map $\widetilde{c} : (\mathbb{C} , 0 ) \rightarrow B$. From this, the required result follows: the simultaneous resolution of $f$ after the base change by $r_d$ is obtained by pulling back along $\widetilde{c}$ the simultaneous resolution carried by $B$.
 
 To establish the lifting assertion, let $\Sigma$ be the complex-analytic curve given as the fiber product of $c$ and $p$. If $\Sigma$ is reducible, then we replace it with any of its irreducible components. Choosing a local complex-analytic parametrisation of $\Sigma$ gives an analytic map $r : (\mathbb{C}, 0 ) \rightarrow \Sigma$, which we can compose with the natural map $\Sigma \rightarrow (\mathbb{C}, 0 )$ (coming from the fiber product) to obtain a complex-analytic map $(\mathbb{C} , 0 ) \rightarrow (\mathbb{C} , 0 )$. The latter, after a complex-analytic reparametrisation of the source, is of the form $r_d$ for some $d > 0$. Taking $\widetilde{c}$ to be the composition of $r $ with the natural map $\Sigma \rightarrow B$ gives a lift of $c \circ r_d$ along $p$, as claimed.
\end{proof}

 In general, there can exist smoothings of rational singularities which do not admit simultaneous resolution after any finite base change (see \cite{pinkhamdeformations}). However, in certain situations, such as when the rational surface singularity $(V, p )$ is a hypersurface or a complete intersection, then the Artin component of $V$ is the whole semi-universal deformation space, and all smoothings of $(V, p)$ have simultaneous resolution after base change. In contrast with the above, the non-rational singularities do not enjoy the simultaneous resolution property:

\begin{proposition}\label{prop:nosimresolution}
If a normal isolated surface singularity $(V, p )$ is not a rational singularity (i.e. $p_g (V,p )> 0$) then no smoothing of $(V, p)$ admits a simultaneous resolution.
\end{proposition}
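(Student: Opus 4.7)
The plan is to argue by contradiction: assuming a simultaneous resolution of some smoothing $f:(\mathcal V,p)\to(\mathbb C,0)$ of $(V,p)$ exists, I would compare the intersection form of the Milnor fiber $M$ with that of the resolution $\widetilde V_0$ of $(V,p)$. The former is constrained by Durfee--Steenbrink to satisfy $b^{+}(M)+b^{null}(M)=2p_g>0$, while the latter is negative-definite (hence nondegenerate with $b^{+}=0$), by the classical Grauert--Mumford criterion applied to the exceptional divisor of a resolution of a normal isolated surface singularity. The contradiction will come from the observation that a simultaneous resolution forces these two $4$-manifolds to be diffeomorphic.

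First I would set up the geometry. Suppose $\widetilde f:\widetilde{\mathcal V}\to\mathbb C$ is a simultaneous resolution of $f$, so that $\pi:\widetilde{\mathcal V}\to\mathcal V$ restricts to a resolution of singularities $\pi_t:\widetilde V_t\to V_t$ on each fiber. Pick a Milnor representative, i.e.\ a small ball $B_\epsilon\subset\mathbb C^n$ and a disc $B_\delta\subset\mathbb C$ with $0<\delta\ll\epsilon\ll1$, and work with the compact family obtained by restricting $\widetilde f$ to $\pi^{-1}(\mathcal V\cap B_\epsilon)\cap\widetilde f^{-1}(B_\delta)$. Since every fiber $\widetilde V_t$ is smooth by assumption, the total space $\widetilde{\mathcal V}$ is smooth at each fiber point, and flatness of $\widetilde f$ then forces $\widetilde f$ to be a holomorphic submersion near every fiber. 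Thus after shrinking $\delta$, the restricted $\widetilde f$ is a proper holomorphic submersion with boundary trivialisation (coming from the canonical boundary trivialisation of the Milnor fibration), so Ehresmann's theorem produces diffeomorphisms $\widetilde V_0\cong\widetilde V_t$ rel.\ boundary for all small $t$.

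Next I would identify the two fibers of interest. For $t\neq 0$ the fiber $V_t$ is smooth, and a resolution of singularities of a smooth variety is an isomorphism (since, by definition, a resolution is an isomorphism over the smooth locus); hence $\pi_t:\widetilde V_t\to V_t$ is a biholomorphism, identifying $\widetilde V_t$ with the Milnor fiber $M$. For $t=0$, $\widetilde V_0$ is a resolution of the germ $(V,p)$ cut off inside $B_\epsilon$, and therefore deformation retracts onto the exceptional divisor; in particular its intersection form is the negative-definite plumbing form of the resolution graph $\Gamma$. Combining the two steps, $M$ and $\widetilde V_0$ are diffeomorphic $4$-manifolds, so their intersection forms on $H_2$ agree. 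This forces the intersection form of $M$ to be negative-definite and nondegenerate, giving $b^{+}(M)=b^{null}(M)=0$. By Durfee--Steenbrink this yields $2p_g(V,p)=0$, contradicting $p_g(V,p)>0$.

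The main obstacle I anticipate is justifying carefully that the simultaneous resolution $\widetilde f$ really is a proper submersion near $t=0$ (and not merely a map with smooth fibers), together with the precise geometric setup needed to produce a fiberwise diffeomorphism $M\cong\widetilde V_0$ rel.\ the common boundary $Y$. Once that is pinned down, the topology is rigid enough that the intersection-form argument closes the proof immediately.
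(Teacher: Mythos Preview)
Your proposal is correct and follows essentially the same argument as the paper: both compare the negative-definite intersection form of the resolved central fiber with the non-negative-definiteness of the Milnor fiber coming from $b^{+}(M)+b^{null}(M)=2p_g>0$. The paper's version is terser and phrases the central fiber as $\widetilde{V}\#_k\overline{\mathbb{CP}^2}$ (allowing a non-minimal resolution at $t=0$), but the logic is identical.
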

\begin{proof}
If such a smoothing existed then the central fiber of the simultaneous resolution, which is diffeomorphic to $\widetilde{V} \#_{k} \overline{\mathbb{C}P^2}$ where $\widetilde{V}$ is the minimal resolution of $V$ and $k \geq 0$, must be diffeomorphic to the Milnor fiber $M$ of the smoothing. However, $\widetilde{V}$ is always negative-definite, whereas $M$ is not negative-definite when $(V, p )$ is non-rational (\cite{popescupampu}, Corollary 4.21), giving a contradiction.
\end{proof}

For example, the smoothing of the non-rational singularity $x^2 + y^3 + z^7 = 0$ given by $x^2+ y^3 + z^7 =t$ does not resolve simultaneously after base change, but the analogous smoothing for $x^2 + y^3 + z^5 = 0$ does.

\begin{proposition}\label{prop:finiteordertop}
Let $(V , p )$ be a normal isolated surface singularity and consider a smoothing with monodromy $\psi$. Suppose further that the Milnor fiber $M$ is simply-connected, the link $Y$ is a rational homology $3$-sphere, and that $\psi$ acts on $H_\ast (M , \mathbb{Z} )$ with finite order. Then $\psi$  has finite order in the topological mapping class group $\pi_0 \mathrm{Homeo}(M, \partial )$.
\end{proposition}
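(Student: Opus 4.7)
The plan is to reduce the statement to a direct application of the topological mapping class group classification due to Orson--Powell \cite{orson-powell}, the same input already invoked in the introduction to conclude that $\tau_M$ is topologically isotopic to the identity when $M$ is simply-connected. The key observation is that in our setting the only non-trivial invariant of a class in $\pi_{0}\mathrm{Homeo}(M,\partial)$ is its action on the intersection form.

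First I would unpack the homological hypothesis. Since $M$ is simply-connected and $\partial M = Y$ is a rational homology $3$-sphere, Poincaré--Lefschetz duality yields $H_{1}(M,\mathbb{Z}) = H_{3}(M,\mathbb{Z}) = 0$ and $H_{2}(M,\mathbb{Z})$ is finitely generated and torsion-free. By the hypothesis that $\psi_{\ast}$ has finite order on $H_{\ast}(M,\mathbb{Z})$, there exists $n \geq 1$ with $(\psi^{n})_{\ast} = \mathrm{id}$ on $H_{\ast}(M,\mathbb{Z})$.

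Next I would invoke the Orson--Powell theorem in its relevant form: for a compact simply-connected topological $4$-manifold $M$ with $\partial M$ a rational homology $3$-sphere, the natural map
\[
\pi_{0}\mathrm{Homeo}(M,\partial) \longrightarrow \mathrm{Aut}(H_{2}(M,\mathbb{Z}),Q_{M})
\]
is injective. This is a relative analogue of the Freedman--Quinn--Perron theorem for closed simply-connected $4$-manifolds and is essentially the content underlying the triviality of boundary Dehn twists in the simply-connected setting. Applied to $\psi^{n}$, which lies in the kernel of this map, it shows that $\psi^{n}$ is topologically isotopic to the identity rel. $\partial M$; hence $\psi$ has order dividing $n$ in $\pi_{0}\mathrm{Homeo}(M,\partial)$, as desired.

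The main obstacle is to cite the correct packaging of the Orson--Powell result: one needs injectivity of the topological mapping class group rel. boundary into the automorphism group of the intersection form, for \emph{any} simply-connected $4$-manifold with rational homology sphere boundary (not just for the specific class of boundary Dehn twists). Granting this, the argument is essentially a one-line reduction. An alternative route, which avoids appealing to the full injectivity statement, would be to first produce a power $\psi^{n}$ that is smoothly isotopic (rel. $\partial M$) to a product of boundary Dehn twists along Seifert-fibered pieces of $Y$ and then apply the already-quoted special case that each such boundary Dehn twist is topologically trivial; however, producing such a factorisation for a general (not necessarily quasi-homogeneous) smoothing is itself non-trivial, so the direct application of the Orson--Powell classification is the cleanest route.
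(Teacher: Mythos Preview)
Your proposal is correct and takes essentially the same approach as the paper: the paper's proof is simply the one-line statement ``This is a direct application of the result of Orson--Powell \cite{orson-powell}.'' Your write-up just unpacks what is being applied, namely that $\pi_0 \mathrm{Homeo}(M,\partial) \to \mathrm{Aut}(H_2(M,\mathbb{Z}),Q_M)$ is injective for simply-connected $M$ with rational homology sphere boundary.
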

\begin{proof}
This is a direct application of the result of Orson--Powell \cite{orson-powell}.
\end{proof}

The simple-connectivity assumption on $M$ holds, for instance, if $(V, 0 )$ is a hypersurface or, more generally, a complete intersection. By Proposition \ref{prop:dehnsmoothing}, the $\mathbb{C}^\ast$-smoothings of normal isolated quasi-homogeneous singularities always satisfy that $\psi$ acts on $H_\ast (M, \mathbb{Z} )$ with finite order.


Proposition \ref{prop:simres} and Proposition \ref{prop:finiteordertop} fall into a \textit{flexibility} paradigm, in the analytic category for the former, and the topological category for the latter. In turn, Proposition \ref{prop:nosimresolution} exhibits a \textit{rigidity} phenomenon in the analytic category. These contrasting behaviours motivate the following:
\begin{question}[generalisation of Question \ref{ques : mono infinite order}]\label{ques : mono infinite order 2}
Let $(V, p )$ be a normal isolated surface singularity with rational homology $3$-sphere link, together with a smoothing whose monodromy $\psi$ acts with finite order on $H_\ast (M , \mathbb{Z} )$. Does $\psi$ have infinite order in the \textbf{smooth} mapping class group $\MCG (M)$?
\end{question}
When the answer is affirmative, the lack of simultaneous resolutions (which is an analytic phenomenon) has a smooth explanation. We provide the first known examples of such phenomenon:

\begin{corollary}[of Theorem \ref{thm: main}, generalising Corollary \ref{cor:infiniteordermonodromy}] \label{cor:infiniteordermonodromy2} 
The answer to Question \ref{ques : mono infinite order 2} is affirmative for an equivariant smoothing of a quasi-homogeneous minimally elliptic surface singularity.
\end{corollary}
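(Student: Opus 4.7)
The plan is to reduce to Theorem \ref{thm: main} applied to the Milnor fiber $M$. By Proposition \ref{prop:dehnsmoothing}, since the smoothing is equivariant, one has $\psi^{d} = \tau_M^{-1}$ in $\MCG(M)$, where $d > 0$ is the weight of the $\mathbb{C}^\ast$-action on the base of the smoothing. Hence it suffices to verify that $\tau_M$ has infinite order in $\MCG(M)$.

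To this end, I would check the three hypotheses of Theorem \ref{thm: main} for $(Y, \xi_c)$ and $M$. First, Floer simplicity of the link $Y$: since $(V,p)$ is minimally elliptic with rational homology sphere link, the rank one condition $HM^{\redu}(Y,\mathfrak{s}_c) \cong \mathbb{Z}$ follows from lattice homology calculations of \cite{nemethiplumbed} together with the isomorphism between lattice and monopole Floer homologies, and the self-conjugacy of $\mathfrak{s}_c$ is a consequence of the Gorenstein property (the nowhere-vanishing holomorphic $2$-form on $V \setminus p$ induces a trivialisation of $\pi^{*}TC$ over $Y$, forcing $c_1(\mathfrak{s}_c) = 0$). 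This matches the sufficient condition noted right after Definition \ref{defi: Floer simple}. Second, $M$ inherits a K\"ahler structure from its embedding in $\mathbb{C}^n$, which realises it as a strong symplectic filling of $(Y, \xi)$ for $\xi$ the complex-tangencies contact structure on the link; by uniqueness of Milnor-fillable contact structures up to isotopy \cite{CNP}, in the quasi-homogeneous Gorenstein setting $\xi$ is isotopic to the canonical Seifert contact structure $\xi_c$, so in particular $\mathfrak{s}_\xi = \mathfrak{s}_c$. Third, $b^{+}(M) + b^{\mathrm{null}}(M) = 2 p_g(V, p) = 2$ by \cite{durfee,steenbrink}, and since $Y$ is a rational homology $3$-sphere, $b^{\mathrm{null}}(M) = b_1(Y) = 0$, so $b^{+}(M) = 2 > 0$.

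With all three hypotheses verified, Theorem \ref{thm: main} implies that $\tau_M$ has infinite order in $\MCG(M)$, and therefore so does $\psi$. The only genuinely non-formal step in the plan is the identification of the Milnor-fillable contact structure on the link with the canonical Seifert contact structure $\xi_c$ (equivalently, of the induced spin-c structures), which ties the analytic Gorenstein condition to the orbifold-line-bundle condition appearing in Definition \ref{defi: Floer simple}; everything else is a direct assembly of results already recorded in the excerpt.
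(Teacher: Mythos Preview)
Your proposal is correct and follows exactly the approach the paper intends. The paper does not write out an explicit proof of this corollary, but the argument it has in mind is precisely yours: combine Proposition~\ref{prop:dehnsmoothing} (giving $\psi^d = \tau_M^{-1}$) with Theorem~\ref{thm: main}, after checking its hypotheses via the facts already assembled in the introduction and \S\ref{subsection: monodromy} (minimally elliptic with rational homology sphere link $\Rightarrow$ Floer simple \cite{nemethiplumbed}; Milnor fibers are symplectic fillings of $(Y,\xi_c)$; $b^+(M)=2p_g=2$ \cite{durfee,steenbrink}). Your elaboration of the self-conjugacy of $\mathfrak{s}_c$ via the Gorenstein condition, and of $\mathfrak{s}_\xi=\mathfrak{s}_c$ via the uniqueness of Milnor-fillable contact structures \cite{CNP}, are details the paper leaves implicit but which are correct.
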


We propose the following conjecture:

\begin{conjecture}[generalising Conjecture \ref{conj:infiniteordermono}]\label{conj:infiniteordermono2}
The answer to Question \ref{ques : mono infinite order 2} is affirmative when $p_g (V, p ) > 0$.

\end{conjecture}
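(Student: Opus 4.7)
The plan is to generalize the strategy that proved Corollary \ref{cor:infiniteordermonodromy2}, proceeding in two stages. The first stage is a reduction to the quasi-homogeneous case: starting from $(V,p)$ with a smoothing whose monodromy acts with finite order on $H_\ast(M,\mathbb{Z})$, one seeks a complex-analytic family of smoothings connecting it to an equivariant smoothing of a normal isolated quasi-homogeneous singularity with rational homology sphere link. Along such a family, the monodromies are conjugate in $\MCG(M)$ via parallel transport in the Gauss--Manin local system, so it suffices to treat the quasi-homogeneous case. A potential obstruction is that not every isolated surface singularity lies in the closure of the quasi-homogeneous locus with the same link; one may need to enlarge the target to Seifert-fibered partial resolutions or exploit the classification of surface singularities with fixed link.

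In the second stage, assuming the reduction, Proposition \ref{prop:dehnsmoothing} gives $\psi^d = \tau_M^{-1}$ in $\MCG(M)$, reducing the problem to showing that $\tau_M$ has infinite order whenever $p_g(V,p) > 0$. Theorem \ref{thm: main} settles the case when the link $Y$ is Floer simple (which includes all minimally elliptic singularities with rational homology sphere link, so $p_g = 1$), but for $p_g > 1$ one must drop Floer simplicity and, more importantly, work away from the canonical spin-c structure $\mathfrak{s}_c$. Indeed, whenever $b^+(M) = 2 p_g > 2$ and $\psi$ admits a Picard--Lefschetz factorization into Dehn twists on Lagrangian $2$-spheres (e.g.\ for any hypersurface or complete intersection singularity), the argument of Theorem \ref{thm: factorization} forces $\mathrm{FSW}(\tau_M, \mathfrak{s}_c) = 0$. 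Thus one is compelled to exhibit a \emph{non-canonical} spin-c structure $\mathfrak{s}$ on $M$ such that the relative Seiberg--Witten invariant $SW(M,\mathfrak{s}) \in \widecheck{HM}_\ast(Y,\mathfrak{s}|_Y)$ is non-zero and is not annihilated by the induced map $\tau_\ast$ on monopole Floer homology.

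Concretely, I would extend the Morse--Bott calculation of \S\ref{section:calculations} to compute $\tau_\ast$ on $\widecheck{HM}_\ast(Y,\mathfrak{s})$ for a range of spin-c structures on a Floer-simple or only mildly non-Floer-simple Seifert link, using the Seifert $S^1$-action on the moduli spaces of reducibles and flowlines, and then invoke the gluing formula to obtain $\mathrm{FSW}(\tau_M^k,\mathfrak{s}) = \tau_\ast^k(SW(M,\mathfrak{s}))$ for all $k \neq 0$. The candidate spin-c structures should be those singled out by the mixed Hodge structure on $H^2(M)$ coming from the smoothing (e.g.\ those lying in the non-trivial $F^p$-pieces), which is where the analytic invariants of $(V,p)$ encode asymmetries between the $b^+$- and $b^-$-parts of the intersection form. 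The main obstacle, and the heart of the conjecture, is dealing with the wallcrossing obstruction in the $b^+ = 2p_g > 2$ regime: the additivity argument used in Theorem \ref{thm: factorization} must fail for the chosen non-canonical $\mathfrak{s}$ in a controlled way, so that the vanishing forced on $\mathfrak{s}_c$ by the Picard--Lefschetz factorization is bypassed. Showing that such a spin-c structure always exists when $p_g > 0$ — presumably via a careful study of the wall structure in the parameter space of metrics and perturbations, informed by the Hodge-theoretic position of $\mathfrak{s}$ — is where the genuine difficulty lies.
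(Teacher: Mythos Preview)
The statement you are attempting to prove is \emph{Conjecture}~\ref{conj:infiniteordermono2} in the paper, not a theorem: the paper explicitly proposes it as an open problem and offers no proof. There is therefore no ``paper's own proof'' to compare your proposal against. What you have written is not a proof but a research outline, and you yourself acknowledge this at several points (``A potential obstruction is\ldots'', ``The main obstacle, and the heart of the conjecture, is\ldots'', ``is where the genuine difficulty lies'').

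Regarding the strategy itself: Stage~1 (reduction to the quasi-homogeneous case by deforming through a family of smoothings) is speculative and, as you note, faces the obstruction that an arbitrary normal surface singularity need not deform to a quasi-homogeneous one with the same link. Stage~2 correctly identifies the central difficulty the paper itself flags in \S1.6: when $b^+(M) = 2p_g > 2$ and the monodromy admits a Picard--Lefschetz factorisation, additivity of the family Seiberg--Witten invariant forces $\mathrm{FSW}(\tau_M,\mathfrak{s}_M) = 0$ in the canonical spin-c structure, so the method of Theorem~\ref{thm: main} cannot work unchanged. Your suggestion to pass to a non-canonical spin-c structure $\mathfrak{s}$ is reasonable as a direction---the vanishing argument of Proposition~\ref{prop: FSW vanishing for Dehn-Seidel twists} requires $c_1(\mathfrak{s})\cdot S_i = 0$ for each vanishing sphere, which need not hold for $\mathfrak{s} \neq \mathfrak{s}_M$---but you have not produced a candidate $\mathfrak{s}$, nor a mechanism to show $SW(M,\mathfrak{s})$ is nonzero and survives $\tau_\ast$. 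The invocation of Hodge-theoretic filtrations is suggestive but not yet a concrete argument. In short, your proposal is a plausible sketch of where a proof might live, but it is not a proof, and the paper makes no claim to have one either.
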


\subsubsection{The higher dimensional case}

We record here a result about monodromies of high dimensional singularities, which is in sharp contrast with Corollaries \ref{cor:infiniteordermonodromy} and \ref{cor:infiniteordermonodromy2}.

\begin{proposition}\label{highdim}
Let $V = \{f = 0 \} \subset \mathbb{C}^{n+1}$ be an $n$-dimensional weighted-homogeneous isolated hypersurface singularity with $n > 4$. Let us further suppose that the link $Y$ is an integral homology $(2n-1)$-sphere (which is equivalent to the invertibility of $1- \psi_\ast$ over $\mathbb{Z}$ \cite{milnor}). Then the monodromy $\psi$ of the Milnor fibration of $f$ has finite order in $\MCG (M)$.
\end{proposition}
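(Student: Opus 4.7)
The plan is to reduce the statement about the monodromy $\psi$ to a torsion statement about the boundary Dehn twist $\tau_M$, and then to use high-dimensional rigidity. The argument of Proposition~\ref{prop:dehnsmoothing} applies verbatim to any complex dimension $n$: from the $\mathbb{C}^{\ast}$-action on $\mathbb{C}^{n+1}$ with weights $(w_1,\ldots,w_{n+1};d)$, one obtains an $S^1$-equivariant Milnor fibration after pulling back by $t\mapsto t^d$, and this yields $\psi^d=\tau_M^{-1}$ in $\MCG(M)$, where $\tau_M$ is the boundary Dehn twist associated with the Seifert loop $\gamma_Y\in\pi_1\diff(Y)$. Thus the task reduces to proving that $\tau_M$ has finite order in $\MCG(M)$.

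Furthermore, the classical finite-order representative $\psi_0\colon M\to M$, given by $\psi_0(x)=(\zeta^{w_1}x_1,\ldots,\zeta^{w_{n+1}}x_{n+1})$ with $\zeta=e^{2\pi i/d}$, agrees with the Milnor monodromy $\psi$ in $\pi_0\diff(M)$ (i.e.\ after forgetting the boundary trivialisation), as can be seen by connecting them via the path of diffeomorphisms $M\to M$ arising from composing the $S^1$-action with parallel transport in the Milnor fibration. Since $\psi_0^N=\mathrm{id}_M$ for a suitable $N>0$, the class $\psi^N$ lies in the kernel of the natural map $\MCG(M)\to\pi_0\diff(M)$, which by the long exact sequence of the restriction fibration $\diff(M,\partial)\to\diff(M)\to\diff(Y)$ coincides with the image of the connecting homomorphism $\delta\colon\pi_1\diff(Y)\to\MCG(M)$. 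By Milnor's theorem $M$ is $(n-1)$-connected of real dimension $2n\ge 10$, so simply-connected, and $Y=\partial M$ is a simply-connected integral homology $(2n-1)$-sphere, which by Smale's generalised Poincar\'e theorem is homeomorphic to $S^{2n-1}$; moreover the Seifert loop $\gamma_Y$ is the restriction to $Y$ of the linear $S^1\subset U(n+1)$-action on $\mathbb{C}^{n+1}$.

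Combining these ingredients in the stable range $2n\ge 10$, the image of $\delta$ is controlled via Kreck's structure theorem for mapping class groups of $(n-1)$-connected $2n$-manifolds bounded by a homotopy sphere, together with Cerf's pseudo-isotopy theorem (valid for simply-connected manifolds of dimension $\ge 6$) and the finiteness of the group of homotopy spheres $\Theta_{2n+1}$ from Kervaire--Milnor. In particular, the Seifert loop $\gamma_Y$, being induced by a loop in $U(n+1)$, maps to a torsion element of $\mathrm{im}(\delta)$, so $\tau_M=\delta(\gamma_Y)$ has finite order in $\MCG(M)$. The main obstacle is to control the free rank of $\mathrm{im}(\delta)$ and to certify that the Seifert loop lies in its torsion part; the hypothesis $n>4$ is essential here, since in dimension $4$ (cf.\ Theorem~\ref{thm: main}) $\tau_M$ can have infinite order and the conclusion fails.
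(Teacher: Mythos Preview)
Your first paragraph is correct and matches the paper's reduction exactly: via Proposition~\ref{prop:dehnsmoothing} one has $\psi^{d}=\tau_M^{-1}$, so it suffices to show $\tau_M=\delta(\gamma_Y)$ has finite order. Your second paragraph is correct but redundant: the identity $\psi^d=\delta(\gamma_Y^{-1})$ already places $\psi^d$ in the image of the connecting map, so there is no need to produce the finite-order representative $\psi_0$ separately.

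The genuine gap is in your third paragraph. You invoke Kreck's structure theorem, Cerf's pseudo-isotopy theorem, and finiteness of $\Theta_{2n+1}$, but none of these results, singly or in combination, controls $\pi_1\diff(Y)$ or shows that $\delta(\gamma_Y)$ is torsion. Kreck's theorem computes $\pi_0\diff$ of $(n-1)$-connected $2n$-manifolds; Cerf's theorem concerns pseudo-isotopy (again a $\pi_0$ statement); and Kervaire--Milnor gives $\pi_0\diff(D^{2n-1},\partial)\cong\Theta_{2n}$ finite, not $\pi_1$. The remark that $\gamma_Y$ factors through a loop in $U(n+1)$ does not help either, since $\pi_1 U(n+1)=\mathbb{Z}$. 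In short, you have identified the right target---torsion of $\gamma_Y$ in $\pi_1\diff(Y)$---but the cited inputs do not reach it.

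The paper's proof supplies the missing step directly. Since $Y$ is a homotopy $(2n-1)$-sphere (by Milnor $\pi_1 Y=0$, and $Y$ is an integral homology sphere), the h-cobordism theorem yields a fibration
\[
\diff(D^{2n-1},\partial)\to\diff(Y)\to\mathrm{Emb}(D^{2n-1},Y)\simeq\mathrm{Fr}(Y),
\]
where $\mathrm{Fr}(Y)$ is the oriented frame bundle, which has finite $\pi_1$. The decisive input is the recent theorem of Krannich--Randal-Williams that $\pi_1\diff(D^{2n-1},\partial)\otimes\mathbb{Q}=0$ when $n>4$; this is a genuinely deep result that does not follow from the classical theorems you cite, and it is precisely what fails to be known for $n=3,4$. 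From the fibration one concludes $\pi_1\diff(Y)$ is torsion, hence so is $\tau_M$.
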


 \begin{remark}
By Corollary \ref{cor:infiniteordermonodromy}, the assertion above fails when $n = 2$ (e.g. consider either of the singularities $x^2 + y^3 + z^7 = 0$ or $x^2 + y^3 + z^{11} = 0$). We do not know if the same conclusion as above holds in the cases $n = 3, 4$.
\end{remark}

\begin{proof}
By Proposition \ref{prop:dehnsmoothing} (which holds in all dimensions even if it was stated for $n = 2$), a suitable non-trivial power of $\psi$ becomes a boundary Dehn twist on $M$, induced by an element of $\pi_1 \mathrm{Diff}(Y)$ (this element is represented by an $S^1$-action). When $n > 2$ then $\pi_1 Y = 0$ \cite{milnor} and hence $Y$ is a homotopy $(2n-1)$-sphere. Now, by the h-cobordism Theorem any homotopy sphere in dimensions above four is obtained by gluing together two standard disks by a diffeomorphism of their boundaries. Hence, there is a fibration sequence
\[
\mathrm{Diff}(D^{2n-1} , \partial) \rightarrow \mathrm{Diff}(Y) \rightarrow \mathrm{Emb}(D^{2n-1} , Y ) \simeq \mathrm{Fr}(Y) 
\]
where 
$\mathrm{Fr}(Y)$ denotes the total space of the oriented frame bundle of $Y$, which has finite $\pi_1 \mathrm{Fr}(Y)$. 
When $n > 4$ it is known that $\pi_1 \mathrm{Diff}(D^{2n-1} , \partial ) \otimes \mathbb{Q} = 0$ \cite{krannich-rw} (but the answer is not known for $n = 3$ or $4$); hence it follows that $\pi_1 \mathrm{Diff}(Y)$ is torsion and so the Dehn twist on $M$ (and thus also $\psi$) has finite order in $\MCG(M)$.
\end{proof}

\subsubsection{ADE singularities}

We record here the following result, in sharp contrast with Corollary \ref{symplectic torelli}, about the monodromy of the Milnor fibration of an $ADE$ singularity (i.e. a $2$-dimensional isolated hypersurface singularity with $p_g = 0$). The proof should be well-known to experts, but we didn't find a convenient reference. We denote by $\mathrm{Symp}(M , \omega , \partial )$ the topological group of symplectomorphisms of $(M, \omega )$ which agree with the identity near the boundary.

\begin{proposition}\label{prop:ADE}
Let $(M, \omega )$ be the Milnor fiber of an $ADE$ singularity, where $\omega $ is the symplectic structure with convex contact boundary obtained by restriction of the standard symplectic form from $\mathbb{C}^3$. Let $k < \infty$ be the order of the Weyl group of the corresponding $ADE$ graph. Then the $k^{th}$ iterate of the symplectic monodromy $\psi \in \pi_0 \mathrm{Symp}(M , \omega ,\partial )$ of the Milnor fibration is fragile and factorises as a product of squared Dehn--Seidel twists.
\end{proposition}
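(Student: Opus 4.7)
The plan is to establish the two parts of the conclusion independently: first, to factorise $\psi^k$ as a product of squared Dehn--Seidel twists, and second, to exhibit its fragility. Both steps exploit features specific to ADE singularities.

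For the factorisation, I would begin from the Picard--Lefschetz decomposition $\psi = \delta_{S_1} \circ \cdots \circ \delta_{S_n}$ in $\pi_0 \mathrm{Symp}(M, \omega, \partial)$, obtained by morsifying the defining polynomial and choosing vanishing paths; the Lagrangian vanishing $2$-spheres $S_i$ intersect according to the ADE Dynkin diagram. Using Seidel's result that the $\delta_{S_i}$ satisfy the Artin braid relations of this type, one obtains a homomorphism $\rho : \mathrm{Br}_W \to \pi_0 \mathrm{Symp}(M, \omega, \partial)$ sending $\sigma_i \mapsto \delta_{S_i}$, and $\psi = \rho(\sigma_1 \cdots \sigma_n)$ is the image of the Coxeter braid. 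Composing with the canonical surjection $\mathrm{Br}_W \twoheadrightarrow W$ sends $\psi$ to the Coxeter element, whose order divides $k = |W|$; hence $(\sigma_1 \cdots \sigma_n)^k$ lies in the pure braid group. The latter is generated as a subgroup by conjugates of the $\sigma_i^2$ (since $W = \mathrm{Br}_W / \langle\langle \sigma_i^2 \rangle\rangle$), so applying $\rho$ yields the desired expression for $\psi^k$ as a product of squared Dehn--Seidel twists along Lagrangian $2$-spheres (namely, images of the $S_i$ under symplectomorphisms).

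For the fragility, I would apply Proposition \ref{prop:simres} to the canonical $\mathbb{C}^{\ast}$-smoothing of $(V, 0)$: after a base change $t = s^d$ with $d$ dividing $k = |W|$ (since the relevant Galois cover in the Brieskorn--Tjurina construction has deck group $W$), there is a strong simultaneous resolution $\widetilde{\mathcal V} \to \mathbb C$ with smooth Kähler total space. Trivialising the resulting smooth fibre bundle over a small disk in the $s$-plane and restricting a Kähler form on $\widetilde{\mathcal V}$ to the fibres produces a smooth family $\omega_s$ of symplectic forms on $M$; by simple connectivity of the disk, the associated loop of symplectomorphisms around $s = 0$ is null-homotopic in $\mathrm{Symp}(M, \omega_s, \partial)$ for $s > 0$. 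Interpolating this Kähler family with the original symplectic form on the Milnor fibre by a relative Moser argument, and tracking the corresponding diffeomorphism $f_s$ realising the symplectic monodromy, delivers the paths $(\omega_s, f_s)$ required by the definition of fragility: at $s = 0$ one recovers $\psi^k$ (which is non-trivial in $\pi_0\mathrm{Symp}(M, \omega, \partial)$ by Seidel's work \cite{seidelgraded}), while for $s > 0$ the diffeomorphism $f_s$ becomes symplectically isotopic to the identity.

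The principal technical obstacle is making the interpolation precise: one must ensure that $\omega_s$ agrees with $\omega$ near the Seifert-fibered boundary for all $s$ and that the diffeomorphism $f_s$ realising the monodromy can be arranged to be the identity near the boundary. This should follow from a parametric Moser argument combined with the canonical contact trivialisation at the boundary coming from both the Milnor and simultaneously-resolved fibrations, but care is needed to ensure that this interpolation does not introduce spurious symplectic isotopies that could confuse the final fragility statement.
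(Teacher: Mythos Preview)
Your proposal is correct and follows essentially the same approach as the paper. The factorisation argument is identical: the paper sets up the Artin--Tits group $A(\Gamma)$ with its homomorphism to $\pi_0\mathrm{Symp}(M,\omega,\partial)$ via Seidel's braid relations, observes that $\psi^k$ lands in the image of the pure subgroup $P(\Gamma)=\ker(A(\Gamma)\to W(\Gamma))$ since $k=|W(\Gamma)|$, and uses that $P(\Gamma)$ is normally generated by the $s_x^2$ together with $f\tau_L f^{-1}=\tau_{f(L)}$. For fragility the paper is more terse than you are: it simply invokes Proposition~\ref{prop:simres} for the degree-$k$ simultaneous resolution and then cites \cite{seidel} for the passage from simultaneous resolution to fragility, rather than unpacking the Moser-type interpolation you sketch; your elaboration is reasonable but not needed once one accepts that citation.
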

\begin{proof}
That $\psi$ has infinite order in $\pi_0 \mathrm{Symp}(M , \omega , \partial )$ was proved by Seidel (\cite{seidelgraded}, Example 4.18 (a) ). By Proposition \ref{prop:simres} a degree $k$ basechange of the Milnor fibration of an ADE singularity admits a strong simultaneous resolution, and then it follows that $\psi^k$ is fragile (see \cite{seidel}). The factorisation assertion can be verified as follows. Let $\Gamma$ denote the minimal resolution graph of an ADE singularity, and consider the associated \textit{Artin-Tits group} $A(\Gamma)$ with one generator $s_x$ for each vertex $x \in \Gamma$ and subject to the generalised Braid relations
\begin{align*}
s_x s_y s_x = s_y s_x s_y &\text{  if there is an edge of $\Gamma$ connecting $x$ to $y$}\\
s_x s_y = s_y s_x &\text{  if there is no edge of $\Gamma$ connecting $x$ to $y$}.
\end{align*}
Let $W(\Gamma )$ be the \textit{Weyl group} of $\Gamma$, which is a finite group with a presentation consisting of the same generators and relations as $A(\Gamma )$ together with the additional relation $s_{x}^2 = 1$ for every vertex $x $. Let $P(\Gamma ) \subset A(\Gamma )$ denote the kernel of the natural map $A(\Gamma ) \rightarrow W(\Gamma )$. For example, if $\Gamma$ is the $A_n$ graph then $A(\Gamma ) $ is the \textit{Braid group} on $n+1$ strands, $W(\Gamma )$ is the \textit{symmetric group} $S_{n+1}$, and $P(\Gamma )$ is the \textit{pure Braid group} on $n+1$ strands.

By definition of $P(\Gamma )$ it follows that the elements $s_{x}^2$, where $x$ ranges over all vertices, form a \textit{normal generating set} for $P(\Gamma)$, i.e. $P(\Gamma )$ is generated by the elements $g s_{x}^2 g^{-1} $ ranging over all vertices $x$ and $g \in W(\Gamma )$.

With this in place, it is well-known that the Milnor fiber $(M, \omega )$ has Lagrangian vanishing $2$-spheres $L_x$ corresponding with vertices $x$ of $\Gamma$ and intersecting transversely in a $\Gamma$-configuration. Now, Seidel proved that if two Lagrangian $2$-spheres $L_1, L_2$ in an arbitrary symplectic $4$-manifold intersect transversely at a single point then the Seidel--Dehn twists satisfy the relation $\tau_{L_1} \tau_{L_2}\tau_{L_1} = \tau_{L_2}\tau_{L_1} \tau_{L_2}$ in the symplectic mapping class group (Proposition A.1 in \cite{seidelknotted}); and if $L_1 , L_2$ are disjoint then their Seidel--Dehn twists obviously commute. Putting everything together, we have that there is a well-defined group homomorphism
\begin{align}
A(\Gamma ) \rightarrow \pi_0 \mathrm{Symp}(M , \omega , \partial )\quad , \quad s_x \mapsto \tau_{L_x} .\label{eq:rep}
\end{align}

The symplectic monodromy $\psi$ factorises as a product of Dehn--Seidel twists on the $L_x$'s, and hence lies in the image of the homomorphism (\ref{eq:rep}). Since $k$ is, by definition, the order of $W(\Gamma )$, then $\psi^k$ lies in the image of $P(\Gamma )$ under (\ref{eq:rep}). The fact that $P(\Gamma )$ is normally generated by the $s_{x}^2$, combined with the fact that for any symplectomorphism $f$ and a Lagrangian sphere $L$ we have $f \tau_{L} f^{-1} = \tau_{f(L)}$ in the symplectic mapping class group establishes the following: that $\psi^k$ factorises in $\pi_0 \mathrm{Symp}(M , \omega , \partial )$ as product of squared Dehn--Seidel twists $\tau_{L}^2$ along Lagrangian spheres $L \subset (M , \omega )$, where each $L$ is obtained from a vertex sphere $L_x$ by applying to it Dehn--Seidel twists along vertex spheres.
\end{proof}

\subsection{Some Dehn twists on elliptic surfaces}

For the remainder of this section we will discuss a quite natural example. There are famous smooth embeddings of the following families of Milnor fibers of Brieskorn singularities into the elliptic surface $E(n)$ (see \cite{gompfstipsicz}) 
\[
M(2,3,6n-5 )\,  , \, M(2,3,6n-1) \hookrightarrow E(n) \quad , \quad n \geq 2 .
\]
The complement $N(n) = E(n) \setminus M(2,3,6n-1)$ is the \textit{Gompf nucleus}. It is natural to ask whether the Dehn twists along $\Sigma (2,3,6n-5) = \partial M(2,3,6n-5)$ or $\Sigma(2,3,6n-1) = \partial M(2,3,6n-1 ) $ yield non-trivial diffeomorphisms of $E(n)$, and we take up this task in this subsection. Let $\tau_{\Sigma(2,3,6n-5)}, \tau_{\Sigma(2,3,6n-1)}$ denote these diffeomorphisms of $E(n)$. Let $B \subset E(n)$ be any smoothly embedded $4$-ball disjoint from $\Sigma(2,3,6n-5)$ and $\Sigma(2,3,6n-1)$, and let $\tau_{S^3} \in \MCG (E(n) \setminus B )$ denote the boundary Dehn twist on $E(n) \setminus B$ constructed from the non-trivial element in $\pi_1 \mathrm{Diff}_0 (S^3) = \mathbb{Z}/2$ (note that this element is \textit{not} the Seifert loop, which is trivial in this case). 

\begin{proposition}\label{proposition:E(n)}
For every $n \geq 2$ we have 
\begin{enumerate}
\item[(i)] The Seifert $S^1$-action on $\Sigma(2,3,6n-5)$ extends smoothly over to the complement $E(n) \setminus M(2,3,6n-5)$; in particular, the Dehn twist $\tau_{\Sigma(2,3,6n-5)}$ is trivial in both $\MCG (E(n) \setminus M(2,3,6n-5) )$ and $\MCG (E(n))$. The analogous statement holds for the Dehn twist along $\Sigma(2,3,6n-1)$.
\item[(ii)] All three Dehn twists $\tau_{\Sigma(2,3,6n-5)}$, $ \tau_{\Sigma(2,3,6n-1)}$, $ \tau_{S^3}$ represent the same element in $\MCG (E(n) \setminus B )$.
\end{enumerate}
\end{proposition}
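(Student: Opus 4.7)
The plan for part (i) is to extend the Seifert $S^1$-action on $Y = \Sigma(2,3,6n-5)$ (respectively $\Sigma(2,3,6n-1)$) over the complement $W = E(n) \setminus M$ of the corresponding Milnor fiber $M$. Once such an extension is produced, Lemma~\ref{lemma:loopextension} applied to $W$ shows that the boundary Dehn twist on $W$ is trivial in $\MCG(W)$, and extending the nullisotopy by the identity across $M$ yields $\tau_{Y}=1$ in $\MCG(E(n))$.

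To construct the $S^1$-action on $W$, I would present $E(n)$ via an elliptic fibration $\pi \colon E(n)\to\mathbb{CP}^1$ with one distinguished singular fiber (say over $0\in\mathbb{CP}^1$) modelled on the germ of the weighted-homogeneous hypersurface singularity $\{x^2+y^3+t^{6n-1}=0\}\subset\mathbb{C}^3$ for the $\Sigma(2,3,6n-1)$ case, or on the analogous germ obtained after a suitable logarithmic transformation for the $\Sigma(2,3,6n-5)$ case. The $\mathbb{C}^\ast$-action making this germ quasi-homogeneous restricts to an $S^1$-action on a small invariant neighborhood of the singular fiber whose restriction to the boundary link agrees with the Seifert action. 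By arranging the remaining singular fibers of $\pi$ to lie within an invariant region, this local $S^1$-action can be extended across all of $W$.

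For part (ii), I use the extended $S^1$-action on $W' = E(n)\setminus M(2,3,6n-5)$ produced in (i). Picking a point of trivial $S^1$-isotropy in the interior of $W'$, a small equivariant tubular neighborhood of its orbit is, by the slice theorem, modelled on the standard rotation $S^1$-action on $B^4$; this produces the required ball $B\subset W'$ disjoint from both $\Sigma(2,3,6n-5)$ and, after a further ambient isotopy, $\Sigma(2,3,6n-1)$. The $S^1$-action restricted to $W:=W'\setminus B$ gives a loop in $\mathrm{Diff}(W)$ whose restrictions to the two boundary components $\Sigma(2,3,6n-5)$ and $-S^3$ are the Seifert loop and the rotation loop on $S^3$ respectively, the latter generating $\pi_1\mathrm{Diff}(S^3)\cong\mathbb{Z}/2$. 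The connecting map of the restriction fibration $\mathrm{Diff}(W,\partial W)\to\mathrm{Diff}(W)\to\mathrm{Diff}(\partial W)$ then sends this pair of boundary loops to zero, yielding $\tau_{\Sigma(2,3,6n-5)}\cdot\tau_{S^3}^{\pm1}=1$ in $\MCG(W,\partial W)$ and hence in $\MCG(E(n)\setminus B)$ after extending by the identity across the Milnor fiber. Since $\tau_{S^3}$ has order at most $2$, this gives $\tau_{\Sigma(2,3,6n-5)}=\tau_{S^3}$. The same argument applied to the $S^1$-action on $N(n)=E(n)\setminus M(2,3,6n-1)$ yields $\tau_{\Sigma(2,3,6n-1)}=\tau_{S^3}$.

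The main obstacle is the construction of the extended $S^1$-action in part (i). This requires matching the complement $W$ of the Milnor fiber in $E(n)$ with a specific $S^1$-equivariant 4-manifold built from the chosen weighted-homogeneous elliptic fibration model. Since $E(n)$ itself admits no effective $S^1$-action (its Seiberg--Witten basic classes would obstruct it), the $S^1$-symmetry must break precisely across the Milnor fiber; verifying that the standard embedding $M\hookrightarrow E(n)$ lies as the ``non-equivariant'' part of an otherwise equivariant elliptic fibration is the technical heart of the argument.
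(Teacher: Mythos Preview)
Your overall strategy for (i)---extend the Seifert $S^1$-action over the complement and invoke Lemma~\ref{lemma:loopextension}---is the same as the paper's, but your construction of the action is quite different and, as written, confused. The paper does not use the elliptic fibration on $E(n)$. Instead it realises $E(n)$ (for $\Sigma(2,3,6n-5)$) as the minimal resolution $\widetilde{X}$ of the Fermat hypersurface in a weighted projective space, which compactifies the Milnor fiber by adding an orbifold curve $C$ at infinity; the complement $\widetilde{X}\setminus M$ is then a tubular neighbourhood of the resolved curve $\widetilde{C}$. The fiberwise $\mathbb{C}^\ast$-action on the normal bundle of $C$ is shown to lift through the Hirzebruch--Jung resolution of the cyclic quotient singularities (Proposition~\ref{proposition:extension}), yielding the desired $S^1$-action. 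For $\Sigma(2,3,6n-1)$ one must additionally verify that this action descends through the chain of blow-downs needed to pass from $\widetilde{X}$ to $E(n)$. Your phrase ``one distinguished singular fiber modelled on the germ of the weighted-homogeneous hypersurface singularity $\{x^2+y^3+t^{6n-1}=0\}$'' conflates a surface-singularity germ (a $2$-dimensional object whose Milnor fiber is a $4$-manifold) with a singular fiber of an elliptic surface (a curve); as stated this does not describe a local model for the complement, and you yourself flag the construction as unresolved.

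Your argument for (ii) has a genuine error. A point with \emph{trivial} isotropy lies on a free $S^1$-orbit, whose equivariant tubular neighbourhood is $S^1\times D^3$ with boundary $S^1\times S^2$, not a ball. To remove a $B^4$ equivariantly you need a \emph{fixed point} $x$, where the slice theorem gives a linear $S^1$-action on $T_xW\cong\mathbb{C}^2$ with weights $(a_1,a_2)$. Even then, the induced loop on $S^3=\partial B^4$ represents the nontrivial element of $\pi_1\mathrm{Diff}(S^3)\cong\mathbb{Z}/2$ only when $a_1+a_2$ is odd, which is not automatic. The paper carries out precisely this weight computation: for $\Sigma(2,3,6n-5)$ it takes $x$ to be the intersection of the central curve of $\widetilde{C}$ with an adjacent exceptional curve $R_1$ and uses the explicit resolution (Lemma~\ref{lemma:weightR1}) to read off weights $(0,1)$; for $\Sigma(2,3,6n-1)$ it tracks the weights through the successive blow-downs of Figure~\ref{fig:blowdowns} to find $(3,2)$ at the cuspidal point. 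Without such a computation, (ii) does not follow.
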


The case $n = 2$ has attracted special attention, since $E(2) = K3$ is the K3 surface. By work of Baraglia and the first author \cite{baraglia-konno} and Kronheimer--Mrowka \cite{KM-dehn} we have that $\tau_{S^3}$ has order two in $\MCG (K3 \setminus B )$. Let $M = M(2,3,7)$ or $M(2,3,11)$ and choose $B $ to be contained in the complement of $M $ in $K3$, which gives a homomorphism $\MCG (M(2,3,7)) \rightarrow \MCG (K3 \setminus B)$ by extending as the identity. Then the aforementioned result combined with Proposition \ref{proposition:E(n)}(ii) show that the boundary Dehn twist on $M$ must have order \textit{at least} two, thus recovering a result already established by different means in work of Mallick, Taniguchi and the first author \cite{konno-mallick-taniguchi}. The main result of this article, Theorem \ref{thm: main}, show that this has infinite order, in fact. 

As a consequence of Proposition \ref{proposition:E(n)} and Theorem \ref{thm: main} we detect an exotic phenomenon related to the smooth embedding space of $\Sigma(2,3,7)$ and $\Sigma(2,3,11)$ into $E(2) = K3$.

\begin{corollary}
Let $Y = \Sigma(2,3,7)$ or $\Sigma(2,3,11)$, embedded into $E(2) = K3$ as above described. There exists an infinite order element in the kernel of the map
\[
\pi_1 \mathrm{Emb}(Y , K3 ) \rightarrow \pi_1 \mathrm{Emb}_{top}(Y , K3 ).
\]
\end{corollary}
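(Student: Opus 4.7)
My candidate element is the loop $\ell \in \pi_1 \operatorname{Emb}(Y, K3)$ given by $\ell(t) = \iota \circ \gamma_Y(t)$, where $\gamma_Y$ generates $\pi_1 \operatorname{Diff}(Y) \cong \mathbb{Z}$ (the Seifert loop, via Theorem~\ref{thm:pi1diffY}). I will argue that $\ell$, or some nontrivial power $\ell^n$, has infinite order in $\pi_1 \operatorname{Emb}(Y,K3)$ and lies in the kernel of the forgetful map to $\pi_1 \operatorname{Emb}_{top}(Y,K3)$.

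For the infinite-order statement, the isotopy-extension fibration $\operatorname{Diff}(K3, \iota(Y) \text{ pointwise}) \to \operatorname{Diff}(K3) \to \operatorname{Emb}(Y, K3)$ (with the last map $f \mapsto f \circ \iota$) yields a connecting homomorphism
\[
\partial \colon \pi_1 \operatorname{Emb}(Y, K3) \longrightarrow \pi_0 \operatorname{Diff}(K3, \iota(Y) \text{ pointwise}) \cong \operatorname{MCG}(M) \times \operatorname{MCG}(W),
\]
where $M = M(2,3,7)$ or $M(2,3,11)$ is the Milnor fiber and $W = K3 \setminus \operatorname{int}(M)$ its complement. Lifting $\ell$ along the partial Dehn twist path $\tau^{(u)}_{(K3,Y)}$ for $u \in [0,2]$, whose terminal value is $\tau_{(K3,Y)}^2$ and which restricts to $(\tau_M^2, \tau_W^{-2})$, identifies $\partial(\ell) = (\tau_M^2, \tau_W^{-2})$. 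Theorem~\ref{thm: main} asserts that $\tau_M$ has infinite order in $\operatorname{MCG}(M)$, so $\partial(\ell)$ and hence $\ell$ have infinite order.

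For the kernel claim, I pass to the analogous topological fibration. Orson--Powell gives $\tau_M^{top} = 0$ in $\pi_0 \operatorname{Homeo}(M, \partial)$ (as $M$ is simply-connected with Seifert-fibered boundary), and Proposition~\ref{proposition:E(n)}(i) gives $\tau_W = 0$ in $\operatorname{MCG}(W)$ by extending the Seifert $S^1$-action smoothly to $W$. Hence $\partial^{top}(\ell^{top}) = 0$, so $\ell^{top}$ lies in the image of $\pi_1 \operatorname{Homeo}(K3) \to \pi_1 \operatorname{Emb}_{top}(Y, K3)$. Concretely, gluing the topological extension of $\gamma_Y$ over $M$ (from Orson--Powell) with the smooth $S^1$-extension over $W$ produces a loop $\Psi \in \pi_1 \operatorname{Homeo}(K3)$ based at the identity with $\Psi(t)\circ\iota = \ell^{top}(t)$. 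Concatenating the partial Dehn twist $\tau^{(u)}$ with a topological isotopy $F^{top}$ from $\tau_{(K3,Y)}^2$ to the identity through $\iota(Y)$-fixing homeomorphisms (which exists by the triviality of $\partial^{top}$) provides a second such representative.

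The main obstacle is concluding that $\ell^{top} = 0$, or at least that $\ell^{top}$ is torsion, in $\pi_1 \operatorname{Emb}_{top}(Y, K3)$. This reduces to showing the loop $\Psi$, equivalently the comparison $F^{top} \cdot (F^{sm})^{-1}$ with a smooth null-isotopy $F^{sm}$ of $\tau_{(K3,Y)}^2$ guaranteed by Proposition~\ref{proposition:E(n)}(i) (which uses that $\tau_{(K3,Y)}$ is smoothly trivial in $\operatorname{MCG}(K3)$), is null-homotopic in $\operatorname{Homeo}(K3)$. This step requires delicate information comparing smooth and topological trivializations of the squared Dehn twist; if one can only verify that $\ell^{top}$ has some finite order $n$, then $\ell^n$ is the desired infinite-order element of the kernel.
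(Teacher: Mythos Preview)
Your approach is essentially that of the paper. The paper's loop is constructed slightly differently: it takes a parallel copy $Y'$ of $Y$ lying in $W = K3 \setminus M$ (outside the support of the Dehn twist), uses the smooth isotopy $f_t$ from $\tau_{(K3,Y)}$ to $\mathrm{id}$ supplied by Proposition~\ref{proposition:E(n)}(i), and sets $e_t = f_t \circ e_0$ where $e_0$ is the inclusion of $Y'$. Unwinding that construction (using that $f_t$ may be taken supported in $W$ and that the Seifert $S^1$--action preserves $Y'$) shows $e_t$ is, up to orientation and change of basepoint, exactly your loop $\ell$, i.e.\ precomposition of the embedding with the Seifert loop. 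The infinite-order argument is the same: the connecting map of the isotopy-extension fibration sends the loop to a class whose $M$-component is a nontrivial power of $\tau_M$, which has infinite order by Theorem~\ref{thm: main}.

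Two remarks. First, your computation $\partial(\ell) = (\tau_M^2, \tau_W^{-2})$ overcounts: with $Y$ at the centre of a symmetric collar, the endpoint $\tau^{(2)} = \tau_{(K3,Y)}^2$ of your lift restricts to $(\tau_M, \tau_W^{\pm 1})$ --- each half of the collar sees one full twist, not two. This does not affect the infinite-order conclusion. Second, the gap you flag for topological triviality --- that $\partial^{top}(\ell^{top}) = 0$ only places $[\ell^{top}]$ in the image of $\pi_1 \mathrm{Homeo}(K3)$, not at $0$ --- is a genuine subtlety, and the paper's sketch does not supply a further argument here: it simply asserts that the topological triviality of $\tau_M$ forces $e_t$ to be trivial in $\mathrm{Emb}_{top}$. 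So you are not missing an ingredient that the paper provides; your caution on this point is well-placed.
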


Here $\mathrm{Emb}(Y , K3 )$ (resp. $\mathrm{Emb}_{top}(Y , K3 )$) denotes the space of smooth (resp. topological and locally flat) embeddings $Y \hookrightarrow K3$. The exotic loop $e_t : Y \hookrightarrow K3$ of embeddings is obtained by considering a parallel copy $Y^\prime$ of $Y$ in $K3 \setminus M$ (where $M = M(2,3,7)$ or $M(2,3,11)$), together with a path of diffeomorphisms $f_t$ of $K3$ with $f_0 = \tau_Y$ and $f_1 = \mathrm{id}_{K3}$ (given by Corollary \ref{proposition:E(n)}), and setting $e_t = f_t \circ e_0$ where $e_0 : Y \hookrightarrow K3$ is the given embedding of $Y^\prime$. Since the boundary Dehn twist on $M$ is topologically trivial, then the loop $e_t$ is trivial in the space of topological embeddings; but if $e_t$ or any iterates of it were trivial in the smooth embedding space then it is straightforward to see that some power of the boundary Dehn twist on $M$ must become trivial in $\MCG(M)$, contradicting Theorem \ref{thm: main}.  

The remainder of this subsection is devoted to the proof of Proposition \ref{proposition:E(n)}.


\subsubsection{Compactification of Milnor fibers}\label{subsection:compactification}

We now recall some well-known compactifications of Milnor fibers, obtained by taking their closure inside of weighted projective spaces (see \cite{dolgachev82}). This will be used to establish Proposition \ref{proposition:E(n)}.

Fix three pairwise coprime integers $\alpha_1 , \alpha_2 , \alpha_3 \geq 2$, where without loss of generality $\alpha_2 , \alpha_3$ are odd. and consider the \textit{weighted projective space}
\[
\mathbb{P} := \mathbb{P}(\alpha_2 \alpha_3 , \alpha_1 \alpha_3 , \alpha_1 \alpha_2 , 1)
\]
Consider the \textit{Fermat hypersurface} in $\mathbb{P}$ given by the weighted homogeneous equation
\[
X = \{ x_{1}^{\alpha_1} + x_{2}^{\alpha_2} + x_{3}^{\alpha_3} + x_{4}^{\alpha_1 \alpha_2 \alpha_3} = 0  \} \subset \mathbb{P} .
\]
Both $X $ and $\mathbb{P}$ are complex orbifolds with with exactly three orbifold points
\[
0:1:  -1 :0 \quad , \quad 1: 0 : -1:0 \quad \text{and } \,\, 1:-1:0 :0 .
\]

The weighted projective space $\mathbb{P}$ has a tautological orbifold holomorphic line bundle denoted $\mathcal{O}_{\mathbb{P}}(-1)$, and $\mathcal{O}_{\mathbb{P}}(n)$ denoted the $n$th tensor power of its inverse for any given $n \in \mathbb{Z}$. For instance, $x_4$ is the only non-trivial holomorphic section of $\mathcal{O}_{\mathbb{P}}(1)$ up to scaling. Its vanishing locus $C := X \cap \{x_4= 0\}$ is a complex orbifold curve with three orbifold points (the same as the ones shown above) of weights $\alpha_1 , \alpha_2 , \alpha_3$. In turn, the locus $\mathbb{P} \cap \{ x_4 \neq 0\} = \mathbb{C}^3$ is an affine chart, and 
\[X \cap \mathbb{C}^3 = \{ (x_1 , x_2 , x_3 ) \in \mathbb{C}^3 \, | \, x_{1}^{\alpha_1} + x_{2}^{\alpha_2} + x_{3}^{\alpha_3} + 1 = 0\}\] is diffeomorphic to the (interior of the) Milnor fiber $M(\alpha_1 , \alpha_2 , \alpha_3 )$ of the Brieskorn--Pham singularity $x_{1}^{\alpha_1} + x_{2}^{\alpha_2} + x_{3}^{\alpha_3} = 0 $. 

Thus $X$ is an orbifold compactification of the Milnor fiber, obtained by adding the orbifold divisor $C$. The curve $C$ has genus zero (see \cite{dolgachev82}, \S 3.5.2) and its orbifold normal bundle in $X$ is $\nu_C \cong \mathcal{O}_{\mathbb{P}}(1)|_C$. By the orbifold version of the Tubular Neighbourhood Theorem, the orbifold $X$ is diffeomorphic to $M(\alpha_1, \alpha_2 , \alpha_3 )$ glued with the unit disk bundle $D(\nu_C)$,
\[
X = M(\alpha_1 , \alpha_2 , \alpha_3 ) \cup D(\nu_C) \quad .
\]
In particular, there must be an orientation-reversing diffeomorphism of $ \partial M(\alpha_1 , \alpha_2 , \alpha_3 ) = \Sigma(\alpha_1 , \alpha_2 , \alpha_3 ) $ and $ \partial D (\nu_C )$. This could also be verified directly, as the following shows. Recall (see Lemma \ref{lemma:seifertdata}) that the topological Picard group of $C$ is infinite cyclic, generated by the unique orbifold line bundle $N \rightarrow C$ of degree $- \frac{1}{\alpha_1 \alpha_2 \alpha_3}$, and $\Sigma(\alpha_1 , \alpha_2 , \alpha_3 ) = S(N)$. 


\begin{lemma}

The orbifold normal bundle $\nu_C$ is topologically isomorphic to $N^{-1}$.
\end{lemma}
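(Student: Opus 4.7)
The plan is to reduce the statement to a matching of orbifold degrees, and then perform a direct intersection-theoretic computation in the weighted projective space $\mathbb{P}$. Since $\alpha_1,\alpha_2,\alpha_3$ are pairwise coprime, Lemma \ref{lemma:seifertdata} applied to the orbifold sphere $C = S^2(\alpha_1,\alpha_2,\alpha_3)$ yields $\mathrm{Pic}^t(C) \cong \mathbb{Z}$, so the topological isomorphism class of any orbifold complex line bundle over $C$ is detected by its orbifold degree. In particular, to prove $\nu_C \cong N^{-1}$ it suffices to verify
\[
\mathrm{deg}\,\nu_C \;=\; \mathrm{deg}\,N^{-1} \;=\; \frac{1}{\alpha_1\alpha_2\alpha_3}.
\]

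Since $\nu_C \cong \mathcal{O}_{\mathbb{P}}(1)|_C$, this reduces to computing $\mathrm{deg}\,\mathcal{O}_\mathbb{P}(1)|_C$ in the orbifold sense. First, I recall the standard orbifold intersection formula on the weighted projective space $\mathbb{P} = \mathbb{P}(\alpha_2\alpha_3,\alpha_1\alpha_3,\alpha_1\alpha_2,1)$: the class $h := c_1(\mathcal{O}_\mathbb{P}(1))$ satisfies
\[
\int_\mathbb{P} h^{3} \;=\; \frac{1}{(\alpha_2\alpha_3)(\alpha_1\alpha_3)(\alpha_1\alpha_2)\cdot 1} \;=\; \frac{1}{\alpha_1^{2}\alpha_2^{2}\alpha_3^{2}}.
\]
Second, as divisor classes in $\mathbb{P}$ I have $[X] = d\cdot h$ with $d = \alpha_1\alpha_2\alpha_3$, and $[\{x_4 = 0\}] = h$. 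Since $C = X \cap \{x_4=0\}$ is a transverse intersection (away from the orbifold loci the Fermat equation and $x_4$ have independent differentials, and at the three orbifold points one checks this directly from the defining equations), the class Poincaré dual to $C$ in $\mathbb{P}$ is $[C] = d\cdot h^{2}$.

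Combining these gives the desired computation:
\[
\mathrm{deg}\,\nu_C \;=\; \int_C c_{1}(\mathcal{O}_{\mathbb{P}}(1)|_C) \;=\; \int_{\mathbb{P}} h\cdot [C] \;=\; d\cdot \int_{\mathbb{P}} h^{3} \;=\; \frac{\alpha_1\alpha_2\alpha_3}{\alpha_1^{2}\alpha_2^{2}\alpha_3^{2}} \;=\; \frac{1}{\alpha_1\alpha_2\alpha_3}.
\]
This matches $\mathrm{deg}\,N^{-1}$, completing the argument via Lemma \ref{lemma:seifertdata}. The only delicate point is justifying the orbifold intersection formula and the transversality of $X \cap \{x_4=0\}$ at the three orbifold points of $\mathbb{P}$; these follow from a routine local computation in the orbifold charts together with the standard extension of intersection theory to weighted projective spaces (as in Dolgachev \cite{dolgachev82}). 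As a consistency check, the sign comes out correctly: $\Sigma(\alpha_1,\alpha_2,\alpha_3) = S(N)$ appears as $\partial M$ with one orientation, while $\partial D(\nu_C) = S(\nu_C)$ with its natural orientation must be orientation-reversingly diffeomorphic to $\partial M$, reflecting that $\mathrm{deg}\,\nu_C = -\mathrm{deg}\,N > 0$.
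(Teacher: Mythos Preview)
Your proof is correct and follows the same overall reduction as the paper: both use Lemma~\ref{lemma:seifertdata} to reduce to showing $\mathrm{deg}\,\nu_C = 1/(\alpha_1\alpha_2\alpha_3)$. The difference lies in how this degree is computed. You compute it by a direct orbifold intersection-theory calculation in $\mathbb{P}$, using $\int_{\mathbb{P}} h^3 = 1/(\alpha_1^2\alpha_2^2\alpha_3^2)$ together with $[C] = d\,h^2$. The paper instead goes through the canonical bundle: it invokes Dolgachev's formula $K_C \cong \mathcal{O}_{\mathbb{P}}(\alpha_1\alpha_2\alpha_3 - \alpha_2\alpha_3 - \alpha_1\alpha_3 - \alpha_1\alpha_2)|_C = \nu_C^{\,\alpha_1\alpha_2\alpha_3 - \alpha_2\alpha_3 - \alpha_1\alpha_3 - \alpha_1\alpha_2}$, then uses the known value $\mathrm{deg}\,K_C = 1 - 1/\alpha_1 - 1/\alpha_2 - 1/\alpha_3$ to solve for $\mathrm{deg}\,\nu_C$. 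Your approach is arguably more direct and avoids the detour through $K_C$; the paper's approach has the advantage of using only the adjunction-type formula for $K_C$ (already cited from \cite{dolgachev82}) rather than the full orbifold intersection pairing on $\mathbb{P}$, so it sidesteps the transversality check at the orbifold points that you flag as the delicate point.
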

\begin{proof}
By Theorem 3.3.4 in \cite{dolgachev82}, the canonical bundle of $C$ can be calculated as
\[
K_C \cong \mathcal{O}_{\mathbb{P}}(\alpha_1 \alpha_2 \alpha_3 - \alpha_2 \alpha_3  - \alpha_1 \alpha_3 - \alpha_1 \alpha_2 )|_C = \nu^{ \alpha_1 \alpha_2 \alpha_3 - \alpha_2 \alpha_3  - \alpha_1 \alpha_3 - \alpha_1 \alpha_2 }.
\]
On the other hand, the degree of $K_C$ is given by $ 1 - 1/\alpha_1 - 1/\alpha_2 - 1/\alpha_3$. Putting all together, the degree of $\nu_C$ is then $\mathrm{deg}\nu_C = \frac{1}{\alpha_1 \alpha_2 \alpha_3}$.
\end{proof}

The unit disk bundle $D(\nu_C )$ has cyclic quotient singularities at the three points shown above which lie on the zero section $C$. A \textit{cyclic quotient singularity} is the singularity $N_{\alpha ; 1 , \beta } = \mathbb{C}^{2} / \mathbb{Z}_\alpha$ where $\mathbb{Z}_\alpha$ acts on $\mathbb{C}^2$ with weight $(1 , \beta )$ for some $0 \leq \beta < \alpha$. Equivalently $N_{\alpha ; 1 , \beta}$ is the total space of the orbifold line bundle over $\mathbb{C} / \mathbb{Z}_{\alpha}$ with Seifert data $( 0 ; \beta )$. In particular, the singularity $N_{n+1 ; 1, n}$ is the $A_n$ \textit{singularity} ($n \geq 1$). In \S \ref{subsubsection:quotient} below we recall the construction of the minimal resolution of the quotient singularity $N_{\alpha ; 1 , \beta}$, whose exceptional locus is given by a chain of $m$ rational curves $R_1 , \ldots , R_m$ of self-intersections $-b_1 , \ldots , - b_m$, where $m$ and the $b_{i} \geq 2 $ are given by the Hirzebruch--Jung continued fraction expansion of $\alpha / \beta$ as in (\ref{continuedfraction}).

Thus, if the bundle $N^{-1}$ has Seifert data
$(e ; \beta_1 , \beta_2 , \beta_3 )$, then as a variety $X$ has three quotient singularities $N_{p_i ; 1 , \beta_i }$ ($i = 1 , 2 , 3$). After performing the minimal resolution of each of them (i.e. replace each singular point $x \in X$ by the chain of rational curves $R_{x,1} , \ldots , R_{x, m_x}$ from the minimal resolution) we obtain a smooth complex surface $\widetilde{X}$. 
The strict transform $\widetilde{C} \subset \widetilde{X}$ of the curve $C$ is the \textit{curve at infinity}. It is given by a star-shaped configuration of rational curves: the central curve has self-intersection $e$; there are 3 chains of rational curves emanating from the central curve, with self-intersection numbers given by the coefficients of the Hirzebruch--Jung continued fraction expansions of $\alpha_i / \beta_i$ for $i = 1,2,3$.

For the families $M(2,3,6n-5)$ or $M(2,3,6n-1)$, $n \geq 2$, the Seifert data of $N^{-1}$ was calculated in Examples \ref{seifertdatafamily1}-\ref{seifertdatafamily2}. From this, one can determine the curve $\widetilde{C}$, which is shown in Figure \ref{fig:curves}.

\begin{figure}[h!]
    \centering
\caption{The curve $\widetilde{C}$}\label{fig:curves}    
    \begin{subfigure}[b]{0.4\textwidth}
        \includegraphics[width=\textwidth]{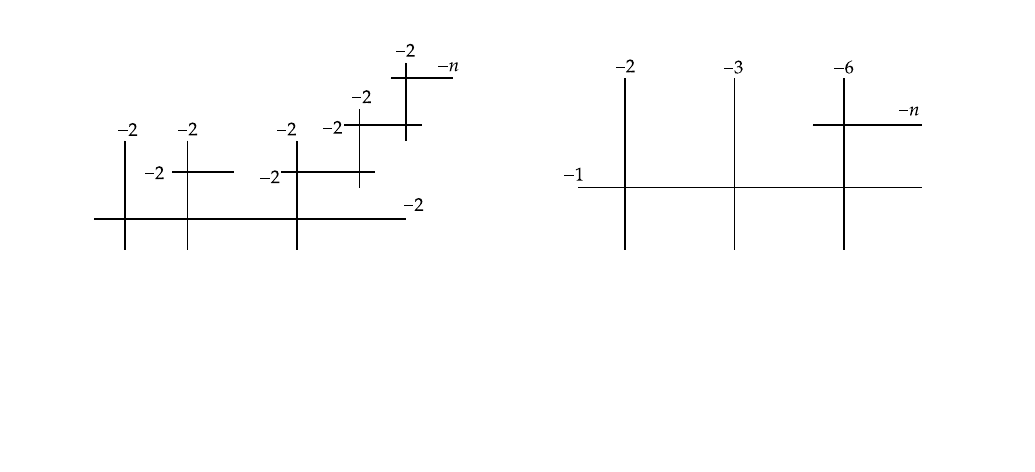}
        \caption{for $M(2,3,6n-5)$, $n \geq 2$}
        \label{fig:gull}
    \end{subfigure}
    ~ 
    \begin{subfigure}[b]{0.4\textwidth}
        \includegraphics[width=\textwidth]{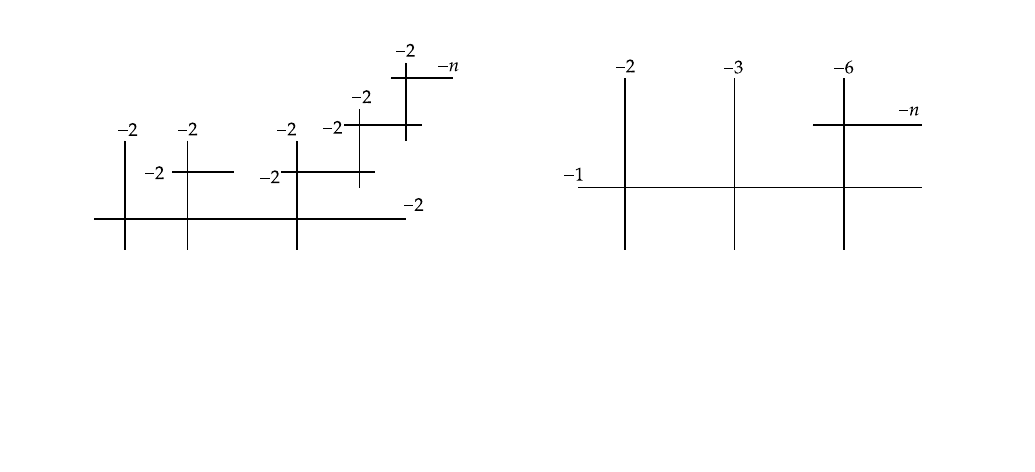}
        \caption{for $M(2,3,6n-1)$, $n \geq 2$}
        \label{fig:tiger}
    \end{subfigure}
    ~ 

\end{figure}

The complex surface $\widetilde{X}$ associated to $M(2,3,6n-5)$ is diffeomorphic to the elliptic surface $E(n)$ (see \cite{ebeling-okonek}). In turn, the complex surface $\widetilde{X}$ associated to $M(2,3,6n-1)$ is \textit{not} minimal, since the central curve of $\widetilde{C}$ is a $(-1)$-sphere. After succesively blowing down the $(-1)$-spheres in $\widetilde{X}$ lying on the curve at infinity, as shown in Figure \ref{fig:blowdowns}, we arrive at a new complex surface $X^\prime$ which is again diffeomorphic to $E(n)$ (see \cite{ebeling-okonek}). 

\begin{figure}[h!]

    \centering
 \includegraphics[scale=0.9]{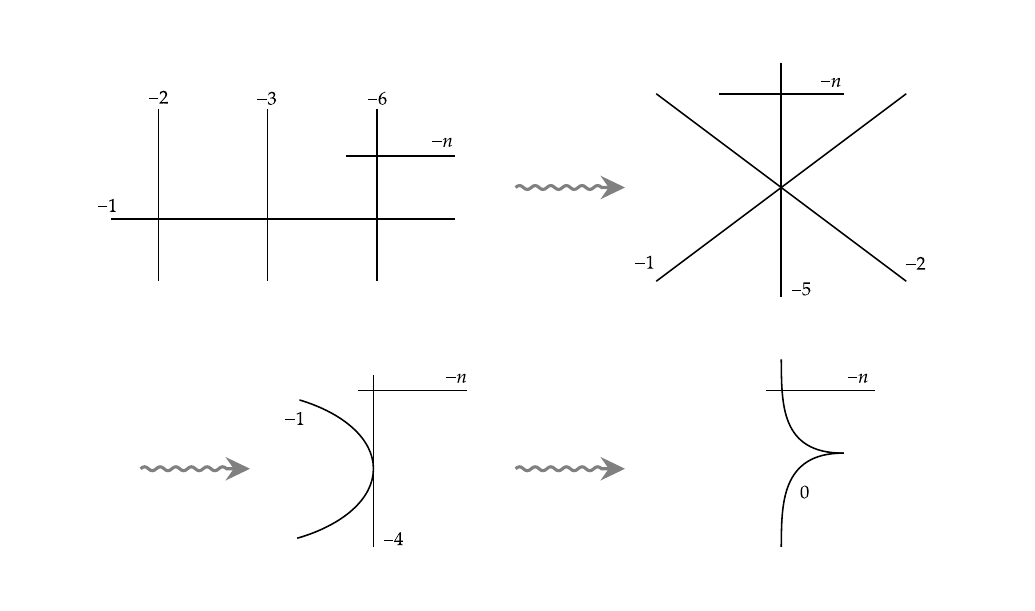}
\caption{Blowing down $(-1)$-curves to get $E(n)$}
  \label{fig:blowdowns}
\end{figure}

\subsubsection{Extending the $S^1$-action over $\widetilde{X} \setminus M(\alpha_1 , \alpha_2 , \alpha_3 )$}


The following is the main ingredient in the proof of Proposition \ref{proposition:E(n)}(i).

\begin{proposition}\label{proposition:extension}
Let $\mathcal{L} \rightarrow C$ be any holomorphic orbifold line bundle over an orbifold Riemann surface $C$. Then the weight one $\mathbb{C}^\ast$-action on on the fibers of $\mathcal{L}$ lifts to the variety $\widetilde{\mathcal{L}}$ obtained by performing the minimal resolution of the cyclic quotient singularities of the total space of $\mathcal{L}$. \end{proposition}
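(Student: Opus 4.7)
The plan is to reduce to a local analysis at each cyclic quotient singularity of $\mathcal{L}$ and then patch the local lifts together using the uniqueness property of the minimal resolution, in the spirit of the proof of Proposition \ref{proposition:minres}. First I would note that the singular locus of the total space of $\mathcal{L}$ consists of isolated points lying on the zero-section above the orbifold points of $C$, and that near such a singular point, coming from an orbifold point of isotropy $\mathbb{Z}_{\alpha}$ with fiber-weight $\beta \in \{0,\ldots,\alpha-1\}$, the total space is holomorphically modelled by $N_{\alpha;1,\beta} = \mathbb{C}^{2}/\mathbb{Z}_{\alpha}$ with $\mathbb{Z}_{\alpha}$ acting via $(u,v) \mapsto (\zeta u, \zeta^{\beta} v)$, where $u$ is a base coordinate and $v$ is a fiber coordinate. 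In this chart the fiberwise $\mathbb{C}^{\ast}$-action is induced from the standard scaling $(u,v) \mapsto (u, \lambda v)$, which commutes with $\mathbb{Z}_{\alpha}$ and therefore descends to the quotient.

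Next I would invoke the toric picture. The key point is that $N_{\alpha;1,\beta}$ is an affine toric surface under the 2-torus $T = (\mathbb{C}^{\ast})^{2}/\mathbb{Z}_{\alpha}$, and its minimal resolution is the classical toric Hirzebruch--Jung resolution obtained by subdividing the defining cone according to the continued fraction expansion of $\alpha/\beta$. Since toric morphisms are equivariant with respect to the torus, the entire $T$-action lifts to the local minimal resolution; in particular, the fiberwise $\mathbb{C}^{\ast}$-action, being a subtorus of $T$, extends holomorphically to a $\mathbb{C}^{\ast}$-action on the local resolution.

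Finally, to globalise these local actions I would use the uniqueness property of the minimal resolution. For each $\lambda \in \mathbb{C}^{\ast}$ the fiberwise multiplication $\rho_{\lambda} : \mathcal{L} \to \mathcal{L}$ is a biholomorphism, and the minimal-resolution argument recalled in the proof of Proposition \ref{proposition:minres} produces a unique biholomorphism $\widetilde{\rho}_{\lambda}$ of $\widetilde{\mathcal{L}}$ covering $\rho_{\lambda}$. Uniqueness immediately delivers the group relations $\widetilde{\rho}_{\lambda \mu} = \widetilde{\rho}_{\lambda} \circ \widetilde{\rho}_{\mu}$ and $\widetilde{\rho}_{1} = \mathrm{id}$. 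The main obstacle, and the one place the toric picture is genuinely needed, is upgrading this collection of biholomorphisms into an honest holomorphic $\mathbb{C}^{\ast}$-action, i.e.\ establishing holomorphic dependence on the parameter $\lambda$. I would handle this by observing that near each singular point the local toric lifts of the previous paragraph already provide a holomorphic $\mathbb{C}^{\ast}$-action, which by uniqueness must coincide with $\widetilde{\rho}_{\lambda}$, while on the smooth locus the action is simply the original fiberwise scaling under the identification $\widetilde{\mathcal{L}} \cong \mathcal{L}$; patching these yields the desired holomorphic $\mathbb{C}^{\ast}$-action on $\widetilde{\mathcal{L}}$.
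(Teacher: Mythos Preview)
Your proof is correct and follows essentially the same two-pronged strategy the paper indicates: the minimal-resolution uniqueness argument from Proposition~\ref{proposition:minres} for the global lift, together with a local description of the resolution of $N_{\alpha;1,\beta}$ for the explicit/holomorphic content. The only difference is packaging: where the paper's ``direct way'' (carried out in \S\ref{subsubsection:quotient} and the subsection after it) tracks the $\mathbb{C}^\ast$-weights explicitly through the Hirzebruch--Jung charts via the diagram~(\ref{diagramX}), you instead invoke the toric nature of $N_{\alpha;1,\beta}$ and its resolution, which gives the local lift in one stroke. Your version is slicker conceptually; the paper's explicit chart-by-chart computation has the advantage of yielding the precise weights at fixed points (e.g.\ Lemma~\ref{lemma:weightR1}), which are needed later for Proposition~\ref{proposition:E(n)}(ii). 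You are also more scrupulous than the paper about the holomorphic dependence on $\lambda$, which the paper's terse proof leaves implicit.
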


Thus, by applying Proposition \ref{proposition:extension} to the normal bundle $\mathcal{L} = \nu_C$ as above, we have that

\begin{corollary}\label{corollary:extension}
The Seifert $S^1$-action on $\Sigma(\alpha_1 , \alpha_2 , \alpha_3 )$ extends smoothly over to the complement $\widetilde{X} \setminus M(\alpha_1 , \alpha_2, \alpha_3 )$. 
\end{corollary}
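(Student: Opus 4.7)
The plan is to combine the uniqueness of the minimal resolution of a normal surface singularity with the explicit toric description of cyclic quotient singularities in order to lift the $\mathbb{C}^\ast$-action.

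First I would note that fiberwise scaling $\rho_\lambda : \mathcal{L} \to \mathcal{L}$ is an algebraic (in particular holomorphic) $\mathbb{C}^\ast$-action on the total space of $\mathcal{L}$, viewed as a normal complex-analytic surface. The singular locus of $\mathcal{L}$ consists of finitely many cyclic quotient singularities $N_{\alpha_i ; 1 , \beta_i}$, all located on the zero section above the orbifold points of $C$, hence contained in the fixed-point set of the $\mathbb{C}^\ast$-action. So each $\rho_\lambda$ is a biholomorphism of the singular surface $\mathcal{L}$ fixing the singular set.

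Next, essentially repeating the argument from Proposition \ref{proposition:minres}: the universal property of the minimal resolution $\pi : \widetilde{\mathcal{L}} \to \mathcal{L}$ furnishes, for each $\lambda \in \mathbb{C}^\ast$, a unique holomorphic map $\widetilde{\rho}_\lambda : \widetilde{\mathcal{L}} \to \widetilde{\mathcal{L}}$ with $\pi \circ \widetilde{\rho}_\lambda = \rho_\lambda \circ \pi$. Uniqueness forces $\widetilde{\rho}_\lambda$ to be a biholomorphism (with inverse $\widetilde{\rho}_{\lambda^{-1}}$) and forces the cocycle identity $\widetilde{\rho}_\lambda \circ \widetilde{\rho}_\mu = \widetilde{\rho}_{\lambda \mu}$, yielding a $\mathbb{C}^\ast$-action as an abstract group action on $\widetilde{\mathcal{L}}$.

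The remaining step, which is where the real content sits, is to upgrade $\lambda \mapsto \widetilde{\rho}_\lambda$ to a holomorphic action, i.e.\ to show that the resulting map $\mathbb{C}^\ast \times \widetilde{\mathcal{L}} \to \widetilde{\mathcal{L}}$ is holomorphic. For this I would use the local toric model of each singularity. Near a singular point, $\mathcal{L}$ is biholomorphic to $\mathbb{C}^2/\mathbb{Z}_{\alpha_i}$ with coordinates $(u,v)$ (base and fiber) and with $\mathbb{Z}_{\alpha_i}$ acting with weights $(1,\beta_i)$; the fiberwise $\mathbb{C}^\ast$-action is the subtorus $\{1\} \times \mathbb{C}^\ast \subset (\mathbb{C}^\ast)^2$ acting by $(u,v) \mapsto (u,\lambda v)$, which commutes with $\mathbb{Z}_{\alpha_i}$. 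The Hirzebruch--Jung minimal resolution of the toric singularity $\mathbb{C}^2/\mathbb{Z}_{\alpha_i}$ is itself a smooth toric surface carrying the full residual $(\mathbb{C}^\ast)^2/\mathbb{Z}_{\alpha_i}$-action extending the one downstairs; in particular, the fiberwise $\mathbb{C}^\ast$-subaction extends holomorphically across each exceptional chain of rational curves. By the uniqueness established above, this toric extension must coincide with $\widetilde{\rho}_\lambda$, so the local lifts glue and depend holomorphically on $\lambda$, giving the desired holomorphic $\mathbb{C}^\ast$-action on $\widetilde{\mathcal{L}}$. The only genuine obstacle is the holomorphic dependence on $\lambda$, and it is handled cleanly by the fact that minimal resolutions of toric surface singularities are themselves toric.
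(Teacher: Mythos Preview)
Your proof is correct and follows essentially the same route as the paper. The paper's proof of Proposition~\ref{proposition:extension} (from which the Corollary is immediate) simply says ``this follows by the argument in the proof of Proposition~\ref{proposition:minres}; alternatively, it can be deduced in a direct way from the description of the minimal resolution of a quotient singularity,'' and you have carried out both halves of this in detail.

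One small point worth noting: you are right to flag the joint holomorphic dependence on $\lambda$ as the step requiring care. The paper's first argument (the universal-property argument from Proposition~\ref{proposition:minres}) glosses over this, while its second argument (the explicit Hirzebruch--Jung chart computation in \S\ref{subsubsection:quotient} and the discussion following it) handles it directly by writing down the lifted weights in coordinates. Your appeal to the toric structure of the minimal resolution is exactly the content of that second argument, and is the cleanest way to close the gap.
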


\begin{proof}[Proof of Proposition \ref{proposition:extension}]
This follows by the argument in the proof of Proposition \ref{proposition:minres}. Alternatively, it can deduced in a direct way from the description of the minimal resolution of a quotient singularity, which we recall below.
\end{proof}

In order to prove Proposition \ref{proposition:E(n)}(ii) we will need to access more specific information about the $S^1$-action on the complement $\widetilde{X} \setminus M(\alpha_1 , \alpha_2 , \alpha_3 )$. For this we now review the construction of the minimal resolution of the cyclic quotient singularity $N_{\alpha ; 1 , \beta}$.

\subsubsection{Minimal resolution of a cyclic quotient singularity}\label{subsubsection:quotient}

We review the proof of the following 

\begin{lemma}[Lemma 3 in \cite{fujiki}]\label{lemma:opencover}
Fix integers $0 \leq \beta < \alpha$. There exists a variety $X$ with a proper birational morphism $X \xrightarrow{f} N_{\alpha ; 1 , \beta}$, and a Zariski open cover $\{U_1 , U_2\}$ such that $U_1 \cong \mathbb{C}^2$ and $U_2 \cong N_{\beta ; 1 , \gamma}$ where $\gamma \in \mathbb{Z}$ is the unique integer with $0 \leq \gamma < \beta$ and
\begin{align}
\alpha / \beta  = b -  \frac{1}{ \beta / \gamma} \label{equation:fraction}
\end{align}
for some integer $b \geq 2$.
\end{lemma}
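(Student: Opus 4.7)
The plan is to construct $X$ as one step in the toric (Hirzebruch--Jung) resolution of the cyclic quotient singularity $N_{\alpha;1,\beta}$. First, I would realize $N_{\alpha;1,\beta}$ as the affine toric variety associated with the cone $\sigma \subset N_{\mathbb{R}}$ generated by $v_0 = (1, 0)$ and $v_2 = (-\beta, \alpha)$ in the lattice $N = \mathbb{Z}^2$ (this is the standard toric dictionary for $\mathbb{C}^2/\mathbb{Z}_\alpha$ with weights $(1, \beta)$), and define $b$ and $\gamma$ by the division algorithm $\alpha = b\beta - \gamma$, $0 \leq \gamma < \beta$. Since $\alpha > \beta$, the case $b = 1$ is ruled out (else $\alpha = \beta - \gamma \leq \beta$), so $b \geq 2$; rearranging yields the stated identity $\alpha/\beta = b - \gamma/\beta = b - 1/(\beta/\gamma)$.

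Next, I would introduce the ray through $v_1 := (0, 1)$, which lies in the interior of $\sigma$ as witnessed by $(0, 1) = (\beta/\alpha) v_0 + (1/\alpha) v_2$, and form the refined fan with the two maximal cones $\sigma_1 = \langle v_0, v_1\rangle$ and $\sigma_2 = \langle v_1, v_2\rangle$. The associated toric variety $X$ comes equipped with the desired proper birational toric morphism $f: X \to N_{\alpha;1,\beta}$ induced by the fan refinement, and is covered by the two affine torus-invariant charts $U_1, U_2$ corresponding to $\sigma_1, \sigma_2$.

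Since $\{v_0, v_1\}$ is the standard $\mathbb{Z}$-basis of $N$, the cone $\sigma_1$ is smooth and so $U_1 \cong \mathbb{C}^2$ is immediate. The crux of the argument is the identification of $U_2$: I would apply the lattice automorphism $\phi = \bigl(\begin{smallmatrix} b & 1 \\ -1 & 0 \end{smallmatrix}\bigr) \in \mathrm{SL}_2(\mathbb{Z})$ and compute $\phi(v_1) = (1, 0)$ and $\phi(v_2) = (\alpha - b\beta, \beta) = (-\gamma, \beta)$, where the step $\alpha - b\beta = -\gamma$ is precisely the Hirzebruch--Jung arithmetic relation from the division algorithm. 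Thus $\sigma_2$ is lattice-equivalent to $\langle (1, 0), (-\gamma, \beta)\rangle$, which by the toric dictionary is the cone for $N_{\beta;1,\gamma}$; therefore $U_2 \cong N_{\beta;1,\gamma}$. The main obstacle will be exactly this final identification, where one must produce a genuine $\mathrm{SL}_2(\mathbb{Z})$-equivalence of the relevant cones rather than merely an $\mathbb{R}$-linear one.
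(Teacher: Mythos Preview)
Your proof is correct. The toric approach you outline is a standard and elegant way to establish this result: the Hirzebruch--Jung resolution is, after all, the prototypical example of a toric resolution, and your identification of $U_2$ via the $\mathrm{SL}_2(\mathbb{Z})$ matrix $\phi$ is clean and accurate.

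However, the paper follows a genuinely different route, reproducing Fujiki's original construction rather than invoking the toric dictionary. The paper realizes $N_{\alpha;1,\beta}$ as a double quotient $\mathbb{C}^2_{t_1,t_2}/(G\times H)$ with $G=\mathbb{Z}_\alpha$ and $H=\mathbb{Z}_\beta$ (obtained by passing to a degree-$\beta$ branched cover), performs the ordinary blowup of $\mathbb{C}^2_{t_1,t_2}$ at the origin, and then takes the $(G\times H)$-quotient of the two standard blowup charts. The identification $U_2\cong N_{\beta;1,\gamma}$ is then read off from the explicit weights of the $(G\times H)$-action in affine coordinates, together with the arithmetic relation $\gamma=b\beta-\alpha$. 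What this more hands-on approach buys is an explicit diagram of coordinate charts, rational gluing maps, and formulas for $f|_{U_i}$, which the paper immediately exploits in the next subsection to track the lifted $\mathbb{C}^\ast$-action and compute its weight on the first exceptional curve. Your toric argument could certainly recover the same data (the $\mathbb{C}^\ast$-action is a one-parameter subgroup of the torus, and the weights at fixed points are read off from the fan), but you would need to spell this out to match what the paper needs downstream.
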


Iterated application of Lemma \ref{lemma:opencover} results in the construction of a resolution of the quotient singularity $N_{\alpha ; 1 , \beta}$. Namely, applying Lemma \ref{lemma:opencover} to $N_{\alpha ; 1 , \beta}$ yields the variety $X$ with an open cover $U_1 = \mathbb{C}^2$, $U_2 = N_{\beta ; 1 , \gamma}$. If $\gamma = 0$ then $N_{\beta; 1, \gamma} = (\mathbb{C}/\mathbb{Z}_\beta ) \times \mathbb{C} \cong \mathbb{C}^2$ is also an affine chart of $X$, and thus $X$ is smooth. If $\gamma > 0$ then apply again Lemma \ref{lemma:opencover} to $U_2 = N_{\beta ; 1 , \gamma}$, etc. This way, one obtains a resolution of singularities $\widetilde{ N_{\alpha ; 1 , \beta }} \rightarrow N_{\alpha ; 1 , \beta}$ where $\widetilde{ N_{\alpha ; 1 , \beta }}$ is covered by $m+1$ affine charts, where $m$ is the length of the Hirzebruch--Jung continued fraction expansion of $\alpha / \beta$,
\begin{align}
\frac{\alpha}{\beta} = b_1 - \frac{1}{b_2 - \frac{1}{b_3 - \cdots \frac{1}{b_m}}} \quad , \quad b_i \geq 2 . \label{continuedfraction}
\end{align}
It is easy to see from the construction given in the proof of Lemma \ref{lemma:opencover} that in the resolution of singularities $ \widetilde{ N_{\alpha ; 1 , \beta }}\rightarrow N_{\alpha ; 1 , \beta}$ the exceptional divisor consists of a chain of $m$ rational curves $R_1 , \ldots , R_m$ with self-intersections given by $- b_1 , \ldots , - b_m $, with $R_i$ intersecting $R_{i+1}$ transversely at a single point for each $i = 1, \ldots , m-1$, and with no additional intersections among the $R_i$'s (see \S1.4.1 in \cite{fujiki}). Thus, $\widetilde{ N_{\alpha ; 1 , \beta }}$ is the \textit{minimal} resolution of $N_{\alpha ; 1 , \beta }$.

We now review the proof of Lemma \ref{lemma:opencover}, following \cite{fujiki}. Our goal is to establish the diagram from (\ref{diagramX}) below. 

\begin{proof}[Proof of Lemma \ref{lemma:opencover}] Let $\mathbb{C}^{2}_{z_1 , z_2}$ denote $\mathbb{C}^2$ with coordinates $z_1 , z_2$, and write $N_{\alpha ; 1, \beta } = \mathbb{C}^{2}_{z_1 , z_2} / G$ where $G = \mathbb{Z}_\alpha$ acts on $\mathbb{C}^{2}_{z_1 , z_2}$ with weight $(1 , \beta )$. Consider the branched cover $\mathbb{C}^{2}_{t_1 , t_2} \rightarrow \mathbb{C}^{2}_{z_1 , z_2}$ of degree $\beta$ given by $(z_1 , z_2 ) = (t_1  , t_{2}^{\beta} )$. Then the $G = \mathbb{Z}_{\alpha}$-action on $\mathbb{C}^{2}_{z_1 , z_2}$ lifts to the weight $(1 , 1 )$ action on $\mathbb{C}^{2}_{t_1 , t_2}$. In addition, the branched covering has automorphism group $H = \mathbb{Z}_{\beta}$ acting on $\mathbb{C}^{2}_{t_1 , t_2}$ with weight $(0 , 1 )$ ; and thus $\mathbb{C}^{2}_{t_1 , t_2}/ G \times H  = N_{\alpha ; 1 , \beta }$.

Next, we blow up $\mathbb{C}^{2}_{t_1 , t_2}$ at the origin and obtain $W = \mathrm{Bl}_{(0,0)}\mathbb{C}^{2}_{t_1 , t_2} \xrightarrow{\sigma} \mathbb{C}^{2}_{t_1 , t_2}$. The blowup $W$ can be described as the subvariety $W \subset \mathbb{C}^{2}_{t_1 , t_2} \times \mathbb{P}^{1}_{\xi_1 , \xi_2}$ (where $\mathbb{P}^{1}_{\xi_1  , \xi_2}$ stands for the projective line $\mathbb{P}^1$ with homogeneous coordinates $\xi_1 : \xi_2 )$ cut out by the equation $t_1 \xi_2 = t_2 \xi_1$, and $\sigma$ is the projection to $t_1 , t_2$. The $G \times H$ action on $\mathbb{C}^{2}_{t_1 , t_2}$ lifts to $W$ in the following way: $G = \mathbb{Z}_\alpha$ acts on $W \subset \mathbb{C}^{2}_{t_1 , t_2} \times \mathbb{P}^{1}_{\xi_1 , \xi_2}$ with weight $(1,1,0,0)$ on the variables $z_1 , z_2 , \xi_1 , \xi_2$ and $H = \mathbb{Z}_\beta $ acts with weight $(0,1,0,1)$.

The blowup $W$ has the open cover $W = W_1 \cup W_2$ where $W_i = W \cap \{ \xi_i \neq 0\}$. Both are affine charts:
\begin{align*}
W_1  & = \{ t_1 \xi_2 = t_2 \} \subset \mathbb{C}^{3}_{t_1 , t_2 , \xi_2} \,\,\, , & W_2 & = \{ t_1 = \xi_1 t_2\} \subset \mathbb{C}^{3}_{t_1 , t_2 , \xi_1}\\
& \cong \mathbb{C}^{2}_{t_1 , \xi_2}  \,\,\, , &   & \cong \mathbb{C}^{2}_{t_2 , \xi_1 }.
\end{align*}
The $G \times H$ action on $W$ preserves each chart $W_i$. In the affine coordinates $(t_1 , \xi_2)$ for $W_1$, and $(t_2 , \xi_1 )$ for $W_2$, the $G$-action has weights $(1,0)$ and $(1,0)$, respectively, and the $H$-action has weights $(0,1)$ and $(1,-1)$, respectively. The gluing of the two charts $W_1 , W_2$ to recover $W$ is done along the domain of definition of the rational map $W_1 \dashrightarrow W_2$ specified by $t_2 = t_1 \xi_2$ and $\xi_1 = \xi_{2}^{-1}$. 

The variety $X$ is then defined as the quotient $X = W / G \times H $, and the map $f : X = W / G \times H \rightarrow \mathbb{C}^{2}_{t_1 , t_2}/G\times H = N_{\alpha ; 1 , \beta}$ is induced by the blow-down map $\sigma$. We obtain the open cover $\{ U_1 , U_2\}$ of $X$ as $U_i = W_i / G \times H$. Let us describe each of these opens. From the weights of the $G \times H$-action on $W_1$, we have 
\[
U_1 = (\mathbb{C}_{t_1}/\mathbb{Z}_\alpha ) \times ( \mathbb{C}_{\xi_2} / \mathbb{Z}_\beta ) \cong \mathbb{C}^{2}_{u_1 , v_1}
\]
where $u_1 = t_{1}^{\alpha}$ and $v_1 = \xi_{2}^{\beta}$. On the other hand $U_2$ is given by 
\[
U_2 = (\mathbb{C}_{t_2} / \mathbb{Z}_\alpha \times \mathbb{C}_{\xi_1} ) / H \cong     \mathbb{C}^{2}_{u_2 , v_2} / H = N_{\beta ; \alpha , -1}
\] 
where $u_2 = t_{2}^{\alpha}$ and $v_2 = \xi_1$, and $H$ acts on $\mathbb{C}^{2}_{u_2 , v_2}$ with weight $(\alpha , -1)$ on $(u_2 , v_2 )$ to give $N_{\beta ; \alpha, -1}$. However, for the unique integer $0 \leq \gamma < \beta$ given by (\ref{equation:fraction}), we have $N_{\beta ; \alpha , -1} = N_{\beta ; - \gamma , -1} = N_{\beta ; \gamma , 1}$, thus $U_2 = N_{\beta ; \gamma , 1}$, as required.
\end{proof}

The following diagram summarises the construction of the variety $X$ and the proper birational map $X \rightarrow N_{\alpha ;1 , \beta}$ given in the proof of Lemma \ref{lemma:opencover},
\begin{equation} \label{diagramX}
\begin{tikzcd}
U_1  = \mathbb{C}^{2}_{u_1 , v_1} \arrow{rrd}{f|_{U_1}}   \arrow[dd,dashed] &   &   \\
 &  & \mathbb{C}^{2}_{z_1 , z_2} / \mathbb{Z}_\alpha  = N_{\alpha ; 1 , \beta} \\
 U_2 = N_{\beta ; \gamma , 1}  = \mathbb{C}^{2}_{u_2 , v_2}/ \mathbb{Z}_\beta \arrow{rru}{f|_{U_2}} &  &  
\end{tikzcd} 
\end{equation}
where $f|_{U_1}$ is given by $z_1 = (u_1)^{1/\alpha}$, $z_2 = v_1 (u_1)^{\beta / \alpha}$, $f|_{U_2}$ is given by $z_1 = v_{2} (u_2 )^{1 / \alpha}$, $z_2  = (u_2 )^{\beta / \alpha}$. The variety $X$ is obtained from $U_1$ and $U_2$ by gluing along the domain of definition of the rational map $U_1 \dashrightarrow U_2$ given by $u_2 = u_{1} (v_1 )^{\alpha / \beta}$, $v_2  = (v_{1})^{-1/ \beta}$.


\subsubsection{The $\mathbb{C}^\ast$-action on $\widetilde{ N_{\alpha ; 1 , \beta }}$}

Regarding $N_{\alpha ; 1 , \beta} = \mathbb{C}^{2}_{z_1 , z_2} / \mathbb{Z}_\alpha$ as the total space of the orbifold line bundle over $\mathbb{C}/\mathbb{Z}_\alpha$ with Seifert data $(0 ; \beta )$, the weight one $\mathbb{C}^\ast$ action along the fibers corresponds to the weight $(0,1)$ action in the coordinates $z_1 , z_2$. We now record some relevant information about the lift of this $\mathbb{C}^\ast$-action to the minimal resolution $\widetilde{N_{\alpha ; 1 , \beta}}$ which we will need later.

We start by verifying once more that this $\mathbb{C}^\ast$-action indeed lifts (thus giving another, more direct proof of Proposition \ref{proposition:extension}). More generally, we will consider the $\mathbb{C}^\ast$-action on $N_{\alpha ; 1 , \beta}$ defined by specifying \textit{fractional} weights $(\frac{r}{\alpha} , \frac{s}{\alpha} )$ (here $r,s \in \mathbb{Z}$; the weight $(0,1)$ action is the case $r= 0 , s = \alpha$). Such weights give a well-defined $\mathbb{C}^\ast$-action on $N_{\alpha ; 1 , \beta}$ \textit{provided} that 
\begin{align}
\alpha \text{ divides } s - r \beta .  \label{actioncondition}
\end{align}
From the formula for the map $f|_{U_1} : U_1 = \mathbb{C}^{2}_{u_1 , v_1} \rightarrow N_{\alpha ; 1 , \beta}   = \mathbb{C}^{2}_{z_1 , z_2} / \mathbb{Z}_\alpha = N_{\beta ; \gamma , 1}  $ in diagram (\ref{diagramX}), there exists a unique way to lift the $\mathbb{C}^\ast$ action to $  U_1 = \mathbb{C}^{2}_{u_1 , v_2}$, by giving the $(u_1 , v_1)$ variables the integer weights
\begin{align}
( \,  r  \, , \, \frac{s - r \beta}{\alpha} \, ) \in \mathbb{Z}^{2} . \label{eq:weightsU1}
\end{align}
From the formula for the gluing map $U_1 = \mathbb{C}^{2}_{u_1 , v_1} \dashrightarrow U_2 = \mathbb{C}^{2}_{u_2 , v_2} / \mathbb{Z}_\beta$ in diagram (\ref{diagramX}), the $\mathbb{C}^\ast$-action on $U_1$ induces a $\mathbb{C}^\ast$-action on $U_2$ where the $u_2 , v_2$ variables have weight
\[
( \, \frac{s}{\beta} \, , \, \frac{ - (s - r \beta)/\alpha }{\beta} \, )  .
\]
However, for these fractional weights to define a $\mathbb{C}^\ast$-action on $U_2 = N_{\beta ; \gamma , 1}$ we must still ensure that the analogue of (\ref{actioncondition}) is true. The condition is now that 
\[
\beta \text{ should divide } s + \frac{ (s - r \beta ) \gamma}{\alpha} .
\]
But $\gamma = b \beta - \alpha$ from (\ref{equation:fraction}), which then gives
\begin{align*}
s + \frac{ (s - r \beta ) \gamma}{\alpha} =  \frac{(s - r \beta)}{\alpha} \cdot b \beta  + r \beta \in \beta \mathbb{Z} .
\end{align*}
Recall that the minimal resolution of $N_{\alpha ; 1 , \beta}$ is obtained by iterated applications of Lemma \ref{lemma:opencover}. The above argument shows, in particular, that the weight $(0, 1)$ $\mathbb{C}^\ast$-action on $N_{\alpha ; 1 , \beta}$ lifts to the varieties provided by the successive applications of Lemma \ref{lemma:opencover}, and thus to the minimal resolution $\widetilde{N_{\alpha ; 1 , \beta}}$.

$ $

We record the following

\begin{lemma}\label{lemma:weightR1}
The $\mathbb{C}^\ast$-action on the minimal resolution $\widetilde{N_{\alpha ; 1 ,\beta}}$ which lifts the $\mathbb{C}^\ast$-action of weight $(0 , 1)$ on $N_{\alpha ; 1 , \beta}= \mathbb{C}^{2}_{z_1 , z_2} / \mathbb{Z}_\alpha$ acts on the first rational curve $R_1$ of the exceptional divisor (i.e. the one that intersects the strict transform of the zero section $\{z_2 = 0\}$) with \textrm{weight one} at the fixed point given by the unique intersection point of $R_1$ with the strict transform of $\{z_2 = 0 \}$.
\end{lemma}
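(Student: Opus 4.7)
The plan is to trace the $\mathbb{C}^\ast$-action through the first step of the resolution via the explicit charts of diagram (\ref{diagramX}), and then read off the weight on the tangent space to $R_1$ at the relevant fixed point directly from the local coordinates on $U_1$.

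First I would specialize the weight formula (\ref{eq:weightsU1}) to the case $(r,s) = (0,\alpha)$, which corresponds to the fractional weights $(r/\alpha, s/\alpha) = (0,1)$ on $N_{\alpha;1,\beta} = \mathbb{C}^2_{z_1,z_2}/\mathbb{Z}_\alpha$. This yields integer weights $(0, (s-r\beta)/\alpha) = (0,1)$ on the coordinates $(u_1,v_1)$ of the chart $U_1 \cong \mathbb{C}^2_{u_1,v_1}$; in particular the action fixes $u_1$ and rotates $v_1$ with weight $1$.

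Second, I would locate $R_1\cap U_1$ and the strict transform of $\{z_2=0\}$ inside $U_1$. Working upstairs in the chart $W_1 \cong \mathbb{C}^2_{t_1,\xi_2}$ of the blowup $W$, we have $z_1 = t_1$ and $z_2 = t_2 = t_1\xi_2$; consequently the total transform of $\{z_2=0\}$ in $W_1$ decomposes into the exceptional curve $\{t_1=0\}$ and the strict transform $\{\xi_2 = 0\}$. Passing to the $G\times H$-quotient via the invariants $u_1 = t_1^\alpha$ and $v_1 = \xi_2^\beta$, these two branches descend to $R_1\cap U_1 = \{u_1 = 0\}$ (the $v_1$-axis) and the strict transform of $\{z_2 = 0\}$ given by $\{v_1 = 0\}$ (the $u_1$-axis), which meet transversely at the origin $p = (0,0)$ of $U_1$. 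This is therefore the required fixed point.

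Finally, the tangent line $T_pR_1$ is the $v_1$-direction inside $T_pU_1 = \mathbb{C}^2_{u_1,v_1}$, on which the $\mathbb{C}^\ast$-action has weight $1$ by the first step; hence the induced representation on $T_pR_1$ is of weight $1$, as claimed. The only nontrivial bookkeeping is to correctly match the exceptional curve and the strict transform upstairs in $W_1$ with their images in the quotient chart $U_1$, but this is benign here since $G\times H$ preserves each of the two branches $\{t_1=0\}$ and $\{\xi_2=0\}$ separately, so passing to invariants amounts only to raising to appropriate powers.
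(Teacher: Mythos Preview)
Your proof is correct and follows essentially the same approach as the paper: specialize the weight formula (\ref{eq:weightsU1}) to $(r,s)=(0,\alpha)$ to obtain weights $(0,1)$ on $(u_1,v_1)$, identify $R_1 = \{u_1=0\}$ and the strict transform of $\{z_2=0\}$ as $\{v_1=0\}$ in the chart $U_1$, and read off the weight on $T_pR_1$. Your version is slightly more detailed in that you justify the identification of the curves by tracing through the upstairs chart $W_1$, whereas the paper simply states these identifications as facts.
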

\begin{proof}
In the affine chart $U_1 = \mathbb{C}^{2}_{u_1, v_1}$ from diagram (\ref{diagramX}) the curve $R_1 $ is given by $u_1 = 0$, and the strict transform of the zero section $\{z_2 = 0\}$ is given by $v_1 = 0$. They intersect at the point $u_1 = v_1= 0$ which is fixed by the $\mathbb{C}^\ast$-action. By (\ref{eq:weightsU1}) specialised to $r = 0 $, $s = \alpha$ we immediately see that the weight of the $\mathbb{C}^\ast$-action on $R_1$ at this fixed point has weight one.
\end{proof}

\subsubsection{Proof of Proposition \ref{proposition:E(n)}}

We first consider the family $M = M(2,3,6n-5)$, $n \geq 2$. The associated complex surface $\widetilde{X}$ of \S \ref{subsection:compactification} is $E(n)$. Assertion (i) follows directly from Lemma \ref{lemma:loopextension} and Corollary \ref{corollary:extension}. We then verify the assertion (ii) (i.e. that $\tau_{\Sigma(2,3,6n-5)} = \tau_{S^3}$ in $\MCG (E(n) \setminus B )$). To see this, note that the intersection point $x$ of the central curve in $\widetilde{C}$ with the first curve $R_1$ of any one of the three chains emanating from the central curve is a fixed point of the $S^1$-action. By Lemma \ref{lemma:weightR1}, together with the fact that the central curve in $\widetilde{C}$ is fixed pointwise under the $S^1$-action, we deduce that the weights of the $S^1$-action on $E(n) \setminus M$ at the fixed point $x$ are given by $(0,1)$. Note that the weight $(0,1)$ $S^1$-action on $S^3$ yields the non-trivial element in $\pi_1 \mathrm{Diff}(S^3) = \mathbb{Z}/2$. Thus, removing a ball $B$ centered at $x$ establishes assertion (ii) for the family $M = M(2,3,6n-5)$, $n\geq 2$.

We now turn to $M = M(2,3,6n-1)$, $n \geq 2$. Again by Corollary \ref{corollary:extension} the $S^1$-action on $\Sigma(2,3,6n-1)$ extends over to $\widetilde{X} \setminus M(2,3,6n-1)$. However, recall that $\widetilde{X}$ is not a minimal complex surface, and that to obtain $X^\prime = E(n)$ from $\widetilde{X}$ we need to blow down $(-1)$-spheres as in Figure \ref{fig:blowdowns}. We first argue that the $S^1$-action on $\widetilde{X} \setminus M(2,3,6n-1)$ descends to $E(n) \setminus M(2,3,6n-1)$. For this, note that each of the $(-1)$-spheres $E = \mathbb{P}^1$ that get blown down at each stage are setwise preserved by the $S^1$-action. This action is holomorphic, and the induced action on the normal bundle $\mathcal{O}_{\mathbb{P}^1}(-1) = \{ ( \, (\xi_1 : \xi_2 )  \, , (z_1 , z_2 ) ) \in \mathbb{P}^1 \times \mathbb{C}^2 \, | \, (z_1 , z_2 ) \in (\xi_1 : \xi_2 ) \}$ of $E = \mathbb{P}^1$ has the form
\[
( \, (\xi_1 : \xi_2 ) \,  , \, (z_1 , z_2 ) \, )  \mapsto ( \, (\lambda^{a_1} \xi_1 : \lambda^{a_2} \xi_2 ) \, , \, (\lambda^{a_1} z_1 , \lambda^{a_2} z_{2} ) \, ) \quad , \quad \lambda \in S^1 
\]
for some weight $(a_1,a_2 ) \in \mathbb{Z}^{2}$. The blow down map is the projection $\mathcal{O}_{\mathbb{P}^1}(-1) \rightarrow \mathbb{C}^2$ to $(z_1 , z_2 )$, and thus the $S^1$-action descends to $\mathbb{C}^2$ as the weight $(a_1,a_2)$ action on $\mathbb{C}^2$. Thus, each $(-1)$-curve $E$ can be blown down $S^1$-equivariantly, and assertion (i) follows by Lemma \ref{lemma:loopextension}.

To verify assertion (ii) we will find a fixed point of the $S^1$-action on $E(n) \setminus M(2,3,6n-1)$ whose weight is $(a_1 , a_2 )$ with $a_1 + a_2$ an \textit{odd} integer. Granted that, the assertion (ii) follows as before. We claim that the singular point on the cuspidal curve from the last stage in Figure \ref{fig:blowdowns} has this property. In order to see this we analyse the $S^1$-action at each stage of the blowing down process of Figure \ref{fig:blowdowns}. In the second stage of Figure \ref{fig:blowdowns}, the triple point is a fixed point with weight $(1,1)$ by Lemma \ref{lemma:weightR1}. From this one deduces that in the third stage of Figure \ref{fig:blowdowns} the point of tangency has weight $(2,1)$, where the weight $1$ occurs along the  $(-4)$-curve by Lemma \ref{lemma:weightR1} (and hence the weight $2$ along the normal direction). In the fourth and last stage we deduce that the cuspidal point has weight $(3,2)$. This establishes (ii) for the family $M = M(2,3,6n-1)$, $n \geq 2$.

\section{Monopoles on Seifert-fibered $3$-manifolds} \label{section:MOY}

This section mostly consists on background material based on the work of Mrowka--Ozsváth--Yu \cite{MOY}, which describes the Seiberg--Witten theory of Seifert-fibered $3$-manifolds in terms of suitable algebro-geometric data. We will make extensive use of this theory in subsequent sections.

\subsection{Spin-c structures and Seifert fibrations}\label{subsection:spin}

We now review some relevant differential geometry of Seifert-fibered $3$-manifolds, following \cite{MOY} and \cite{furutasteer}. This will be provide the ingredients for setting up the Seiberg--Witten equations on such manifolds.

Fix a closed oriented Seifert-fibered $3$-manifold $Y \rightarrow C$ over a closed oriented orbifold surface $C$. Thus, $Y$ is identified as the unit circle bundle $Y = S(N)$ associated to a hermitian line bundle $N \rightarrow C$. We let $x_i \in C$, for $i = 1 , \ldots , n$, denote the orbifold points of $C$, with corresponding isotropy groups  $\mathbb{Z}/\alpha_i \mathbb{Z}$ with $\alpha_i \geq 2$. We then make the following auxiliary choices. First, we fix \textit{holomorphic} data:
\begin{enumerate}
\item[(i)] an orbifold holomorphic structure on $C$, making $C$ into an orbifold Riemann surface.
\item[(ii)] a holomorphic structure on the orbifold line bundle $N \rightarrow C$ up to isomorphism. That is, we fix a lift of the class of $N$ in the topological Picard group $\mathrm{Pic}^{t}(C)$ to the holomorphic Picard group $\mathrm{Pic}(C)$ (for the holomophic structure on $C$ fixed in (i) above). 
\end{enumerate}
Secondly, we fix \textit{geometric} data: 
\begin{enumerate}
\item[(iii)] an orbifold Kähler metric\footnote{An orbifold Kähler metric on the orbifold $C$ can be regarded as an ordinary Kähler metric on the Riemann surface $|C| \setminus \{ x_1 , \ldots , x_n \}$ with \textit{cone singularities} of cone angle $2\pi / \alpha_i $ at the points $x_i$.} $g_C$ of \textit{constant curvature} on the orbifold Riemann surface $C$. Such a metric is unique up to rescaling by a positive constant and up to holomorphic automorphisms of $C$. We denote by $\omega_C$ the area $2$-form of $g_C$. 
\item[(iv)] a hermitian connection $i \eta$ on $N \rightarrow C$ with \textit{constant curvature}, and such that the associated holomorphic structure on $N \rightarrow C$ given by the Cauchy--Riemann operator of $i \eta$ belongs in the isomorphism class fixed in (ii) above. Here we regard $\eta$ as a $1$-form on $Y = S(N)$, and the constant curvature condition on $i\eta$ is the hermitian Yang--Mills condition:
\begin{align}
d \eta = - \frac{2 \pi \deg (N) }{\mathrm{Area}(C, g_C)} \omega_C 
. \label{eq:constantcurvature}
\end{align}
\end{enumerate}

\begin{remark}\label{remark:choices} Some observations are in order:
\begin{enumerate}
\item In fact, not every orbifold Riemann surface $C$ admits a Kähler metric of constant curvature. Such a metric exists when $n \geq 3$ or $n = 2$ and $\alpha_1 \neq \alpha_2$ when the genus of $C$ is zero (see \cite{furutasteer}, Theorem 1.2). Thus, we make the tacit agreement that all orbifold Riemann surfaces that we consider from now on admit such a metric, and we will not worry about the few that don't.

\item The space of holomorphic stuctures on $N \rightarrow C$ up to complex gauge transformations is acted upon \textit{freely and transitively} by the \textit{Jacobian torus}
\[
H^{1}(C, i\mathbb{R})/H^{1}(C, 2 \pi i \mathbb{Z})  .
\]
In particular, when $C$ has genus \textit{zero}, i.e. $|C| \cong \mathbb{C}P^1$, then the choice in (ii) is unique.

\item The space of holomorphic structures on $N \rightarrow C$ up to complex gauge transformations is identified with the space of hermitian connections on $N$ with constant curvature and up to hermitian gauge transformations, by a standard argument. Namely, given a holomorphic structure $\overline{\partial}$ on $N$ there exists a complex gauge transformation $g$ of $N$, which is unique up to hermitian gauge transformations, such that the Chern connection associated to the holomorphic structure $g ( \overline{\partial} )$ has constant curvature.

Thus, the choice in (iv) is \textit{unique up to hermitian gauge transformations} (granted choices (i)-(iii) have been made).
\end{enumerate}
\end{remark}

\begin{remark}
For convenience, for the rest of the article we will often drop the adjective ``orbifold" (e.g. we might say ``line bundle" instead of ``orbifold line bundle") when it is clear from the context.
\end{remark}

On $C$ there is a \textit{canonical} spin-c structure $\mathfrak{s}_C$. It has spinor bundle given by \[S_C = \mathbb{C} \oplus K_{C}^{-1} \rightarrow C \quad , \] where $K_C$ is the canonical bundle of $C$, equipped with the hermitian metric induced from $g_C$, and the Clifford multiplication $\rho_C : TC \rightarrow \mathrm{Hom}(S_0, S_0 )$ is given by the symbol of the Dirac operator $\sqrt{2} ( \overline{\partial} + \overline{\partial}^\ast ) : \Gamma ( C , S_C ) \rightarrow \Gamma (C , S_C )$. The set of (isomorphism classes of) spin-c structures on $C$ is in one-to-one correspondence with orbifold complex line bundles $E_0 \rightarrow C$ with hermitian metric, up to isomorphism. The spin-c structure corresponding to $E_0$ is denoted $\mathfrak{s}_{E_0}$, and its spinor bundle is $S_{E_0} = S_C \otimes E_0 $ with Clifford multiplication $\rho_{E_0}(v) = \rho_{C}(v) \otimes \mathrm{id}_{E_0}$.

The spin-c structure $\mathfrak{s}_C$ on $C$ determines another spin-c structure $\mathfrak{s}_Y$ on $Y$ through the Seifert fibration $Y \rightarrow C$, which we call the \textit{canonical spin-c structure} on $Y$. In order to describe its spinor bundle we need to introduce a Riemannian metric $g_Y$ on $Y$, and we take 
\[
g_Y = \eta^2 + \pi^\ast g_C .\]
The connection $i \eta$ gives us a splitting of the tangent bundle of $Y = S(N)$, 
\begin{align}
TY = \mathbb{R} \cdot \zeta \otimes \pi^\ast TC \label{eq:splittingTY}
\end{align}
where $\pi^\ast TC = \mathrm{Ker}(\eta )$ is the horizontal distribution of the connection and $\mathbb{R} \cdot \zeta$ is the vertical distribution, which is trivialised by the vector field $\zeta$ that generates the $S^1$-action on $Y = S(N)$. The splitting (\ref{eq:splittingTY}) is orthogonal with respect to $g_Y$, but the Levi-Civita connection of $g_Y$ does not, in general, preserve this splitting. The natural connection on $TY$ to consider is the \textit{adiabatic connection}\footnote{The adiabatic connection $\nabla^{o} $ can be regarded as the limit when $t \rightarrow 0$ of the Levi--Civita connections of the metrics $t^2 \eta^2 + \pi^\ast g_C$, see \cite{nicolaescu}.} $\nabla^{o} = d \oplus \pi^\ast \nabla^{C} $ instead (here $\nabla^{C}$ is the Levi-Civita connection associated to $g_C$), which does preserve the splitting and is also compatible with $g_Y$.

Given the metric $g_Y$ and the compatible connection $\nabla^{o}$, the canonical spin-c structure $\mathfrak{s}_Y$ on $Y$ has spinor bundle
\begin{align}
S_Y = \pi^\ast S_C =   \mathbb{C} \oplus  \pi^\ast K_{C}^{-1}\label{eq:spinorbundle} \rightarrow Y \quad ,
\end{align}
and Clifford multiplication $\rho_Y : TY \rightarrow \mathrm{Hom}(S_Y , S_Y )$ is defined as $\rho_C$ along the horizontal distribution $\pi^\ast TC \subset TY$, and $\rho_Y (\eta )= +i$ (resp. $-i$) on $\mathbb{C}$ (resp. on $\pi^\ast K_{C}^{-1}$). Similarly as before, the spin-c structures on $Y$ are in one-to-one correspondence with complex line bundles $E \rightarrow Y$ with hermitian metric, up to isomorphism, and the spin-c structure $\mathfrak{s}_E$ associated with $E$ has spinor bundle $S_E = S_Y \otimes E$ with Clifford multiplication denoted $\rho_E$.

We also note the following:

\begin{lemma}\label{lemma:contact}
If $\mathrm{deg}N < 0$, then $\xi : = \mathrm{Ker}(\eta)$ is a (positively-oriented) contact structure on $Y$. The spin-c structure $\mathfrak{s}_\xi$ on $Y$ determined by the contact structure $\xi$ agrees with the canonical one $\mathfrak{s}_Y$ determined from the Seifert fibration $Y\rightarrow C$.
\end{lemma}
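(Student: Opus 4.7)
The plan has two parts. First, to verify that $\xi=\ker\eta$ is a positively-oriented contact structure I would just compute $\eta\wedge d\eta$ directly. Using the hermitian Yang--Mills identity \eqref{eq:constantcurvature} we have
\[
\eta\wedge d\eta=-\frac{2\pi\deg(N)}{\mathrm{Area}(C,g_C)}\,\eta\wedge\pi^{\ast}\omega_{C}.
\]
Since the horizontal lift of $\omega_{C}$ pairs with $\eta$ to give the Riemannian volume form of $g_Y=\eta^{2}+\pi^{\ast}g_C$ (with the induced orientation on $Y=S(N)$), and because $\deg N<0$ by hypothesis, the right-hand side is a \emph{positive} multiple of the volume form. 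Hence $\xi$ is a positive, co-oriented contact structure, and in addition its Reeb vector field $R$ is a positive multiple of the vertical vector field $\zeta$ generating the Seifert $S^{1}$-action.

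For the second part, I would unwind the definitions of both spin-c structures and match them. The contact spin-c structure $\mathfrak{s}_{\xi}$ is built as follows: choose a compatible almost complex structure $J_{\xi}$ on $\xi$ (so that $d\eta(\,\cdot\,,J_{\xi}\,\cdot\,)$ is a metric on $\xi$ inducing the co-orientation); then the spinor bundle is $S_{\xi}=\mathbb{C}\oplus(\xi,J_{\xi})$ and the Clifford action sends the Reeb direction to $+i$ on the $\mathbb{C}$ summand and $-i$ on $(\xi,J_{\xi})$, while a vector $v\in\xi$ acts as $\bigl(\begin{smallmatrix}0 & -\langle v,\cdot\rangle\\ v & 0\end{smallmatrix}\bigr)$ in the usual way. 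Under the horizontal-vertical splitting \eqref{eq:splittingTY} induced by $i\eta$, the distribution $\xi=\ker\eta$ is canonically identified with $\pi^{\ast}TC$, and the natural choice $J_{\xi}=\pi^{\ast}J_{C}$ makes $(\xi,J_{\xi})$ isomorphic, as a hermitian line bundle, to $\pi^{\ast}TC=\pi^{\ast}K_{C}^{-1}$. Therefore the spinor bundle of $\mathfrak{s}_{\xi}$ becomes
\[
S_{\xi}\;=\;\mathbb{C}\oplus\pi^{\ast}K_{C}^{-1},
\]
which is exactly $S_{Y}$ from \eqref{eq:spinorbundle}.

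It remains to check the Clifford multiplications agree. On the horizontal subbundle $\xi=\pi^{\ast}TC$, the Clifford multiplication $\rho_Y$ was defined as the pullback of $\rho_C$, while $\rho_{\xi}$ also uses the standard Clifford action associated with $(J_\xi,\langle\cdot,\cdot\rangle)=\pi^{\ast}(J_C,g_C)$; these coincide by construction of the canonical spin-c structure on a Kähler surface. Along the vertical direction, $\eta$ is Reeb-like (up to the positive multiple established above), and both $\rho_Y(\eta)$ and the Clifford action of the Reeb direction in $\mathfrak{s}_{\xi}$ act as $+i$ on the trivial summand and $-i$ on $\pi^{\ast}K_{C}^{-1}$. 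Hence $\mathfrak{s}_{\xi}\cong\mathfrak{s}_{Y}$.

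The only genuinely delicate point I would be careful with is the orientation/sign bookkeeping in step two: making sure that the co-orientation of $\xi$ given by $\eta$, the orientation of $Y$, the complex structure on $TC$, and the chosen identification $\ker\eta\cong\pi^{\ast}TC$ are mutually consistent, so that $\rho_{\xi}$ sends the Reeb direction to $+i$ on $\mathbb{C}$ (not $-i$). Once the sign in $\eta\wedge d\eta>0$ from the computation above is used to fix the sign of the Reeb vector field relative to $\zeta$, the remaining conventions align automatically.
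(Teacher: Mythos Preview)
Your proposal is correct. For the first assertion you compute $\eta\wedge d\eta$ directly from \eqref{eq:constantcurvature}, while the paper equivalently computes $\ast d\eta$ and infers the sign of $\eta\wedge d\eta$ from that; these are the same argument in slightly different packaging. For the second assertion (that $\mathfrak{s}_\xi\cong\mathfrak{s}_Y$), the paper's proof does not address it at all, whereas you supply a complete verification by identifying $(\xi,J_\xi)\cong\pi^\ast K_C^{-1}$ and matching the Clifford actions---so here your write-up is in fact more thorough than the paper's.
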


\begin{proof}
From (\ref{eq:constantcurvature}) we have that the Hodge star operator acting on the $2$-form $d \eta$ is
\[
\ast d \eta = - \frac{2 \pi \mathrm{deg}(N)}{\mathrm{Area}(C,g_C)}  \eta
\]
and hence $\eta \wedge d \eta $ is either a positive or negative or identically vanishing $3$-form on $Y$, according as to whether the orbifold degree of $N$ is negative, positive or zero, respectively.
\end{proof}

Given a hermitian connection $B$ on a hermitian line bundle $E \rightarrow Y$, the associated \textit{spin-c connection} $\nabla_B$ on the hermitian vector bundle $S_E  = S_Y \otimes E$ is uniquely defined by the property that Clifford multiplication $\rho_E : TY \rightarrow \mathrm{Hom}(S_E ,S_E)$ is parallel \textit{with respect to the adiabatic connection} $\nabla^{o}$ on $TY$, and $\nabla_B$ on $S_E$, and that the induced connection on $\mathrm{det}S_E = E^2 \otimes \pi^\ast K_{C}^{-1}$ is
$B^2 \otimes \pi^\ast B_{K_C}^{-1}$ (where $B_{K_C}$ is the Levi--Civita connection on $K_C$). In terms of the splitting (\ref{eq:spinorbundle}) this is just the connection $B \oplus (B \otimes \pi^\ast B_{K_C}^{-1})$. The \textit{Dirac operator} $D_B: \Gamma(Y, S_E) \rightarrow \Gamma(Y, S_E)$ twisted by the connection $B$ is the differential operator given by $
D_B \psi = \rho( \nabla_B \psi )$, which in terms of the natural splitting $S_E = E \oplus ( E \otimes \pi^\ast K_{C}^{-1} )$ has the block form
\begin{align}
D_B = \begin{pmatrix}
i \nabla^{B}_{\zeta} & \sqrt{2} \cdot \overline{\partial}_{B}^\ast \\
\sqrt{2} \cdot\overline{\partial}_B & -i \nabla^{B}_{\zeta} 
\end{pmatrix}.\label{eq:diracblock}
\end{align}
Here $\overline{\partial}_{B}^\ast$ is the adjoint of the differential operator $\overline{\partial}_{B} : \Gamma (Y, E ) \rightarrow \Gamma (Y , E \otimes \pi^\ast K_{C}^{-1} )$, which only involves derivatives along $C$. Namely, choosing a local coordinate $z = x+iy$ defined on the open set $\tilde{U} \subset \mathbb{C}$ associated to an orbifold chart $(\tilde{U}, \Gamma , \varphi )$ of $C$ we have
\[
\overline{\partial}_B \alpha = \big( \nabla^{B}_{\partial/\partial x} \alpha + i \nabla^{B}_{\partial /\partial y} \alpha \big) d \overline{z} ,
\]
where $\partial / \partial x$ and $\partial / \partial y$ are regarded as vectors tangent to $Y$ using the splitting (\ref{eq:splittingTY}). 

The Dirac operator $D_B$ thus decomposes into \textit{vertical} and \textit{horizontal} components $D_B = D_{B}^v + D_{B}^{h}$
\begin{align*}
D_{B}^v = \begin{pmatrix}
i \nabla^{B}_{\zeta} & \0 \\
0 & -i \nabla^{B}_{\zeta} 
\end{pmatrix} \quad , \quad D_{B}^h = \sqrt{2} \cdot \begin{pmatrix}
0 & \overline{\partial}_{B}^\ast \\
\overline{\partial}_B & 0
\end{pmatrix}.
\end{align*}
For future reference, we record here the following result regarding the ``super-commutator" of the two components, We have $\{ D_{B}^v , D_{B}^h \} := D_{B}^v D_{B}^h + D_{B}^h D_{B}^v$ ,
\begin{lemma}[Lemma 5.3.1 \cite{MOY}] \label{lemma:commutator}
$\{ D_{B}^v , D_{B}^h \}  = - \rho \big( \Pi_{\xi} \ast (F_B - \frac{1}{2} \pi^\ast F_{K_C}  ) \big)$, where $\Pi_{\xi}$ is the orthogonal projection onto the horizontal distribution $\xi = \mathrm{Ker}(\eta) \subset TY$.
\end{lemma}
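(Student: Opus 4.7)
The proof should be a direct local computation, exploiting two geometric features of the Seifert setup: (i) horizontal lifts of vector fields on $C$ commute with the Seifert vector field $\zeta$, and (ii) the spin-c connection is built from the adiabatic connection $\nabla^{o} = d \oplus \pi^{\ast}\nabla^{C}$, which preserves the horizontal/vertical splitting of $TY$ and thus has no mixed-type curvature. The starting point is to plug the block forms of $D_B^v$ and $D_B^h$ into the super-commutator to obtain
\begin{equation*}
\{D_B^v , D_B^h\} = \sqrt{2}\begin{pmatrix} 0 & i[\nabla_\zeta^B, \overline{\partial}_B^\ast] \\ -i[\nabla_\zeta^B, \overline{\partial}_B] & 0 \end{pmatrix},
\end{equation*}
which reduces the problem to identifying the commutator $[\nabla_\zeta^B, \overline{\partial}_B]$ (the second slot follows by adjunction).

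The key step is to show that this commutator is zeroth-order and equals a curvature contraction. In a local orthonormal frame $e_1, e_2$ on $C$ with horizontal lifts $\tilde e_1, \tilde e_2$ on $Y$, the lifts are $S^1$-invariant, so $[\zeta, \tilde e_j] = 0$. Writing $\overline{\partial}_B \alpha = (\nabla^{B}_{\tilde e_1}\alpha + i \nabla^{B}_{\tilde e_2}\alpha) \otimes d\bar z$ and using that the Clifford coefficient $d\bar z$ is $\nabla^o$-parallel along $\zeta$ (since $\nabla^o$ is trivial in the vertical direction), the second-order and first-order terms all drop out, leaving
\begin{equation*}
[\nabla_\zeta^B, \overline{\partial}_B]\alpha = \big(F^{\mathrm{spin}^c}(\zeta, \tilde e_1) + i F^{\mathrm{spin}^c}(\zeta, \tilde e_2)\big)\alpha \otimes d\bar z,
\end{equation*}
where $F^{\mathrm{spin}^c}$ is the curvature of the spin-c connection on $S_E$.

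To finish I would decompose $F^{\mathrm{spin}^c}$ into its Riemannian piece (built from the curvature of $\nabla^o$ acting on $S_E$ via Clifford multiplication) and its $U(1)$-piece equal to $\tfrac12 F_{\det S_E} = F_B - \tfrac12 \pi^\ast F_{K_C}$. Since $\nabla^o = d \oplus \pi^\ast \nabla^C$, the curvature of $\nabla^o$ has purely horizontal type and its contraction with $\zeta$ vanishes identically; consequently only the $U(1)$-piece survives, and the commutator becomes scalar multiplication by $\iota_\zeta(F_B - \tfrac12 \pi^\ast F_{K_C})$ twisted by $d\bar z$. Reassembling into the off-diagonal matrix of Step 1 and using the standard identity $\iota_\zeta \omega = \ast(\eta \wedge \ast\omega)$ together with the dictionary $\Pi_\xi \ast \omega = \ast \omega - (\ast\omega)(\zeta)\,\eta$, the off-diagonal block with a $d\bar z$-coefficient is precisely Clifford multiplication by the horizontal $1$-form $\Pi_\xi \ast (F_B - \tfrac12 \pi^\ast F_{K_C})$, up to the overall sign in the stated formula.

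The main obstacle will be the careful bookkeeping in the last step: tracking sign conventions between $\iota_\zeta$, the Hodge star on $Y$, and Clifford multiplication as it acts on the two summands $E$ and $E \otimes \pi^\ast K_C^{-1}$ of $S_E$ (Clifford multiplication by a horizontal $1$-form $v$ versus by $Jv$ differs by $i$ on each summand). The rest of the argument — the cancellation of top-order terms and the identification of the surviving zeroth-order term with a curvature contraction — is essentially forced once one exploits $[\zeta, \tilde e_j]=0$.
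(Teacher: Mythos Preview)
The paper does not supply its own proof of this lemma: it is quoted verbatim as Lemma 5.3.1 of Mrowka--Ozsv\'ath--Yu and used as a black box, so there is no in-paper argument to compare against. Your proposal is a sensible reconstruction of how such a computation goes, and the overall architecture---reducing to $[\nabla_\zeta^B,\overline{\partial}_B]$, killing the first-order part via $[\zeta,\tilde e_j]=0$, and observing that only the $U(1)$ half of the spin-c curvature survives because $\nabla^o$ has no vertical curvature---is correct.

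One point to be careful about in your last paragraph: the operator $\overline{\partial}_B$ here is not a map $\Gamma(E)\to\Gamma(E)$ but $\Gamma(Y,E)\to\Gamma(Y,E\otimes\pi^\ast K_C^{-1})$, so when you write $[\nabla_\zeta^B,\overline{\partial}_B]$ the two $\nabla_\zeta^B$'s are acting on different bundles (one twisted by $\pi^\ast K_C^{-1}$, one not). The difference between them is exactly the $\zeta$-component of $\pi^\ast B_{K_C}^{-1}$, which vanishes since $B_{K_C}$ is pulled back from $C$; this is why the $-\tfrac12\pi^\ast F_{K_C}$ term appears cleanly and no extra Levi-Civita contribution sneaks in. You implicitly use this when asserting ``the Clifford coefficient $d\bar z$ is $\nabla^o$-parallel along $\zeta$,'' but it is worth making explicit. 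With that caveat, and the sign bookkeeping you already flag, the argument goes through.
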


With this in place, we can now describe the Seiberg--Witten equation on $Y$.

\subsection{Seiberg--Witten monopoles on $Y$}

The \textit{Seiberg-Witten equation} on the Seifert-fibered $3$-manifold $Y = S(N)$ with the spin-c structure $\mathfrak{s}_E$ is given by the following partial differential equation on a pair $(B, \psi ) $, where $B$ is a connection on the line bundle $E\rightarrow Y$ and $\psi$ a section of $S_E = E \oplus E \otimes \pi^\ast K_{C}^{-1}$:
\begin{align}
F_B - \frac{1}{2} \pi^\ast F_{K_C} & = \rho^{-1} (\psi \psi^\ast )_0 \label{eq:3dSW1}\\
D_B \psi & = 0 \label{eq:3dSW2}.
\end{align}

Above, $\pi^\ast F_{K_C}$ stands for the curvature of the Levi--Civita connection on $K_C$ pulled back along $\pi: Y \rightarrow C$. 

A solution to (\ref{eq:3dSW1}-\ref{eq:3dSW2}) will be referred to as a \textit{monopole} on $Y$ in the line bundle $E$. The equations are invariant under the infinite-dimensional \textit{gauge group} $\mathcal{G}(Y)$ of automorphisms of the bundle of Clifford modules $(S_E , \rho_E )$, which is just the group of smooth maps $g : Y \rightarrow U(1)$. If $\mathcal{C}(Y, \mathfrak{s}_E )$ denotes the configuration space of pairs $(B , \psi )$ (which is a Fréchet space when equipped with the $C^\infty$ topology), then the monopoles on $Y$ in the line bundle $E$ are the critical points of the \textit{Chern--Simons--Dirac functional} $\mathcal{L} : \mathcal{C}(Y, \mathfrak{s}_E ) \rightarrow \mathbb{R}$ given by
\begin{equation}\label{eq: CSD}
\mathcal{L}(B, \psi ) = - \int_Y (B- B^\prime ) \wedge (F_B + F_{B^\prime} ) + \int_{Y} \langle D_B \psi , \psi \rangle .
    \end{equation}

Here $B^\prime$ is a fixed connection on $E$.

Let $\mathcal{C}^\ast (Y, \mathfrak{s}_E ) \subset \mathcal{C}(Y, \mathfrak{s}_E )$ denote the space of \textit{irreducible} configurations, i.e. those $(B, \psi )$ with $\psi$ not vanishing identically. Let $\mathcal{B}(Y, \mathfrak{s}_E )$ (resp. $\mathcal{B}^\ast (Y, \mathfrak{s}_E )$) denote the quotient of the configuration space $\mathcal{C}(Y, \mathfrak{s}_E )$ (resp. $\mathcal{C}^\ast (Y, \mathfrak{s}_E )$) by the action of the group of gauge transformations $\mathcal{G}(Y)$. It is a standard fact that one obtains a Hilbert manifold by considering instead the space of $L^{2}_k$ irreducible configurations $(B, \psi )$ modulo $L^{2}_{k+1}$ gauge transformations for suitable $k > 0$ (see \cite{KM}), but we will often ignore this and simply refer to $\mathcal{B}^\ast (Y, \mathfrak{s}_E )$ as a manifold, for the sake of exposition.

The Chern--Simons--Dirac functional yields a function $\mathcal{L} : \mathcal{B}(Y, \mathfrak{s}_E ) \rightarrow \mathbb{R}/ 4 \pi^2 \mathbb{Z}$. In the case when $c_1 (\mathfrak{s}_E )$ is a torsion class, then $\mathcal{L}$ is valued in $\mathbb{R}$ rather than the circle $\mathbb{R}/4 \pi^2 \mathbb{Z}$. The former condition holds precisely when $E$ is isomorphic to the pullback of an orbifold line bundle from $C$ (see Theorem 2.0.19 and Remark 2.0.20 in \cite{MOY}), and we shall see that these are the only spin-c structures we will have to care about.

We denote by $\mathfrak{C}(Y, E) \subset \mathcal{B}(Y, \mathfrak{s}_E )$ the \textit{moduli space of monopoles} on $Y$ in the line bundle $E$, for the moment regarded as a topological space. Next, we review the description of the moduli spaces $\mathfrak{C}(Y,E)$ on Seifert-fibered $3$-manifolds from \cite{MOY}.

\subsubsection{Irreducible monopoles}


Let $\mathfrak{C}^\ast (Y , \mathfrak{s}_E ) \subset \mathfrak{C}(Y, \mathfrak{s}_E )$ be the moduli space of \textit{irreducible} monopoles in $E$.

\begin{proposition}[Theorem 5, \cite{MOY}]\label{proposition:MOYirreducible} The moduli space of irreducible monopoles $\mathfrak{C}^\ast (Y,E) \subset \mathcal{B}^\ast (Y, \mathfrak{s}_E)$ is a Morse--Bott critical manifold of the Chern--Simons--Dirac functional. It decomposes as a disjoint union (indexed by isomorphism classes of orbifold line bundles $E_0$ on $C$ with $E \cong \pi^\ast E_0$ and $0\leq \mathrm{deg}E_0 < \frac{1}{2}\mathrm{deg}K_C$)
\[
\mathfrak{C}^\ast (Y, E ) = \bigsqcup_{ E \cong \pi^\ast E_0 \, , \, 0 \leq \mathrm{deg}E_0 < \frac{1}{2}\mathrm{deg}K_C }\Big( \mathfrak{C}^{+} (E_0 ) \sqcup \mathfrak{C}^{-}(E_0) \Big) ,\]
where $\mathfrak{C}^+ (E_0)$ is diffeomorphic to the moduli space $\mathcal{D}(C,E_0 )$ of orbifold effective divisors on $C$ in the line bundle $E_0$ (cf. Definition \ref{definition:divisor} and Lemma \ref{lemma:effRiemannsurface}), and the charge-conjugation symmetry of the configuration space identifies $\mathfrak{C}^{-}(E_0 )$ diffeomorphically with $\mathfrak{C}^{+}(E_{0}^{-1} \otimes K_C )$.
\end{proposition}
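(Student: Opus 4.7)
The plan is to exploit the Seifert $S^1$-symmetry in order to reduce the three-dimensional Seiberg--Witten equations on $Y$ to abelian vortex equations on the base orbifold $C$, and then to invoke the classical correspondence between vortex solutions and effective divisors.

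First I would write each spinor as $\psi = (\alpha, \beta)$ according to the splitting $S_E = E \oplus E \otimes \pi^\ast K_C^{-1}$, so that the block form (\ref{eq:diracblock}) of the Dirac operator and the curvature equation decompose into vertical and horizontal components. A K\"ahler-type Weitzenb\"ock calculation applied along the horizontal directions, together with integration against the Chern--Simons--Dirac equation, yields the pointwise identity $\alpha \cdot \overline{\beta} \equiv 0$; unique continuation then forces each irreducible monopole into either the $\mathfrak{C}^+$ branch ($\beta \equiv 0$) or the $\mathfrak{C}^-$ branch ($\alpha \equiv 0$). Charge conjugation sends $(B, \alpha, \beta)$ to $(-B + \pi^\ast B_{K_C}, \overline{\beta}, \overline{\alpha})$ and intertwines $\mathfrak{s}_E$ with $\mathfrak{s}_{E^{-1} \otimes \pi^\ast K_C}$, so it identifies $\mathfrak{C}^-(E_0) \cong \mathfrak{C}^+(E_0^{-1} \otimes K_C)$ and reduces the analysis to the $\mathfrak{C}^+$ branch.

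Next I would establish Seifert invariance of solutions in $\mathfrak{C}^+$. By averaging a configuration over the Seifert circle action and applying a Bochner/maximum-principle estimate, one shows that after a gauge transformation $\iota_\zeta F_B = 0$ and $\nabla^B_\zeta \alpha = 0$. This forces $E \cong \pi^\ast E_0$ for an orbifold line bundle $E_0$ on $C$, and a descended pair $(B_0, \alpha_0)$ satisfying the abelian vortex equations on $(C, \omega_C)$: $\overline{\partial}_{B_0} \alpha_0 = 0$ together with $i \Lambda F_{B_0} = c - \tfrac{1}{2}|\alpha_0|^2$ for a topological constant $c$. Integrating the moment-map equation and using nonnegativity of $|\alpha_0|^2$ pins down the constraint $0 \leq \deg E_0 < \tfrac{1}{2}\deg K_C$: the lower bound is necessary for a nonzero holomorphic section to exist, while the upper bound is strict precisely because $\alpha_0 \not\equiv 0$. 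The Bradlow/Kazdan--Warner correspondence (i.e.\ solving an associated semilinear Liouville-type PDE on the orbifold $C$) then supplies the required diffeomorphism $\mathfrak{C}^+(E_0) \cong \mathcal{D}(C, E_0)$ via $(B_0, \alpha_0) \mapsto \mathrm{div}(\alpha_0)$.

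Finally, the Morse--Bott property would follow by linearizing the gauge-fixed equations at a solution and expanding the resulting deformation complex into Fourier modes with respect to the Seifert $S^1$-action. The hard part will be a spectral gap argument ruling out contributions from nonzero Fourier modes: this relies on the super-commutator formula of Lemma \ref{lemma:commutator} together with the adiabatic spectral analysis of the twisted Dirac operator on $Y$ carried out in \S\ref{section:dirac}, and it is precisely where the degree constraint $\deg E_0 < \tfrac{1}{2}\deg K_C$ enters. The remaining zero-weight cohomology is then identified with $H^0(C, E_0)$, which matches the tangent space to $\mathcal{D}(C, E_0)$ at $\mathrm{div}(\alpha_0)$, yielding the Morse--Bott non-degeneracy transverse to the critical manifold.
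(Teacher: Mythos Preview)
Your overall architecture matches the paper's: reduce irreducible monopoles to vortices on $C$, then invoke the vortex--divisor correspondence via a Kazdan--Warner equation. But two of your steps diverge from the actual argument in ways that matter.

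First, Seifert invariance is not obtained by averaging. The integration-by-parts argument you invoke in your first paragraph already yields all three identities $D_B^v\psi=0$, $D_B^h\psi=0$, and $|\alpha|\cdot|\beta|=0$ simultaneously (this is \S 5.5 of \cite{MOY}, and is what the paper's proof summary records as (\ref{eq:identities})). The first of these \emph{is} the Seifert invariance: it forces $B$ to have trivial holonomy along the fibers, and together with the curvature equation forces $(E,B)$ to descend to a pair $(E_0,B_0)$ on $C$. So your separate ``averaging plus Bochner/maximum-principle'' step is both unnecessary and dubious on its own terms --- averaging a solution over the $S^1$-action produces an invariant configuration, but gives no reason why the original solution should be gauge-equivalent to it.

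Second, your Morse--Bott argument points in the wrong direction. The spectral analysis of \S\ref{section:dirac} concerns the Dirac operator at a \emph{reducible} monopole and relies on the adiabatic limit $\mathrm{Area}(C,g_C)\to 0$; neither ingredient is relevant for the irreducible critical manifolds, whose Morse--Bott property holds for any metric in the Seifert class. The actual argument (in \cite{MOY}, only cited in the paper) works by identifying the Hessian of $\mathcal{L}$ at an irreducible with the deformation complex of the two-dimensional vortex equations on $C$. Relatedly, the tangent space to $\mathcal{D}(C,E_0)$ at a divisor $D=\alpha_0^{-1}(0)$ is $H^0(D,\mathcal{E}_0|_D)$ (of dimension the background degree of $E_0$, cf.\ Lemma \ref{lemma:effRiemannsurface}), not $H^0(C,E_0)$.
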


We recall the meaning of Morse--Bott critical manifold, following \cite{FLINthesis}:

\begin{definition} \label{def:MorseBott}
Let $[S] \subset \mathcal{B}^\ast (Y, \mathfrak{s}_E )$ be finite dimensional submanifold of the configuration space consisting entirely of irreducible monopoles, and let $S\subset \mathcal{C}^\ast (Y, \mathfrak{s}_E )$ be its inverse image. We say $[S]$ is a \textit{Morse--Bott} critical manifold if for every $(B, \psi ) \in S$ the Hessian of $\mathcal{L}$ at $(B, \psi )$, which gives a self-adjoint differential operator $\mathcal{D}_{(B, \psi )}$ acting on $T_{(B, \psi )} \mathcal{C}^\ast (Y, \mathfrak{s}_E ) = \Omega^1 (Y, i \mathbb{R} ) \oplus \Gamma (Y, S_E )$, satisfies that the inclusion 
\[
T_{(B, \psi )} [S] \rightarrow \mathrm{Ker} \mathcal{D}_{(B, \psi )}/ \mathrm{Im}\mathbf{d}_{(B, \psi )}
\]
is an isomorphism, where $\mathbf{d}_{(B, \psi )} : \Omega^0 (Y, i \mathbb{R}) \rightarrow T_{(B, \psi )} \mathcal{C}^\ast (Y, \mathfrak{s}_E )$ is the linearisation at $1 \in \mathcal{G}(Y)$ of the gauge action $g \in \mathcal{G}(Y) \mapsto (B - g^{-1} dg , g \psi )$.
\end{definition}

For completeness, we now summarise how the above result is established in \cite{MOY}, since the theme of the proof will reappear at various points throughout the article:

\begin{proof}[Proof summary] \textit{Step 1: From monopoles to vortices.}

Denote by $\zeta$ the vector field on $Y$ that generates the Seifert $S^1$ action. Let $(B, \psi )$ be an irreducible monopole on $Y$ in the line bundle $E$. Write $\psi = (\alpha , \beta) \in \Gamma(Y, S_E)$ according to the splitting $S_E = E \oplus E \otimes \pi^\ast K_{C}^{-1}$.

 The first step towards the proof of Proposition \ref{proposition:MOYirreducible} is an integration by parts argument (see \S 5.5 of \cite{MOY}), which shows that a monopole $(B, \psi )$ satisfies the identities
\begin{align}
D_{B}^{v}\psi = 0 \, \,\, , \,\,\, D_{B}^{h} \psi = 0 \,\,\, \text{and} \,\,\, |\alpha| \cdot |\beta| = 0. \label{eq:identities}
\end{align}
Because $\psi$ is not identically vanishing, the first identity in (\ref{eq:identities}) shows that the connection $B$ on $E$ has \textit{trivial holonomy along the fibers} of $Y \rightarrow C$, and together with (\ref{eq:3dSW1}), that the curvature form $F_B$ \textit{pulls back} from $C$. This means that the line bundle with connection $(E, B)$ on $Y$ is the pullback of a pair of orbifold line bundle with connection $(E_0 , B_0 )$ on $C$. Similarly, $\alpha$ and $\beta$ descend to sections $\alpha_0$ and $\beta_0$ of $E_0$ and $E_0 \otimes K_{C}^{-1}$, respectively. By the second identity in (\ref{eq:identities}), $\alpha_0$ is a holomorphic section of $(E_0 , \overline{\partial}_{B_0})$ and $\overline{\beta_0}$ a holomorphic section of $ E_{0}^{-1} \otimes K_C$. The third identity in (\ref{eq:identities}) and unique continuation then say that either $\alpha_0 = 0$ or $\beta_0 = 0$ identically on $C$.

At this point it follows that the triple $(B_0 , \alpha_0 , \beta_0 )$, consisting of a connection $B_0$ on the hermitian orbifold line bundle $E_0$, and sections $\alpha_0 \in \Gamma( C , E_0)$, $\beta \in \Gamma(C , E_0 \otimes K_{C}^{-1})$, solves the \textit{vortex equation}:
\begin{align}
& 2 F_{B_0} - F_{K_C}  =   i ( |\alpha_0 |^2 - |\beta_0 |^2 )\omega_C \label{eq:vortex1}\\
& \overline{\partial}_{B_0} \alpha_0 = 0   \text{ 
 and  }  \overline{\partial}_{B_0}^{\ast} \beta_0 = 0 \label{eq:vortex2} \\
& \alpha_0  = 0  \text{  or  }  \beta_0 = 0 . \label{eq:vortex3}
\end{align}

A solution $(B_0 , \alpha_0 , \beta_0 )$ to $(\ref{eq:vortex1}-\ref{eq:vortex3})$ with $\alpha_0 \neq 0$ or $\beta_0 \neq 0$ will be referred to as a \textit{vortex} on $C$ in the line bundle $E_0$. In the first case we call it a \textit{positive vortex}, and in the second a \textit{negative vortex}. Denote by $M_{v} (C , E_0 ) = M_{v}^{+}(C, E_0 ) \sqcup M_{v}^{-}(C , E_0 )$ the corresponding moduli space of vortices, modulo the unitary gauge group $\mathcal{G}(C) = \mathrm{Map}(C, U(1) )$.

Conversely, a vortex on $C$ in the line bundle $E_0$ pulls back to a monopole on $Y$ in $E = \pi^\ast E_0$. All combined, this establishes an identification (of topological spaces, for the moment) of the moduli space $\mathfrak{C}^\ast (Y , E )$ of irreducible monopoles on $Y$ in the line bundle $E$ with the disjoint union of the moduli spaces $M_{v} (C , E_0 ) $ of vortices over all line bundles $E_0$ with $E \cong \pi^\ast E_0$.

$ $

\textit{Step 2: From vortices to effective divisors}

We have natural identifications (for the moment, as topological spaces)
\begin{align}
M^{\pm}_{v} (C, E_0 ) = M^{\mp}_{v}(C, E_{0}^{-1} \otimes K_C ) \, , \quad (B_0 , \alpha_0 , \beta_0 ) \mapsto (B_{K_C} - B_0 , \overline{\beta_0} , \overline{\alpha_0} ). \label{eq:vortexduality}
\end{align}

Thus, we focus on the positive vortices only from now on.  A positive vortex $(B_0 , \alpha_0 )$ on $C$ in $E_0$ defines an effective orbifold divisor $(\overline{\partial}_{B_0} , \alpha_0 )$ on $C$ in $E_0$. Thus, the existence of a positive vortex in $E_0$ necessarily implies that $\mathrm{deg}E_0 \geq 0$, and from the equation (\ref{eq:vortex1}) that $\mathrm{deg}E_0 < \frac{1}{2} \mathrm{deg}K_C$. Conversely, given an effective divisor $(\overline{\partial} , s )$ in a line bundle $E_0$ with $0 \leq \mathrm{deg}E_0 < \frac{1}{2}\mathrm{deg}K_C$ we want a complex gauge transformation $g \in \mathcal{G}_{\mathbb{C}}(C) =  \mathrm{Map}(C ,\mathbb{C}\ast )$ such that if $B_0$ is the Chern connection of $g \cdot \overline{\partial}$ (with respect to the hermitian metric of $E_0$) and $\alpha_0 = g \cdot s$, then $(B_0 , \alpha_0 )$ is a positive vortex in $E_0$, and $g$ is unique up to unitary gauge transformation. Such $g$ is obtained by solving a Kazdan--Warner equation (see e.g. Lemma 7.2.4 in \cite{morgan}), and the existence and uniqueness follows from the properties of said equation. 

All combined, this establishes an identification of the positive vortex moduli space $M_{v}^{+}(C, E_0 )$ and the moduli space of effective divisors $\mathcal{D}(C , E_0 )$. We refer to \cite{MOY} for the proof of the Morse--Bott assertion.




\end{proof}

\subsubsection{Reducible monopoles}

A monopole $(B, \psi)$ on $Y$ in the line bundle $E$ is called \textit{reducible} if $\psi$ vanishes identically. Thus, it consists of a unitary connection $B$ on $E$ such that 
\begin{align}
F_B = \frac{1}{2} \pi^\ast F_{K_C}.\label{eq:reduciblemonopole}
\end{align}
Let us denote by $\mathfrak{C}^{\redu}(Y, E ) \subset \mathcal{B}(Y, \mathfrak{s}_E)$ the moduli space of reducible monopoles in $Y$. 


\begin{proposition}[\S 5.8 in \cite{MOY}] \label{proposition:MOYreducible} A line bundle $E$ on $Y$ supports a reducible monopole if and only if $E \cong \pi^\ast E_0$ for some orbifold line bundle $E_0$ on $C$. Fixing one such $E_0$ gives a homeomorphism of the moduli space of reducible monopoles $\mathfrak{C}^{\redu}(Y, E)$ with the moduli space $\mathfrak{T}(E_0)$ of holomorphic structures on $E_0$. 

Let $B$ be a reducible monopole on $E$. The reducible locus $\mathfrak{C}^{\redu}(Y,E) \subset \mathcal{B}(Y,\mathfrak{s}_E)$ is \textbf{not} a Morse--Bott critical manifold of the Chern--Simons--Dirac functional near $B \in \mathfrak{C}^{\redu}(Y, E)$ precisely when both of the following occur:
\begin{itemize}
\item there exists a line bundle $E_0$ on $C$ with $E \cong \pi^\ast E_0$ and $ \mathrm{deg}E_0 = \frac{1}{2} \mathrm{deg}K_C$, and
\item the holomorphic structure $\mathcal{E}_0$ on $E_0$ corresponding to the reducible monopole $B$ has $H^{0}(C , \mathcal{E}_0 ) \oplus H^{1}(C , \mathcal{E}_0 )\neq 0$.
\end{itemize}
\end{proposition}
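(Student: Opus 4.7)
The plan is to first identify reducibles with holomorphic structures on $C$, then reduce Morse--Bottness to the vanishing of $\ker D_B$, and finally compute $\ker D_B$ via the vertical/horizontal decomposition of the Dirac operator.

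For the first part, a reducible $B$ satisfies $F_B = \tfrac{1}{2}\pi^* F_{K_C}$, so $F_B$ is pulled back from $C$. Using the Gysin isomorphism $\pi^*\colon H^1(C;\mathbb{R})\cong H^1(Y;\mathbb{R})$ (valid whenever $\deg N \neq 0$), every such $B$ is $\mathcal{G}(Y)$-gauge equivalent to a pullback $\pi^* B_0$, where $B_0$ is a constant-curvature connection on some $E_0$ with $\pi^* E_0 \cong E$; Gauss--Bonnet applied to $F_{B_0} = \tfrac{1}{2} F_{K_C}$ forces $\deg E_0 = \tfrac{1}{2}\deg K_C$. The residual $\mathcal{G}(Y)$-freedom among pullbacks reduces to $\mathcal{G}(C)$, so $\mathfrak{C}^{\redu}(Y,E)$ is the moduli of constant-central-curvature connections on $E_0$ modulo $\mathcal{G}(C)$. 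Applying the standard Hermitian Yang--Mills/Kazdan--Warner correspondence (each complex gauge orbit of holomorphic structures contains a unique unitary gauge orbit with constant curvature) identifies this with $\mathfrak{T}(E_0)$.

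For the Morse--Bott reduction, at $(B,0)$ the Hessian $\mathcal{D}_B$ of the CSD functional splits on $\Omega^1(Y;i\mathbb{R})\oplus\Gamma(Y,S_E)$ into a $1$-form block (essentially $\ast d$) and a spinor block given by $D_B$. Together with the gauge linearization $\mathbf{d}f = (-df,0)$, the quotient $\ker\mathcal{D}_B/\mathrm{Im}\,\mathbf{d}$ is $H^1(Y;i\mathbb{R})\oplus \ker D_B$. This should equal $T_B\mathfrak{C}^{\redu}(Y,E)\cong H^1(Y;i\mathbb{R})$ via the first part, so the Morse--Bott property at $B$ reduces to $\ker D_B = 0$. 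To compute $\ker D_B$, I would invoke Lemma~\ref{lemma:commutator}: since $F_B - \tfrac{1}{2}\pi^*F_{K_C}=0$, the anticommutator $\{D_B^v,D_B^h\}$ vanishes, giving $D_B^2=(D_B^v)^2+(D_B^h)^2$ and hence $\ker D_B=\ker D_B^v\cap\ker D_B^h$.

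If condition~(1) holds, fix the unique half-canonical $E_0$ with $\pi^* E_0\cong E$ and write $B=\pi^* B_0$. Fourier decomposition along Seifert fibers identifies $\ker D_B^v$ as the weight-zero subspace $\pi^*\bigl(\Gamma(C,E_0)\oplus\Gamma(C,E_0\otimes K_C^{-1})\bigr)$; intersecting with $\ker D_B^h$ imposes $\overline{\partial}_{B_0}\alpha_0=0$ and $\overline{\partial}_{B_0}^*\beta_0=0$, yielding via Serre duality
\[
\ker D_B\;\cong\;H^0(C,\mathcal{E}_0)\oplus H^1(C,\mathcal{E}_0),
\]
which is nonzero precisely under condition~(2). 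If instead condition~(1) fails, then for every $E_0\in(\pi^*)^{-1}(E)$ the obstruction class $2c_1(E_0)-c_1(K_C)$ is an odd multiple of $[N]$ in $H^2(C;\mathbb{Z})$; writing $B\sim\pi^* B_0 + \tfrac{k}{2}\,i\eta$ for odd $k$ and using $d\eta\propto\pi^*\omega_C$ shows that $B$ has holonomy $-1$ around each Seifert fiber, so $\ker D_B^v=0$ and Morse--Bottness holds automatically.

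The most delicate point I anticipate is the condition-(1)-failure case, where $B$ is not gauge-equivalent to any genuine pullback and the fiber-holonomy argument must be set up carefully. The remaining steps follow standard lines once one secures the decoupling $D_B^2=(D_B^v)^2+(D_B^h)^2$ together with the Fourier decomposition of spinors along the Seifert fibers.
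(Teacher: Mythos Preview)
Your overall strategy matches the paper's: reduce Morse--Bottness to $\ker D_B=0$, use Lemma~\ref{lemma:commutator} to get $D_B^2=(D_B^v)^2+(D_B^h)^2$, and then analyze the fiber holonomy. However, there are two related errors that both stem from an implicit assumption that the coefficient in front of $i\eta$ is a half-integer.

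First, your opening paragraph is not correct in general. A reducible $B$ is \emph{not} gauge-equivalent to a pure pullback $\pi^\ast B_0$ unless condition~(1) already holds: the fiber holonomy of $B$ is gauge-invariant, and a pure pullback has trivial fiber holonomy, which forces $\deg E_0=\tfrac12\deg K_C$. The paper's argument instead fixes an \emph{arbitrary} $E_0$ with $\pi^\ast E_0\cong E$ and writes
\[
B=\pi^\ast B_0+\frac{\tfrac12\deg K_C-\deg E_0}{\deg N}\cdot i\eta,
\]
which works uniformly and gives the identification with $\mathfrak{T}(E_0)$ regardless of whether condition~(1) holds. Your Gysin argument only controls the closed-1-form ambiguity and cannot remove this $i\eta$ term. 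Second, in the condition-(1)-fails case, your claim that $2c_1(E_0)-c_1(K_C)$ is an \emph{odd} multiple of $[N]$ (equivalently, that the coefficient above is $k/2$ with $k$ odd and the holonomy is $-1$) is false: the coefficient $c$ is the rational number $\delta_0^+\in(0,1)$ from~(\ref{eq:fibrewiseeigen}), which need not equal $1/2$, and $2E_0-K_C$ need not be any multiple of $N$ when $\mathrm{Pic}^t(C)$ is not cyclic. The correct (and simpler) argument is just that $c\notin\mathbb{Z}$ gives nontrivial fiber holonomy $e^{2\pi i c}$, hence $\ker D_B^v=0$. You flagged this case as delicate, and indeed your treatment of it is where the argument breaks; once you use the paper's formula for $B$ both issues disappear.
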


The assertion that $\mathfrak{C}^{\redu}(Y, E )$ is a Morse--Bott critical manifold needs some comments, since the the configuration space $\mathcal{B}(Y, \mathfrak{s}_E )$ is not a manifold near the locus of reducible configurations $\mathcal{B}^{\redu}(Y, \mathfrak{s}_E )$. However, the locus $\mathcal{B}^{\redu}(Y, \mathfrak{s}_E )$ can be regarded as a manifold (again, for this one should consider $L^{2}_{k}$ configurations modulo $L^{2}_{k+1}$ gauge transformations), and then Definition \ref{def:MorseBott} adapts immediately to this case: 

\begin{definition}
Let $[S]$ be a finite-dimensional submanifold of $\mathcal{B}^{\redu}(Y, \mathfrak{s}_E)$, and let $S \subset \mathcal{C}(Y, \mathfrak{s}_E )$ denote its inverse image. We say $[S]$ is a \textit{Morse--Bott} critical manifold if for every $(B, 0 ) \in S$ the Hessian of $\mathcal{L}$ at $(B, 0 )$, which gives a self-adjoint differential operator $\mathcal{D}_{(B, 0 )}$ acting on $T_{(B, 0 )} \mathcal{C} (Y, \mathfrak{s}_E ) = \Omega^1 (Y, i \mathbb{R} ) \oplus \Gamma (Y, S_E )$, satisfies that the inclusion 
\[
T_{(B, 0 )} [S] \rightarrow \mathrm{Ker} \mathcal{D}_{(B, \psi )}/ \mathrm{Im}\mathbf{d}_{(B, 0 )}
\]
is an isomorphism, where $\mathbf{d}_{(B, 0 )}$ is the linearisation of the gauge action.
\end{definition}

Some additional remarks about Proposition \ref{proposition:MOYreducible} are in order:
\begin{remark} 
$ $
\begin{enumerate}
\item The moduli space $\mathfrak{T}(E_0)$ of holomorphic structures on an orbifold line bundle $E_0$ on $C$, modulo the complex gauge group $\mathcal{G}_{\mathbb{C}}(C) = \mathrm{Map}(C, \mathbb{C}^\ast)$, becomes identified with the \textit{Jacobian torus} $H^{1}(C , i \mathbb{R})/H^{1}(C , 2  \pi i \mathbb{Z} )$ upon choosing one particular holomorphic structure (see Remark \ref{remark:choices} (3)). 
\item If there exists a line bundle $E_0$ on $C$ pulling back to $E$, then it is not unique: the line bundles with this property are precisely $E_0 \otimes N^{k}$ for $k \in \mathbb{Z}$ (see Lemma \ref{lemma:pullbackline}).

\item Following \cite{MOY}, a topological line bundle $E_0$ on $C$ with $\mathrm{deg}E_0 = \frac{1}{2} \mathrm{deg}K_C$ will be referred to as an \textit{orbi-spin} bundle.
\end{enumerate}
\end{remark}



For completeness, we summarise the proof of Proposition \ref{proposition:MOYreducible}, following \cite{MOY}.

\begin{proof}[Proof summary]

If $E$ admits a reducible monopole, then by the equation (\ref{eq:reduciblemonopole}) we see that $c_1 (E)$ is a torsion class, and hence by [Theorem 2.0.19, \cite{MOY}] we have $E \cong \pi^\ast E_0$ for some line bundle $E_0$. Such line bundle $E_0$ is not unique -- the line bundles on $C$ pulling back to $C$ are given by the $E_0 \otimes N^{k}$ where $k \in \mathbb{Z}$ (see Theorem 2.0.19 in \cite{MOY}).

Conversely, if $E = \pi^\ast E_0$ for some $E_0$, then we may choose a connection $B_0$ on $E_0$ with constant curvature, and we obtain a reducible monopole $B$ as 
\begin{align}
B = \pi^\ast B_0 + \frac{\frac{1}{2} \mathrm{deg}K_C - \mathrm{deg}E_0 }{\mathrm{deg}N} \cdot i \eta .\label{eq:reducible}
\end{align}
All reducible monopoles are of this form (up to gauge transformation), for some choice of constant curvature connection $B_0$. The latter space of connections (modulo hermitian gauge transformations) is identified with the Jacobian torus.

Next, we discuss the Morse--Bott assertion. It is easy to see that the reducible locus $\mathfrak{C}^{\redu}(Y, E)$ is a Morse--Bott critical manifold precisely when the Dirac operator $D_B$ has no kernel, for all reducible monopoles $B$ in $E$. By Lemma \ref{lemma:commutator}, the super-commutator of $\{D_{B}^v , D_{B}^h\}$ vanishes at a reducible monopole $B$, hence 
\[
(D_{B})^2 = (D_{B}^v)^2 + (D_{B}^h )^2 .
\]
Suppose that $D_B$ has non-trivial kernel. Then $B$ has trivial fiberwise holonomy and the curvature form $F_B$ pulls back from $C$ by (\ref{eq:reducible}), from which we deduce that $(E , B )$ is the pullback of some pair $(E_0 , B_0 )$ with $F_{B_0} = \frac{1}{2} F_{K_C}$. Consider the holomorphic line bundle $\mathcal{E}_0 = (E_0 , \overline{\partial}_{B_0} )$. It also follows that the kernel of $D_B$ is identified with $H^{0}(C , \mathcal{E}_0 ) \oplus H^{1}(C , \mathcal{E}_0 )$ and $\mathrm{deg} \mathcal{E}_0 = \frac{1}{2} \mathrm{deg}K_C$, from which one direction is proved, and the converse implication follows similarly.\end{proof}

Later we will work with the \textit{blowup model} for monopole Floer homology (see \cite{KM}). For this we will need to have a holomorphic description of the blown-up reducible monopoles. We will take up this task in \S \ref{section:dirac}.

\begin{definition}A \textit{blown-up reducible monopole} on $Y$ in the line bundle $E$ is a pair $(B, \psi )$ consisting of a connection $B$, and a spinor $\psi$ with \textit{unit $L^{2}$ norm} ($||\psi ||_{L^{2}(Y)} = 1$), satisfying the equations
\begin{align}
F_B &= \frac{1}{2} F_{K_C} \label{eq:blownupred1}\\
D_B \psi &= \lambda \psi \text{  for some } \lambda \in \mathbb{R} \label{eq:blownupred2}.
\end{align}
\end{definition}

In the case when $C$ has genus zero (i.e. $|C| \cong \mathbb{P}^1$), then the space of blown-up reducible monopoles in $E$, modulo gauge transformations, is the union of the projectivised eigenspaces $\mathbb{P}(V_\lambda )$ of the Dirac operator $D_{B}$ twisted by the unique reducible monopole $B$ in $E$. Furthermore, $\mathbb{P}(V_\lambda )$ is a Morse-Bott critical manifold of the blown-up configuration space in the sense of (\cite{FLINthesis}, Definition 1.2) precisely when $\{(B, 0)\} \in \mathcal{B}(Y, \mathfrak{s}_E)$ is a non-degenerate critical point.

\subsection{Flows of the Chern--Simons--Dirac functional}

We will now review the holomorphic description of the flowlines of the Chern--Simons--Dirac functional $\mathcal{L} : \mathcal{B}(Y , \mathfrak{s}_E ) \rightarrow \mathbb{R}/4\pi^2 \mathbb{Z}$, following \cite{MOY}. From this point on, whenever we discuss gradient flowlines of the Chern--Simons--Dirac functional, we will mean \textit{negative} flowlines (i.e. flowlines of $- \nabla \mathcal{L}$). 

Before proceeding, we discuss some general features about the Chern--Simons--Dirac functional on the Seifert-fibered manifolds $Y = S(N)$ (with $\mathrm{deg}N < 0$, as usual throughout the article). By Proposition \ref{proposition:MOYirreducible} and Proposition \ref{proposition:MOYreducible}, the spin-c structures $\mathfrak{s}_E$ on $Y$ for which $\mathcal{L}$ has non-empty critical locus must have \textit{torsion} first Chern class, because $E$ pulls back from $C$ (see also Lemma \ref{lemma:pullbackline}). It follows that $\mathcal{L}$ takes values on $\mathbb{R}$, rather than the circle $\mathbb{R}/4 \pi^2 \mathbb{Z}$ (see \cite{KM}, Lemma 4.1.3). Furthermore, we have the following special feature:

\begin{lemma}[Lemma 4.9.5 in \cite{MOY}] Suppose $\mathrm{deg}N < 0$ and fix a torsion spin-c structure $\mathfrak{s}_E$ on $Y = S(N)$. After shifting $\mathcal{L} : \mathcal{B}(Y, \mathfrak{s}_E) \rightarrow \mathbb{R}$ by a constant so that $\mathcal{L} = 0$ on the reducible critical locus, then $\mathcal{L} > 0$ on the irreducible critical locus. In particular, there exist no flowlines from the reducible critical locus to the irreducible critical locus. 
\end{lemma}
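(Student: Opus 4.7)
The plan is to compute $\mathcal{L}$ explicitly on each critical locus via the vortex--monopole correspondence of Propositions~\ref{proposition:MOYirreducible} and~\ref{proposition:MOYreducible}, exploiting that at a monopole the Dirac term in~\eqref{eq: CSD} vanishes automatically. Since $c_1(\mathfrak{s}_E)$ is torsion, $\mathcal{L}$ is genuinely $\mathbb{R}$-valued and a constant shift is legitimate.

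First I would show constancy of $\mathcal{L}$ on the reducible critical locus. Fix a reducible $B^{\mathrm{red}}$ in $E = \pi^{\ast}E_0$ as reference connection $B^{\prime}$ in~\eqref{eq: CSD}. Any other reducible $B$ in the same $\mathfrak{s}_E$ differs from $B^{\mathrm{red}}$ (modulo gauge) by $\pi^{\ast}a$ for some harmonic $i\mathbb{R}$-valued $1$-form $a$ on $C$, and both reducibles have curvature $\tfrac{1}{2}\pi^{\ast}F_{K_C}$. The integrand in the topological term of~\eqref{eq: CSD} is then the pullback of the $3$-form $a\wedge F_{K_C}$ on the two-dimensional orbifold $C$, which vanishes identically. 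Thus $\mathcal{L}$ is constant on the reducible locus, and I normalize so this constant equals $0$.

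For an irreducible monopole $(B,\psi)$ in $E=\pi^{\ast}E_0$ corresponding to a positive vortex $(B_0^v,\alpha_0,0)$ on $C$ (the negative-vortex case is treated analogously using the charge conjugation symmetry), we have $B = \pi^{\ast}B_0^v$ and $D_B\psi = 0$, so only the topological term contributes. Using the reducible $B^{\mathrm{red}} = \pi^{\ast}B_0^{\mathrm{red}} + c\cdot i\eta$ in $E$ with $c = \bigl(\tfrac{1}{2}\deg K_C - \deg E_0\bigr)/\deg N$ as reference, the decomposition $B-B^{\mathrm{red}} = \pi^{\ast}(B_0^v-B_0^{\mathrm{red}}) - c\cdot i\eta$ has its $\pi^{\ast}$-component produce a pulled-back $3$-form (which vanishes for dimensional reasons), leaving
\[
\mathcal{L}(B,\psi) \;=\; c\cdot i \int_Y \eta \wedge \pi^{\ast}\!\bigl(F_{B_0^v} + \tfrac{1}{2} F_{K_C}\bigr).
\]
Applying the fiber-integration formula $\int_Y \eta \wedge \pi^{\ast}\omega = \kappa\int_C\omega$ (with $\kappa$ a real constant depending on the fiber length and orientation) together with Chern--Weil to compute $\int_C F_{B_0^v}$ and $\int_C F_{K_C}$ in terms of degrees, the right-hand side reduces to a nonzero multiple of $-c\cdot\bigl(2\deg E_0 + \deg K_C\bigr)$. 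Since $\deg N < 0$ forces $c < 0$, since the existence of the positive vortex forces $0 \le \deg E_0 < \tfrac{1}{2}\deg K_C$ and hence $\deg K_C > 0$, and since $\deg E_0 \ge 0$, both factors are positive and so is $\mathcal{L}(B,\psi)$.

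The final flowline assertion is immediate from monotonicity of $\mathcal{L}$ along its negative gradient flow: a trajectory from a reducible critical point (where $\mathcal{L} = 0$) to an irreducible one (where $\mathcal{L} > 0$) would contradict $\mathcal{L}$ being non-increasing along the flow. The main technical point is the sign and orientation bookkeeping in the second step --- specifically fixing the sign of the fiber-integration constant $\kappa$ compatibly with the chosen orientations of $Y$ and its Seifert fibers (which uses $\deg N < 0$) and tracking the factors of $i$ arising from the $\mathfrak{u}(1)$-valued curvatures. Once a consistent set of conventions is fixed, the remainder is straightforward arithmetic.
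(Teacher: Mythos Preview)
The paper does not supply its own proof of this lemma; it is quoted from \cite{MOY} without argument, so there is no internal proof to compare against. Your overall strategy --- evaluate the Chern--Simons--Dirac functional directly at an irreducible by fiber-integrating the topological term --- is the natural one and is essentially how the computation goes in \cite{MOY}.

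There is, however, a genuine issue in your second step. The expression you arrive at, a multiple of $-c\cdot(2\deg E_0 + \deg K_C)$, follows the paper's formula \eqref{eq: CSD} literally, but that formula appears to drop the $\pi^\ast F_{K_C}$ contribution from the determinant line: the functional whose Euler--Lagrange equation is exactly \eqref{eq:3dSW1} has topological term $-\int(B-B')\wedge\bigl(F_B+F_{B'}-\pi^\ast F_{K_C}\bigr)$ (equivalently, the Kronheimer--Mrowka normalisation via $A^t=2B-\pi^\ast B_{K_C}$). With this correction the same fiber integration gives instead
\[
\mathcal{L}(B,\psi)\ \propto\ c\cdot\bigl(\deg E_0-\tfrac{1}{2}\deg K_C\bigr)\ =\ -\,\frac{\bigl(\tfrac{1}{2}\deg K_C-\deg E_0\bigr)^{2}}{\deg N},
\]
a perfect square divided by $-\deg N>0$, manifestly positive and visibly invariant under the charge-conjugation substitution $\deg E_0\mapsto\deg K_C-\deg E_0$. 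With your $+\tfrac{1}{2}\deg K_C$ instead of $-\tfrac{1}{2}\deg K_C$, this symmetry fails: for a negative vortex one has $\deg E_0>\tfrac{1}{2}\deg K_C$, hence $c>0$, and the product $-c\cdot(2\deg E_0+\deg K_C)$ is negative. So the ``charge conjugation handles the other case'' shortcut is exactly the step that would have exposed the discrepancy; once the $F_{K_C}$ term is restored your argument goes through and the sign bookkeeping you flagged becomes transparent.
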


\subsubsection{The ruled surface}\label{subsubsection:ruled}

The holomorphic description of the space of flows now involves the \textit{orbifold ruled surface} given as the total space of the projective orbifold bundle 
\[
R = \mathbb{P} (\mathbb{C} \oplus N ) \xrightarrow{\Pi} C
\] 
i.e. the projectivisation of the orbifold holomorphic vector bundle $\mathbb{C} \oplus N \rightarrow C$. The \textit{zero section} of the ruled surface $R$ is the suborbifold $C_- = \mathbb{P}( \mathbb{C} \oplus 0 ) \subset R$, and the \textit{section at infinity} is the suborbifold $C_+ = \mathbb{P}(0 \oplus N ) \subset R $, both of which are orbifold complex curves biholomorphic to $C$. The orbifold locus of $R$ is given by the union of the orbifold loci of the curves $C_\pm$. 

The ruled surface $R$, being an orbifold projective bundle, carries an orbifold \textit{tautological holomorphic line bundle} $\mathcal{O}_R (-1) \rightarrow R$. This has the property that
\begin{align}
\mathcal{O}_{R}(-1)|_{C_-} = \mathcal{O}_R \text{  and  } \mathcal{O}_{R}(-1)|_{C_+} = N  \label{formula:restrictions} .\end{align}
Dualising the tautological bundle $\mathcal{O}_{R}(-1)$ we obtain the line bundle denoted $\mathcal{O}_{R}(1)$, and its $n$th tensor power is denoted $\mathcal{O}_{R}(n)$. When $n \geq 1$, the line bundle $\mathcal{O}_{R}(n)$ carries a canonical holomorphic section, denoted $z^{n}$, which vanishes precisely over the section at infinity $C_+ \subset R$ to order $n$.

The Picard groups of topological or holomorphic orbifold line bundles on $R$ are given in terms of those of the curve $C$, 
\begin{align}
\mathrm{Pic}^{t}(C) \times \mathbb{Z}  \cong \mathrm{Pic}^t (R) \, , \quad (E_0 , k ) \mapsto ( \Pi^{\ast} E_0 )\otimes \mathcal{O}_{R}(k)  \label{pict} \\
\mathrm{Pic} (C) \times \mathbb{Z}  \cong \mathrm{Pic} (R) \, ,  \quad (\mathcal{E}_0 , k ) \mapsto ( \Pi^{\ast} \mathcal{E}_0 )\otimes \mathcal{O}_{R}(k)   . \label{pic}
\end{align}
In particular, note that since $N$ is non-torsion, then by (\ref{pict}-\ref{pic}) and (\ref{formula:restrictions}) any orbifold line bundle (topological or holomorphic) over $R$ is determined uniquely from its restrictions to the curves $C_{\pm} \subset R$.

We will be interested in effective divisors on the ruled surface. 

\begin{definition}An \textit{effective orbifold divisor} on $R$ in an orbifold line bundle $L$ consists of a pair $(\overline{\partial} , s )$ where $\overline{\partial}$ is a holomorphic structure on $L \rightarrow R$, and $s$ is a \textit{non-trivial} holomorphic section of $L \rightarrow X$, i.e. $\overline{\partial} s = 0$. We denote by $\mathcal{D}(R, L )$ the quotient of the space of such holomorphic pairs $(\overline{\partial} , s )$ by the group of complex gauge transformations $\mathcal{G}_{\mathbb{C}}(R)  = \mathrm{Map}(R , \mathbb{C}^\ast )$ of automorphisms of $L$, equipped with the Whitney $C^\infty$ topology.

The following moduli space will also come into play. Fix a basepoint $x \in R$. A \textit{based} or \textit{framed effective orbifold divisor} on $X$ in the line bundle $L$ consists of a triple $( \overline{\partial} , s , f)$, where $(\overline{\partial},s)$ is an effective orbifold divisor on $X$ in the line bundle $L$, and $f$ is an isomorphism of vector spaces $f : \mathbb{C} \xrightarrow{\cong} L_x $ (a framing of $L$ at $x$). We denote by $\mathcal{D}^{o} (R, L )$ the quotient of the space of such holomorphic triples by the action of $\mathcal{G}_{\mathbb{C}}(R)$. Alternatively, $\mathcal{D}^{o}(R, L )$ can be constructed as the quotient of the space of holomorphic pairs ($\overline{\partial} , s )$ by the \textit{based} complex gauge group $\mathcal{G}^{o}_{\mathbb{C}}(R) := \{ g \in \mathcal{G}_{\mathbb{C}}(R) \, | \, g(x) = 1\}$. There is a residual $\mathbb{C}^\ast$-action on the space $\mathcal{D}^{o}(R, L )$ with quotient the space $\mathcal{D}(R, L )$.
\end{definition}

\subsubsection{Flows between irreducibles}

We fix a spin-c structure $\mathfrak{s}_E$ on $Y = S(N)$ where $\mathrm{deg}N< 0$, and study the flowlines from $\mathfrak{C}^{\ast} (Y , E )$ to $\mathfrak{C}^{\ast}(Y , E)$. Let $E_0 , E_1 \rightarrow C$ be two orbifold complex line bundles with $E \cong \pi^\ast E_0 \cong \pi^\ast E_1$, and recall the decomposition of $\mathfrak{C}^\ast (Y, E )$ from Proposition \ref{proposition:MOYirreducible}. Thus, there exists a unique orbifold complex line bundle $\widehat{E} = \widehat{E} ( E_0 , E_1 )$ over the ruled surface $R$ such that
\[
\widehat{E}|_{C_-} \cong E_0 \quad \text{and} \quad \widehat{E}|_{C_+} \cong E_1 .
\]
Indeed, note that by Lemma \ref{lemma:pullbackline} we have $E_1 = E_{0} \otimes N^{-k}$ for a unique $k \in \mathbb{Z}$, and hence by (\ref{pict}) and (\ref{formula:restrictions}) we have $\widehat{E} = (\Pi^\ast E_0 ) \otimes \mathcal{O}_{R}(k )$ where 
\[
k = \frac{ \mathrm{deg}E_0 - \mathrm{deg}E_1}{\mathrm{deg}N} .
\]

\begin{proposition}[Theorems 7.0.18 and 9.2.5 in \cite{MOY}]\label{proposition:MOYflowsirreducibles} With $E, E_0 , E_1$ as above, the moduli space $M( \mathfrak{C}^{+}(E_0 ) , \mathfrak{C}^{+}(E_1))$ of parametrised flows of $\mathcal{L} : \mathcal{B}(Y , \mathfrak{s}_E ) \rightarrow \mathbb{R}$  from $\mathfrak{C}^{+}(E_0 )$ to $\mathfrak{C}^{+}(E_1)$ is homeomorphic to the open subset of the moduli space $\mathcal{D}(R , \widehat{E} )$ of effective divisors on the ruled surface $R$ in the line bundle $\widehat{E}$ which do not contain both sections $C_\pm \subset R$. If $(\widehat{\mathcal{E}} , \alpha )$ is such an effective divisor, with zero set $D = \alpha^{-1}(0)$, then the moduli space of parametrised flows $M( \mathfrak{C}^{+}(E_0 ) , \mathfrak{C}^{+}(E_1))$ is modelled near $(\widehat{\mathcal{E}} , \alpha )$ on the zero locus of a complex-analytic map $\kappa_{sw} : H^{0}(D , \widehat{\mathcal{E}}|_D ) \rightarrow H^{1}(D , \widehat{\mathcal{E}}|_D )$ such that $\kappa_{sw} (0 ) = 0$ and $(d \kappa_{sw})_{0} = 0$.

The analogous result for $M( \mathfrak{C}^{-}(E_0 ) , \mathfrak{C}^{-}(E_1))$ can be deduced from the above using the charge-conjugation symmetry. On the other hand, there are no flows connecting the $\mathfrak{C}^\pm$ critical manifolds to the $\mathfrak{C}^{\mp}$ critical manifolds. 
\end{proposition}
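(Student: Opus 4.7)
The plan is to realise a gradient flowline of $\mathcal{L}$ as a $4$-dimensional Seiberg--Witten solution on the cylinder $\mathbb{R} \times Y$, and then to use the fact that $\mathbb{R} \times Y$ inherits a natural almost-K\"ahler structure from the Seifert fibration (it is conformally diffeomorphic to the complement $N \setminus (\text{zero section})$ of the zero section in the orbifold line bundle $N \to C$) to reduce the $4$-dimensional Seiberg--Witten equations to a holomorphic ``vortex'' problem, exactly as in Step 1 of the proof of Proposition \ref{proposition:MOYirreducible}. Concretely, in temporal gauge, a finite-energy parametrised flowline from $\mathfrak{C}^{+}(E_0)$ to $\mathfrak{C}^{+}(E_1)$ corresponds to a $4$-dimensional monopole $(\mathbb{B},\Psi)$ on $\mathbb{R} \times Y$ in the pullback of $\mathfrak{s}_E$, and after a complex gauge transformation the equations force $\Psi$ to be the data of a holomorphic section of a holomorphic line bundle on $\mathbb{R}\times Y \cong N \setminus 0$. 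The compactification $R = \mathbb{P}(\mathbb{C}\oplus N) \supset N\setminus 0$ adjoins the curves $C_-$ (zero section) and $C_+$ (section at infinity), corresponding to the two ends $t \to -\infty$ and $t\to +\infty$ respectively.

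Next, I would invoke a removable-singularity argument along $C_\pm$, using the exponential decay of the flowline to its positive-vortex asymptotic limits in $\mathfrak{C}^{+}(E_0)$ and $\mathfrak{C}^{+}(E_1)$, to extend the holomorphic pair across $C_-\cup C_+$ into an effective orbifold divisor $(\widehat{\mathcal{E}},\alpha)$ on the compact ruled surface $R$. The restrictions $\widehat{\mathcal{E}}|_{C_-}$ and $\widehat{\mathcal{E}}|_{C_+}$ must recover the holomorphic structures on $E_0$ and $E_1$ produced by the limiting vortices, and by (\ref{pict})--(\ref{formula:restrictions}) this pins down $\widehat{\mathcal{E}}$ topologically as the bundle $\widehat{E}(E_0,E_1)$. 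The condition that the zero divisor $D = \{\alpha = 0\}$ contain neither $C_-$ nor $C_+$ as a full component translates exactly into the asymptotic spinor not being identically zero at $\pm\infty$, i.e., irreducibility of the endpoints; conversely, any effective divisor with this property is shown to come from a flowline by a Kazdan--Warner / gauge-fixing argument parallel to Step 2 of the proof of Proposition \ref{proposition:MOYirreducible}, but now on the compact surface $R$.

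For the Kuranishi description, I would exploit the short exact sequence
\[
0 \to \mathcal{O}_R \xrightarrow{\alpha} \widehat{\mathcal{E}} \to \widehat{\mathcal{E}}|_D \to 0,
\]
together with $H^0(R,\mathcal{O}_R) = \mathbb{C}$ (and the relevant higher-cohomology vanishing for the ambient deformation problem of pairs modulo scalars), to identify the Zariski tangent space and obstruction space of $\mathcal{D}(R,\widehat{E})$ at $(\widehat{\mathcal{E}},\alpha)$ with $H^0(D,\widehat{\mathcal{E}}|_D)$ and $H^1(D,\widehat{\mathcal{E}}|_D)$. Matching this complex-analytic Kuranishi model with the Seiberg--Witten Kuranishi model for the moduli of flows on $\mathbb{R}\times Y$ yields the map $\kappa_{sw}$ with $\kappa_{sw}(0)=0$ and $(d\kappa_{sw})_0 = 0$. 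The main obstacle will be the compactification/removable-singularity analysis and the verification that the deformation complex of the flow is genuinely isomorphic to the holomorphic deformation complex of the effective divisor on $R$ (not merely of the same virtual dimension); both require careful exponential-decay estimates at the two ends and a comparison of the two elliptic complexes. Finally, the analogous assertion for $M(\mathfrak{C}^-(E_0),\mathfrak{C}^-(E_1))$ follows immediately from the charge-conjugation symmetry (\ref{eq:vortexduality}), and the absence of flows between $\mathfrak{C}^+$ and $\mathfrak{C}^-$ follows from the identity $|\alpha|\cdot|\beta| = 0$ of (\ref{eq:identities}) combined with unique continuation, which precludes any flowline whose asymptotic spinor switches between the $\alpha$-component and the $\beta$-component.
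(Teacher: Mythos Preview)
Your outline is correct and follows essentially the same three-step strategy as the paper's proof summary: (1) identify flowlines with vortices on the cylinder $R^o = \mathbb{R}\times Y$ via the K\"ahler structure and an integration-by-parts argument, (2) extend vortices to effective divisors on the compact ruled surface $R$ using exponential decay and a Kazdan--Warner argument for the converse, and (3) identify the Seiberg--Witten deformation complex with the sheaf cohomology $H^*(D,\widehat{\mathcal{E}}|_D)$.

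Two minor points of difference are worth flagging. First, for the absence of flows between $\mathfrak{C}^+$ and $\mathfrak{C}^-$, the paper uses the $4$-dimensional analogue of the identity $|\alpha|\cdot|\beta|=0$ (appearing among the vortex equations on the cylinder), not the $3$-dimensional identity (\ref{eq:identities}) you cite; the argument is the same in spirit but lives on $R^o$ rather than on $Y$. Second, for the Kuranishi step the paper does \emph{not} match the divisor Kuranishi model with the Seiberg--Witten one as complex-analytic spaces (this is explicitly disclaimed in Remark~\ref{remark:orbispace}(2)); instead it directly computes the kernel and cokernel $H^*_{sw}(A,\Phi)$ of the linearised Seiberg--Witten operator and identifies these with $H^*(D,\widehat{\mathcal{E}}|_D)$, then argues separately that $\kappa_{sw}$ can be taken holomorphic because the linearised Seiberg--Witten map with gauge-fixing is complex-linear. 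Your proposed route via the short exact sequence and ``matching'' would in principle prove something slightly stronger, but is not what the paper actually does.
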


\begin{remark}\label{remark:orbispace}
Some remarks are in order:
\begin{enumerate}

\item Above, we may tacitly \textit{define} the groups $H^{\ast}(D , \widehat{\mathcal{E}}|_D )$ as $H^{\ast}( R , \widehat{\mathcal{E}}/ \alpha \mathcal{O}_R)$, where $\widehat{\mathcal{E}}/ \alpha \mathcal{O}_R$ is defined as the orbifold sheaf obtained as the cokernel of a morphism of orbifold sheaves:
\[
0 \rightarrow \mathcal{O}_R \xrightarrow{\alpha} \widehat{\mathcal{E}} \rightarrow \widehat{\mathcal{E}}/ \alpha \mathcal{O}_R \rightarrow 0 \quad .
\]
This definition has the advantage of avoiding discussing what sort of singular space is the zero set $D = \alpha^{-1}(0)$, and it shall be sufficient for our purposes. 

Alternatively, one can regard $D$ as a \textit{complex-analytic orbispace}, possibly non-reduced. This is the orbifold version of the notion of a complex-analytic space, and its corresponding notion in algebraic geometry is that of a  Deligne--Mumford stack. The inclusion $\iota : D \rightarrow R$ is a morphism of complex-analytic orbispaces, and now $\widehat{\mathcal{E}}|_D$ can be defined as an orbifold sheaf on $D$ in the familiar way: as the sheaf of $\mathcal{O}_D$-modules $\iota^{\ast}  ( \widehat{\mathcal{E}}/ \alpha \mathcal{O}_R ) \cong ( \iota^{-1} (\widehat{\mathcal{E}}/ \alpha \mathcal{O}_R ) ) \otimes_{\iota^{-1} \mathcal{O}_R } \mathcal{O}_D $, making use of the usual pullback $\iota^\ast$ and inverse image $\iota^{-1}$ operations on sheaves over complex-analytic orbispaces.

\item 
One can show that the moduli space $\mathcal{D}(R, \widehat{E} )$ is a complex-analytic space which is locally modelled near a given divisor on the zeros of a complex-analytic map $\kappa_{div} : H^{0}(D , \widehat{\mathcal{E}}|_D ) \rightarrow H^{1}(D , \widehat{\mathcal{E}}|_D )$ such that $\kappa_{div} (0 ) = 0$ and $(d \kappa_{div})_{0} = 0$ (see Appendix \ref{section:divisors}). That the identification $M( \mathfrak{C}^{+}(E_0 ) , \mathfrak{C}^{+}(E_1))\cong \mathcal{D}(R , \widehat{E} )$ from Proposition \ref{proposition:MOYflowsirreducibles} is an isomorphism of \textit{complex-analytic spaces} is not addressed in \cite{MOY} and it won't be needed for our purposes either. However, this statement is true, and can be proved by establishing that both $M( \mathfrak{C}^{+}(E_0 ), \mathfrak{C}^{+}(E_1))$ and $ \mathcal{D}(R , \widehat{E} )$ are complex-analytic spaces carrying a \textit{universal family} of effective divisors on $R$ in the line bundle $\widehat{E}$. 
\end{enumerate}
\end{remark}

\begin{proof}[Proof summary] The proof involves establishing a dimensional reduction phenomenon, as in the proof of Lemma \ref{proposition:MOYirreducible}. 

$ $

\textit{Step 1: From flowlines to vortices on the cylinder.} The locus $R^{o} = R \setminus ( C_- \cup C_+ ) = N \setminus C_-$ is a non-compact complex manifold diffeomorphic to the cylinder $\mathbb{R} \times Y$. Let us fix a particular identification, using the hermitian metric on $N$: 
\[
v \in N \setminus C_- \mapsto ( \mathrm{log} |v|  ,  v / |v| ) \in \mathbb{R} \times Y.
\]
We fix on $R^{o} = \mathbb{R}_{t} \times Y$ a cylindrical Kähler metric, with $(1,1)$ form
\[
\omega_{R^{o}} = dt \wedge \eta  + \Pi^\ast \omega_C .
\]

If $(A, \Phi)$ is a flowline connecting two irreducible critical manifolds, and since these are Morse--Bott critical manifolds (cf. Proposition \ref{proposition:MOYirreducible}), then $(A, \Phi )$ approaches its limits with \textit{exponential decay} after applying a gauge transformation. The equation for the negative gradient flow of $\mathcal{L} : \mathcal{B}(Y, \mathfrak{s}_E ) \rightarrow \mathbb{R}$ is identified with the Seiberg--Witten equation (in temporal gauge) on the cylinder $R^{o} = \mathbb{R} \times Y$. Combining these with a standard integration by parts argument (e.g. see \cite{morgan}), it was proved in \cite{MOY} that, after a gauge transformation defined on $R^{o}$ approaching $1$ at $\pm \infty$, the pair $(A, \Phi)$ solves the \textit{vortex equations over the non-compact Kähler surface} $R^{o}$: writing $\Phi = (\alpha , \beta ) \in \Gamma ( R^{o} , \pi_{2}^\ast E \otimes \pi_{2}^{\ast} ( \mathbb{C} \oplus K_{C}^{-1} ) ) $ these read 
\begin{align*}
2 \Lambda F_A - \Lambda \Pi^\ast F_{K_C} & = i ( |\alpha|^2 - |\beta |^2 )\\
F_{A}^{0,2} & = 0\\
\overline{\partial}_{A} \alpha = 0 , & \quad \overline{\partial}_{A}^\ast \beta = 0 \\
\alpha = 0 & \text{  or  } \beta = 0 \, ,
\end{align*}
where $\Lambda$ denotes the adjoint Lefschetz operator associated to the Kähler form $\omega_{R^{o}}$, and $\pi_{2} : R^{o} = \mathbb{R} \times Y \rightarrow Y$ is the projection to the second factor. A solution $(A , \alpha , \beta )$ to the above equations over $R^{o}$ with $\alpha \neq 0$ or $\beta \neq 0$ will be referred to as a \textit{vortex on the cylinder} $R^{o}$ in the line bundle $\pi_{2}^\ast E \rightarrow R^{o}$. In the first case we call it a \textit{positive} vortex, and in the second a \textit{negative} vortex. Let $M_{v} (E_0 , E_1)  = M_{v}^{+} (E_0 , E_1) \sqcup M_{v}^{-} (E_0 , E_1 )$ be the moduli space of vortices on $R^{o}$ in the line bundle $\pi_{2}^\ast E$ approaching vortices on $C$ in $E_0$ and $E_1$ as $t \rightarrow - \infty$ and $t \rightarrow + \infty$, respectively, modulo $U(1)$ gauge transformations on $R^{o}$ converging to $1$ as $t \rightarrow \pm \infty$. All combined we have a homeomorphism $M(\mathfrak{C}^{\pm}(E_0 ) , \mathfrak{C}^{\pm} (E_1 )) \cong M_{v}^{\pm} (E_0 , E_1 )$, and we learn that there cannot exist flowlines from the $\mathfrak{C}^{\pm}$ to the $\mathfrak{C}^{\mp}$ critical manifolds. 



$ $

\textit{Step 2: From vortices on the cylinder to divisors on the ruled surface.} 

Let $(A, \alpha )$ be a positive vortex on the cylinder $R^{o}$, converging with exponential decay to (positive) vortices on $E_0$ and $E_1$, as $t \rightarrow - \infty$ and $t \rightarrow + \infty$, respectively.

An equivalent description of the line bundle $\widehat{E} $ over $R$ is that it is obtained by gluing together the line bundles $(\Pi^\ast E_0 )|_{R \setminus C_+} $ and $(\Pi^\ast E_1 )|_{R \setminus C_-}$ over the region $R^{o} = \mathbb{R} \times Y$ using the isomorphisms $(\Pi^\ast E_0 )|_{R^{o}} \cong \pi_{2}^\ast E \cong (\Pi^\ast E_1 )|_{R^{o}}$ (induced from isomorphisms $ \pi^\ast E_{0} \cong E \cong \pi^\ast E_{1}$). The convergence with exponential decay guarantees that $\alpha$ \textit{extends} over to the ruled surface $R$ as a \textit{continuous} section $\widehat{\alpha}$ of $\widehat{E}$, and it is shown in \cite{MOY} that the holomorphic structure $\overline{\partial}_A$ on $\widehat{E}|_{R^{o}}$ \textit{extends} over to $\widehat{E}$. Since $\widehat{\alpha}$ is holomorphic for $\overline{\partial}_A$ over $R^{o}$, the regularity of the Cauchy--Riemann operator implies that $\widehat{\alpha}$ must be holomorphic over the whole $R$.

All combined, this says that the vortex $(A, \alpha )$ extends to an effective divisor on $R$ in the line bundle $\widehat{E}$. Conversely, \cite{MOY} solve a Kazdan--Warner equation in order to establish that: given an effective divisor on $R$ in $\widehat{E}$ there exists a complex gauge transformation $g$ defined on $R^{o}$, unique up to $U(1)$ gauge transformations approaching $1$ as $t \rightarrow \pm \infty$, such that acting by $g$ on the effective divisor yields a vortex on $R^{o}$.

$ $

\textit{Step 3: Deformation theory.} The space of flows $M(\mathfrak{C}^{+} (E_0 ) , \mathfrak{C}^{+}(E_1) )$ is the zero set of a Fredholm section $s$ of a suitable Banach vector bundle $\mathcal{V} \rightarrow \mathcal{B}$. As a matter of general principles, one can then show that near a given flow $(A, \Phi = (\alpha , 0 ) )$ the moduli space is modelled on the zero locus of a smooth map $H^{0}_{sw} (A, \Phi )\xrightarrow{\kappa_{sw}} H^{1}_{sw}(A , \Phi )$, where $H^{0}_{sw} (A, \Phi )$ and $H^{1}_{sw} (A , \Phi )$ are the (finite-dimensional) kernel and cokernel of the (vertical projection of the) derivative of $s$ at $(A , \Phi )$ and $\kappa_{sw}(0) = (d \kappa_{sw})_0 = 0$ (see \S 4.3 in \cite{DK} or Theorem B.9 in \cite{salamon}). 

In \cite{MOY} it is shown that $H^{\ast}_{sw}(A, \Phi ) \cong H^{\ast} ( D , \widehat{\mathcal{E}}|_D )$, where $D = \widehat{\alpha}^{-1}(0)$ and $\widehat{\mathcal{E}} = ( \widehat{E} , \overline{\partial}_A )$ (see Lemma 12.6 in \cite{salamon} for the analogous result in the case of a \textit{compact} Kähler surface). The reason why $\kappa_{sw}$ can be chosen to be a complex-analytic (i.e. holomorphic) map is the following. First, the space of pairs $(A, \Phi )$ has a complex structure: $\Phi$ is a section of a complex vector bundle; and the space of connections $A$ can be identified as the space of Cauchy--Riemann operators (or "half connections") on the corresponding bundle, which is naturally complex as well. The linearisation of the Seiberg--Witten map \textit{with gauge-fixing} is a \textit{complex-linear} map (this operator appears as formula (12.3) in \cite{salamon}). Thus, the kernel and cokernel, which are given by $H_{sw}^{0}(A , \Phi ), H_{sw}^{1}(A , \Phi )$, are complex vector spaces. In this situation, the Kuranishi map $\kappa_{sw} $ can be taken to be holomorphic, which follows from the proof of Theorem B.9 in \cite{salamon} (namely, the pseudo-inverse $T$ in that argument can be chosen to be an $S^1$-equivariant operator, by applying Remark B.8 to the derivative of $f$, where the $S^1$ action is that induced from complex scalar multiplication on the tangent space at $(A, \Phi )$ to the space of pairs).
\end{proof}

\subsubsection{Flows from irreducibles to reducibles}\label{subsubsection:flowsreducible}

Fix a spin-c structure $\mathfrak{s}_E$ on $Y = S(N)$. 

We denote by $\lfloor \frac{K_C}{2} \rfloor$ (resp. $\lceil \frac{K_C}{2} \rceil$) the unique line bundle on $C$ of maximal (resp. minimal) degree among those line bundles $E_0$ with $E \cong \pi^\ast E_0$ (recall that all $E_0$ with $\pi^\ast E_0 \cong E$ differ from each other by a tensor power of $N$, by Lemma \ref{lemma:pullbackline}) and $\mathrm{deg} E_0 \leq \frac{1}{2} \mathrm{deg}K$ (resp. $\mathrm{deg} E_0 \geq \frac{1}{2} \mathrm{deg}K_C$). We stress that the line bundles $\lfloor \frac{K_C}{2}\rfloor, \lceil \frac{K_C}{2} \rceil$ depend on the chosen line bundle $E$ over $Y$.

The following values $\delta_{0}^{\pm} \in [0,1) \cap \mathbb{Q}$ will play an important role throughout the article:
\begin{align}
\delta_{0}^{+} = \frac{\mathrm{deg} \lfloor \frac{K_C}{2} \rfloor - \frac{1}{2} \mathrm{deg}K_C}{\mathrm{deg}N} \,  , \quad
\delta_{0}^{-} = \frac{\frac{1}{2} \mathrm{deg}K_C - \mathrm{deg}\lceil \frac{K_C}{2}\rceil }{\mathrm{deg}N}. \label{eq:fibrewiseeigen}
\end{align}

Fix now another line bundle $E_0$ with $E \cong \pi^\ast E_0$. Denote by $\widehat{E} = \widehat{E}(E_0 )$ the unique line bundle over $R$ with $\widehat{E}|_{C_-} = E_0$ and $\widehat{E}|_{C_+} = \lfloor \frac{K_C}{2}\rfloor $. Namely, this is $\widehat{E} = (\Pi^\ast E_0 ) \otimes \mathcal{O}_{R}(k)$ where 
\[
k = \frac{ \mathrm{deg}E_0  - \mathrm{deg} \lfloor \frac{K_C}{2} \rfloor}{\mathrm{deg} N}.
\]

\begin{proposition}[Theorems 10.0.15  and 10.1.1] \label{proposition:MOYflowsreducible}Assume that the reducible critical manifold $\mathfrak{C}^{\redu}(E)$ is Morse--Bott (cf. Proposition \ref{proposition:MOYreducible}). With $E, E_0$ as above, then the moduli space $M^{o}( \mathfrak{C}^{+}(E_0 ) , \mathfrak{C}^{\redu}(Y,E ))$ of based parametrised flows of $\mathcal{L} : \mathcal{B}(Y , \mathfrak{s}_E ) \rightarrow \mathbb{R}$  from $\mathfrak{C}^{+}(E_0 )$ to $\mathfrak{C}^{\redu}(Y ,E)$ is homeomorphic to the open subset of the moduli space $\mathcal{D}^{o}(R , \widehat{E} )$ of based effective divisors on the ruled surface $R$ in the line bundle $\widehat{E}$ which do not contain the zero section $C_- \subset R$. If $(\widehat{\mathcal{E}} , \alpha )$ is such an effective divisor, with zero set $D = \alpha^{-1}(0)$, then the moduli space of based parametrised flows $M( \mathfrak{C}^{+}(E_0 ) , \mathfrak{C}^{\redu}(Y,E))$ is modelled near $(\widehat{\mathcal{E}} , \alpha )$ on the zero locus of a complex-analytic map $\kappa^{o}_{sw} : H^{0}(D , \widehat{\mathcal{E}}|_D ) \oplus \mathbb{C}\rightarrow H^{1}(D , \widehat{\mathcal{E}}|_D )$ such that $\kappa^{o}_{sw} (0 ) = 0$ and $(d \kappa^{o}_{sw})_{0} = 0$.

The analogous result for $M( \mathfrak{C}^{-}(E_0 ) , \mathfrak{C}^{\redu}(Y,E))$ can be deduced from the above using the charge-conjugation symmetry.
\end{proposition}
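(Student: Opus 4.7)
The strategy is to parallel the proof of Proposition \ref{proposition:MOYflowsirreducibles}, but now carefully keep track of the fact that at the positive end of the cylinder the flowline limits to a \emph{reducible} monopole, where the spinor vanishes. The additional bookkeeping at this end accounts for the extra $\mathbb{C}$-factor in the Kuranishi map and for the appearance of $\lfloor K_C/2 \rfloor$ as the restriction $\widehat{E}|_{C_+}$.

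First, I would repeat Step 1 of the proof of Proposition \ref{proposition:MOYflowsirreducibles} verbatim to reduce a flowline $(A,\Phi)$ from $\mathfrak{C}^+(E_0)$ to $\mathfrak{C}^{\redu}(Y,E)$ to a solution of the Kähler vortex equations on the non-compact surface $R^o=\mathbb{R}\times Y$ in the line bundle $\pi_2^\ast E$. Because both critical manifolds are Morse--Bott (using the hypothesis on $\mathfrak{C}^{\redu}(E)$), the convergence as $t\to\pm\infty$ is exponential after gauge transformation. At $t\to-\infty$ the spinor converges with non-zero limit, so exactly as before the section $\alpha$ extends continuously across $C_-\subset R$ to give $\widehat{E}|_{C_-}\cong E_0$. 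At $t\to+\infty$, however, $\alpha\to 0$, and I would need to rescale in order to identify the correct extension across $C_+$.

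Second, I would analyse the decay rate at the reducible end. The Hessian of $\mathcal{L}$ at a reducible monopole $B$ decouples into the self-dual $\ast d$ operator on $1$-forms and the Dirac operator $D_B$ on spinors. Exponential convergence is governed by the \emph{smallest positive eigenvalue} of $D_B$ on $S_E=S_Y\otimes E$, and by the adiabatic/vertical decomposition (\ref{eq:diracblock}) together with Lemma \ref{lemma:commutator} this eigenvalue is realised on the positive summand by eigensections of the fiberwise operator $i\nabla^B_\zeta$ pulled back from holomorphic sections of the bundle $\lfloor K_C/2\rfloor$ on $C$, with fibrewise eigenvalue $\delta_0^+$ as defined in (\ref{eq:fibrewiseeigen}). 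Thus the rescaled section $e^{\lambda t}\alpha$ (for $\lambda$ the correct positive rate) converges with non-zero leading term, and standard removal-of-singularities for the Cauchy--Riemann operator $\overline{\partial}_A$ extends both the holomorphic structure and the rescaled section across $C_+$ to give a holomorphic section $\widehat{\alpha}$ of a line bundle whose restriction to $C_+$ is forced to be $\lfloor K_C/2\rfloor$, determining $\widehat{E}$ uniquely as stated. The non-vanishing of the leading term at $C_+$ provides precisely a framing of the line bundle $\widehat{E}$ at a chosen basepoint on $C_+$, which is the origin of the ``based'' moduli space $\mathcal{D}^o(R,\widehat{E})$; equivalently, we quotient by the based complex gauge group rather than by $\mathcal{G}_\mathbb{C}(R)$. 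Conversely, a based effective divisor on $R$ in $\widehat{E}$ which does not contain $C_-$ produces a solution to the Kähler vortex equation on $R^o$ (hence a flowline) by solving a Kazdan--Warner equation exactly as in \cite{MOY}, so the bijection as topological spaces follows. The charge-conjugation symmetry gives the parallel statement for negative divisors.

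Finally, I would deduce the Kuranishi model. Reducing $\mathfrak{C}^{\redu}(E)$ to Morse--Bott form, the linearisation of the Seiberg--Witten equation with gauge fixing at a flowline $(A,\Phi)$ whose limiting divisor has zero scheme $D$ has, by the same sheaf-cohomology computation as in \cite{MOY,salamon}, kernel and cokernel of the form $H^0(D,\widehat{\mathcal{E}}|_D)\oplus \mathbb{C}$ and $H^1(D,\widehat{\mathcal{E}}|_D)$ respectively. The extra $\mathbb{C}$ summand on the source is precisely the deformation parameter coming from rescaling the framing at $C_+$, which is not available in the closed flowline problem. Complex-linearity of the gauge-fixed linearisation (the observation used in Remark \ref{remark:orbispace}) guarantees that the Kuranishi map $\kappa^o_{sw}$ can be chosen holomorphic with vanishing $1$-jet. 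The main obstacle in making this argument rigorous is precisely the exponential-decay and rescaling analysis at the reducible end: one must show that on the Morse--Bott reducible locus the leading-order asymptotics in the eigenspace for $\delta_0^+$ extend regularly across $C_+$ and that the framing-freedom is accurately captured as an extra $\mathbb{C}$-factor in the deformation complex. Everything else parallels Proposition \ref{proposition:MOYflowsirreducibles}.
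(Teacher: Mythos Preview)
Your overall strategy correctly parallels the proof of Proposition~\ref{proposition:MOYflowsirreducibles}, and Step~1 is fine. However, there is a genuine gap in your treatment of the reducible end which would cause the argument to fail as written.

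The issue is your claim that the rescaled section $e^{\lambda t}\alpha$ ``converges with non-zero leading term'' at $C_+$, and that this non-vanishing value is what furnishes the framing. In fact the rescaling factor is always the \emph{fixed topological} quantity $\delta_0^+$ from (\ref{eq:fibrewiseeigen}), independent of the particular flowline; one takes the pair $(e^{t\delta_0^+}\overline{\partial}_A e^{-t\delta_0^+},\, e^{t\delta_0^+}\alpha)$ and shows it extends over $C_+$. The actual decay rate $\delta$ of the flowline (the eigenvalue in the blowup to which it converges) satisfies only $\delta \geq \delta_0^+$ with $\delta-\delta_0^+\in\mathbb{Z}$, and the extended section \emph{may very well vanish identically along $C_+$}: this happens precisely when $\delta>\delta_0^+$, and corresponds to the divisor containing $C_+$ with positive multiplicity (cf.\ Proposition~\ref{proposition:MOYflowsblowup}). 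So your asserted source for the framing is not available in general, and rescaling by the flowline-dependent $\delta$ instead would not give a uniform construction over the whole moduli space.

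The based structure and the extra $\mathbb{C}$ in the Kuranishi domain arise differently: a flowline into the reducible locus has a residual $\mathbb{C}^\ast$ stabiliser coming from the constant gauge transformations fixing the reducible limit, and the based moduli space $M^o$ retains this direction rather than quotienting by it. On the divisor side this corresponds exactly to taking $\mathcal{D}^o(R,\widehat{E})$, the quotient by the based gauge group $\mathcal{G}^o_{\mathbb{C}}(R)$, and the tangent direction of the residual $\mathbb{C}^\ast$-action supplies the additional $\mathbb{C}$ summand in $H^0(D,\widehat{\mathcal{E}}|_D)\oplus\mathbb{C}$. Once you correct the rescaling and the origin of the framing, the rest of your outline (Kazdan--Warner for the converse, complex-linearity for holomorphicity of $\kappa^o_{sw}$) is sound and matches the paper.
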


\begin{proof}[Proof summary]

The proof follows similar steps as the proof of Proposition \ref{proposition:MOYflowsirreducibles}, and we only review some aspects that are different. Like before, the Morse--Bott assumption on $\mathfrak{C}^{\redu}(E)$ guarantees that flowlines converging to $\mathfrak{C}^{\redu}(E)$ as $t \rightarrow + \infty$ will do so with exponential decay, after applying a gauge transformation. With this in place, Step 1 proceeds as before. 

In Step 2, we start by building the line bundle $\widehat{E}$ over the ruled surface $R$ explicitly by gluing $(\Pi^\ast E_0 )|_{R \setminus C_+}$ with $(\Pi^\ast \lfloor \frac{K_C}{2} \rfloor )|_{R \setminus C_-}$ over $R^{o} = \mathbb{R}_t \times Y$ using the isomorphisms $(\Pi^\ast E_0 )|_{R^{o}} \cong \pi_{2}^\ast E \cong (\Pi^\ast \lfloor \frac{K_C}{2} \rfloor)|_{R^{o}}$ together with multiplication by $e^{t \delta^{+}_0 }$ on $\pi_{2} E \rightarrow R^{o}$ (the reason for this choice of gluing will be clear momentarily). Suppose $(A, \alpha )$ is positive vortex on the cylinder $R^{o}$ in the line bundle $\pi_{2}^\ast E$, converging to a positive vortex on $C$ in $E_0$ as $t\rightarrow - \infty$ and to a reducible monopole $B$ in $E$ as $t \rightarrow + \infty$, with exponential decay. We define an effective divisor on $R^{o}$ in $\widehat{E}|_{R^{o}}$ as the pair
\begin{align}
( e^{t \delta_{0}^+ } \overline{\partial}_A e^{-t \delta_{0}^+} , e^{t \delta_{0}^+ } \alpha ) . \label{eq:divisor}
\end{align}
Note also the identity
\[
e^{t \delta_{0}^+ } \overline{\partial}_A e^{-t \delta_{0}^+} = \overline{\partial}_{A + i \delta_{0}^{+} \eta } .
\]

We claim (see Proposition 10.0.19 in \cite{MOY}) that (\ref{eq:divisor}) \textit{extends} to an effective divisor on the ruled surface $R$ in the line bundle $\widehat{E}$. The extension over to $C_- \subset R$ is established as in the proof of Proposition \ref{proposition:MOYflowsirreducibles}. To establish the extension over $C_+ \subset R$, we first assert that (\ref{eq:divisor}) converges as $t \rightarrow + \infty$ to a holomorphic pair
\[(\overline{\partial}_{B_0} , \alpha_0 )\]
in the line bundle $\lfloor \frac{K_C}{2} \rfloor$ over $C$, where $\overline{\partial}_{B_0}$ is a holomorphic structure on $ \lfloor \frac{K_C}{2} \rfloor$, and $\alpha_0$ a holomorphic section of said bundle, possibly vanishing everywhere).

The assertion is established as follows. The bundle with connection $(E , B + i \delta_{0}^{+} \eta )$ has fibrewise trivial holonomy and its curvature pulls back from $C$, hence descends to a bundle with connection $(E_0 , B_0 )$ on $C$. Upon calculating the degree of $E_0$, one sees that $E_0 \cong \lfloor K_C /2 \rfloor$. On the other hand, the flowline $(A, \alpha )$ converges in the blow-up model of the configuration space to an eigenvector $\alpha_+$ of $D_B$ with eigenvalue $\delta > 0$, which moreover is a section of the subbundle $E \subset E \oplus ( E \otimes \pi^\ast K_{C}^{-1} )$ (since $(A, \alpha )$ is a positive vortex). The section $\alpha$ over $R^{o}$ has asymptotics $\alpha \sim e^{- \delta t} \alpha_+$ as $t \rightarrow + \infty$. By the block form (\ref{eq:diracblock}) of $D_B$ we see that
\begin{align}
i \nabla_{\zeta}^{\pi^\ast B_0 } \alpha_+  = ( \delta - \delta_{0}^{+} ) \alpha_+ \quad , \quad \overline{\partial}_{\pi^\ast B_0} \alpha_+ = 0 . \label{eq:alpha+}
\end{align}
In particular $\delta - \delta_{0}^{+} \in \mathbb{Z}$, and since $\delta \geq 0$, $\delta_{0}^{+} \in [0,1)$ then we see that $\delta \geq \delta_{0}^{+}$. There are two cases to consider now:
\begin{itemize}
\item If $\delta > \delta_{0}^{+}$, then the convergence assertion follows with $\alpha_0 := 0 $.
\item If $\delta = \delta_{0}^{+}$ then by (\ref{eq:alpha+}) the section $\alpha_+$ of $E$ descends to a non-trivial holomorphic section of $(\lfloor K_C / 2\rfloor , \overline{\partial}_{B_0} )$ and the convergence assertion follows with $\alpha_0 := \alpha_+$.
\end{itemize}

The section $e^{t \delta_{0}^+ } \alpha$ thus extends continuously over $R$ as a section of $\widehat{E}$, and is holomorphic over $R^{o}$. Similarly as before, the holomorphic structure $e^{t \delta_{0}^+ } \overline{\partial}_A e^{-t \delta_{0}^+}$ is shown in \cite{MOY} to extend over $R$, and by the regularity of the Cauchy--Riemann operator the section $e^{t \delta_{0}^+ } \alpha$ is holomorphic over the whole $R$ with respect to this holomorphic structure.
\end{proof}

For example, if $D_0$ is an effective divisor on $C$ in the line bundle $E_0$, then 
\[
D = \Pi^\ast D_0 + \frac{ \mathrm{deg}E_0  - \mathrm{deg} \lfloor \frac{K_C}{2} \rfloor}{\mathrm{deg} N} \cdot C_+
\]
is an effective divisor in $\mathcal{D}(R , \widehat{E} )$ not containing the zero section $C_-$. In general, an effective divisor $D$ on $R$ in $\widehat{E}$ can be uniquely written as 
\[
D = D^\prime + n  \cdot C_+
\]
where $D^\prime$ does not contain $C_+$ and $n \in \mathbb{Z}_{\geq 0}$. The integer $n$ is the multiplicity of the vanishing of $D$ along the section at infinity $C_+$, and is related to the eigenvalue of the projectivised eigenspace to which the lift of the flowline to the blowup converges as $t \rightarrow + \infty$, as the following result addresses:


\begin{proposition}\label{proposition:MOYflowsblowup}Assume that the reducible critical manifold $\mathfrak{C}^{\redu}(E)$ is Morse--Bott (cf. Proposition \ref{proposition:MOYreducible}). With $E, E_0$ as above, then the locus in the moduli space $M^{o}( \mathfrak{C}^{+}(E_0 ) , \mathfrak{C}^{\redu}(Y,E ))$ corresponding to blown-up based parametrised flows into a projectivised eigenspace with eigenvalue $ \geq \delta_{0}^{+} + n$, where $n \in \mathbb{Z}_{\geq 0}$, is homeomorphic to the locus in $\mathcal{D}^{o}(R , \widehat{E} )$ consisting of based effective divisors $D$ which contain the section at infinity $C_+ \subset R$ with multiplicity at least $n$, and which do not contain the zero section $C_- \subset R$. 



\end{proposition}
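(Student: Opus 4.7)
The plan is to refine the correspondence of Proposition \ref{proposition:MOYflowsreducible} by identifying the multiplicity of vanishing of the divisor $D$ along the section at infinity $C_+$ with the limiting eigenvalue of the flowline in the blown-up configuration space. Proposition \ref{proposition:MOYflowsreducible} already provides a homeomorphism $M^{o}(\mathfrak{C}^{+}(E_0),\mathfrak{C}^{\redu}(Y,E)) \cong \{D \in \mathcal{D}^{o}(R,\widehat{E}) \mid C_- \not\subset D\}$, so the task is to show that, under this homeomorphism, the limiting projectivised eigenvalue being $\geq \delta_{0}^{+}+n$ corresponds precisely to the divisor vanishing along $C_+$ to order $\geq n$.

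First, I would recall the key asymptotic computation extracted in the proof of Proposition \ref{proposition:MOYflowsreducible}: a positive vortex $(A,\alpha)$ on $R^{o}=\mathbb{R}_t\times Y$ which converges as $t\to+\infty$, in the blown-up configuration space, to $(B,[\alpha_+])$, where $\alpha_+$ is a unit eigenvector of $D_{B}$ with eigenvalue $\delta\in\delta_{0}^{+}+\mathbb{Z}_{\geq 0}$, has the asymptotic expansion $\alpha(t,y)\sim c\,e^{-\delta t}\alpha_+(y)$ as $t\to+\infty$, for a non-zero constant $c$ (which corresponds to the basepoint datum of the based moduli space $M^{o}$). Writing $\delta=\delta_{0}^{+}+n$, the extended holomorphic section $\widehat{\alpha}=e^{t\delta_{0}^{+}}\alpha$ of $\widehat{E}\to R$ therefore satisfies $\widehat{\alpha}(t,y)\sim c\,e^{-nt}\alpha_+(y)$ near $C_+$.

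Next, I would introduce a holomorphic fibre coordinate near $C_+$ in the ruled surface $R=\mathbb{P}(\mathbb{C}\oplus N)$. A tubular neighbourhood of $C_+$ in $R$ is biholomorphic (as an orbifold line bundle over $C$) to the normal bundle of $C_+$ in $R$, namely $N^{-1}$, with $C_+$ appearing as the zero section. Under the identification $R^{o}=N\setminus C_-$ given by $v\mapsto(\log|v|,v/|v|)$, a fibre coordinate $w=v^{-1}$ on $N^{-1}$ satisfies $|w|=e^{-t}$ near $C_+$. Combined with the asymptotic $\widehat{\alpha}\sim c\,e^{-nt}\alpha_+$, this shows that in any local trivialisation of $\widehat{E}$ about a point of $C_+$, the holomorphic section $\widehat{\alpha}$ vanishes along $C_+$ to order exactly $n$ (with equality because $\alpha_+$ is non-zero and defines a non-vanishing holomorphic section of $\widehat{E}|_{C_+}$, as shown in the final paragraph of the proof of Proposition \ref{proposition:MOYflowsreducible}). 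Consequently $\delta\geq\delta_{0}^{+}+n$ if and only if the divisor $D=\widehat{\alpha}^{-1}(0)$ contains $C_+$ with multiplicity at least $n$, which is the required bijection; that this is a homeomorphism follows since the multiplicity and the eigenvalue are both upper semi-continuous in their respective topologies.

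The technical obstacle is the rigorous matching between the exponential decay rate along the cylindrical end and the order of vanishing on $C_+$ in the compactification. Both the extension of $\widehat{\alpha}$ across $C_+$ and the identification of the fibre coordinate $w$ with $e^{-t}\cdot(\text{unit})$ are already provided by the arguments assembled in Proposition \ref{proposition:MOYflowsreducible} and the construction of $R^{o}=N\setminus C_-$ in \S\ref{subsubsection:ruled}, so the remaining content is purely bookkeeping of the leading-order term.
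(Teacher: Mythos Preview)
Your proposal is correct and follows essentially the same approach as the paper. The paper's proof factors $e^{t\delta_{0}^{+}}\alpha = e^{t(\delta_{0}^{+}-\delta)}\cdot(e^{t\delta}\alpha)$ and observes that $e^{-tn}$ extends to the canonical holomorphic section $z^{n}$ of $\mathcal{O}_{R}(n)$, while you introduce a local fibre coordinate $w$ near $C_{+}$ with $|w|=e^{-t}$; these are just local versus global ways of expressing the same vanishing-order calculation. One minor wording issue: $\alpha_{+}$ is a \emph{not identically vanishing} holomorphic section of $\widehat{E}|_{C_{+}}$, not a nowhere-vanishing one, but this is all that is needed to compute the multiplicity of $D$ along $C_{+}$.
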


\begin{proof}
Let $(A, \Phi )$ be a flowline whose lift to the blowup converges a projectivised eigenspace with eigenvalue $\delta> 0$. The effective divisor on $R$ corresponding to $(A, \Phi )$ is $( e^{t \delta_{0}^+ } \overline{\partial}_A e^{-t \delta_{0}^+} , e^{t \delta_{0}^+ } \alpha )$, and we have
\[
e^{t \delta_{0}^{+}} \alpha = e^{t (\delta_{0}^{+}  - \delta )} \cdot ( e^{ t\delta} \alpha) \, ,
\] 
where $e^{t \delta} \alpha$ is not identically vanishing over $C_+$ (over $C_+$ it restricts to the section $\alpha_+ \neq 0$ from Step 2 in the proof of Proposition \ref{proposition:MOYflowsreducible}). Now, the function $e^{-t n}$ over $R^{o}$ extends to the canonical holomorphic section $z^n$ of $\mathcal{O}_{R}(n)$ over the ruled surface $R$ (see \S \ref{subsubsection:ruled}), and $\delta_{0}^+ - \delta \leq  - n$ occurs precisely when the effective divisor on $R$ vanishes with multiplicity at least $n$ over $C_+$, which proves the first assertion. 
\end{proof}

\section{The eigenspaces of the Dirac operator on Seifert-fibered $3$-manifolds}\label{section:dirac}

In this section we describe the eigenspaces of the Dirac operator $D_B$ twisted by a \textit{reducible} monopole on a Seifert-fibered $3$-manifold. More precisely, we describe the eigenspaces in the \textit{adiabatic limit} as the \textit{area of the base} of the Seifert fibration $Y \rightarrow C$ \textit{goes to zero}.

Let $Y = S(N)$ be a Seifert-fibered $3$-manifold over an orbifold closed oriented surface $C$, with $\mathrm{deg}N < 0$. We choose holomorphic and geometric data (i)-(iv) on $C$, as in \S \ref{subsection:spin}. In this section we regard the holomorphic data (i.e. the holomorphic structures on $C$ and on $N \rightarrow C$) as \textit{fixed}, whereas we will \textit{vary the geometric data} $g_C $, $i \eta$. Recall (Remark \ref{remark:choices}, item 3) that the connection $i \eta$ is unique (up to hermitian gauge transformation of $N$) given the data (i)-(iii) has been chosen, so essentially we are only varying the metric $g_C$ in its conformal class. Up to conformal automorphisms of $C$, this just amounts to varying $g_C$ through \textit{constant rescalings}, and hence the area $\mathrm{Area}(C, g_C)$ is the only relevant parameter in our discussion.

We fix once and for all a line bundle $E \rightarrow Y$ and \textit{holomorphic data} on $E$, namely
\begin{enumerate}
\item[(v)] a holomorphic structure $\mathcal{E}_0$ up to isomorphism on \textit{some} line bundle $E_0 \rightarrow C$ with $E \cong \pi^\ast E_0$. Since $N$ has a holomorphic structure up to isomorphism (given by the holomorphic data (i) from \S \ref{subsection:spin}), and every line bundle pulling back to $E$ is of the form $E_0 \otimes N^k$, then a holomorphic structure on some $E_0$ singles out holomorphic structures on \textit{every} line bundle on $C$ that pulls back to $E$, compatibly with tensoring by the holomorphic bundle $N$.
\end{enumerate}
By Proposition \ref{proposition:MOYreducible}, the holomorphic data (v) on $E$ determines a unique reducible monopole $B$ in the line bundle $E$ up to hermitian gauge transformations. 

With this in place, let $V_\lambda$ denote the $\lambda$-eigenspace of the Dirac operator $D_B : \Gamma (Y , S_E ) \rightarrow \Gamma (Y, S_E )$. Our goal is to give an explicit description of the eigenspaces $V_\lambda$ for suitable choices of geometric data. 
In order to state the main result of this section, we first introduce some notation. As in \S \ref{subsubsection:flowsreducible}, we denote by $\lfloor \frac{K_C}{2} \rfloor$ (resp. $\lceil \frac{K_C}{2} \rceil$) the line bundle on $C$ of maximal (resp. minimal) degree among those line bundles $E_0$ with $E \cong \pi^\ast E_0$ and $\mathrm{deg} E_0 \leq \frac{1}{2} \mathrm{deg}K$ (resp. $\mathrm{deg} E_0 \geq \frac{1}{2} \mathrm{deg}K_C$), and consider the real numbers $\delta_{0}^{\pm} \in [0,1) \cap \mathbb{Q}$ given by
\begin{align}
\delta_{0}^{+} = \frac{\mathrm{deg} \lfloor \frac{K_C}{2} \rfloor - \frac{1}{2} \mathrm{deg}K_C}{\mathrm{deg}N} \,  , \quad
\delta_{0}^{-} = \frac{\frac{1}{2} \mathrm{deg}K_C - \mathrm{deg}\lceil \frac{K_C}{2}\rceil }{\mathrm{deg}N}. 
\end{align}

\begin{remark}\label{remark:cases} It is worth noting that:
\begin{itemize}
\item If there exists an orbi-spin bundle $E_0$ pulling back to $E$, then $\lfloor \frac{K_C}{2} \rfloor = \lceil\frac{K_C}{2}\rceil $ and hence $\delta_{0}^+ = \delta_{0}^{-} = 0$.
\item Otherwise, we have $\delta_{0}^{\pm} > 0$. We then have $1 = \delta_{0}^+ + \delta_{0}^-$, since in this case $\lfloor \frac{K_C}{2} \rfloor \otimes N^{-1} \cong \lceil\frac{K_C}{2}\rceil $
\end{itemize}
\end{remark}
We consider the following vector spaces of orbifold sheaf cohomology of holomorphic line bundles: 
\begin{align}
\mathcal{U}_{\lambda}^+  = \begin{cases} 0 &\quad \text{if } \lambda \notin \delta_{0}^+ + \mathbb{Z}\\
 H^0 (C, \lfloor \frac{K_C}{2} \rfloor \otimes N^{n} ) &\quad \text{if } \lambda = \delta_{0}^+ + n \,  (\in \delta_{0}^+ + \mathbb{Z})
\end{cases}
\end{align} \label{sheafcohomologies}
\[
\mathcal{U}_{\lambda}^-  = \begin{cases} 0 &\quad \text{if } \lambda \notin \delta_{0}^- + \mathbb{Z}\\
 H^1 (C, \lceil \frac{K_C}{2} \rceil \otimes N^{-n} ) &\quad \text{if } \lambda = \delta_{0}^- + n \,  (\in \delta_{0}^- + \mathbb{Z})
 \end{cases}
 \]
where we recall once more that $N$, $\lfloor \frac{K_C}{2} \rfloor$ and $\lceil \frac{K_C}{2} \rceil$ all carry preferred holomorphic structures (up to isomorphism) determined from the holomorphic data (ii) from \S \ref{subsection:spin} and the holomorphic data (v) on $E$, which will always be implicit in our notation in what follows.

With this in place, the main result of this section is the following

\begin{theorem}\label{theorem:eigenspaces}
Fix holomorphic data (i)-(ii) on $C$ as in \S \ref{subsection:spin}, together with holomorphic data (v) on the line bundle $E \rightarrow Y$, and a positive constant $\Lambda > 0$. There exists a constant $\epsilon > 0$ (depending on the previous data) such that if $(g_C, i \eta)$ is a choice of geometric data on $C$ with $\mathrm{Area}(C, g_C) < \epsilon$ and $B$ is a reducible monopole in $E$ corresponding to the holomorphic data (v) on $E$ (cf. Proposition \ref{proposition:MOYreducible}), then the $\lambda$-eigenspace $V_\lambda$ of the Dirac operator $D_B$ is given by $V_\lambda \cong \mathcal{U}^{+}_\lambda \oplus \mathcal{U}^{-}_\lambda$ for all $\lambda \in [-\Lambda , \Lambda ]$.

\end{theorem}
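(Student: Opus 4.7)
The plan is to diagonalize $D_B$ via a Fourier decomposition along the Seifert $S^1$-fibers of $Y\to C$, and then apply orbifold Hodge theory on $C$, using the adiabatic limit to suppress horizontal modes.

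First, I would decompose $\Gamma(Y,S_E)=\bigoplus_{k\in\mathbb{Z}}\Gamma(Y,S_E)_k$ into $S^1$-isotypic components. Fixing $E_0=\lfloor K_C/2\rfloor$, the weight-$k$ subspace identifies naturally with $\Gamma(C,E_0\otimes N^{-k})\oplus\Gamma(C,E_0\otimes N^{-k}\otimes K_C^{-1})$. Using formula (\ref{eq:reducible}), the unique (up to gauge) reducible monopole $B$ corresponding to the holomorphic data (v) takes the form $B=\pi^\ast B_0-\delta_0^+\, i\eta$ where $B_0$ is a constant-curvature connection on $E_0$, so on weight-$k$ sections the vertical derivative acts by the scalar $i\nabla^B_\zeta=\delta_0^+-k$. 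Since the horizontal operator $\bar\partial_B$ preserves the weight decomposition, $D_B$ splits as a direct sum $D_B=\bigoplus_{k}D_B^{(k)}$ with
\[
D_B^{(k)}=\begin{pmatrix}\delta_0^+-k & \sqrt{2}\,\bar\partial_k^\ast \\ \sqrt{2}\,\bar\partial_k & -(\delta_0^+-k)\end{pmatrix},
\]
where $\bar\partial_k$ denotes the Cauchy--Riemann operator on the holomorphic line bundle $E_0\otimes N^{-k}$ over $C$.

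Next, since $B$ is a reducible monopole ($F_B=\frac{1}{2}\pi^\ast F_{K_C}$), Lemma \ref{lemma:commutator} gives $\{D_B^v,D_B^h\}=0$, so
\[
(D_B^{(k)})^2=(\delta_0^+-k)^2\cdot I+2\begin{pmatrix}\bar\partial_k^\ast\bar\partial_k & 0 \\ 0 & \bar\partial_k\bar\partial_k^\ast\end{pmatrix}.
\]
Orbifold Hodge theory on $C$ identifies $\ker\bar\partial_k^\ast\bar\partial_k\cong H^0(C,E_0\otimes N^{-k})$ and $\ker\bar\partial_k\bar\partial_k^\ast\cong H^1(C,E_0\otimes N^{-k})$, and a direct check shows that $D_B^{(k)}$ acts on these kernels by scalar multiplication by $\delta_0^+-k$ and $-(\delta_0^+-k)$ respectively. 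Reindexing via $n=-k$ on the $H^0$-summand recovers $\mathcal{U}^+_{\delta_0^++n}$; and using the relations $\lceil K_C/2\rceil=\lfloor K_C/2\rfloor\otimes N^{-1}$ together with $\delta_0^++\delta_0^-=1$ from Remark \ref{remark:cases} on the $H^1$-summand recovers $\mathcal{U}^-_\lambda$ as stated.

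Finally, I would control the non-harmonic contributions in the adiabatic limit. For weights with $|\delta_0^+-k|>\Lambda$ the bound $|D_B^{(k)}|\geq|\delta_0^+-k|>\Lambda$ rules out any contribution. For the finitely many $k$ with $|\delta_0^+-k|\leq\Lambda$, a conformal rescaling $g_C\mapsto t^2 g_C$ leaves the holomorphic structures, $\delta_0^+$, and the Dolbeault cohomology groups all invariant, while rescaling the scalar Laplacians $\bar\partial_k^\ast\bar\partial_k$ and $\bar\partial_k\bar\partial_k^\ast$ by $t^{-2}$. Thus choosing $\mathrm{Area}(C,g_C)$ sufficiently small (uniformly over the finite set of relevant $k$) forces the smallest positive Laplacian eigenvalue on each relevant line bundle above $\Lambda^2$, ensuring no non-harmonic eigenvector can produce an eigenvalue of $D_B$ in $[-\Lambda,\Lambda]$. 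The only technically delicate point is this uniform area bound; everything else is algebraic or a standard application of orbifold Hodge theory on $C$.
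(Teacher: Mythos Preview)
Your proposal is correct and follows essentially the same approach as the paper: both reduce to the fiberwise Fourier decomposition, the block form (\ref{eq:diracblock}) of $D_B$, orbifold Hodge theory on $C$, and the adiabatic rescaling of $g_C$ to push positive Laplacian eigenvalues above the relevant threshold. Your packaging via the $S^1$-isotypic decomposition (which $D_B$ manifestly preserves, so $D_B=\bigoplus_k D_B^{(k)}$ with scalar diagonal) is slightly cleaner than the paper's decomposition by $D_B^v$-eigenvalue $\delta$: the paper must track that $D_B^h$ sends $\delta$-eigenvectors to $(-\delta)$-eigenvectors via Lemma~\ref{lemma:commutator} and then use a final $L^2$-orthogonality argument to rule out the $\psi_{-\lambda}$ component, whereas in your setup the harmonic and non-harmonic parts are automatically separated as eigenspaces of the diagonal operator $(D_B^{(k)})^2$.
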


The next three subsections are devoted to the proof of Theorem \ref{theorem:eigenspaces}.

\subsection{The Dirac operator on a Seifert fibration}

Next, we describe some general properties of the Dirac operator $D_B$ when $B$ is a reducible monopole in $E$. We may fix an identification $E = \pi^\ast E_0$ where $E_0 =\lfloor \frac{K_C}{2} \rfloor$ and take $B$ to be the connection given by the formula (\ref{eq:reducible}).

The operator $D_B$ can be decomposed into vertical and horizontal components
\begin{align*}
D_B = D_{B}^v & + D_{B}^h \\
D_{B}^v = \begin{pmatrix} i \nabla_{\zeta}^B & 0 \\ 0 & - i \nabla_{\zeta}^B \end{pmatrix} \, &, \quad D_{B}^h = \begin{pmatrix} 0 & \sqrt{2} \cdot \overline{\partial}_{B}^\ast \\ \sqrt{2} \cdot \overline{\partial}_B & 0 \end{pmatrix}.
\end{align*}
The operator $D_{B}^v$ (resp. $D_{B}^h$) only involves derivatives in the fiber (resp. base) directions and is a bounded linear operator when regarded as an operator between Sobolev spaces of sections $L^{2}_k (Y, S_E )\rightarrow L^{2}_{k-1}(Y, S_E )$ (however, it is \textit{not} a Fredholm operator). The differential operator $D_{B}^v$ restricts naturally to a differential operator along each fiber $\pi^{-1} (x) \cong S^1$ of the Seifert fibration $\pi : Y \rightarrow C$. From the formula (\ref{eq:reducible}) we immediately recognise this as the operator on $\pi^{-1}(x)$ given by the sum of two (twisted) Dirac operators on the circle:
\begin{align*}
(i \partial_\zeta + \delta_{0}^+ ) \oplus ( -i \partial_\zeta - \delta_{0}^+ ) : \Gamma (\pi^{-1}(x) , \mathbb{C} \oplus \mathbb{C} ) \rightarrow \Gamma (\pi^{-1}(x) , \mathbb{C} \oplus \mathbb{C} ).
\end{align*}

Thus, the set of eigenvalues of the restriction to $\pi^{-1}(x)$ of the operator $D_{B}^v$ \textit{do not depend on} $x \in C$, and are given by the discrete set $\sigma := ( \delta_{0}^+ + \mathbb{Z} ) \cup (- \delta_{0}^+ + \mathbb{Z} ) \subset \mathbb{R}.$ From Remark \ref{remark:cases} we have
\begin{align*}
\sigma = (\delta_{0}^+ + \mathbb{Z}) \cup (\delta_{0}^{-} + \mathbb{Z} ).
\end{align*}

Because the fiberwise eigenvalues are constant, a given spinor $\psi \in \Gamma (Y, S_E )$ can be \textit{uniquely} decomposed as a sum 
\begin{align}
\psi = \sum_{\delta \in \sigma} \psi_\delta \label{eq:fourier}
\end{align}
where  $\psi_\delta \in \Gamma (Y, S_E )$ satisfies the equation $D_{B}^v \psi_\delta = \delta \psi_\delta$, and the $\psi_\delta$ are pairwise $L^2$ orthogonal (in fact, fiberwise $L^2$ orthogonal). 

This is essentially a Fourier decomposition of $\psi$ along the fibers. Because $B$ is a \textit{reducible} monopole, then the horizontal operator $D_{B}^h$ interacts nicely with this kind of decomposition. By Lemma \ref{lemma:commutator} we have
\[
D_{B}^v D_{B}^h + D_{B}^h D_{B}^v = 0
\]
and thus we deduce
\begin{align}
D_{B}^v \big( D_{B}^h \psi_\delta \big)  = - \delta D_{B}^h \psi_\delta \label{eq:diracidentity}
\end{align}
i.e. the operator $D_{B}^h$ sends a fiberwise $\delta$-eigenvector to a fiberwise $(- \delta)$-eigenvector. We will exploit this fact in what follows.

\subsection{Topological eigenspaces}

The eigenspace $V_\lambda$ of $D_B$ depends on the holomorphic and geometric data (i)-(v), but it contains the following subspace $U_\lambda \subset V_{\lambda}$ which, as we will see now, only depends on holomorphic data (i)-(ii) on $C$ and holomorphic data (v) on $E$:
\[
U_\lambda = \Big\{ \psi \in \Gamma (Y, S_E ) \, | \, D_{B}^v \psi = \lambda \psi \, , \, D_{B}^h \psi = 0 \Big\}.
\]
Decomposing $\psi = (\alpha , \beta )$ according to the splitting (\ref{eq:spinorbundle}), the equations defining $U_\lambda$ can be written as
\[
i \nabla^{B}_\zeta \alpha = \lambda \alpha \,, \quad - i \nabla^{B}_{\zeta} \beta = \lambda \beta \, , \quad \overline{\partial}_B \alpha = 0 \, , \quad \overline{\partial}^{\ast}_B \beta = 0 .
\]
Thus, we have a natural splitting $U_\lambda = U^{+}_\lambda \oplus U^{-}_\lambda$ where 
\begin{align*}
U_{\lambda}^{+} &= \Big\{ \alpha \in \Gamma (Y, E ) \, | \, \,\, i \nabla^{B}_\zeta \alpha = \lambda \alpha  \, , \, \overline{\partial}_B \alpha = 0
\Big\}\\
U_{\lambda}^{-} &= \Big\{ \beta \in \Gamma (Y, E \otimes \pi^\ast K_{C}^{-1} ) \, | \,\,\,  -i \nabla^{B}_\zeta \beta = \lambda \beta  \, , \, \overline{\partial}^{\ast}_B \beta = 0
\Big\}.
\end{align*}

\begin{lemma}\label{lemma:eigenspacepulled}
For every $\lambda \in \mathbb{R}$, there are isomorphisms $U^{\pm}_\lambda \cong \mathcal{U}_{\lambda}^{\pm}$ with the sheaf cohomology vector spaces from (\ref{sheafcohomologies}). 
\end{lemma}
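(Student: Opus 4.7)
The plan is to establish a Fourier-type decomposition of sections of $E$ and of $E \otimes \pi^* K_C^{-1}$ along the Seifert fibers, and to identify each Fourier mode with sections of a specific orbifold holomorphic line bundle on $C$.

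First, I would recall from Proposition \ref{proposition:MOYreducible} and formula (\ref{eq:reducible}) that, after choosing an identification $E \cong \pi^{\ast}\lfloor K_C/2 \rfloor$, a reducible monopole $B$ in the isomorphism class singled out by the holomorphic data (v) may be written (up to hermitian gauge) as $B = \pi^{\ast} B_{0} + i \delta_{0}^{+} \eta$, where $B_{0}$ is the constant-curvature connection on $\lfloor K_C/2\rfloor$ associated to the chosen holomorphic structure. Hence $i \nabla^{B}_{\zeta} = i \nabla^{\pi^{\ast} B_{0}}_{\zeta} + \delta_{0}^{+}$, and on a section $\pi^{\ast}\tilde{\alpha}$ pulled back from $C$ this gives eigenvalue $\delta_{0}^{+}$.

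Next, I would isotypically decompose $\Gamma(Y,E)$ under the Seifert $S^{1}$-action. Using that the line bundles on $C$ pulling back to $E$ are precisely $\lfloor K_C/2\rfloor \otimes N^{n}$ for $n \in \mathbb{Z}$ (Lemma \ref{lemma:pullbackline}), together with the tautological section of $\pi^{\ast}N$ on $Y = S(N)$ as a trivialisation, I obtain an isomorphism of $C^{\infty}(C)$-modules
\begin{equation*}
\Gamma(Y, E) \;\cong\; \bigoplus_{n \in \mathbb{Z}} \Gamma\!\bigl(C, \lfloor K_C/2\rfloor \otimes N^{n}\bigr),
\end{equation*}
where the $n$-th summand is exactly the $(\delta_{0}^{+}+n)$-eigenspace of $i\nabla^{B}_{\zeta}$. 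Under this identification, $\overline{\partial}_{B}$ restricted to the $n$-th mode is the Cauchy--Riemann operator for the Chern connection on $\lfloor K_C/2 \rfloor \otimes N^{n}$ with its chosen holomorphic structure: this uses that both $B_{0}$ on $\lfloor K_C/2\rfloor$ and the connection on $N$ are of constant curvature, and the tensor product connection on $\lfloor K_C/2\rfloor \otimes N^{n}$ is the Chern connection compatible with the induced holomorphic structure. Therefore
\begin{equation*}
U^{+}_{\delta_{0}^{+}+n} \;=\; \ker \overline{\partial}_{B}\big|_{\text{mode }n} \;\cong\; H^{0}\!\bigl(C,\lfloor K_C/2\rfloor \otimes N^{n}\bigr) \;=\; \mathcal{U}^{+}_{\delta_{0}^{+}+n},
\end{equation*}
and $U^{+}_{\lambda} = 0$ for $\lambda \notin \delta_{0}^{+}+\mathbb{Z}$ since the fiberwise spectrum of $i \nabla^{B}_{\zeta}$ on $\Gamma(Y,E)$ is exactly $\delta_{0}^{+}+\mathbb{Z}$.

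The argument for $U^{-}$ is parallel. A Fourier decomposition gives $\Gamma(Y, E \otimes \pi^{\ast}K_C^{-1}) \cong \bigoplus_{m} \Gamma(C, \lfloor K_C/2\rfloor \otimes N^{m} \otimes K_C^{-1})$, with $-i\nabla^{B}_{\zeta}$ acting on the $m$-th mode by $-(\delta_{0}^{+}+m)$. Using the dichotomy of Remark \ref{remark:cases} together with the identity $\lceil K_C/2\rceil = \lfloor K_C/2\rfloor \otimes N^{-(\delta_{0}^{+}+\delta_{0}^{-})}$, the eigenvalue equation $-(\delta_{0}^{+}+m) = \delta_{0}^{-}+n$ is solved by an integer $m$ for which $\lfloor K_C/2\rfloor \otimes N^{m} \otimes K_C^{-1} = \lceil K_C/2\rceil \otimes N^{-n} \otimes K_C^{-1}$. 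Under this identification, $\overline{\partial}_{B}^{\ast}\beta = 0$ becomes $\overline{\partial}^{\ast}\tilde{\beta} = 0$ on $C$, and orbifold Hodge theory (cf.\ \cite{furutasteer, boyer}) for the line bundle $\lceil K_C/2\rceil \otimes N^{-n}$ identifies the space of harmonic $(0,1)$-form representatives with $H^{1}(C, \lceil K_C/2\rceil \otimes N^{-n})$, giving $U^{-}_{\delta_{0}^{-}+n} \cong \mathcal{U}^{-}_{\delta_{0}^{-}+n}$; and $U^{-}_{\lambda} = 0$ outside $\delta_{0}^{-}+\mathbb{Z}$.

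The main point requiring care is the orbifold nature of $C$: the Seifert $S^{1}$-action is not free over the orbifold points, so one must interpret the Fourier decomposition in the orbifold sense and invoke the orbifold version of Hodge theory. This is precisely what the correspondence between orbifold line bundles on $C$ and the eigenbundle decomposition of the $S^{1}$-action on $Y$ (underlying Lemma \ref{lemma:pullbackline} and Proposition \ref{proposition:MOYreducible}) is designed to handle, so no new analytic input should be needed beyond what is already available in \cite{MOY, furutasteer}.
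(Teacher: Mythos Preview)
Your proposal is correct and follows essentially the same approach as the paper. The only difference is organizational: you set up the full Fourier decomposition of $\Gamma(Y,E)$ and $\Gamma(Y,E\otimes\pi^*K_C^{-1})$ along the Seifert fibers first and then read off each eigenspace, whereas the paper argues element-by-element --- starting from a nonzero $\alpha\in U_\lambda^+$, rewriting $i\nabla^B_\zeta\alpha=\lambda\alpha$ as $i\nabla^{B+i\lambda\eta}_\zeta\alpha=0$, deducing that $(E,B+i\lambda\eta)$ descends to some $(E_0,B_0)$ on $C$, and then identifying $E_0\cong\lfloor K_C/2\rfloor\otimes N^n$ via the degree formula $\deg E_0 = \tfrac12\deg K_C + \lambda\,\deg N$. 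Both arguments encode the same content; your version makes the bundle identifications once and for all, while the paper's phrasing avoids explicitly invoking the tautological section of $\pi^*N$ as a trivialisation.
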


Thus, the elements contained in the subspace $U_{\lambda} \subset V_\lambda$ sit there for a \textit{topological} reason (meaning that they are stable under changes of geometric data, and only depend on holomorphic data). This motivates the following

\begin{definition}
The eigenspace $V_\lambda$ of $D_B$ is \textit{topological} if $U_\lambda = V_\lambda$.
\end{definition}

\begin{proof}[Proof of Lemma \ref{lemma:eigenspacepulled}]
Let us start understanding a given element $ \alpha \in U_{\lambda}^+$. The equation $i \nabla^{B}_\zeta \alpha = \lambda \alpha$ can be rewritten as $i \nabla^{B + i \lambda \eta }_\zeta \alpha = 0$. If $\alpha$ is not identically zero, then it follows that $B$ is a connection with trivial fiberwise holonomy. Furthermore, since $2F_B = F_{K_C}$ then the curvature form $F_B$ is pulled back from the base. These two conditions imply that the bundle with connection $(E, B + i \lambda \eta)$ is isomorphic to the pullback of a corresponding pair $(E_0 , B_0 )$ over $C$, and $\alpha$ gives a holomorphic section of $E_0$ (with respect to the holomorphic structure $\overline{\partial}_{B_0}$). From $\pi^\ast B_0 = B + i \lambda \eta$ we deduce the identity
\begin{align}
\mathrm{deg} E_0 = \frac{1}{2} \mathrm{deg}K_C + \lambda F_N .\label{eq:degrees}
\end{align}
On the other hand, $\lambda$ is a fiberwise eigenvalue and, more specifically, it is one of the form $\lambda = \delta_{0}^+ + n$ for some $n \in \mathbb{Z}$. Together with (\ref{eq:degrees}), this shows that there is an isomorphism of holomorphic bundles $( E_0 , \overline{\partial}_{B_0} ) \cong \lfloor \frac{K_C}{2} \rfloor \otimes N^{n}$. From this, an isomorphism $U_{\lambda}^+ \cong \mathcal{U}_{\lambda}^+$ follows. 

The case of a non-zero $\beta \in U_{\lambda}^-$ is similar. In this case, we similarly deduce that the bundle with connection $(E, B-i\lambda \eta )$ is the pullback of a line bundle $(E_0 , B_0 )$ on $C$, and $\beta$ yields an element of $H^1 (C, E_0 )$. In this case $\lambda$ will be of the form $\delta_{0}^{-} + n $, and from this we deduce $( E_0  , \overline{\partial}_{B_0} ) \cong \lceil \frac{K_C}{2} \rceil \otimes N^{-n}$.
\end{proof}

For convenience, we introduce the following notation 

\begin{align}
E_{\delta}^+ &:= \lfloor \frac{K_C}{2} \rfloor \otimes N^n \, ,  \quad \text{for } \delta  = \delta_{0}^+ + n \label{eq:line+}\\
E_{\delta}^- &:= \lceil \frac{K_C}{2} \rceil \otimes N^{-n} \, , \quad \text{for } \delta = \delta_{0}^- + n \label{eq:line-}.
\end{align}

\subsection{From fiberwise eigenvectors to Laplace eigenvectors}

On the Kähler orbifold $(C, g_C)$ we have the Hodge Laplacians acting on $(0,0)$ forms and $(0,1)$ forms. These operators can be twisted by an orbifold hermitian line bundle with connection $(E_0 , B_0 )$ to give operators
\[
\overline{\partial}_{B_0}^{\ast} \overline{\partial}_{B_0} : \Gamma (C, E_0 ) \rightarrow \Gamma (C, E_0 ) \, , \quad \overline{\partial}_{B_0} \overline{\partial}_{B_0}^{\ast} : \Gamma (C, E_0 \otimes K^{-1}_C ) \rightarrow \Gamma (C, E_0 \otimes K^{-1}_C).
\]
The line bundles $E_{\delta}^\pm$ defined in (\ref{eq:line+}-\ref{eq:line-}) carry holomorphic structures up to isomorphism (provided by the holomorphic data (v)). Each $E_{\delta}^{\pm}$ has a unique hermitian connection $B_{\delta}^{\pm}$ with constant curvature inducing the given isomorphism class of holomorphic structure. The following is a key calculation:

\begin{lemma}\label{lemma:eigenlaplace}
Let $\psi \in V_\lambda$ be an eigenvector of the Dirac operator $D_B$. If some $\psi_\delta$ in the decomposition $(\ref{eq:fourier})$ is not identically zero, then at least one of the Laplacians $\overline{\partial}_{B_{\delta}^+}^{\ast} \overline{\partial}_{B_{\delta}^+}$, $\overline{\partial}_{B_{\delta}^-} \overline{\partial}_{B_{\delta}^-}^{\ast}$ has an eigenvalue at $\lambda^2 - \delta^2$.
\end{lemma}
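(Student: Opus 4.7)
The proof plan rests on the super-commutator identity at a reducible monopole. First I would invoke Lemma~\ref{lemma:commutator}, which at a reducible $B$ (where $F_B = \tfrac{1}{2}\pi^\ast F_{K_C}$) forces $\{D_B^v, D_B^h\} = 0$, and hence
\[
D_B^2 = (D_B^v)^2 + (D_B^h)^2.
\]
The operator $D_B^v$ preserves each fiberwise eigenspace by construction, and the identity \eqref{eq:diracidentity} shows that $D_B^h$ interchanges the $\delta$- and $(-\delta)$-fiberwise eigenspaces; therefore $(D_B^h)^2$ also preserves each fiberwise eigenspace. Applied to $\psi$ with $D_B\psi = \lambda\psi$, decomposed as $\psi = \sum_{\delta\in\sigma}\psi_\delta$, the identity $D_B^2\psi = \lambda^2\psi$ projects onto the $\delta$-summand to give
\[
(D_B^h)^2\,\psi_\delta \;=\; (\lambda^2 - \delta^2)\,\psi_\delta.
\]

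Second, I would write $\psi_\delta = (\alpha_\delta,\beta_\delta) \in \Gamma(Y, E\oplus E\otimes \pi^\ast K_C^{-1})$. The off-diagonal block form of $D_B^h$ from \eqref{eq:diracblock} makes $(D_B^h)^2$ block-diagonal, equal to $2\cdot \operatorname{diag}(\overline{\partial}_B^\ast \overline{\partial}_B,\,\overline{\partial}_B\overline{\partial}_B^\ast)$, and so the previous displayed identity separates into
\[
2\,\overline{\partial}_B^\ast\overline{\partial}_B\,\alpha_\delta \;=\; (\lambda^2-\delta^2)\,\alpha_\delta,\qquad 2\,\overline{\partial}_B\overline{\partial}_B^\ast\,\beta_\delta \;=\; (\lambda^2-\delta^2)\,\beta_\delta.
\]
Since $\psi_\delta\neq 0$, at least one of $\alpha_\delta$ or $\beta_\delta$ is nonzero.

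Third, I would carry out the descent to $C$. Assume $\alpha_\delta\neq 0$. The computation already performed in the proof of Lemma~\ref{lemma:eigenspacepulled} (applied to $\alpha_\delta$, which satisfies $i\nabla_\zeta^B\alpha_\delta = \delta\alpha_\delta$ but not necessarily $\overline{\partial}_B\alpha_\delta = 0$) shows that $(E,B+i\delta\eta)$ has trivial fiberwise holonomy and pulls back from $(E_\delta^+, B_\delta^+)$ on $C$, and $\alpha_\delta$ descends to a unique nonzero $\tilde\alpha_\delta\in\Gamma(C, E_\delta^+)$. Because $\eta$ vanishes on the horizontal distribution $\xi$, the Cauchy--Riemann operators $\overline{\partial}_B$ and $\overline{\partial}_{\pi^\ast B_\delta^+}$ agree, so $\overline{\partial}_B\alpha_\delta = \pi^\ast \overline{\partial}_{B_\delta^+}\tilde\alpha_\delta$; the analogous descent for the formal adjoint (using that the $L^2$ pairings on pullback sections over $Y$ and over $C$ differ only by a constant fiber-length factor) yields $\overline{\partial}_B^\ast\overline{\partial}_B\alpha_\delta = \pi^\ast(\overline{\partial}_{B_\delta^+}^\ast\overline{\partial}_{B_\delta^+}\tilde\alpha_\delta)$. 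Combined with the previous paragraph, $\tilde\alpha_\delta$ is a nonzero eigenvector of $\overline{\partial}_{B_\delta^+}^\ast\overline{\partial}_{B_\delta^+}$ with eigenvalue $\lambda^2-\delta^2$ (after absorbing the factor of $2$ into the normalisation of the Laplacian used in \cite{MOY}). The case $\beta_\delta\neq 0$ is entirely parallel, descending $\beta_\delta$ to a section $\tilde\beta_\delta\in\Gamma(C, E_\delta^-\otimes K_C^{-1})$ and producing a nonzero eigenvector of $\overline{\partial}_{B_\delta^-}\overline{\partial}_{B_\delta^-}^\ast$ at the same eigenvalue.

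The main obstacle is purely bookkeeping: keeping track of which line bundle on $C$ is being used (the identification depends on $\delta$ and differs for the $\alpha$- and $\beta$-components) and checking that the horizontal Dirac operator and its adjoint descend correctly under the twisted pullback identifications. Once these identifications are set up, the result is a formal consequence of the vanishing super-commutator at a reducible monopole.
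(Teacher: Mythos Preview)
Your proof is correct and follows essentially the same strategy as the paper: both arguments hinge on the vanishing of the super-commutator $\{D_B^v,D_B^h\}$ at a reducible monopole and then descend the resulting Laplace-type equation to $C$ via the identification $(E,B+i\delta\eta)\cong\pi^\ast(E_\delta^\pm,B_\delta^\pm)$. The only cosmetic difference is that you square $D_B$ first (so that the $\delta$-summand decouples immediately), whereas the paper applies $D_B$ once to obtain the coupled relation $(\lambda-\delta)\psi_\delta = D_B^h\psi_{-\delta}$ and then eliminates $\psi_{-\delta}$; your observation about the stray factor of $2$ from the $\sqrt{2}$ in \eqref{eq:diracblock} is accurate (the paper silently drops it in the component equations), and it is harmless for the only application of the lemma in Lemma~\ref{lemma:smallarea}.
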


In particular, because the Laplacians are non-negative operators, we deduce that if $\psi \in V_\lambda$ then the decomposition (\ref{eq:fourier}) of $\psi$ only has \textit{finitely-many} non-zero $\psi_\delta$'s, namely
\[
\psi = \sum_{\delta \in \sigma \, , \, \delta^2 \leq \lambda^2} \psi_\delta .
\]

\begin{proof}
Decomposing $\psi$ according to (\ref{eq:fourier}), and applying $D_B$ we deduce 
\[
\sum_{\delta \in \sigma} \lambda \psi_\delta = \sum_{\delta \in \sigma} \Big( \delta \psi_\delta + D^{h}_B \psi_\delta \Big) .
\]
Because the decomposition (\ref{eq:fourier}) is $L^2$-orthogonal, together with the key property (\ref{eq:diracidentity}), we deduce 
\begin{align}
(\lambda - \delta ) \psi_\delta = D^{h}_B \psi_{-\delta} \, , \quad \forall \delta \in \sigma . \label{eq:diracidentity2}
\end{align} If we write this equation in components $\psi_\delta = (\alpha_\delta , \beta_\delta )$ this says
\begin{align*}
(\lambda - \delta) \alpha_\delta &= 
\overline{\partial}_{B}^\ast \beta_{-\delta} \\
(\lambda - \delta )\beta_\delta & =  \overline{\partial}_{B} \alpha_{-\delta} .
\end{align*}
 Applying $\overline{\partial}^{\ast}_B$ to the second equation, and then using the first equation yields
 \begin{align}
 \overline{\partial}^{\ast}_B \overline{\partial}_{B} \alpha_\delta = (\lambda^2 - \delta^2 )\alpha_\delta \,, \quad \forall \delta \in \sigma \label{eq:eigenlaplace}
 \end{align}
and similarly
 \begin{align*}
 \overline{\partial}_B \overline{\partial}^{\ast}_{B} \beta_\delta = (\lambda^2 - \delta^2 )\beta_\delta \, , \quad \forall \delta \in \sigma .
 \end{align*}
Let us suppose $\alpha_\delta$ is not identically zero (the case when $\beta_\delta$ is not identically zero is similar). Then, by the argument from the proof of Lemma \ref{lemma:eigenspacepulled}, we have that the pair $(E,  B + i \delta \eta )$ is isomorphic to the pullback of the pair $(E_{\delta}^+ , B_{\delta}^+ )$. Then $\alpha_\delta$ becomes a non-trivial section of $E_{\delta}^+$ which, by (\ref{eq:eigenlaplace}), satisfies the eigenvector equation for the Hodge Laplacian
\[
\overline{\partial}^{\ast}_{B_{\delta}^+} \overline{\partial}_{B_{\delta}^+} \alpha_\delta = (\lambda^2 - \delta^2 )\alpha_\delta .
\]
\end{proof}


\begin{lemma}\label{lemma:smallarea}
Given a constant $\Lambda > 0$ exists a constant $\epsilon > 0$ (depending on $\Lambda$ and on the holomorphic data (i)-(ii) on $C$ and the holomorphic data (v) on $E$) such that: if $\mathrm{Area}(C,g_C) < \epsilon$ and $\lambda \in [-\Lambda , \Lambda]$, then neither of the two Laplacians $\overline{\partial}_{B_{\delta}^+}^{\ast} \overline{\partial}_{B_{\delta}^+}$, $\overline{\partial}_{B_{\delta}^-} \overline{\partial}_{B_{\delta}^-}^{\ast}$ has an eigenvalue at $\lambda^2 - \delta^2$ for some $\delta \in \sigma$ with $\delta^2 < \lambda^2$.
\end{lemma}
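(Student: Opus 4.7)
The plan is to exploit the scaling behaviour of the twisted Dolbeault Laplacians on $C$ under constant rescalings of the Kähler metric $g_C$, which is the only remaining degree of freedom in the geometric data once the holomorphic data has been fixed (cf. Remark \ref{remark:choices}).

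First, I would reduce to a finite set of $\delta$'s. Since $\sigma = (\delta_0^+ + \mathbb{Z}) \cup (\delta_0^- + \mathbb{Z}) \subset \mathbb{R}$ is discrete, and $|\lambda| \leq \Lambda$ forces $|\delta| < \Lambda$, the collection
\[
\sigma_\Lambda := \{ \delta \in \sigma \, | \, \delta^2 < \Lambda^2\}
\]
is finite. For each $\delta \in \sigma_\Lambda$, the line bundles $E_\delta^\pm$ and their isomorphism classes of holomorphic structures (specified via the holomorphic data (v)) are fixed, and in particular the Cauchy--Riemann operators $\overline{\partial}_{B_\delta^\pm}$ acting on sections of $E_\delta^\pm$ and on sections of $E_\delta^\pm \otimes K_C^{-1}$ depend only on the complex structure of $C$ and not on $g_C$.

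Next, I would carry out the scaling computation. Fix a reference constant curvature Kähler metric $g_C^{(1)}$ on $C$ with unit area, and consider the one-parameter family $g_t := t^2 g_C^{(1)}$ (with area $t^2$). One can choose the connections $B_\delta^\pm$ so that their underlying 1-forms are independent of $t$ (the constant-curvature condition $F_{B_\delta^\pm} = c_\delta \cdot \omega_C$ is satisfied for all $t$ with the constant $c_\delta$ rescaled accordingly). A direct calculation then shows that, under the metric $g_t$, the $L^2$-inner product on $\Gamma(C, E_\delta^+)$ scales by $t^2$, while that on $\Omega^{0,1}(C, E_\delta^+)$ is independent of $t$, so the formal adjoint of $\overline{\partial}_{B_\delta^+}$ with respect to $g_t$ satisfies $\overline{\partial}_{B_\delta^+, g_t}^\ast = t^{-2} \overline{\partial}_{B_\delta^+, g_1}^\ast$. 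Consequently,
\[
\overline{\partial}_{B_\delta^+, g_t}^\ast \overline{\partial}_{B_\delta^+} = t^{-2} \cdot \overline{\partial}_{B_\delta^+, g_1}^\ast \overline{\partial}_{B_\delta^+},
\]
and the analogous identity holds for the other Laplacian. Therefore each non-zero eigenvalue of these Laplacians scales as $t^{-2}$ times its value at $t = 1$.

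Finally, I would conclude by choosing $\epsilon$ small. For each $\delta \in \sigma_\Lambda$, let $\mu_\delta^\pm > 0$ denote the smallest positive eigenvalue of $\overline{\partial}_{B_\delta^+, g_1}^\ast \overline{\partial}_{B_\delta^+}$ and $\overline{\partial}_{B_\delta^-, g_1} \overline{\partial}_{B_\delta^-, g_1}^\ast$ respectively (these are strictly positive by the discreteness of the spectrum and Hodge theory; the zero eigenspaces are $H^0(C, E_\delta^+)$ and $H^1(C, E_\delta^-)$, but we are only concerned with positive eigenvalues). Set
\[
\epsilon := \frac{1}{\Lambda^2} \min_{\delta \in \sigma_\Lambda} \min(\mu_\delta^+ , \mu_\delta^-).
\]
Then for any $g_C$ with $\mathrm{Area}(C, g_C) = t^2 < \epsilon$, any positive eigenvalue of either Laplacian associated to any $\delta \in \sigma_\Lambda$ is at least $t^{-2} \min(\mu_\delta^+, \mu_\delta^-) > \Lambda^2 \geq \lambda^2 - \delta^2$. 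Since the hypothesis $\delta^2 < \lambda^2$ forces $\lambda^2 - \delta^2 > 0$, the value $\lambda^2 - \delta^2$ also cannot be the zero eigenvalue, and this is the desired conclusion. The only mildly subtle point is verifying that the connections $B_\delta^\pm$ can be chosen coherently under rescalings of $g_C$, but this is immediate from the fact that constant curvature is preserved under conformal constant rescalings in real dimension two (up to rescaling the curvature constant).
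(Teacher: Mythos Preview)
Your proof is correct and follows essentially the same approach as the paper: both arguments reduce to finitely many $\delta \in \sigma$ with $|\delta| < \Lambda$, then use that rescaling the metric $g_C$ by a constant rescales the eigenvalues of the twisted Dolbeault Laplacians by the inverse factor, so shrinking the area pushes all positive eigenvalues above $\Lambda^2$. Your version is simply more explicit about verifying the scaling behaviour of the adjoint $\overline{\partial}^\ast$ and gives an explicit formula for $\epsilon$, whereas the paper's proof states the rescaling fact without elaboration.
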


\begin{proof}
Re-scaling the metric $g_C$ by a factor of $a >0$ has the effect of re-scaling the Laplacians $\overline{\partial}_{B_{\delta}^+}^{\ast} \overline{\partial}_{B_{\delta}^+}$, $\overline{\partial}_{B_{\delta}^-} \overline{\partial}_{B_{\delta}^-}^{\ast}$ (and therefore also their eigenvalues) by a factor of $1/a$. For each $\delta \in \sigma$ with $\delta^2 < \Lambda^2$ we can therefore choose $a > 0$ small enough that the first positive eigenvalue of both Laplacians $\overline{\partial}_{B_{\delta}^+}^{\ast} \overline{\partial}_{B_{\delta}^+}$, $\overline{\partial}_{B_{\delta}^-} \overline{\partial}_{B_{\delta}^-}^{\ast}$ (defined using the rescaled metric $a \cdot g_C$) is greater than $\Lambda^2 - \delta^2$. Since there are only finitely many $\delta \in \sigma$ with $\delta^2 < \Lambda^2$, then the desired result follows.
\end{proof}

\begin{proof}[Proof of Theorem \ref{theorem:eigenspaces}]
Let $\lambda \in [-\Lambda , \Lambda]$ be fixed, and let $\psi \in V_\lambda$. If $\mathrm{Area}(C,g_C) < \epsilon$, then it follows from Lemma \ref{lemma:eigenlaplace} and Lemma \ref{lemma:smallarea} that in the decomposition (\ref{eq:fourier}) all $\psi_\delta$ vanish except for possibly $\psi_{\pm\lambda}$. That is, 
\begin{align}
\psi = \psi_\lambda + \psi_{-\lambda}. \label{eq:pmlambda}
\end{align}
From (\ref{eq:diracidentity2}) we also have $D_{B}^h \psi_{-\lambda} = 0$, and thus $\psi_{-\lambda} \in U_{-\lambda} \subset V_{-\lambda}$. Taking the inner product of (\ref{eq:pmlambda}) with $\psi_{-\lambda}$ and integrating over $Y$ yields $\psi_{-\lambda} = 0$ (here $\langle \psi , \psi_{-\lambda \rangle_{L^{2}(Y)}} = 0$ because $\psi \in V_\lambda$ and $\psi_{-\lambda} \in V_{-\lambda}$, and $\langle \psi_\lambda, \psi_{-\lambda} \rangle_{L^{2}(Y)} = 0$ because $\psi_{\pm \lambda}$ is a fibrewise $(\pm \lambda)$-eigenvector of $D_{B}^v$).

Hence, we've shown $\psi = \psi_{\lambda}$, i.e. $\psi \in U_\lambda$. This shows that the eigenspace $V_\lambda$ is topological, i.e. $U_\lambda = V_\lambda$. Together with Lemma \ref{lemma:eigenspacepulled}, the proof of Theorem \ref{theorem:eigenspaces} is now complete.
\end{proof}

\subsection{The Seifert $S^1$-action on the eigenspaces of the Dirac operator}

Next, we discuss how the Seifert $S^1$-action on $Y = S(N)$ interacts with the eigenspaces of the Dirac operator $D_B$. Here $B$ will be a reducible monopole in the spin-c structure corresponding to a line bundle $E \rightarrow Y$.

We fix a \textit{lift} of the $S^1$-action on $Y$ to the spin-c frame bundle. Concretely, this just amounts to fixing an identification of $E$ with the pullback $\pi^\ast E_0$ of an orbifold line bundle on $C$. This lift makes the spinor bundle $S_E \rightarrow Y$ into an $S^1$-equivariant vector bundle, and the Clifford multiplication $\rho : TY \rightarrow \mathrm{Hom}(S_E , S_E )$ into an $S^1$-equivariant bundle map. 


The eigenspaces $V_\lambda$ of the Dirac operator $D_B$ now become complex $S^1$-representations, and we want to describe their weight decomposition. More precisely, we only do this for the subrepresentation $U_\lambda \subset V_\lambda$ (recall from the proof of Theorem \ref{theorem:eigenspaces} that $V_\lambda = U_\lambda$ provided the area of $C$ is small enough). The complex $S^1$-representation $U_\lambda$ further splits as $U_\lambda = U_\lambda^{+} \oplus U_{\lambda}^{-}$ according to the splitting $S_E = E \oplus ( E \otimes \pi^\ast K_{C}^{-1} )$.

\begin{proposition}\label{proposition:weights}
If $U_{\lambda}$ is non-trivial, then the complex $S^1$-representation $U_\lambda^{\pm}$ has weight $c \mp \lambda \in \mathbb{Z}$, where $c =  \frac{ \mathrm{deg}E_0 - \frac{1}{2} \mathrm{deg}K_C}{\mathrm{deg}N}$. 
\end{proposition}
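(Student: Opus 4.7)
The plan is to unpack the eigenvalue equations in terms of the explicit formula for the reducible connection and read off the weights from the vertical covariant derivative.

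First, I would choose a lift identifying $E \cong \pi^*E_0$, which turns $E$, and hence the spinor bundle $S_E = E \oplus (E \otimes \pi^*K_C^{-1})$, into $S^1$-equivariant bundles in the obvious way (the $S^1$-action being trivial on the fibers in the pullback trivialisation). Since the Seifert $S^1$-action preserves $\eta$, the connection $B = \pi^*B_0 - c i\eta$ from (\ref{eq:reducible}) is $S^1$-invariant, so $D_B$ is $S^1$-equivariant and each eigenspace $U_\lambda^\pm$ carries a well-defined $S^1$-representation. With the convention that the induced action on sections is by pullback, the infinitesimal generator acts as $\zeta$, so weight $k$ means $\zeta(\sigma) = ik\sigma$ in the pullback trivialisation.

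Next, I would evaluate $\nabla^B_\zeta$ in this trivialisation. Since $\zeta$ is vertical, $\pi^*B_0(\zeta) = B_0(\pi_*\zeta) = 0$, and by the defining property $\eta(\zeta)=1$, so
\[
\nabla^B_\zeta \alpha = \zeta(\alpha) + B(\zeta)\alpha = \zeta(\alpha) - c i\alpha .
\]
For $\alpha \in U_\lambda^+$, the equation $i\nabla^B_\zeta \alpha = \lambda \alpha$ rearranges to $\zeta(\alpha) = i(c-\lambda)\alpha$, identifying $\alpha$ as a weight $(c-\lambda)$ vector. In particular $c - \lambda \in \mathbb{Z}$, as required (this is also forced by the fact that $\lambda \in \delta_0^+ + \mathbb{Z}$, and $\delta_0^+ \equiv c \pmod{\mathbb{Z}}$ from (\ref{eq:fibrewiseeigen})).

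The case of $U_\lambda^-$ is entirely parallel. The spin-c connection on the second summand $E \otimes \pi^*K_C^{-1}$ is $B - \pi^*B_{K_C}$, and $\pi^*B_{K_C}(\zeta) = 0$ by the same verticality argument, so $\nabla_\zeta$ on this summand is computed by the identical formula $\zeta(\beta) - c i\beta$. The equation $-i\nabla_\zeta \beta = \lambda \beta$ then gives $\zeta(\beta) = i(c+\lambda)\beta$, so $\beta$ has weight $c+\lambda$. This completes the proof. There is no serious obstacle — the content is essentially the bookkeeping of making sure that the only contribution of $B$ along the fibers is the scalar $-ci$ coming from the $\eta$-component, while the $\pi^*B_0$ and $\pi^*B_{K_C}$ pieces vanish on $\zeta$; the real work has already been done in setting up (\ref{eq:reducible}) and Lemma \ref{lemma:eigenspacepulled}.
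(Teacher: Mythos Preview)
Your proof is correct and essentially identical to the paper's. The paper also writes the fiberwise restriction of $D_B^v$ as $(i\partial_\zeta + c)\oplus(-i\partial_\zeta - c)$ using the formula $B = \pi^\ast B_0 - ci\eta$ from (\ref{eq:reducible}), solves $i\partial_\zeta\alpha + c\alpha = \lambda\alpha$ to get $\alpha(e^{i\theta}y) = e^{i(c-\lambda)\theta}\alpha(y)$ (and the analogous equation for $\beta$), and reads off the weights --- exactly your computation, just phrased in terms of integrating the ODE along the fiber rather than the infinitesimal statement $\zeta(\alpha) = i(c-\lambda)\alpha$.
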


\begin{proof}


Recall that the Dirac operator $D_B$ restricts along each fiber $\pi^{-1}(x)$ of the Seifert fibration $Y \xrightarrow{\pi} C$ to the sum of two twisted Dirac operators 
\[
(i \partial_\zeta + c ) \oplus (- i \partial_\zeta - c ) : \Gamma (\pi^{-1}(x) , \mathbb{C} \oplus \mathbb{C}) \rightarrow \Gamma (\pi^{-1}(x) , \mathbb{C} \oplus \mathbb{C}) .
\]
Let $\psi = (\alpha , \beta) \in U^{+}_{\lambda} \oplus U^{-}_{\lambda} = U_\lambda$. Thus, we have 
\[
i \partial_\zeta \alpha + c \alpha = \lambda \alpha \quad , \quad -i \partial_\zeta  \beta - c \beta = \lambda \beta 
\]
from which it follows that
\[
\alpha ( e^{i \theta} y ) = e^{i (c - \lambda)\theta}\alpha (y) \quad , \quad \beta ( e^{i \theta}y) = e^{i (  c + \lambda)\theta} \beta (y)
\]
at each point $y \in Y$. The assertion now follows from this.
\end{proof}

\subsection{The case when the canonical spin-c structure is self-conjugate }

We conclude this section by spelling out some consequences of Therem \ref{theorem:eigenspaces} and Proposition \ref{proposition:weights} in the case when the \textit{canonical spin-c structure} $\mathfrak{s}_{c}$ on $Y$ (i.e. $E = \mathbb{C}$) is \textit{self-conjugate}, which is assumed throughout this section. This means that $\pi^\ast K_C$ is the trivial line bundle on $Y$, which is equivalent to saying that $K_C \cong N^{-m}$ for some integer $m$, which is necessarily given by 
\[m = - \frac{\mathrm{deg}K_C}{\mathrm{deg}N} = \frac{ \chi (C)}{\mathrm{deg}N}.
\]
We have $\lfloor \frac{K_C}{2} \rfloor \cong N^{- \lfloor m / 2\rfloor}$, $\lceil \frac{K_C}{2} \rceil \cong N^{-\lceil m/2 \rceil}$ and
\[
\delta_0 := \delta_{0}^+ = \delta_{0}^- = \begin{cases} 0 &\quad \text{if $m$ is even}\\
1/2 & \quad \text{if $m$ is odd}.
\end{cases}
\]
We note that the integer $m$ is even precisely when there exists an orbi-spin bundle $E_0$ on $C$ pulling back to the trivial line bundle $E =  \mathbb{C}$. For example, if $Y = \Sigma ( \alpha_1 , \ldots , \alpha_n )$ is a Seifert-fibered integral homology sphere then this occurs precisely when all $\alpha_i$ are \textit{odd}.

We let $B$ be a reducible monopole in $E = \mathbb{C}$. Recall that by Theorem \ref{theorem:eigenspaces}, the eigenspace $V_{\lambda}$ of $D_B$ agrees with $\mathcal{U}_{\lambda}$ in the small area regime. The spaces $\mathcal{U}_{\lambda} = \mathcal{U}_{\lambda}^{+} \oplus \mathcal{U}_{\lambda}^{-}$ are empty unless $\lambda \in \delta_0 + \mathbb{Z}$.

\begin{lemma}\label{proposition:selfconj}
Suppose that the canonical spin-c structure on $Y$ is self-conjugate, with $K_C \cong N^{-m}$. Then for any $n \in \mathbb{Z}$ there is an isomorphism
\begin{align*}
\mathcal{U}^{\pm}_{\delta_{0} + n  } \cong H^0 (C , N^{n-\lfloor m/2 \rfloor}).
\end{align*}
\end{lemma}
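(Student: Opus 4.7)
The approach is to simply unwind the definitions of $\mathcal{U}_{\lambda}^{\pm}$ from (\ref{sheafcohomologies}) in the self-conjugate setting, and then apply orbifold Serre duality.

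First I would handle the $\mathcal{U}^{+}$ case, which is immediate. By definition, for $\lambda = \delta_{0} + n \in \delta_{0}^{+} + \mathbb{Z}$ we have $\mathcal{U}^{+}_{\delta_{0}+n} = H^{0}(C, \lfloor \tfrac{K_{C}}{2}\rfloor \otimes N^{n})$. Under the hypothesis $K_{C}\cong N^{-m}$ one has $\lfloor \tfrac{K_{C}}{2}\rfloor \cong N^{-\lfloor m/2\rfloor}$ (this is the defining property of $\lfloor\tfrac{K_C}{2}\rfloor$ among tensor powers of $N$, since $\mathrm{deg}\,N<0$ and the orbifold degree of $N^{-\lfloor m/2\rfloor}$ is the largest one $\leq \tfrac{1}{2}\mathrm{deg}\,K_{C}$). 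Substituting, we get $\mathcal{U}^{+}_{\delta_{0}+n}\cong H^{0}(C, N^{n-\lfloor m/2\rfloor})$, as required.

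Next I would handle the $\mathcal{U}^{-}$ case. By definition, $\mathcal{U}^{-}_{\delta_{0}+n} = H^{1}(C, \lceil \tfrac{K_{C}}{2}\rceil \otimes N^{-n})$, and $\lceil\tfrac{K_{C}}{2}\rceil \cong N^{-\lceil m/2\rceil}$ by the analogous characterisation. Substituting gives $\mathcal{U}^{-}_{\delta_{0}+n}\cong H^{1}(C, N^{-n-\lceil m/2\rceil})$. Now I apply Serre duality on the closed oriented orbifold Riemann surface $C$, which yields
\[
H^{1}(C, N^{-n-\lceil m/2\rceil}) \;\cong\; H^{0}\bigl(C, K_{C}\otimes N^{\,n+\lceil m/2\rceil}\bigr)^{\vee}.
\]
Using $K_{C}\cong N^{-m}$ and the identity $\lceil m/2\rceil - m = -\lfloor m/2\rfloor$, the right-hand side becomes $H^{0}(C, N^{\,n-\lfloor m/2\rfloor})^{\vee}$. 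Since this is a finite-dimensional complex vector space, it is (non-canonically) isomorphic to $H^{0}(C, N^{n-\lfloor m/2\rfloor})$, completing the proof.

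There is no real obstacle here beyond invoking the correct form of Serre duality for orbifold line bundles on $C$, which in this setting is a standard consequence of the Hodge theorem on Kähler orbifolds (alternatively, one may avoid Serre duality entirely by noting that both sides are the spaces of harmonic spinors cut out by the conditions $-i\nabla^{B}_{\zeta}\beta = (\delta_{0}+n)\beta$ and $\overline{\partial}_{B}^{\ast}\beta = 0$ versus $i\nabla^{B}_{\zeta}\alpha = (\delta_{0}+n)\alpha$ and $\overline{\partial}_{B}\alpha=0$, and then using the pointwise $C$-antilinear identification $\beta\mapsto\overline{\beta}$ between sections of the two trivial bundles in the self-conjugate case).
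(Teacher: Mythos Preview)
Your proof is correct and follows essentially the same route as the paper: the $\mathcal{U}^{+}$ case is immediate from the definitions, and the $\mathcal{U}^{-}$ case is handled by Serre duality together with the identity $\lceil m/2\rceil - m = -\lfloor m/2\rfloor$. The paper's proof is just a terser version of yours (it suppresses the dual in the Serre duality isomorphism, since only a vector-space isomorphism is claimed).
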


\begin{proof}
The result for $\mathcal{U}_{\delta_0 
+ n }^{+}$ is clear. Using Serre duality we have 
\[
\mathcal{U}^{-}_{\delta_0 + n} = H^{1}( C , N^{- \lceil m /2 \rceil - n} ) \cong H^{0} (C , N^{n + \lceil m /2 \rceil - m} ) = H^{0} (C , N^{n - \lfloor m /2 \rfloor }).\] 
\end{proof}



Now fix some lift of the Seifert $S^1$-action to the spin-c frame bundle, i.e. $ E \cong \pi^\ast E_0$. By the following result, as long as the orbifold Euler characteristic $\chi (C)$ is non-zero, there exists a special eigenvalue on whose projectivised eigenspace the Seifert $S^1$-action is non-trivial:

\begin{corollary}
Suppose that the canonical spin-c structure on $Y$ is self-conjugate. If $\lambda =  \frac{1}{2} \frac{\chi (C)}{\mathrm{deg}N}$, then $\mathcal{U}_{\lambda} $ is a complex $2$-dimensional $S^1$-representation of weight 
 \[(c- \frac{1}{2}\frac{\chi(C)}{\mathrm{deg}N} , c + \frac{1}{2}\frac{\chi(C)}{\mathrm{deg}N} ),\]
 where $c \in \mathbb{R}$ is as in Proposition \ref{proposition:weights}. In particular, the $S^1$-action on $\mathbb{P}(\mathcal{U}_\lambda ) \approx \mathbb{P}^1$ induced by the Seifert $S^1$-action on $Y$ has degree $ \frac{\chi(C)}{\mathrm{deg}N}$.
\end{corollary}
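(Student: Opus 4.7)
The plan is to read off everything directly from Lemma \ref{proposition:selfconj} and Proposition \ref{proposition:weights}, once we identify how $\lambda = \frac{1}{2}\frac{\chi(C)}{\mathrm{deg}N}$ relates to the integer $m = \chi(C)/\mathrm{deg}N$ appearing in the formula $K_C \cong N^{-m}$. Indeed, since $\mathrm{deg}K_C = -\chi(C)$, one has $m = \chi(C)/\mathrm{deg}N$, so $\lambda = m/2$.

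Next, I would split into the two parity cases for $m$ and write $\lambda = \delta_0 + n$. If $m$ is even, then $\delta_0 = 0$ and $n = m/2 = \lfloor m/2\rfloor$, so $n - \lfloor m/2\rfloor = 0$. If $m$ is odd, then $\delta_0 = 1/2$ and $n = (m-1)/2 = \lfloor m/2\rfloor$, so again $n - \lfloor m/2\rfloor = 0$. In both cases Lemma \ref{proposition:selfconj} gives
\[
\mathcal{U}_\lambda^{\pm} \;\cong\; H^0(C, N^{\,0}) \;\cong\; H^0(C, \mathcal{O}_C) \;\cong\; \mathbb{C},
\]
so that $\mathcal{U}_\lambda = \mathcal{U}_\lambda^{+} \oplus \mathcal{U}_\lambda^{-}$ is two-dimensional.

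The weight assertion is then immediate from Proposition \ref{proposition:weights}: the subspace $\mathcal{U}_\lambda^{\pm}$ is an $S^1$-representation of weight $c \mp \lambda$, i.e.\ of weight $c \mp \frac{1}{2}\frac{\chi(C)}{\mathrm{deg}N}$, where $c$ is the constant from that proposition. Finally, for any two-dimensional complex $S^1$-representation with weights $(a,b)$, the induced $S^1$-action on the projectivisation $\mathbb{P}^1$ has degree $|a - b|$ (the fixed points being the two coordinate axes, with local weights $\pm(a-b)$). Applying this with $(a,b) = (c-\lambda, c+\lambda)$ gives degree $2\lambda = \frac{\chi(C)}{\mathrm{deg}N}$, completing the corollary. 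There is no real obstacle here; the only thing to be careful about is the uniform verification that $n - \lfloor m/2\rfloor = 0$ regardless of the parity of $m$, which is what makes the ``special'' eigenvalue $\lambda = m/2$ give a $2$-dimensional eigenspace rather than a bigger one.
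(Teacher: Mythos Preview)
Your proof is correct and follows essentially the same approach as the paper: the paper's own proof simply records the identity $\delta_0 + \lfloor m/2 \rfloor = \frac{1}{2}\frac{\chi(C)}{\mathrm{deg}N} = \lambda$ and then invokes Lemma \ref{proposition:selfconj} and Proposition \ref{proposition:weights}, which is exactly what you do (with the parity case-split making that identity explicit). Your additional remarks on why $H^0(C,\mathcal{O}_C)\cong\mathbb{C}$ and how the weights on $\mathbb{P}^1$ determine the degree are correct elaborations of steps the paper leaves implicit.
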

\begin{proof}
Noting the identity
\[
\delta_0 + \lfloor m/2 \rfloor =  \frac{1}{2} \frac{\chi (C)}{\mathrm{deg}N} = \lambda  \quad ,
\]
the assertion just follows from Lemma \ref{proposition:selfconj} and Proposition \ref{proposition:weights}.

\end{proof}


When $\delta_0 = 1/2$ then $\mathcal{U}_0 = 0$. The following result pertains the case $\delta_0 = 0$,

\begin{lemma}\label{lem: vanishing homology 1} 
Suppose that the canonical spin-c structure on $Y$ is self-conjugate, with $K_C \cong N^{-m}$. In addition, suppose that $m$ is even and $C$ has genus $0$. Then $H^{0}(C,N^{-\frac{m}{2}})=0$.
\end{lemma}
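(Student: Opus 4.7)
The plan is to exploit the squaring map on global sections. Since $m$ is even, $N^{-m/2}$ is a well-defined orbifold holomorphic line bundle on $C$, and tensor-squaring defines a map
\[
\sigma : H^0(C, N^{-m/2}) \longrightarrow H^0(C, N^{-m}) = H^0(C, K_C), \quad s \longmapsto s \otimes s.
\]
The map $\sigma$ reflects zero vectors: if $s \in H^0(C, N^{-m/2})$ is not identically zero, then it is non-vanishing at some smooth point $p \in C$, and hence $(s \otimes s)(p) = s(p) \otimes s(p) \neq 0$ in the fiber $(N^{-m})_p$. It therefore suffices to show $H^0(C, K_C) = 0$.

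To establish this vanishing, I would argue that global orbifold holomorphic $1$-forms on $C$ correspond to honest holomorphic $1$-forms on $|C|$. Concretely, near an orbifold point $x_i$ with orbifold coordinate $w$ and isotropy $\mathbb{Z}_{\alpha_i}$, the $\mathbb{Z}_{\alpha_i}$-equivariance of a section of $K_C$ forces it to take the form $w^{\alpha_i - 1} g(w^{\alpha_i})\, dw$ for some holomorphic germ $g$. In the $|C|$-coordinate $v = w^{\alpha_i}$, the chain rule gives $w^{\alpha_i - 1} dw = \tfrac{1}{\alpha_i}\, dv$, so the form becomes $\tfrac{1}{\alpha_i} g(v)\, dv$ — a genuine holomorphic $1$-form on $|C|$ near $x_i$. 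Patching over $C$ yields a natural isomorphism $H^0(C, K_C) \cong H^0(|C|, K_{|C|})$, and the right-hand side vanishes because $|C| \cong \mathbb{P}^1$ has genus zero.

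Combining both steps, any $s \in H^0(C, N^{-m/2})$ must satisfy $s \otimes s = 0$ and therefore $s = 0$. I do not foresee any serious obstacle: the proof is a short two-step reduction, and the only non-trivial point is the elementary local computation identifying invariant orbifold $1$-forms with ordinary holomorphic $1$-forms on the coarse moduli space.
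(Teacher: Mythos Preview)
Your proof is correct but takes a genuinely different route from the paper's. The paper works directly with the Seifert data of $N^{-m/2}$: writing its data as $(e;\frac{\beta_1}{\alpha_1},\ldots,\frac{\beta_n}{\alpha_n})$ and comparing with the known Seifert data $(-2;\frac{\alpha_1-1}{\alpha_1},\ldots,\frac{\alpha_n-1}{\alpha_n})$ of $K_C = (N^{-m/2})^{\otimes 2}$, one obtains $-2 = 2e + \sum_i \lfloor 2\beta_i/\alpha_i\rfloor$, which forces the background degree $e$ to be negative; the vanishing of $H^0$ then follows from the general correspondence between orbifold sections and sections of the desingularisation on $|C|\cong\mathbb{P}^1$.

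Your argument bypasses this Seifert-data computation entirely by composing with the (nonlinear) squaring map into $H^0(C,K_C)$ and then invoking the identification of orbifold holomorphic $1$-forms with ordinary holomorphic $1$-forms on $|C|$. This is arguably more conceptual --- it uses only that $K_C$ is the canonical bundle, whose sections are governed by the genus of $|C|$ --- whereas the paper's approach yields the slightly sharper piece of information that the background degree of $N^{-m/2}$ itself is already negative. Both arguments ultimately rest on the same principle (orbifold sections $\leftrightarrow$ sections of the desingularisation), but you apply it to $K_C$ after squaring, while the paper applies it to $N^{-m/2}$ directly.
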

\begin{proof}
 Denote the Seifert data of $N^{-\frac{m}{2}}$ by $(e,\frac{\beta_{1}}{\alpha_{1}},\cdots, \frac{\beta_{n}}{\alpha_{n}})$ with $e\in \mathbb{Z}$ and $0\leq \beta_{i}<\alpha_{i}$. Note that $N^{-\frac{m}{2}}$ is a square root of $K_{C}$. Since the Seifert data of $K_{C}$ is $(-2,\frac{\alpha_{1}-1}{\alpha_{1}},\cdots,\frac{\alpha_{n}-1}{\alpha_{n}})$ then we have 
\[-2=2e+\sum^{2}_{i=1}\lfloor \frac{2\beta_{i}}{\alpha_{i}}\rfloor.\] 
In particular, we have $e<0$. This implies $H^{0}(C,N^{-\frac{m}{2}})=0$, since the holomorphic sections of the orbifold bundle $N^{-m/2}$ are in 1-1 correspondence with the holomorphic sections of its desingularisation.
\end{proof}
Combining Lemma \ref{proposition:selfconj} and Lemma \ref{lem: vanishing homology 1}, we obtain the following
\begin{corollary}\label{cor: Dirac no kernel}
Suppose that the canonical spin-c structure on $Y$ is self-conjugate and $C$ has genus $0$. Then $\mathcal{U}_{0}=0$. In particular, the blown-up reducible critical manifolds of $Y$ in the canonical spin-c structure are Morse--Bott.    
\end{corollary}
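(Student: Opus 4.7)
The plan is to split into the two parity cases for the integer $m$ defined by $K_C \cong N^{-m}$. When $m$ is odd, Remark \ref{remark:cases} gives $\delta_0 = 1/2$, so $0 \notin \delta_0 + \mathbb{Z}$ and $\mathcal{U}_0 = 0$ follows directly from the definition of $\mathcal{U}^{\pm}_\lambda$ in (\ref{sheafcohomologies}). When $m$ is even we have $\delta_0 = 0$, and Lemma \ref{proposition:selfconj} specialised to $n = 0$ identifies both $\mathcal{U}_0^{+}$ and $\mathcal{U}_0^{-}$ with $H^0(C, N^{-m/2})$; Lemma \ref{lem: vanishing homology 1} then delivers the vanishing. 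This establishes the first assertion.

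For the Morse--Bott consequence I would invoke the characterization recalled after Proposition \ref{proposition:MOYreducible}: since $C$ has genus zero the reducible locus modulo gauge is the single point $\{(B,0)\}$, and each projectivised eigenspace $\mathbb{P}(V_\lambda)$ in the blow-up is Morse--Bott precisely when $\{(B,0)\}$ is a non-degenerate critical point of the Chern--Simons--Dirac functional. On a reducible the Hessian decouples into connection and spinor directions; since the Jacobian torus vanishes in genus zero, the connection direction contributes nothing and non-degeneracy reduces to $\ker D_B = V_0 = 0$. Applying Theorem \ref{theorem:eigenspaces} in the small-area regime then gives $V_0 = \mathcal{U}_0 = 0$, from which the Morse--Bott conclusion follows.

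For a metric-independent version of the second step one could instead verify the criterion in Proposition \ref{proposition:MOYreducible} directly: condition (a) there forces $E_0 = N^{-m/2}$ and hence $m$ even, and when $m$ is even the cohomology in condition (b) vanishes by Lemma \ref{lem: vanishing homology 1} together with Serre duality (using $K_C \cong N^{-m}$ to identify $H^1(C, N^{-m/2}) \cong H^0(C, N^{-m/2})^{\ast} = 0$). I do not foresee any genuine obstacle here; the argument is essentially parity bookkeeping combined with the two lemmas immediately preceding the corollary.
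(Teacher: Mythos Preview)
Your proposal is correct and follows essentially the same approach as the paper, which simply says the corollary is obtained by ``combining Lemma \ref{proposition:selfconj} and Lemma \ref{lem: vanishing homology 1}'' (together with the observation just before Lemma \ref{lem: vanishing homology 1} that $\mathcal{U}_0 = 0$ when $\delta_0 = 1/2$). Your treatment of the Morse--Bott consequence via the characterization after Proposition \ref{proposition:MOYreducible} and Theorem \ref{theorem:eigenspaces} is also how the paper implicitly uses the corollary in \S\ref{section: proof main}, and your alternative metric-independent route through Proposition \ref{proposition:MOYreducible} directly is a nice observation but not needed.
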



\section{Calculations for rational homology spheres} \label{section:calculations}

Throughout this section $Y = S(N)$ is a Seifert-fibered $3$-manifold over the orbifold Riemann surface $C$, and $\mathrm{deg}N < 0$. In addition, we will also assume that $C$ has \textit{genus zero}, which is equivalent to the condition that $Y = S(N)$ is a \textit{rational homology sphere}. In this situation the calculation of effective divisors on the ruled surface is much more elementary than in the general case, and we will use it to describe the topology of the moduli spaces of flows, and conclude by describing a few examples.

\subsection{Effective divisors on the ruled surface}

The Picard groups (both the holomorphic and the topological) of the ruled surface $R = \mathbb{P}(\mathcal{O}_C \oplus N )$ are given in terms of those of the curve $C$ by (\ref{pict}-\ref{pic}). 
Since $C$ has genus zero, then $\mathrm{Pic}(C) = \mathrm{Pic}^t (C)$ and thus $\mathrm{Pic}(R) = \mathrm{Pic}^t (R)$, i.e. any topological orbifold line bundle over $R$ has a unique holomorphic structure up to isomorphism. Hence, we will not make further mention to the holomorphic structure on a line bundle over $C$ or $R$.

With this in place, we now move on to describing the holomorphic sections of line bundles over the ruled surface $R$. Recall from (\ref{pic}) that any orbifold line bundle on $R$ is of the form $\Pi^\ast E_0 \otimes \mathcal{O}_{R}(n)$ for unique $E_0 \in \mathrm{Pic}(C)$ and $n \in \mathbb{Z}$, where $\Pi : R \rightarrow C$ stands for the bundle projection. 

\begin{lemma} There is a natural isomorphism of orbifold sheaves over $C$,  
\[
\Pi_\ast \big( \Pi^\ast E_0 \otimes \mathcal{O}_R (n ) \big) \cong E_0 \otimes \mathrm{Sym}^n (\mathcal{O}_C \oplus N )^{\ast} = \bigoplus_{j = 0}^{n} E_0 \otimes N^{-j} .
\]
In particular,
\begin{align}
H^0 (R ,  \Pi^\ast E_0 \otimes \mathcal{O}_R (n) ) \cong \bigoplus_{j = 0}^n H^0 ( C, E_0 \otimes N^{-j} ) \label{equation:sectiondecomposition}\\
H^1 (R ,  \Pi^\ast E_0 \otimes \mathcal{O}_R (n) ) \cong \bigoplus_{j = 0}^n H^1 ( C, E_0 \otimes N^{-j} ) . \label{equation:H1}\end{align}
\end{lemma}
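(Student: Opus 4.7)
The plan is to prove this via the orbifold analogue of the standard projective bundle formula combined with the Leray spectral sequence for $\Pi\colon R \to C$. First, I would compute $\Pi_\ast \mathcal{O}_R(n)$ for $n \geq 0$ directly from the definition of $\mathcal{O}_R(-1)$ recalled in \S \ref{subsubsection:ruled} as the tautological sub-line-bundle of $\Pi^\ast(\mathcal{O}_C \oplus N)$. Fiberwise over $x \in C$, global sections of $\mathcal{O}_R(n)\big|_{\mathbb{P}(V_x)}$ are canonically identified with degree-$n$ homogeneous polynomial functions on $V_x = (\mathcal{O}_C \oplus N)_x$, giving the canonical isomorphism $H^0(\mathbb{P}(V_x), \mathcal{O}(n)) \cong \mathrm{Sym}^n V_x^\ast$. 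Globalising yields a canonical isomorphism of orbifold sheaves on $C$
\[
\Pi_\ast \mathcal{O}_R(n) \;\cong\; \mathrm{Sym}^n(\mathcal{O}_C \oplus N)^\ast \;\cong\; \bigoplus_{j=0}^n N^{-j},
\]
where the last identification uses the splitting of the symmetric power of the rank-two bundle $\mathcal{O}_C \oplus N$. Applying the projection formula to $\Pi^\ast E_0 \otimes \mathcal{O}_R(n)$ then gives the first assertion of the lemma.

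For the cohomology statement, I would invoke the Leray spectral sequence for $\Pi$. The higher direct image $R^1 \Pi_\ast \mathcal{O}_R(n)$ vanishes for $n \geq 0$ by Grauert/cohomology-and-base-change, since fiberwise $H^1(\mathbb{P}^1, \mathcal{O}(n)) = 0$ for $n \geq -1$; and $R^i \Pi_\ast = 0$ for $i \geq 2$ because the fibers are one-dimensional. Hence the Leray spectral sequence degenerates, giving
\[
H^i\bigl(R, \Pi^\ast E_0 \otimes \mathcal{O}_R(n)\bigr) \;\cong\; H^i\bigl(C, \Pi_\ast(\Pi^\ast E_0 \otimes \mathcal{O}_R(n))\bigr), \qquad i = 0, 1,
\]
which combined with the pushforward computation above yields (\ref{equation:sectiondecomposition}) and (\ref{equation:H1}).

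The only mildly subtle point, and what I expect to be the main technical obstacle, is to justify that all these classical projective-bundle computations carry over verbatim to the orbifold setting. I would handle this by arguing locally on orbifold charts of $C$: above an orbifold chart $\tilde{U}/\Gamma$ with $\Gamma \subset S^1$ acting on $N|_{\tilde U}$ with some weight, the restriction of $R$ is the $\Gamma$-quotient of the genuine $\mathbb{P}^1$-bundle $\tilde{U} \times \mathbb{P}(\mathbb{C} \oplus N_0)$ over $\tilde{U}$, where $N_0$ is the fiber of $N$. The identifications $H^0(\mathbb{P}^1, \mathcal{O}(n)) = \mathrm{Sym}^n(\mathbb{C} \oplus N_0)^\ast$ and $H^1(\mathbb{P}^1, \mathcal{O}(n)) = 0$ are canonical, hence $\Gamma$-equivariant, and so descend to orbifold sheaves on $\tilde{U}/\Gamma$. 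Patching these local computations together—together with the observation that the splitting $\mathrm{Sym}^n(\mathcal{O}_C \oplus N)^\ast \cong \bigoplus_{j=0}^n N^{-j}$ is a splitting of orbifold line bundles—delivers both the global pushforward identification and the vanishing of $R^1\Pi_\ast$, after which the Leray argument proceeds formally on the orbifold $C$ just as on a smooth curve.
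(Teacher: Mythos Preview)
Your proposal is correct and follows essentially the same approach as the paper: compute the pushforward via the projective-bundle formula (the paper cites Hartshorne, Proposition 7.11), then use the Leray spectral sequence together with the vanishing of $R^1\Pi_\ast$ (established via Grauert and the fiberwise vanishing of $H^1(\mathbb{P}^1,\mathcal{O}(n))$) to deduce the cohomology identifications. Your treatment is in fact slightly more careful than the paper's in two respects: you explicitly invoke the projection formula, and you state the correct hypothesis $n \geq -1$ for the fiberwise $H^1$ vanishing (the paper's proof asserts this for ``any line bundle $L_0$'', which is of course false in general but harmless here since $L$ restricts to $\mathcal{O}(n)$ with $n \geq 0$ on each fiber).
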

\begin{proof}
The first assertion is standard (e.g. Proposition 7.11 in \cite{hartshorne}). It is a generalization of the familiar fact that
\[H^0 (\mathbb{C}P^m , \mathcal{O}_{\mathbb{C}P^m }(n) ) \cong \mathrm{Sym}^n (\mathbb{C}^{m+1})^\ast .\]
The formula (\ref{equation:sectiondecomposition}) follows immediately from the first assertion. 

We now establish (\ref{equation:H1}). Let us denote $L = \Pi^\ast E_0 \otimes \mathcal{O}_{R}(n) $. First, we claim that the orbifold sheaf $R^{1}\Pi_\ast L  $ (i.e. the $1$st derived functor associated to the functor $\Pi_\ast$, acting on $L$) is zero. For this, note that the $H^1$ of the restriction of the sheaf $L$ to any fiber of $\Pi : R \rightarrow C$ vanishes, because the fibers of $\Pi$ are $\mathbb{P}^1$ (in the case of the fiber over an orbifold point of $C$, we consider instead the $\mathbb{P}^1$ fiber on the lift of an orbifold chart) and $H^{1}( \mathbb{P}^1 , L_0) = 0 $ for any line bundle $L_0$. Thus, by Grauert's Semicontinuity Theorem we have $R^{1} \Pi_\ast L = 0$. With this in place, the Leray spectral sequence
\[
E^{p,q}_{2} = H^p (C , R^q  \Pi_\ast L ) \implies H^{p+q}(R , L )
\]
together with $R^1 \Pi_\ast  L = 0$ (and $R^0 \Pi_\ast = \Pi_\ast$ by definition) show that $H^{1}(C , \Pi_\ast L ) \cong H^{1} (R , L )$ as required.
\end{proof}

When $n \geq 1$ the line bundle $\mathcal{O}_R (n)$ has a canonical holomorphic section, which we denote $z^n$, that vanishes precisely over the section at infinity $C_+$ with multiplicity $n$. More generally, a non-trivial holomorphic section of the bundle $\Pi^\ast E_0 \otimes \mathcal{O}_{R}(n)$ over the ruled surface $R$ coming from the subspace $H^0 (C , E_0 \otimes N^{-j} )$ from the decomposition (\ref{equation:sectiondecomposition}) (with $j  \in \{0 , \ldots , n\}$) vanishes over $C_-$ with multiplicity $j$, and over $C_+$ with multiplicity $n-j$.

\begin{proposition}\label{proposition:divisorsgenus0}
Suppose that the genus of $C$ is zero. Then the space $\mathcal{D}(R , L )$ of effective divisors on $R$ in the line bundle $L = \Pi^\ast E_0 \otimes \mathcal{O}_R (n)$ which do not contain $C_{-}$ (resp. $C_+$) 
is homeomorphic to the open subset of the projective space $\mathbb{P}\big( \oplus_{j = 0}^n H^0 (C, E_0 \otimes N^{-j} ) \big) $ given by those $s_0 :  s_1 :  \ldots :  s_n $ with $s_0 \neq 0$ (resp. $s_n \neq 0$). \end{proposition}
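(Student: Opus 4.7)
The plan is to reduce the moduli space of effective divisors to a projective space of holomorphic sections, using the genus-zero assumption, and then to read off the non-containment conditions from the restriction of sections to $C_\pm$.

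First, since $C$ has genus zero we have $\mathrm{Pic}(C) = \mathrm{Pic}^t(C)$, and by the isomorphism (\ref{pic}) the same holds for $R$. In particular, any two holomorphic structures on the orbifold line bundle $L$ are equivalent under the complex gauge group $\mathcal{G}_{\mathbb{C}}(R) = \mathrm{Map}(R, \mathbb{C}^\ast)$. Fix a reference holomorphic structure $\overline{\partial}_0$ on $L$. Then every pair $(\overline{\partial}, s )$ is complex-gauge equivalent to one with $\overline{\partial} = \overline{\partial}_0$, and the residual stabiliser of $\overline{\partial}_0$ in $\mathcal{G}_{\mathbb{C}}(R)$ consists of holomorphic maps $R \to \mathbb{C}^\ast$, which are constant since $R$ is a compact connected orbifold. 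Hence
\[
\mathcal{D}(R,L) \;\cong\; \bigl( H^0(R,L) \setminus 0 \bigr)/\mathbb{C}^\ast \;=\; \mathbb{P}\bigl( H^0(R,L) \bigr).
\]
(That the quotient topology and the projective-space topology agree follows because $H^0(R,L)$ is finite dimensional and the quotient map is the standard projectivisation.)

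Next, the decomposition (\ref{equation:sectiondecomposition}) yields a $\mathbb{C}$-linear isomorphism
\[
H^0(R,L) \;\cong\; \bigoplus_{j=0}^n H^0(C, E_0 \otimes N^{-j})
\]
so that $\mathcal{D}(R,L)$ is homeomorphic to $\mathbb{P}\bigl(\bigoplus_{j=0}^n H^0(C, E_0 \otimes N^{-j})\bigr)$ with homogeneous coordinates $[s_0 : s_1 : \cdots : s_n]$.

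Finally, I will read off the two non-containment conditions by restricting a global section to $C_\pm$. As recalled in the excerpt, the summand $s_j$, viewed as a section of $L$ on $R$, vanishes along $C_-$ to order exactly $j$ and along $C_+$ to order exactly $n-j$. Restricting a general section $s = \sum_{j=0}^n s_j$ to $C_-$, the terms with $j \geq 1$ vanish identically on $C_-$, so $s|_{C_-}$ equals $s_0$ under the natural identification $L|_{C_-} \cong E_0$ (coming from $\mathcal{O}_R(n)|_{C_-} \cong \mathcal{O}_C$ via (\ref{formula:restrictions})). Thus the effective divisor determined by $s$ contains $C_-$ as a component if and only if $s_0 = 0$. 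Symmetrically, using $\mathcal{O}_R(n)|_{C_+} \cong N^{-n}$, the restriction $s|_{C_+}$ corresponds to $s_n \in H^0(C, E_0 \otimes N^{-n})$, so the divisor contains $C_+$ if and only if $s_n = 0$. This identifies the two open subsets in the statement.

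The only real subtlety is verifying that the topologies match (i.e.\ that the homeomorphism $\mathcal{D}(R,L) \cong \mathbb{P}(H^0(R,L))$ respects the $C^\infty$ quotient topology on the left and the standard complex-analytic topology on the right); however, since everything in sight is finite-dimensional and the slice $\{\overline{\partial}_0\} \times (H^0(R,L)\setminus 0)$ intersects each $\mathcal{G}_{\mathbb{C}}(R)$-orbit transversely along a single $\mathbb{C}^\ast$-orbit, this is a standard verification rather than the main content.
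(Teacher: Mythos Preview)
Your proof is correct and follows essentially the same approach as the paper: the paper's argument is more terse, but the core content---identifying $\mathcal{D}(R,L)$ with $\mathbb{P}(H^0(R,L))$ via the genus-zero uniqueness of holomorphic structures, then reading off the $C_\pm$ non-containment conditions from the vanishing orders of the summands $s_j$ along $C_\pm$---is identical. You have simply made explicit the identification step that the paper leaves implicit.
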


\begin{proof}
Let us describe the space of effective divisors not containing $C_-$ (the case of $C_+$ is identical). Consider a non-trivial holomorphic section $s$ of $\pi^\ast E_0 \otimes \mathcal{O}_R (n )$, which we can write as $s = s_0 + \ldots + s_n$ where $s_j$ comes from $H^0 (C, E_0 \otimes N^{-j} )$. If $s_0$ vanishes identically, then the effective divisor $D = \{ s = 0\} \subset R$ contains $C_-$. If $s_0$ is not identically vanishing, then $s$ doesn't vanish identically on $C_-$ (because $s_1 , \ldots , s_n$ all vanish identically over $C_-$ but $s_0$ does not) -- i.e. $D$ doesn't contain $C_-$. The result follows.
\end{proof}


A similar description applies to the moduli space $\mathcal{D}^{o}(R, L)$ of based divisors, which is given by $\big( \oplus_{j = 0}^{n} H^{0}(C , E_0 \otimes N^{-j} )\big) \setminus 0 $. Of course, the residual $\mathbb{C}^\ast$-action is the standard one that yields the projective space $\mathbb{P}\big( \oplus_{j = 0}^n H^0 (C, E_0 \otimes N^{-j} ) \big) $ after quotienting. 

There is also an additional free $\mathbb{C}^\ast$-action on the open locus in the space of effective divisors $\mathcal{D}(R, L )$ (resp. based divisors $\mathcal{D}^{o}(R,L)$) which do not contain $C_\pm$ which extends the \textit{translation} $\mathbb{R}$-action on the moduli space of parametrised flows (resp. based parametrised flows). We will thus refer to it as the translation $\mathbb{C}^\ast$-action, and it can identified as follows. The ruled surface $R = \mathbb{P}(\mathbb{C} \oplus N )$ has the $\mathbb{C}^\ast$-action given by the trivial action on $\mathbb{C}$ and the standard (weight $1$) action on $N$. This $\mathbb{C}^\ast$-action on $R$ has fixed locus $C_\pm$, with $C_-$ (resp. $C_+$) being repelling (resp. attracting). It induces a natural action on the space of effective divisors $D \subset R $ in $L$ not containing $C_\pm$. In terms of the decomposition (\ref{equation:sectiondecomposition}) of a holomorphic section $s$ of $L = \Pi^\ast E_0 \otimes \mathcal{O}_R (n)$ as $ s = s_0 + \cdots s_n $, the translation $\mathbb{C}^\ast$ action is given by 
\[
\lambda \cdot s = s_0 + \lambda s_1 + \cdots \lambda^n s_n \quad , \quad \lambda \in \mathbb{C}^\ast .
\] 

At this point, Proposition \ref{proposition:divisorsgenus0}, Proposition \ref{proposition:MOYflowsirreducibles}, Proposition \ref{proposition:MOYflowsreducible} and Proposition \ref{proposition:MOYflowsblowup} can used to determine the topology of the moduli spaces of flows (either parametrised or unparametrised or based) between critical manifolds. We record the following:

\begin{corollary}\label{cor:topologymoduli}
Suppose that the genus of $C$ is zero. Let $E$ be a line bundle on $Y$, and $E_0, E_1$ be orbifold line bundles on $C$ pulling back to $E$. Then
\begin{itemize}
\item Let $k = \frac{ \mathrm{deg}E_0 - \mathrm{deg}E_1}{\mathrm{deg}N}$. The moduli space $M (\mathfrak{C}^{+} (E_0 ) , \mathfrak{C}^{+}(E_1 )$ of parametrised flows between irreducible critical manifolds is homeomorphic to the open subset of the projective space $\mathbb{P} \big( \bigoplus_{j = 0}^{k} H^{0}(C , E_0 \otimes N^{-j} )  \big) $ given by the complement of the projective subspaces $\mathbb{P} \big( \bigoplus_{j = 1}^{k} H^0 (C , E_0 \otimes N^{-j}) \big)$ and $\mathbb{P} \big( \bigoplus_{j = 0}^{k-1} H^0 (C , E_0 \otimes N^{-j} ) \big)$
\item Let $k = \frac{ \mathrm{deg}E_0 - \mathrm{deg} \lfloor K_C /2 \rfloor }{\mathrm{deg}N}$. The moduli space $\breve{M} (\mathfrak{C}^{+}(E_0 ) , \mathfrak{C}^{\redu}(E ) )$ of unparametrised flows into the reducible critical manifold is homeomorphic to the open subset of the projective space $\mathbb{P} \big( \bigoplus_{j = 0}^{k} H^{0}(C , E_0 \otimes N^{-j} )  \big) $ given by the complement of the projective subspace $\mathbb{P} \big( \bigoplus_{j = 1}^{k} H^0 (C , E_0 \otimes N^{-j} ) \big)$

\item Let $k$ be the same as in the previous item, and $n \geq 0$ a non-negative integer. The moduli space $\breve{M} ( \mathfrak{C}^{+}(E_0 ) , \mathbb{P}(V_{\delta_{0}^{+} + n} ) )$ of blown-up unparametrised flows into the eigenspace $\mathbb{P} (V_{\delta_{0}^{+} + n})$ of the Dirac operator at the reducible monopole in $E$ is homeomorphic to the open subset of the projective space $\mathbb{P} \big( \bigoplus_{j = 0}^{k-n} H^{0} (C , E_0 \otimes N^{-j} ) \big) $ given by the complement of the projective subspaces $\mathbb{P}\big(  \bigoplus_{j = 1}^{k-n} H^{0}(C , E_0 \otimes N^{-j}) \big)$ and $\mathbb{P}\big( \bigoplus_{j = 0}^{k-n-1} H^{0} (C , E_0 \otimes N^{-j} ) \big)$.
\end{itemize}
\end{corollary}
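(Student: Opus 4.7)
The plan is to combine the genus-zero description of effective divisors from Proposition \ref{proposition:divisorsgenus0} with the divisor identifications of flow moduli spaces in Propositions \ref{proposition:MOYflowsirreducibles}, \ref{proposition:MOYflowsreducible} and \ref{proposition:MOYflowsblowup}. The three assertions all take place in $\mathcal{D}(R, \widehat{E})$ (or its based version $\mathcal{D}^o(R, \widehat{E})$) for $\widehat{E} = \Pi^* E_0 \otimes \mathcal{O}_R(k)$, with $k$ dictated by the degrees as in each statement. Since $C$ has genus zero, the task reduces to describing the relevant open subsets explicitly as complements of linear projective subspaces.

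First I would establish a ``vanishing dictionary'' under the isomorphism
\[
H^0\bigl(R, \Pi^* E_0 \otimes \mathcal{O}_R(k)\bigr) \;\cong\; \bigoplus_{j=0}^{k} H^0(C, E_0 \otimes N^{-j})
\]
from (\ref{equation:sectiondecomposition}). Writing a holomorphic section as $s = s_0 + \cdots + s_k$ with $s_j \in H^0(C, E_0 \otimes N^{-j})$, a direct check in fiber coordinates on $R = \mathbb{P}(\mathcal{O}_C \oplus N)$ shows that each summand $s_j$ vanishes along $C_-$ to order $j$ and along $C_+$ to order $k-j$. Consequently, the effective divisor $\{s=0\} \subset R$ contains $C_-$ iff $s_0 = 0$, contains $C_+$ iff $s_k = 0$, and contains $C_+$ to multiplicity $\geq n$ iff $s$ lies in the linear subspace $\bigoplus_{j=0}^{k-n} H^0(C, E_0 \otimes N^{-j})$.

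Second, I would apply Proposition \ref{proposition:divisorsgenus0} to identify $\mathcal{D}(R, \widehat{E})$ with $\mathbb{P}\bigl(\bigoplus_{j=0}^k H^0(C, E_0 \otimes N^{-j})\bigr)$ and read off each case. For (1), Proposition \ref{proposition:MOYflowsirreducibles} identifies $M(\mathfrak{C}^+(E_0), \mathfrak{C}^+(E_1))$ with the subset of $\mathcal{D}(R, \widehat{E})$ not containing either section (since both endpoints of the flow are irreducible); the dictionary then yields the complement of the two projective subspaces $\{s_0=0\}$ and $\{s_k=0\}$. For (2), Proposition \ref{proposition:MOYflowsreducible} identifies the based parametrised moduli space $M^o(\mathfrak{C}^+(E_0), \mathfrak{C}^{\redu}(E))$ with the subset of $\mathcal{D}^o(R, \widehat{E})$ not containing $C_-$; the unparametrised moduli space $\breve{M}$ is the quotient by the combined $\mathbb{R}$-translation and $S^1$-framing actions, whose complexification matches the residual $\mathbb{C}^*$-action on $\mathcal{D}^o$ with quotient $\mathcal{D}$ described after Proposition \ref{proposition:divisorsgenus0}. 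For (3), Proposition \ref{proposition:MOYflowsblowup} gives the locus of based divisors containing $C_+$ with multiplicity $\geq n$ and not containing $C_-$; intersecting with the complement of the multiplicity $\geq n+1$ locus isolates convergence into exactly the projectivised eigenspace $\mathbb{P}(V_{\delta_0^+ + n})$, and passing to unparametrised flows and applying the dictionary yields the double complement in the stated projective subspace.

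The main point requiring care is the identification of the $\mathbb{C}^*$-quotient in cases (2) and (3): one must verify that the $\mathbb{R}$-translation action on parametrised flows and the $S^1$-framing action on the based moduli space combine into a single complex action matching the residual $\mathbb{C}^*$-action on $\mathcal{D}^o(R, \widehat{E})$ highlighted after Proposition \ref{proposition:divisorsgenus0}. Granted this compatibility, the remainder of the proof is purely linear-algebraic bookkeeping from the vanishing dictionary, with no further analytic input required.
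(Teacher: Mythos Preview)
Your proposal is correct and follows the same route the paper takes: the corollary is stated without proof, immediately after the sentence ``At this point, Proposition \ref{proposition:divisorsgenus0}, Proposition \ref{proposition:MOYflowsirreducibles}, Proposition \ref{proposition:MOYflowsreducible} and Proposition \ref{proposition:MOYflowsblowup} can be used to determine the topology of the moduli spaces of flows\ldots'', so the intended argument is precisely the combination you outline. Your vanishing dictionary is exactly the content of Proposition \ref{proposition:divisorsgenus0} and the paragraph preceding it, and your treatment of the three cases matches the paper's setup. You are also right that the only non-bookkeeping step is the passage from based parametrised flows to unparametrised flows in items (2) and (3), i.e.\ identifying the $(S^1\times\mathbb{R})$-quotient of $M^o$ with the residual-$\mathbb{C}^*$ quotient $\mathcal{D}^o\to\mathcal{D}$; the paper leaves this implicit as well.
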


In particular, the moduli spaces of unparametrised flows between irreducibles are \textit{odd dimensional}, whereas the moduli spaces of unparametrised flows into reducibles are \textit{even dimensional}.


As for their deformation theory, we note the following:

\begin{corollary}\label{cor: Kuranishi}
Suppose that the genus of $C$ is zero. Then the moduli spaces of parametrised flows $M( \mathfrak{C}^{+}(E_0 ) , \mathfrak{C}^{+}(E_1 ) )$ 
are ``clean intersections" i.e. the Kuranishi maps \[\kappa_{sw} : H^{0}(D , \widehat{\mathcal{E}}|_D ) \rightarrow H^{1} (D , \widehat{\mathcal{E}}|_D )\] from Proposition \ref{proposition:MOYflowsirreducibles} all vanish identically. The same holds for the moduli spaces of based parametrised flows into reducibles (i.e. the Kuranishi maps from Proposition \ref{proposition:MOYflowsreducible} 
vanish identically).

\end{corollary}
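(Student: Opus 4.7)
The plan is to prove vanishing via a dimension count: I will show that the complex dimension of the Kuranishi domain $H^{0}(D, \widehat{\mathcal{E}}|_D)$ coincides with the complex dimension of the moduli space given by Corollary \ref{cor:topologymoduli}, and then use the holomorphicity of $\kappa_{sw}$ to conclude it vanishes identically.

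The key input on the sheaf side is the short exact sequence of orbifold sheaves
\[
0 \to \mathcal{O}_R \xrightarrow{\cdot \alpha} \widehat{\mathcal{E}} \to \widehat{\mathcal{E}}|_D \to 0
\]
on the ruled surface $R$, whose associated long exact sequence reads
\[
0 \to H^{0}(R, \mathcal{O}_R) \to H^{0}(R, \widehat{\mathcal{E}}) \to H^{0}(D, \widehat{\mathcal{E}}|_D) \to H^{1}(R, \mathcal{O}_R) \to \cdots.
\]
Since $C$ has genus zero, the Leray spectral sequence together with $\Pi_\ast \mathcal{O}_R = \mathcal{O}_C$ and $R^{1}\Pi_\ast \mathcal{O}_R = 0$ (exactly as in the proof of (\ref{equation:H1})) give $H^{1}(R, \mathcal{O}_R) \cong H^{1}(C, \mathcal{O}_C) = 0$ and $H^{0}(R, \mathcal{O}_R) = \mathbb{C}$. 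Combined with the decomposition (\ref{equation:sectiondecomposition}), this yields the identity
\[
\dim_{\mathbb{C}} H^{0}(D, \widehat{\mathcal{E}}|_D) = \sum_{j=0}^{k} \dim_{\mathbb{C}} H^{0}(C, E_0 \otimes N^{-j}) - 1,
\]
where $k$ is the fiberwise degree of $\widehat{E}$ over $C$.

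To finish, I invoke Corollary \ref{cor:topologymoduli}, which identifies $M(\mathfrak{C}^{+}(E_0), \mathfrak{C}^{+}(E_1))$ with an open subset of the projective space $\mathbb{P}\bigl(\bigoplus_{j=0}^{k} H^{0}(C, E_0 \otimes N^{-j})\bigr)$, hence a topological manifold of real dimension exactly $2\dim_{\mathbb{C}} H^{0}(D, \widehat{\mathcal{E}}|_D)$. The Kuranishi presentation from Proposition \ref{proposition:MOYflowsirreducibles} realises the moduli space locally near a divisor as the zero locus of a holomorphic map $\kappa_{sw} \colon H^{0}(D, \widehat{\mathcal{E}}|_D) \to H^{1}(D, \widehat{\mathcal{E}}|_D)$ with $\kappa_{sw}(0) = 0$. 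If $\kappa_{sw}$ were not identically zero, its zero locus would be a proper complex-analytic subvariety of a neighbourhood of $0$, hence of real dimension strictly less than $2\dim_{\mathbb{C}} H^{0}(D, \widehat{\mathcal{E}}|_D)$, contradicting the topological dimension count. Therefore $\kappa_{sw} \equiv 0$. The based reducible case is identical: Corollary \ref{cor:topologymoduli} identifies $M^{o}(\mathfrak{C}^{+}(E_0), \mathfrak{C}^{\redu}(Y, E))$ with an open subset of $\bigoplus_{j=0}^{k} H^{0}(C, E_0 \otimes N^{-j})$, whose complex dimension matches $\dim_{\mathbb{C}} H^{0}(D, \widehat{\mathcal{E}}|_D) + 1$, i.e.\ the dimension of the Kuranishi domain $H^{0}(D, \widehat{\mathcal{E}}|_D) \oplus \mathbb{C}$ of Proposition \ref{proposition:MOYflowsreducible}, and the same dimensional argument forces $\kappa_{sw}^{o} \equiv 0$. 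The only subtle point is that the Mrowka--Ozsváth--Yu identification is only asserted topologically, but a topological dimension bound is all that is needed to run the argument, since a proper complex-analytic subvariety drops real dimension by at least two.
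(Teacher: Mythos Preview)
Your proof is correct and follows essentially the same approach as the paper: both use the short exact sequence $0 \to \mathcal{O}_R \to \widehat{\mathcal{E}} \to \widehat{\mathcal{E}}|_D \to 0$ together with $H^{1}(R,\mathcal{O}_R)=0$ (from genus zero) to match $\dim_{\mathbb{C}} H^{0}(D,\widehat{\mathcal{E}}|_D)$ with the dimension of the projective space $\mathbb{P}(H^{0}(R,\widehat{\mathcal{E}}))$, and then conclude that the holomorphic Kuranishi map must vanish by a dimension count. Your closing remark about needing only a topological dimension bound is in fact a slightly more careful justification than the paper gives.
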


\begin{proof}
We consider only the case pertaining Proposition \ref{proposition:MOYflowsirreducibles} (the other case is similar). Since $\kappa_{sw}$ is complex-analytic, then its zero set is a complex-analytic space with dimension bounded above by $\mathrm{dim}_{\mathbb{C}} H^{0} (D , \widehat{\mathcal{E}}|_D )$, the dimension of its Zariski tangent space. In the case at hand, with $C$ of genus zero, we also know that it is homeomorphic to an open subset of the projective space $\mathbb{P}(H^{0}(R , \widehat{\mathcal{E}} )  )$, hence the dimension of the zero locus of $\kappa_{sw}$ is $\mathrm{dim}_{\mathbb{C} } \mathbb{P}(H^{0}(R , \widehat{\mathcal{E}} )  )$. But $ \mathrm{dim}_{\mathbb{C} } \mathbb{P}(H^{0}(R , \widehat{\mathcal{E}} )  ) = \mathrm{dim}_{\mathbb{C}} H^{0} (D , \widehat{\mathcal{E}}|_D )$ due to the exact sequence
\[
0 \rightarrow \mathcal{O}_R \rightarrow \widehat{\mathcal{E}} \rightarrow \widehat{\mathcal{E}}|_D \rightarrow 0 
\]
and the vanishing $H^{1}(R , \mathcal{O}_R ) = 0$. It follows that the dimension of the zero locus of $\kappa_{sw}$ agrees with the dimension of the Zariski tangent space, and so $\kappa_{sw}$ must vanish.
\end{proof}

In the remainder of this section we provide some sample computations of Seiberg--Witten moduli spaces on Seifert-fibered rational homology spheres.

\subsection{The canonical monopoles}
When the canonical bundle $K_C$ has \textit{positive degree} ($\mathrm{deg} K_C > 0$) then there is a distinguished effective divisor on $C$ with degree less than $ \frac{1}{2}\mathrm{deg}K_C$, namely the \textit{empty} divisor. This gives, under the correspondence from Proposition \ref{proposition:MOYirreducible}, an irreducible monopole $(B , \psi = (\alpha , \beta ))$ on $Y$ in the trivial line bundle $E = \mathbb{C}$ with $B$ given by the trivial connection, $\beta = 0$ and $\alpha = \mathrm{const.}$ where
\[
\mathrm{const.} = \sqrt{ \frac{2 \pi \mathrm{deg}K_C}{\mathrm{Area}C}}.
\]
We refer to $(B,\psi)$ as the \textit{canonical positive monopole} (whenever $\mathrm{deg}K_C > 0$). This is always a non-degenerate critical point of the Chern--Simons--Dirac functional. Under charge conjugation, the canonical positive monopole corresponds to a negative monopole (i.e. with $\alpha = 0$) in the line bundle $\pi^\ast (K_C )$ that we refer to as the \textit{canonical negative monopole}.

We will be interested in flowlines from the canonical positive monopole into a particular eigenspace:

\begin{corollary}\label{corollary:flowscontact}
Suppose that $C$ has genus zero and $\mathrm{deg}K_C > 0$. For the canonical spin-c structure $E = \mathbb{C}$ on $Y$, let $n \geq 0$ be the unique integer with $\lfloor \frac{K_C}{2} \rfloor \cong N^{-n}$. Then the space of unparametrised blown-up flowlines from the canonical positive monopole into the critical manifold $\mathbb{P}(V_{\delta_{0}^{+} + n}) \cong \mathbb{P}^1$ consists of a single flowline. 

If, in addition, the first positive eigenvalue of the Dirac operator at the reducible monopole in $E = \mathbb{C}$ is given by $\delta_{0}^{+}+n$, then this flowline is transversely cut-out if and only if $\bigoplus_{j = 1}^{n} H^{1}(C , N^{-j} ) = 0$. 
\end{corollary}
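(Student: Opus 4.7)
The plan is to apply the third bullet of Corollary~\ref{cor:topologymoduli} with $E_0 = \mathbb{C}$, the trivial line bundle on $C$, which corresponds under Proposition~\ref{proposition:MOYirreducible} to the canonical positive monopole (the constant-$\alpha$ solution gives the empty effective divisor in $\mathbb{C}$). First I would compute the relevant parameter $k = (\deg E_0 - \deg\lfloor K_C/2\rfloor)/\deg N$. Since $\lfloor K_C/2 \rfloor \cong N^{-n}$ gives $\deg\lfloor K_C/2\rfloor = -n\deg N$, this yields $k = n$, and hence $k - n = 0$. The third bullet then identifies $\breve{M}(\mathfrak{C}^{+}(\mathbb{C}), \mathbb{P}(V_{\delta_0^+ + n}))$ with the open subset of $\mathbb{P}(H^0(C, \mathcal{O}_C))$ obtained by removing two empty projective subspaces. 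Since $|C| \cong \mathbb{P}^1$ we have $H^0(C, \mathcal{O}_C) \cong \mathbb{C}$, so the moduli space is a single point, establishing the first assertion.

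For transversality, I would pin down the unique flowline as the based divisor coming from the section $z^n \in H^0(R, \mathcal{O}_R(n))$ (the only section of $\widehat{\mathcal{E}} = \mathcal{O}_R(n)$, up to scale, that is non-vanishing along $C_-$ and vanishes to order exactly $n$ along $C_+$), so that the associated effective divisor is $D = n \cdot C_+$. Via the Kuranishi model of Proposition~\ref{proposition:MOYflowsreducible}, this flowline is transversely cut out precisely when the obstruction space $H^1(D, \widehat{\mathcal{E}}|_D)$ vanishes. Following Remark~\ref{remark:orbispace}(1), I would rewrite this as $H^1(R, \mathcal{O}_R(n)/z^n\mathcal{O}_R)$ and feed the short exact sequence
\[
0 \to \mathcal{O}_R \xrightarrow{z^n} \mathcal{O}_R(n) \to \mathcal{O}_R(n)/z^n\mathcal{O}_R \to 0
\]
into its long exact sequence. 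Since $R$ is a rational orbifold ruled surface over a genus-zero base one has $H^1(R, \mathcal{O}_R) = H^2(R, \mathcal{O}_R) = 0$, so $H^1(D, \widehat{\mathcal{E}}|_D) \cong H^1(R, \mathcal{O}_R(n))$. The identification (\ref{equation:H1}) together with $H^1(C, \mathcal{O}_C) = 0$ then collapses this to $\bigoplus_{j=1}^n H^1(C, N^{-j})$, giving the stated equivalence.

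The main subtlety I would need to address is the role of the hypothesis that $\delta_0^+ + n$ is the first positive eigenvalue of $D_B$. I expect this assumption is what guarantees that $\mathbb{P}(V_{\delta_0^+ + n})$ sits as the lowest positive projectivised eigenspace in the blown-up critical set, so that the flowline in question is genuinely unbroken and the local Kuranishi model from Proposition~\ref{proposition:MOYflowsreducible} is the correct object governing transversality. Without this condition one would in principle need to supplement the unbroken model with contributions from broken trajectories through intermediate strata $\mathbb{P}(V_{\delta})$ with $0 < \delta < \delta_0^+ + n$, which would muddle the clean ``if and only if'' statement. Modulo this interpretive point, the remaining steps are straightforward cohomological bookkeeping.
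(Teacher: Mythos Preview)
Your proposal is correct and follows essentially the same route as the paper's own proof: apply the third item of Corollary~\ref{cor:topologymoduli} with $E_0=\mathbb{C}$ and $k=n$ to get a single flowline corresponding to $D=nC_+$ cut out by $z^n$, then compute the obstruction $H^1(D,\mathcal{O}_R(n)|_D)$ via the long exact sequence of $0\to\mathcal{O}_R\xrightarrow{z^n}\mathcal{O}_R(n)\to\mathcal{O}_R(n)|_D\to 0$ together with (\ref{equation:H1}) and the genus-zero vanishing $H^1(C,\mathcal{O}_C)=0$. Your reading of the first-positive-eigenvalue hypothesis is also in line with the paper's: it is what makes Proposition~\ref{proposition:MOYflowsreducible} give $H^1(D,\widehat{\mathcal{E}}|_D)$ as the correct obstruction space for this particular flowline.
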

\begin{proof}
We use the third item in Corollary \ref{cor:topologymoduli}, with $E_0 = \mathbb{C}$ and $k = n = \frac{ - \mathrm{deg} \lfloor K_C /2 \rfloor }{\mathrm{deg}N}$. It follows that the moduli space consists of a single flowline: $\mathbb{P} ( H^0 ( C , \mathcal{O}_C ) ) = \{ \text{point} \}$. This unique flowline corresponds to the divisor $D = n C_+$ in the bundle $\widehat{E} = \mathcal{O}_R (n)$ cut out by the canonical section $z^n$.

When the eigenvalue $\delta_{0}^{+}+n$ is the first positive eigenvalue of the Dirac operator at the reducible, then by Proposition \ref{proposition:MOYflowsreducible} the obstruction space of this flowline is given by $H^{1} ( D , \mathcal{O}_{R}(n)|_D )$, and the flowline is transversely cut-out precisely when this vanishes. In order to compute the obstruction space we can use the short exact sequence of orbifold sheaves 
\[0 \rightarrow \mathcal{O}_R \xrightarrow{z^n} \mathcal{O}_{R}(n) \rightarrow \mathcal{O}_{R}(n)|_D \rightarrow 0\] which induces a long exact sequence in cohomology, a piece of which is 
\[
\cdots \rightarrow H^{1}( R , \mathcal{O}_R ) \rightarrow H^1 (R , \mathcal{O}_R (n) ) \rightarrow H^{1} ( D , \mathcal{O}_R (n)|_D ) \rightarrow 0 .
\]
By (\ref{equation:H1}) we have $H^{1}(R , \mathcal{O}_R ) \cong H^{1} (C , \mathcal{O}_C )$ and $H^{1}(R , \mathcal{O}_R (n) ) \cong \bigoplus_{j = 0}^{n} H^{1}(C, N^{-j} )$. Since $C$ has genus zero then $H^{1}(C , \mathcal{O}_C ) = 0$, so it follows that $H^{1}(D , \mathcal{O}_R (n)|_D ) \cong \bigoplus_{j = 1}^{n} H^{1}(C , N^{-j} )$.
\end{proof}

\subsection{Some examples}

Below we describe the critical loci and the moduli spaces of flowlines for some rational homology spheres.




Before, we make some comments. We will not discuss the spaces of flowlines in the blow-up that connect reducibles with reducibles; these are easily described as a complement of projective subspaces in a suitable projective space (see \cite{FLINthesis}, Lemma 3.16). We adopt the convention that whenever an assertion is made about a particular eigenspace of the Dirac operator $D_B$ for the reducible monopole $B$ in a spin-c structure then it is understood that this assertion holds only after possibly taking $\mathrm{Area}(C,g_C) \ll 1$ possibly depending on that eigenvalue (so that we can apply Theorem \ref{theorem:eigenspaces}).

\subsubsection{Unit tangent bundle}
Let $Y = S(TC)$ be the unit tangent bundle of a hyperbolic orbifold closed oriented surface of genus zero, as in Example \ref{example:S(TC)}. It follows that $N = TC$ has negative orbifold degree, by the hyperbolicity condition. Note that the special case when $C$ has $3$ orbifold points of isotropy $2  , 3 , 7$ gives the Brieskorn integral homology sphere $Y = \Sigma(2,3,7)$. We will consider only the canonical spin-c structure $E = \mathbb{C}$, which is self-conjugate (since $K_C $ is a power of $N = TC$, namely $K_C = N^{-1}$). By Proposition \ref{proposition:MOYirreducible}, the positive irreducible monopoles in $E = \mathbb{C}$ correspond to effective divisors in a line bundle $E_0 = N^{-k} = K_{C}^{k}$ with $0 \leq \mathrm{deg}E_0 < \frac{1}{2} K_C$. Thus, the only possibility is $E_0 = \mathbb{C}$, and hence there is only the canonical positive monopole $\mathfrak{c}$. So the irreducible critical manifols are given by $\mathfrak{c}$ and its conjugate $\overline{\mathfrak{c}}$. As for the boundary stable critical manifolds, by Lemma \ref{proposition:selfconj} these are given by the $\mathbb{P} (V_{1/2 + n} ) = \mathbb{P} ( H^0 ( C , N^n )^{\oplus 2} )$ with $n \geq 0$. These are non-empty only for the case $n = 0$, i.e. $\mathbb{P} (V_{1/2} ) \cong \mathbb{P}^1$. By Corollary \ref{corollary:flowscontact} there exists a unique flowline from each of $\mathfrak{c}$ and $\overline{\mathfrak{c}}$ into the critical manifold $\mathbb{P}^1$, and this flowline is transversely cut out.


\subsubsection{$Y = \Sigma(2,3,11)$}

Now $Y = S(N)$ where $\mathrm{deg}N = - 1/66$, and $K_C = N^{-5}$. The only line bundles $E_0$ on $C$ with $0 \leq \mathrm{deg}E_0 < \frac{1}{2} \mathrm{deg}K_C = 5 / 132$ are $E_0 = \mathbb{C} , N^{-1}$ or $N^{-2}$. One can show that the background degrees of $N^{-1}$ and $N^{-2}$ (i.e. the degrees of the desingularisations $|N^{-1}|$ and $|N^{-2}|$ over $|C|$) are both negative (equal to $-1$). But the holomorphic sections of an orbifold line bundle $E_0$ are in correspondence with those of its desingularisation $|E_0|$. Hence again the only positive irreducible monopole is the canonical one $\mathfrak{c}$. As for boundary-stable critical manifolds, these are given by Lemma \ref{proposition:selfconj} as the $\mathbb{P}(V_{1/2 + n} ) = \mathbb{P} ( H^{0}(C , N^{n-2} )^{\oplus 2} )$, $n \geq 0$. If $n > 2$ these are empty since $\mathrm{deg} N^{n-2} < 0$, and for $n = 0,1$ we calculated that these are empty as well above. The only reducible critical manifold is thus $\mathbb{P} (V_{5/2} ) \cong \mathbb{P}^1$. Again, there is exactly one flowline from each irreducible (i.e. $\mathfrak{c}$ and $\overline{\mathfrak{c}}$) into $\mathbb{P}^1$, which are transverse.

\subsubsection{$Y = \Sigma(2,3,13)$}

Now one has $K = N^{-7}$, and there are again exactly two irreducible critical points $\mathfrak{c}, \overline{\mathfrak{c}}$ and one boundary-stable reducible manifold $\mathbb{P} (V_{7/2} ) \cong \mathbb{P}^1$, together with exactly one flowline from each irreducible to the reducible. However, the flowlines are \textit{not transverse} anymore, since $H^1 (C, N^{-1}) = H^{0}(C , N^{-6 }) \cong \mathbb{C}$ is non-trivial (see Corollary \ref{corollary:flowscontact}).

\subsubsection{$Y = \Sigma(2,3,23)$}

Now $K_C = N^{-17}$. In this case there are two positive irreducible monopoles: the canonical one $\mathfrak{c}$ coming from the empty effective divisor in $E_0 = \mathbb{C}$; and an additional one $\mathfrak{a}$ given by the non-trivial holomorphic section in $E_0 = N^{-6}$, which vanishes over the orbifold point of isotropy order $23$ to fractional order $1/23$. Thus, in total there are four irreducibles: $\mathfrak{c} , \overline{\mathfrak{c}} , \mathfrak{a} , \overline{\mathfrak{a}}$. There are two boundary-stable critical manifolds: $\mathbb{P}(V_{1/2 +2}) = \mathbb{P} ( H^0 (C , N^{-6} )^{\oplus 2} ) \cong \mathbb{P}^1$ and $\mathbb{P} (V_{1/2 +8} ) = \mathbb{P} ( H^0 (C , \mathcal{O}_C )^{\oplus 2} ) \cong \mathbb{P}^1$. 
The topology of the spaces of flows is given by:
\begin{itemize}
\item 
The space of unparametrised flows from $\mathfrak{c}$ to $\mathbb{P}(V_{1/2+8})$ consists of a single flowline. 

\item 
The space of unparametrised flows from $\mathfrak{c}$ into $\mathbb{P}(V_{1/2+2})$ is homeomorphic to the complement of two points in the projective space $\mathbb{P}(H^0 (C , N^0 \oplus N^{-6} ) ) \cong \mathbb{P}^1$. 

\item The space of parametrised flows from $\mathfrak{c}$ into $\mathfrak{a}$ is the complement of two points in the same projective space $\cong \mathbb{P}^1$ from the previous item, and hence the space of unparametrised flows from $\mathfrak{c}$ to $\mathfrak{a}$ is a circle.

\item The space of unparametrised flows from $\mathfrak{a}$ into $\mathbb{P}(V_{1/2+8})$ is empty.

\item The space of unparametrised flows from $\mathfrak{a}$ into $\mathbb{P}(V_{1/2 + 2} )$ consists of a single flow.

\item The space of flows from $\mathfrak{a}$ into $\mathfrak{c}$ is empty.
\end{itemize}



\section{Monopole Floer homology and invariants of families}\label{section:familiescobordism}

\subsection{A Morse--Bott model for monopole Floer homology}

We shall use Francesco Lin's \cite{FLINthesis} Morse-Bott approach to monopole Floer homology, so we briefly review it here.
We do not intend to give a self-contained exposition here, since it would just be a lengthy repeat of \cite{FLINthesis}.
Instead, we try to summarize Lin's construction and refer to the appropriate places in \cite{FLINthesis} for details.
We assume readers have a familiarity with Kronheimer--Mrowka's book \cite{KM} on monopole Floer homology.
As in \cite{KM}, there are three flavors $\widecheck{HM}_*(Y)$, $\widehat{HM}_*(Y)$, $\overline{HM}_*(Y)$ for a closed oriented 3-manifold $Y$, but we shall focus on $\widecheck{HM}_*(Y)$ as this is the version we shall need to use.
Also, just for simplicity, we impose $b_1(Y)=0$, which is satisfied for our situation.

\begin{remark}
We will need to work with Morse--Bott models for the monopole groups with $\mathbb{Z}$ coefficients. The construction in \cite{FLINthesis} is done with $\mathbb{F} = \mathbb{Z}/2 $ coefficients only, but this can be promoted over to $\mathbb{Z}$ by considering oriented geometric chains as in \S 6.1 in \cite{miller} and then assigning signs to counts in moduli spaces as in \S22.1 of \cite{KM}.
\end{remark}

\subsubsection{Stratified space and $\delta$-chain}

An ingredient of the whole construction is a model of the homology of a critical manifold $X$ of the Chern--Simons--Dirac functional in terms of geometric chains.

Let $N^d$ be a $d$-dimensional space stratified by manifolds (\cite[Definition 3.4.11]{FLINthesis}).
Namely, there are closed subsets
\[
N^d \supset N^{d-1} \supset \cdots N^0 \supset N^{-1}=\emptyset,
\]
where $N^d \neq N^{d-1}$ and each $N^e \setminus N^{e-1}$ is a smooth manifold of dimension $e$ (possibly disconnected or empty ). The closure $\Delta \subset N^e$ of a connected component of $N^{e} \setminus N^{e-1}$ is called an $e$-dimensional \textit{face}.
Such a stratified space $N$ is called a {\it $d$-dimensional abstract $\delta$-chain} if for every pair of faces $\Delta_1 \subset \Delta_2$ of dimensions $e$ and $e+2$ there exists exactly two faces containing $\Delta_1$ and which are contained in $\Delta_2$, together with some additional properties that we won't discuss (see \cite[Definition 4.1.1]{FLINthesis}).
A typical example of an abstract $\delta$-chain is given by a compactified moduli space of unparametrized trajectories (\cite[Example 4.1.2]{FLINthesis}).

Given a smooth manifold $X$ without boundary, a {\it $\delta$-chain} in $X$ is a pair $\sigma=(\Delta,f)$, where $\Delta$ is an abstract $\delta$-chain and $f : \Delta \to X$ is a continuous map satisfying a few additional properties (\cite[Definition 4.1.6]{FLINthesis}). If $\Delta$ is clear from context 
we just use $f$ or $f(\Delta)$ to denote $(\Delta,f)$.
When $X$ is a critical submanifold of the Chern--Simons--Dirac functional,
a typical example of a $\delta$-chain in $X$ is given by a compactified moduli space $\Delta$ of unparametrized trajectories together with the evaluation map $f=\ev_+$ at the time $+\infty$ (or the evaluation map $f=\ev_-$ at $-\infty$).
We shall use the fact that, given two $\delta$-chains  $\sigma_1$ and $\sigma_2$ in $X$, there is the notion of fiber product $\sigma_1 \times \sigma_2$ which is also a $\delta$-chain in $X$ (\cite[Lemma 4.1.8]{FLINthesis}; see also \cite{miller}, Proposition 6.1).

Lin gave a model of the homology of $X$ in terms of $\delta$-chains as follows.
Fix a countable collection $\mathcal{F}=\{\sigma_\alpha\}_\alpha$ of $\delta$-chains in $X$.
There is a notion of transversality between two $\delta$-chains (\cite[Definition 4.1.7]{FLINthesis}), and one can define a chain group $\tilde{C}_d^{\mathcal{F}}(X)$ over $\mathbb{F}=\Z/2$ generated by $\delta$-chains of dimension $d$ transverse to all $\sigma_\alpha$, up to isomorphism (see just after \cite[Definition 4.1.6]{FLINthesis}) after we quotient by the relation
\[
(\Delta,f) + (\Delta',f') \sim (\Delta \sqcup \Delta', f \sqcup f').
\]
The boundary operator is defined by 
\[
\tilde{\delta} : \tilde{C}_d^{\mathcal{F}}(X) \to \tilde{C}_{d-1}^{\mathcal{F}}(X) \quad ; \quad [\Delta,f] \mapsto \sum_{\Delta'}[\Delta',f|_{\Delta'}],
\]
where the sum is taken over all codimension-one faces $\Delta' \subset \Delta$.

To get a model of the homology of $X$, one needs to introduce the notion of {\it negligible} chain (\cite[Definition 4.1.11]{FLINthesis}), which means its image is contained in a lower dimensional $\delta$-chain, which corresponds to a degenerate simplex in the singular homology.
Let $(C_d^{\mathcal{F}}(X), \partial)$ be the quotient of $(\tilde{C}_d^{\mathcal{F}}(X), \tilde{\partial})$ by the subcomplex generated by negligible chains.
Then this chain complex $(C_d^{\mathcal{F}}(X), \partial)$ gives a model of the homology of $X$ (\cite[Proposition 4.1.13]{FLINthesis}).
If there is no risk of confusion, we simply write $(C_d(X), \partial)$ for $(C_d^{\mathcal{F}}(X), \partial)$.

\subsubsection{Floer chain group}

Next, we shall describe the Floer chain group.
Let $\mathfrak{s}$ be a spin$^c$ structure on $Y$.
We fix a metric $g$ on $Y$, and pick a perturbation $\mathfrak{q}_0$.
The perturbation $\mathfrak{q}_0$ is taken to be {\it admissible} (\cite[Definition 4.2.1]{FLINthesis}).
This is a condition to achieve the transversality of evaluation maps to critical manifolds from moduli spaces of flow lines between them.

Let $\mathcal{B}^\sigma(Y,\mathfrak{s})$ denote the blown-up configuration space \cite[Subsection 6.1]{KM}.
On $\mathcal{B}^\sigma(Y,\mathfrak{s})$, there is the blown-up gradient of the Chern--Simons--Dirac functional.
Let $\textsf{C} \subset \mathcal{B}^\sigma(Y,\mathfrak{s})$ denote the set of critical submanifolds of this gradient.
Just as in the Morse case, one can define the notion of {\it boundary-(un)stability} of a reducible critical point (\cite[Definition 3.3.9]{FLINthesis}).
Thus $\textsf{C}$ splits into a disjoint union
\[
\textsf{C} = \textsf{C}^o \cup \textsf{C}^s \cup \textsf{C}^u,
\]
where each set consists of irreducible, boundary-stable and boundary-unstable critical submanifolds, respectively.

Let $[\mathfrak{C}], [\mathfrak{C}']$ be critical submanifolds of the Chern--Simons--Dirac functional.
Then we can consider the compactified moduli space $\breve{M}_z^+([\mathfrak{C}], [\mathfrak{C}'])$ of unparametrized trajectries from $[\mathfrak{C}]$ to $[\mathfrak{C}']$ with respect to a relative path $z$ (\cite[Example 4.1.2]{FLINthesis}).
Then we can obtain a countable family of $\delta$-chains $\mathcal{F}$ in $\mathfrak{C}$ defined by
\[
(\breve{M}_z^+([\mathfrak{C}], [\mathfrak{C}']), \ev_-)
\]
where $[\mathfrak{C}']$ and $z$ run.
Now, we define
\[
C^o = \bigoplus_{[\mathfrak{C}] \in \textsf{C}^o} C_\ast^{\mathcal{F}}([\mathfrak{C}]),\quad
C^s = \bigoplus_{[\mathfrak{C}] \in \textsf{C}^s} C_\ast^{\mathcal{F}}([\mathfrak{C}]),\quad
C^u = \bigoplus_{[\mathfrak{C}] \in \textsf{C}^u} C_\ast^{\mathcal{F}}([\mathfrak{C}]).
\]
We also set
\[
\check{C} = C^o \oplus C^s,\quad
\hat{C} = C^o \oplus C^u,\quad
\bar{C} = C^s \oplus C^u.
\]


The differentials are defined as follows.
Let $\sigma=(\Delta,f)$ be a $\delta$-chain in a critical submanifold $[\mathfrak{C}]$.
Then for each moduli space $\breve{M}_z^+([\mathfrak{C}], [\mathfrak{C}'])$,
the fiber product
\[
\sigma \times \breve{M}_z^+([\mathfrak{C}], [\mathfrak{C}'])
\]
is an abstract $\delta$-chain and this gives a $\delta$-chain in $[\mathfrak{C}']$ by the evaluation map 
\[
\ev_+ : \sigma \times \breve{M}_z^+([\mathfrak{C}], [\mathfrak{C}']) \to [\mathfrak{C}'].
\]
It turns out the $\mathrm{ev}_+$ is transverse to every $ev_-$ with codomain $[\mathfrak{C}']$, hence this $\delta$-chain is $\mathcal{F}$-transverse.
Then we can define the operator 
\[
\partial^o_s : C^o_\ast \to C^s_\ast
\]
by
\[
\partial^o_s[\sigma]
= \sum_{[\mathfrak{C}'] \in \textsf{C}^s}[\ev_+ : \sigma \times \breve{M}_z^+([\mathfrak{C}], [\mathfrak{C}']) \to [\mathfrak{C}']].
\]
The other operators 
\[
\partial^o_o : C^o_\ast \to C^o_\ast,\quad
\partial^u_s : C^u_\ast \to C^s_\ast,\quad
\partial^u_o : C^u_\ast \to C^o_\ast
\]
are defined in a similar fashion (\cite[page 101]{FLINthesis}).
Also, when both $[\mathfrak{C}], [\mathfrak{C}']$ are reducible, the operators 
\[
\bar{\partial}^s_s : C^s_\ast \to C^s_\ast,\quad
\bar{\partial}^s_u : C^s_\ast \to C^u_\ast,\quad
\bar{\partial}^u_s : C^u_\ast \to C^s_\ast,\quad
\bar{\partial}^u_u : C^u_\ast \to C^u_\ast
\]
are also defined in a similar manner (\cite[page 102]{FLINthesis}).
Combining these operators exactly in a same way as in the usual monopole Floer homology \cite[Definition 2.4.4]{KM}, we can define operators
\[
\check{\partial} : \check{C}_\ast \to \check{C}_\ast,\quad
\hat{\partial} : \hat{C}_\ast \to \hat{C}_\ast,\quad
\bar{\partial} : \bar{C}_\ast \to \bar{C}_\ast
\]
of degree $-1$, and these make $\check{C}_\ast, \hat{C}_\ast, \bar{C}_\ast$ chain complexes (\cite[Proposition 4.2.4]{FLINthesis}).
We denote by
\[
\widecheck{HM}_\ast(Y,\mathfrak{s}),\quad
\widehat{HM}_\ast(Y,\mathfrak{s}),\quad
\overline{HM}_\ast(Y,\mathfrak{s})
\]
the homologies of these chain complexes.
When perturbations are taken to make all critical points non-degenerate, these groups coincide with the usual monopole Floer homologies defined in \cite{KM} (\cite[Lemma 4.2.6]{FLINthesis}).

When the spin-c structure $\mathfrak{s}$ is torsion, which is the case we are interested in, one can define an absolute $\mathbb{Q}$-grading $\grad(\mathfrak{C})$ for each component $\mathfrak{C}$ of $\textsf{C}$. 
For each $d$-dimensional $\delta$-chain $\sigma$ in $\mathfrak{C}$, set $\grad(\sigma)=\grad(\mathfrak{C})+d$. This makes $\widecheck{HM}_\ast(Y,\mathfrak{s})$ into a $\mathbb{Q}$-graded $\mathbb{F}$-module.

\subsection{Induced map by families of cobordisms} It is a fundamental property that a cobordism between two 3-manifolds induces a map between their monopole Floer homologies. This can be generalized to families of cobordisms and we briefly recall relevant results here, mostly following \cite{JLIN2022}. In what follows all manifolds considered are smooth, oriented, compact (possibly with boundary) and connected.

\begin{definition}
Let $Y_0, Y_1$ be oriented, closed 3-manifolds. A {\it family of cobordisms} $\widetilde{W}/\mathcal{Q}$ from $Y_0$ to $Y_1$ consists of the following data.
\begin{enumerate}
    \item A smooth fiber bundle $W\hookrightarrow \widetilde{W}\to \mathcal{Q}$ whose fiber $W$ is a smooth oriented 4-manifold with boundary and whose base $\mathcal{Q}$ is a smooth, oriented, closed manifold. 
    \item An orientation-preserving diffeomorphism 
    \[
    \partial W\cong \mathcal{Q}\times (\overline{Y}_0\sqcup Y_{1})
    \]
    that covers the identity map on $\mathcal{Q}$, called the boundary parametrization.
\end{enumerate}
We say $\widetilde{W}/\mathcal{Q}$ is a trivial family if the boundary parametrization can be extended to a trivialization of the bundle $\widetilde{W}\to \mathcal{Q}$. We ignore $\mathcal{Q}$ from our notation if $\mathcal{Q}$ is obvious from the context.  
\end{definition}

\begin{assumption}\label{assumption: family} 
To simplify our discussions here, we make the following assumptions.
\begin{itemize}
 \item $\mathcal{Q}\cong S^{1}$. 
\item The monodromy action of $\pi_{1}(\mathcal{Q})$ on $H_{*}(W;\mathbb{R})$ preserves the homological orientation (i.e. the orientation on $\Lambda^{\max}(H^{0}(W;\mathbb{R})\oplus H^{1}(W;\mathbb{R})\oplus H^{+}(W;\mathbb{R}))$). Furthermore, if this action is nontrivial, then we assume $b^{+}(W)\geq 3$.
\end{itemize}
\end{assumption}
A family spin-c structure $\mathfrak{s}_{\widetilde{W}}$ is a lift of the structure group of the vertical tangent bundle 
\[
T^{v}\widetilde{W}:=\ker(T\widetilde{W}\to T\mathcal{Q}).
\]
from $SO(4)$ to  $\operatorname{Spin}^{c}(4)$.
 Let $\mathfrak{s}_{Y_{i}}$ be the restriction of $\mathfrak{s}_{\widetilde{W}}$ on $Y_{i}$. For various $X\subset \widetilde{W}$, we use $\mathfrak{s}_{X}$ to denote the restriction of $\mathfrak{s}_{\widetilde{W}}$ to $X$. Then as explained in \cite[Proposition 4.5]{JLIN2022}, we have induced maps 
\begin{equation}\label{eq: cobordism maps}
\begin{split}
\widecheck{HM}(\widetilde{W},\mathfrak{s}_{\widetilde{W}}): \widecheck{HM}_{*}(Y_0,\mathfrak{s}_{Y_0})\to \widecheck{HM}_{*}(Y_1,\mathfrak{s}_{Y_1}),\\
\widehat{HM}(\widetilde{W},\mathfrak{s}_{\widetilde{W}}): \widehat{HM}_{*}(Y_0,\mathfrak{s}_{Y_0})\to \widehat{HM}_{*}(Y_1,\mathfrak{s}_{Y_1}),\\
\overline{HM}(\widetilde{W},\mathfrak{s}_{\widetilde{W}}): \overline{HM}_{*}(Y_0,\mathfrak{s}_{Y_0})\to \overline{HM}_{*}(Y_1,\mathfrak{s}_{Y_1}).
\end{split}
\end{equation}
by counting solutions on all fibers. 

If we further assume $b^{+}(W)\geq 1$, then there is a well-defined ``mixed map''
\[
\overrightarrow{HM}(\widetilde{W},\mathfrak{s}_{\widetilde{W}})_{\xi}: \widehat{HM}_{*}(Y_0,\mathfrak{s}_{Y_0})\to \widecheck{HM}_{*}(Y_1,\mathfrak{s}_{Y_1})
\]
Unlike those maps in (\ref{eq: cobordism maps}), the mixed map depends on the choice of a chamber $\xi$. Concretely, $\xi$ is a homotopy class of a section of a bundle $V^{+}(W)\hookrightarrow V^{+}(\widetilde{W})\to \mathcal{Q}$, whose fiber over $b\in \mathcal{Q}$ is the positive cone 
\[
V^{+}(W_{b}):=\{\alpha\in H^{2}(W_{b};\mathbb{R})\mid \alpha\cdot \alpha>0\}.
\]
By Assumption \ref{assumption: family}, one can pick a canonical chamber $\xi_{c}$ defined as follows: 
\begin{itemize}
    \item If $b^{+}(W)=1 $ or $2$, then the monodromy action is trivial on $H_*(M)$. As a result, we have a canonical trivialization $V^{+}(\widetilde{W})\cong V^{+}(W)\times \mathcal{Q}$. We let $\xi_{c}$ be represented by a constant section. When $b^{+}(W)=1$, we need to fix an orientation on $H^{+}(W;\mathbb{R})$ and let $\xi_{c}$ take value in the positive component of $V^{+}(W)$.
    \item If $b^{+}(W)\geq 3$, $V^{+}(\widetilde{W})$ has a unique section up to homotopy because $V^{+}(W)\simeq S^{b^{+}(W)-1}$.
\end{itemize}
For simplicity in notation, we will just write $\overrightarrow{HM}(\widetilde{W},\mathfrak{s}_{\widetilde{W}})_{\xi_{c}}$ for $\overrightarrow{HM}(\widetilde{W},\mathfrak{s}_{\widetilde{W}})$ from now on. We next summarize some important properties of family Seiberg-Witten invariant and family cobordism maps. See \cite[Section 4]{JLIN2022} for details.
\begin{enumerate}
    \item Suppose $\widetilde{W}/\mathcal{Q}$ is a trivial family. Then for any $\mathfrak{s}_{\widetilde{W}}$, the maps \[\widecheck{HM}(\widetilde{W},\mathfrak{s}_{\widetilde{W}}),\quad \overrightarrow{HM}(\widehat{W},\mathfrak{s}_{\widetilde{W}}),\quad \overrightarrow{HM}(\overline{W},\mathfrak{s}_{\widetilde{W}}),\quad \overrightarrow{HM}(\widetilde{W},\mathfrak{s}_{\widetilde{W}})\]
    are all trivial.
    \item Let $\widetilde{W}_{01}/\mathcal{Q}$  be a family cobordism from $Y_0$ to $Y_1$ and let $\widetilde{W}_{12}/\mathcal{Q}$ be a family cobordism from $Y_1$ to $Y_2$. Given a family spin-c structure $\mathfrak{s}_{\widetilde{W}_{02}}$ on $\widetilde{W}_{02}$, the cobordism induces maps that satisfy various \textit{gluing formulas} . 
    \begin{enumerate}
        \item Suppose $\widetilde{W}_{01}$ is a trivial family with $b^{+}(W_{01})>0$. Then one has 
        \begin{equation}\label{eq: hm arrow gluing}
        \overrightarrow{HM}(\widetilde{W}_{02},\mathfrak{s}_{\widetilde{W}_{02}})=  \widecheck{HM}(\widetilde{W}_{12},\mathfrak{s}_{\widetilde{W}_{12}})\circ \overrightarrow{HM}(W_{01},\mathfrak{s}_{W_{01}})
         \end{equation}

         \item Suppose $\widetilde{W}_{12}$ is a trivial family with $b^{+}(W_{12})>0$. Then one has
\begin{equation}\label{eq: hmhat gluing}
        \overrightarrow{HM}(\widetilde{W}_{02},\mathfrak{s}_{\widetilde{W}_{02}})=  \overrightarrow{HM}(W_{12},\mathfrak{s}_{W_{12}})\circ \widehat{HM}(\widetilde{W}_{01},\mathfrak{s}_{\widetilde{W}_{01}}).
        \end{equation}
         
        \item Suppose $\widetilde{W}_{12}$ is a trivial family. Then one has 
        \begin{equation}\label{eq: hmcheck gluing}
        \widecheck{HM}(\widetilde{W}_{02},\mathfrak{s}_{\widetilde{W}_{02}})=  \widecheck{HM}(W_{12},\mathfrak{s}_{W_{12}})\circ \widecheck{HM}(\widetilde{W}_{01},\mathfrak{s}_{\widetilde{W}_{01}}).
        \end{equation}
        
    \end{enumerate}
    \item Given a family of closed 4-manifolds $X\hookrightarrow \widetilde{X}\to \mathcal{Q}$ and a family  spin-c structure $\mathfrak{s}_{\widetilde{X}}$ whose restriction  $\mathfrak{s}$ to $X$ satisfies $d(\mathfrak{s})=-1$. Here
    \[
 d(\mathfrak{s}):=\frac{c^{2}_{1}(\mathfrak{s})-2\chi(X)-3\sigma(X)}{4}.
    \] 
    Assume the monodromy acts trivially on $H_{*}(X)$ and assume $b^{+}(X)>1$. By removing tubular neighorhoods of two sections, one obtains a family of cobordisms $X^{\circ}/\mathcal{Q}$ from $S^3$ to $S^3$. Then we have the following formula 
   \begin{equation}\label{eq: FSW formula}
    \overrightarrow{HM}(\widetilde{X}^{\circ},\mathfrak{s}_{\widetilde{X}^{\circ}})(\hat{1})=\operatorname{FSW}(\widetilde{X},\mathfrak{s}_{\widetilde{X}})\cdot \check{1}.
     \end{equation}
      Here $\hat{1}\in \widehat{HM}_{-1}(S^3)$ and $\check{1}\in \widecheck{HM}_0(S^{3})$ denote the canonical generators. And $\operatorname{FSW}(\widetilde{X},\mathfrak{s}_{\widetilde{X}})$ denotes the family Seiberg-Witten invariant \emph{for the constant chamber} (see \cite[Section 2]{JLIN2022}).
\item Consider a smooth 4-manifold $M$ with boundary $\partial M = Y$ and $b^+(M)>0$. Consider $f\in \MCG(M)$ that fix the homological orientation on $M$. We further assume $b^{+}(M)>2$ if $f$ acts nontrivially on $H_{*}(M)$. Let $\mathfrak{s}$ be a spin-c structure such that $f^*(\mathfrak{s})\cong \mathfrak{s}$. Then we can define the family Seiberg-Witten invariant \emph{for diffeomorphisms} as follows. By taking the mapping torus $M\hookrightarrow \widetilde{M}\to \mathcal{Q}$ and removing small balls around a constant section near the boundary,  one obtains a family of cobordisms $M^{\circ}\hookrightarrow\widetilde{M}^{\circ}\to \mathcal{Q}$ which satisfies Assumption (\ref{assumption: family}). Let $\mathfrak{s}_{\widetilde{M}^{\circ}}$ be any family spin-c structure on $\widetilde{M}^{\circ}$ that restricts to $\mathfrak{s}|_{M^{\circ}}$ on fibers. Then we define 
\begin{equation}\label{eq: FSW for diff}
\FSW(M,\mathfrak{s},f):=\overrightarrow{HM}(\widetilde{M}^{\circ},\mathfrak{s}_{\widetilde{M}^{\circ}})(\hat{1})\in\widecheck{HM}(Y,\mathfrak{s}|_{Y})    
\end{equation}
This gives an invariant of the smooth isotopy class of $f$ and the isomorphism class of $\mathfrak{s}$, and is independent of the choice of $\mathfrak{s}_{\widetilde{M}^{\circ}}$. The invariant is \textit{additive}: given $f,g \in \MCG(M)$ both satisfying the above assumptions then
\begin{equation}\label{eq: FSW additive}
\FSW(M,\mathfrak{s},f\circ g)=\FSW(M,\mathfrak{s},f)+\FSW(M,\mathfrak{s},g).    
\end{equation}
\item Consider a smooth, closed 4-manifold $X$ with $b^+(X)>1$. Let $f\in \MCG(X)$ be a diffeomorphism that acts trivially on $H_*(X)$ and let $\mathfrak{s}$ be spin-c structure on $X$ such that $f^*(\mathfrak{s})=\mathfrak{s}$ and $d(\mathfrak{s})=-1$. Then one can similarly define 
\[\FSW(X,\mathfrak{s},f):=\FSW(\widetilde{X},\mathfrak{s}_{\widetilde{X}}).\]
Here $X\hookrightarrow \widetilde{X}\to S^1$ is the mapping torus of $f$ and $\mathfrak{s}_{\widetilde{X}}$ is any family spin-c structure that restricts to $\mathfrak{s}$ on fibers. Again, 
$\FSW(X,\mathfrak{s},f)$ is well-defined and additive. Furthermore, if $\FSW(X,\mathfrak{s},f)\neq 0$, then for any $k\neq 0$,  then $f^k$ is not isotopic a diffeomorphism supported on $D^4$. (See \cite[Proof of Theorem 1.7]{KMPW} for the case $k=1$. The general case follows from additivity.)
\end{enumerate}

\section{Proofs of Theorem \ref{thm: main}, \ref{thm: factorization}}\label{section: proof main}
Let $Y=S(N)$ be the circle bundle for an orbifold line bundle $N\to C$ with $\mathrm{deg}N <0$ over a 2-orbifold $C$. Through this section we assume $Y$ is Floer simple (see Definition  \ref{defi: Floer simple}), and in particular $C$ has genus zero. Let $\mathfrak{s}_c$ denote the canonical spin-c structure on $Y$, which by assumption is self-conjugate. Then we have $K_{C}=N^{-m}$ for $m=\frac{\chi(C)}{\operatorname{deg}(N)} \in \mathbb{Z}$. 

Note that $m > 0$. Indeed, otherwise by Proposition \ref{proposition:MOYirreducible} there are no irreducible critical manifolds, and by Corollary \ref{cor: Dirac no kernel} the reducible critical manifolds in the blowup are Morse--Bott; this would give $HM^{\redu}(Y, \mathfrak{s}_c ) = 0$, contradicting the Floer simple condition.

We fix a metric $g_{C}$ on $C$ such that Theorem \ref{theorem:eigenspaces} applies with $\Lambda=\frac{m}{2}$ and consider the induced metric on $Y$. Let $(B,0)$ be a reducible critical point for the canonical spin-c structure $\mathfrak{s}_{c}$. Then by Corollary \ref{cor: Dirac no kernel}, the Dirac operator $D_{B}$ has a trivial kernel and hence $(B,0)$ is a non-degenerate critical point. Let 
\[\cdots<\lambda_{-2}<\lambda_{-1}<0<\lambda_{0}<\lambda_{1}<\cdots\] be the eigenvalues of $D_{B}$, with eigenspace $V_{\lambda_{i}}$. Then the critical manifold $\mathfrak{C}_{i}:=\mathbb{P}(V_{\lambda_{i}})$ is Morse--Bott and we have decompositions 
\[
\textsf{C}^{s}=\bigsqcup_{i\geq 0}\mathfrak{C}_{i},\quad \textsf{C}^{u}=\bigsqcup_{i<0}\mathfrak{C}_{i}.
\]
 Let $h=h(Y,\mathfrak{s}_{c})$ be the Fr\o yshov invariant. Then for any $i\geq 0$, we have $\grad(\mathfrak{C}_{i})\in -2h+2\mathbb{Z}$ and
\begin{equation}\label{eq: relative grading between reducible}
\grad(\mathfrak{C}_{i+1})-\grad(\mathfrak{C}_{i})=2\operatorname{dim}_{\mathbb{C}}V_{\lambda_{i}}.    
\end{equation}
(See \cite[page 57]{FLINthesis}.)
 Now we consider the irreducible critical manifold $\textsf{C}^{o}$. By Proposition \ref{proposition:MOYirreducible}, $\textsf{C}^{o}$ is Morse-Bott and has a decomposition
\begin{equation}\label{eq: Co}
    \textsf{C}^{o}=\bigsqcup_{ 0\leq l < \frac{m}{2}}\Big( \mathfrak{C}^{+} (N^{-l} ) \sqcup \mathfrak{C}^{-}(N^{-l}) \Big).
\end{equation}
Each $\mathfrak{C}^{\pm} (N^{-l} )$ is diffeomorphic to $\mathbb{P}(H^0(C;N^{-l}))$. Furthermore, since the moduli space of unbased, parametrized flowlines from $\mathfrak{C}^{+} (N^{-l})$ to $\mathfrak{C}_{i}$ (with $i\geq 0$) is odd dimensional (see Corollary \ref{cor:topologymoduli}) and the cokernel is a complex vector space, the relative grading between these two critical manifolds is an odd integer, i.e. we have 
\[
\grad(\mathfrak{C}^{+} (N^{-l}))=\grad(\mathfrak{C}^{-} (N^{-l}))\in -2h+1+2\mathbb{Z}.
\]

\begin{lemma}\label{lem: H0 vanishing} $H^0(C;N^{-l})=0$ for any $1\leq l\leq \frac{m}{2}$.
\end{lemma}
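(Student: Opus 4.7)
The plan is to reduce the statement to a vanishing of background degrees and then argue separately for various ranges of $l$. Since the space of holomorphic sections of an orbifold line bundle $L$ on a genus-zero orbifold surface vanishes precisely when the background degree $e_L$ is negative (the space of holomorphic sections being identified with those of the desingularization $|L|$ on $\mathbb{P}^1$), the lemma is equivalent to proving $e_{N^{-l}} \leq -1$ for $1 \leq l \leq m/2$. The endpoint $l = m/2$ (when $m$ is even) is already Lemma \ref{lem: vanishing homology 1}. A useful key identity for the remaining values of $l$ is
\[
e_{N^{-l}} + e_{N^{-(m-l)}} = -2.
\]
This follows from $N^{-l} \otimes N^{-(m-l)} = N^{-m} = K_C$ together with the self-conjugacy constraint $m\beta_i^N \equiv 1 \pmod{\alpha_i}$, which forces the exact equality $\beta_i^{(-l)} + \beta_i^{(-(m-l))} = \alpha_i - 1$ and hence ensures that no carries appear in the tensor product formula for Seifert data.

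The base case $l = 1$ I would handle by a direct Seifert-data calculation: writing $m\beta_i^N = 1 + k_i\alpha_i$ with $0 \leq k_i \leq m-1$, a short computation yields $e_N = -(n-2+\sum k_i)/m \geq 1-n$, whence $e_{N^{-1}} = -n - e_N \leq -1$. For the intermediate range $2 \leq l < m/2$, my plan is to argue by contradiction using the Floer simplicity hypothesis $HM^{\redu}(Y, \mathfrak{s}_c) \cong \mathbb{Z}$. If $e_{N^{-l_0}} \geq 0$ for some such $l_0$, then by Proposition \ref{proposition:MOYirreducible} the critical submanifolds $\mathfrak{C}^{\pm}(N^{-l_0}) \cong \mathbb{P}(H^0(C, N^{-l_0}))$ would be nonempty, contributing extra generators to the Morse--Bott chain group $C^o_\ast$ in absolute degrees lying in $-2h + 1 + 2\mathbb{Z}$, of opposite parity to the reducibles. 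Using the explicit genus-zero description of the flowline moduli spaces from Corollary \ref{cor:topologymoduli} together with the cleanness of the Kuranishi maps from Corollary \ref{cor: Kuranishi}, I would trace the fundamental class of each extra $\mathfrak{C}^{\pm}(N^{-l_0})$ through the differential and show that at least one such class descends to a nonzero element of $HM^{\redu}$ that is linearly independent from the class of the canonical positive monopole $\mathfrak{c}$. This forces $\operatorname{rank}_\mathbb{Z} HM^{\redu}(Y, \mathfrak{s}_c) \geq 2$ and contradicts Floer simplicity.

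The main obstacle I anticipate is making the non-cancellation argument in the last step fully rigorous: ensuring that the classes arising from the putative extra irreducibles $\mathfrak{C}^{\pm}(N^{-l_0})$ cannot all be killed by differentials to or from other critical manifolds, whether these be the canonical monopoles, the reducible tower, or the charge-conjugate partner $\mathfrak{C}^{\mp}(N^{-l_0})$. This will require a careful grading-sensitive chain-level analysis, systematically exploiting the projective-space structure of the flowline moduli spaces from Corollary \ref{cor:topologymoduli} to compute boundary operators explicitly, and using the $\mathbb{Q}$-grading induced from the Fr\o yshov invariant $h(Y, \mathfrak{s}_c)$ to rule out pairwise cancellations.
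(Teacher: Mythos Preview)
Your instinct to invoke Floer simplicity is correct and is exactly what the paper does, but the execution you propose is harder than necessary and you have correctly identified the gap yourself. The paper's argument avoids the chain-level ``non-cancellation'' analysis entirely by packaging everything into a spectral sequence with a few coarse dimension bounds.

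Here is the missing idea. First perturb the Chern--Simons--Dirac functional so that the critical manifolds $\textsf{C}^o$ and $\textsf{C}^s$ are unchanged but all flowline moduli are transverse. The action filtration on $\check{C}_\ast$ (with $\mathbb{Q}$-coefficients) gives a spectral sequence with
\[
E^1 \cong \bigoplus_{\mathfrak{C}} H_\ast(\mathfrak{C})[\grad(\mathfrak{C})].
\]
Now four easy observations replace your entire tracing argument: (i) the parity of $\grad(\mathfrak{C})+2h$ separates irreducibles (odd) from boundary-stable reducibles (even); (ii) for even $k$ the differential $d^n: E^n_k \to E^n_{k-1}$ vanishes, since there are no flowlines from reducibles to irreducibles; (iii) $\dim E^1_k \leq 1$ for each even $k$, by the structure of the reducible tower; and (iv) $\dim E^1_k$ is even for odd $k$, by charge-conjugation symmetry between $\mathfrak{C}^+$ and $\mathfrak{C}^-$. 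Combining (ii) and (iii) gives $\dim E^1_k - \dim E^\infty_k \leq 1$ for odd $k$, and since $\bigoplus_{k\text{ odd}} E^\infty_k$ embeds in $HM^{\redu}(Y,\mathfrak{s}_c)\otimes\mathbb{Q} \cong \mathbb{Q}$, you get $\dim\bigl(\bigoplus_{k\text{ odd}} E^1_k\bigr) \leq 2$. But this space is the total homology of $\bigsqcup_{0\leq l < m/2}\bigl(\mathfrak{C}^+(N^{-l}) \sqcup \mathfrak{C}^-(N^{-l})\bigr)$, and the canonical monopoles at $l=0$ already account for dimension $2$. Hence $H^0(C,N^{-l}) = 0$ for $1 \leq l < m/2$.

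Your separate treatment of $l=1$ via Seifert data and the identity $e_{N^{-l}} + e_{N^{-(m-l)}} = -2$ are unnecessary: the spectral sequence handles all $1 \leq l < m/2$ uniformly, and only $l = m/2$ needs Lemma~\ref{lem: vanishing homology 1} as a separate input. Note also that by perturbing to transversality you avoid having to work with the clean-intersection Kuranishi description from Corollary~\ref{cor: Kuranishi}; the spectral sequence needs no explicit differentials at all.
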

\begin{proof}
The case $l=\frac{m}{2}$ has been proved in Lemma \ref{lem: vanishing homology 1} so we assume $1\leq l<\frac{m}{2}$. 
By \cite[Chapter 2, Theorem 3.17]{FLINthesis}, we may add a perturbation $\mathfrak{q}$ to the Chern--Simons--Dirac functional such that the critical manifolds $\textsf{C}^{o}$ and $\textsf{C}^{s}$ are unchanged, while all moduli spaces of flowlines are transverse. Consider the Floer chain complex $\check{C}$ with $\mathbb{Q}$-coefficient. The perturbed Chern--Simons--Dirac functional gives an action filtration $\check{C}$, inducing a spectral sequence. The absolute $\mathbb{Q}$-grading on $\check{C}$ induces a $\mathbb{Q}$-grading on each page of this spectral sequence. As a graded vector space, we have an isomorphism 
\[
E^{1}\cong \bigoplus_{\mathfrak{C}}  H_{*}(\mathfrak{C})[\grad(\mathfrak{C})].
\]
Here the direct sum is taken over all components of $\textsf{C}^{s}\cup \textsf{C}^{o}$. And $H_{*}(\mathfrak{C})[\grad(\mathfrak{C})]$ denotes the grading shift of $H_{*}(\mathfrak{C})$ up by $\grad(\mathfrak{C})$.
For each $k\in -2h+\mathbb{Z}$, we use $E^{n}_{k}$ to denote the subspace of the $E^n$-page in homologial grading $-2h+k$.
Then we can draw the following conclusions: 
\begin{enumerate}[(I)]
    \item Since $\grad(\mathfrak{C})+2h$ is even when $\mathfrak{C}\subset \textsf{C}^{s}$ and is odd when $\mathfrak{C}\subset \textsf{C}^{o}$, we have 
    \[
\bigoplus_{k \text{ odd}}E^{1}_{k}\cong \bigoplus_{\mathfrak{C}\subset \textsf{C}^{o}}  H_{*}(\mathfrak{C})[\grad(\mathfrak{C})]
    \]
    and
    \[
\bigoplus_{k \text{ even}}E^{1}_{k}\cong \bigoplus_{\mathfrak{C}\subset \textsf{C}^{s}}  H_{*}(\mathfrak{C})[\grad(\mathfrak{C})].
    \]
     \item For any $n\geq 1$ and any even $k$, the differential $d^{n}: E^{n}_{k}\to E^{n}_{k-1}$ is trivial. This is because  there is no flowline from $\textsf{C}^{s}$ to $\textsf{C}^{o}$. 
    \item By (\ref{eq: relative grading between reducible}), $\operatorname{dim}(E^{1}_{k})\leq 1$ for any even $k$.
    \item For any odd $k$, $\operatorname{dim}(E^{1}_{k})$ is even. This follows from the charge-conjugation symmetry between $\mathfrak{C}^{+}(N^{-l})$ and  $\mathfrak{C}^{-}(N^{-l})$.
    \item By (I), (II) and (III), we have 
    \[\operatorname{dim}(E^{1}_{k})-\operatorname{dim}(E^{\infty}_{k})\leq 1.\] 
    \item We have the isomorphism
\begin{equation}\label{eq: E-infinity-odd}
\bigoplus_{k \text{ odd }} E^{\infty}_{k}\cong \bigoplus_{k \text{ odd }}\widecheck{HM}_{-2h+k}(Y,\mathfrak{s})\subseteq HM^{\redu}(Y,\mathfrak{s})\cong \mathbb{Q}.
\end{equation}
\item By (IV) and (V), we see that 
\[
\operatorname{dim}(\bigoplus_{k \text{ odd}}E^{1}_{k})\leq 2.
\]
In other words, we have 
\[
\operatorname{dim}(H_{*}(\bigsqcup_{0\leq l<\frac{m}{2}}(\mathbb{P}(H^0(C;N^{-l}))\sqcup \mathbb{P}(H^0(C;N^{-l}))))\leq 2.
\]
Since $H^0(C;N^{0})\cong \mathbb{C}$, we must have $H^0(C;N^{-l})=0$ for any $1\leq l< \frac{m}{2}$.
\end{enumerate}
\end{proof}

\begin{lemma}\label{lem: lambda_1}
$\lambda_{0}=\frac{m}{2} \in \frac{1}{2} \mathbb{Z}$.    
\end{lemma}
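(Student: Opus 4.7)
The plan is a direct computation of the eigenspaces $V_\lambda$ of $D_B$ for $\lambda \in (0, m/2]$ using the results of Section \ref{section:dirac}. Since the metric on $C$ was chosen so that Theorem \ref{theorem:eigenspaces} applies with $\Lambda = m/2$, for every such $\lambda$ we have $V_\lambda = \mathcal{U}^+_\lambda \oplus \mathcal{U}^-_\lambda$. Because $\mathfrak{s}_c$ is self-conjugate with $K_C \cong N^{-m}$, Lemma \ref{proposition:selfconj} identifies both summands with a single sheaf cohomology group: if $\lambda = \delta_0 + n$ with $n \in \mathbb{Z}_{\geq 0}$ (where $\delta_0 = 0$ if $m$ is even and $\delta_0 = 1/2$ if $m$ is odd), then
\[
\mathcal{U}^{\pm}_{\lambda} \cong H^0\bigl(C,\, N^{n - \lfloor m/2 \rfloor}\bigr).
\]

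First I would rule out the existence of a positive eigenvalue strictly below $m/2$. For $0 < \lambda = \delta_0 + n < m/2$ we have $0 \leq n \leq \lfloor m/2 \rfloor - 1$, so the exponent $\ell := \lfloor m/2\rfloor - n$ satisfies $1 \leq \ell \leq \lfloor m/2 \rfloor \leq m/2$. Lemma \ref{lem: H0 vanishing} (together with Lemma \ref{lem: vanishing homology 1} in the boundary case $\ell = m/2$, which is needed only when $m$ is even) then forces $H^0(C, N^{-\ell}) = 0$, so $V_\lambda = 0$. The case $\lambda = 0$ (which can occur only when $m$ is even) has already been treated by Corollary \ref{cor: Dirac no kernel}.

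Next I would produce the eigenvalue at $m/2$ itself. Writing $m/2 = \delta_0 + \lfloor m/2 \rfloor$, the formula above specialises to $\mathcal{U}^{\pm}_{m/2} \cong H^0(C, \mathcal{O}_C) \cong \mathbb{C}$. Hence $V_{m/2}$ is nontrivial (in fact $2$-dimensional), so $m/2$ is an eigenvalue. Combined with the previous paragraph this yields $\lambda_0 = m/2$, proving the lemma.

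The only mildly delicate point is making sure Lemma \ref{lem: H0 vanishing} actually covers the full range up to $m/2$; since that lemma is stated as $1 \leq \ell \leq m/2$ (using Lemma \ref{lem: vanishing homology 1} at the endpoint), everything fits. No further estimates or moduli-space analysis are required here, so I expect no substantive obstacle.
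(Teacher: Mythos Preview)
Your proof is correct and follows essentially the same approach as the paper: both use Theorem~\ref{theorem:eigenspaces} (applicable since $\Lambda = m/2$) together with Lemma~\ref{proposition:selfconj} to reduce the question to the vanishing of $H^0(C,N^{-\ell})$ for $1 \le \ell \le m/2$, and then invoke Lemma~\ref{lem: H0 vanishing}. Your write-up is simply a more explicit unpacking of the formula $\lambda_0 = \min\{\delta_0 + n : H^0(C,N^{n-\lfloor m/2\rfloor}) \neq 0\}$ that the paper records. (A minor remark: the boundary case $\ell = m/2$ does not actually arise when ruling out eigenvalues in $(0,m/2)$, since $\ell = m/2$ would force $m$ even and $n=0$, giving $\lambda = 0$; but this over-caution does no harm.)
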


\begin{proof}
Recall that we chose the metric $g_C$ on $C$ such that Theorem \ref{theorem:eigenspaces} applies with $\Lambda = \frac{m}{2}$. Thus, since $\mathcal{U}_{m/2} = \mathbb{C}^2$, Lemma \ref{proposition:selfconj} gives the formula
\[
\lambda_{0}=\min\{\delta_0 + n\mid n \in \mathbb{Z}_{\geq 0} \text{ 
 and  } H^0 (C , N^{n-\lfloor m/2 \rfloor})\neq 0\}.
 \]
 From this and Lemma \ref{lem: H0 vanishing} we obtain that $\lambda_0 = \delta_0 + \lfloor m / 2 \rfloor = m /2$.
\end{proof}


By Lemma \ref{lem: H0 vanishing} and (\ref{eq: Co}), we have $\textsf{C}^{o}$ consists of two points: the canonical monopole $[c]$ and its conjugate $[\bar{c}]$. The boundary stable critical manifold with lowest grading is given by \[\mathfrak{C}_{0}=\mathbb{P}(\mathcal{U}_{\frac{m}{2}})=\mathbb{P}(\mathbb{C}\psi^{+}\oplus \mathbb{C}\psi^{-})\cong \mathbb{P}^{1}.\] 
Here $\psi^{+}$ (resp. $\psi^{-}$) is any nonzero vector in $\mathcal{U}^{+}_{\frac{m}{2}}$ (resp. $\mathcal{U}^{-}_{\frac{m}{2}}$.)

 By Corollary \ref{corollary:flowscontact}, the moduli space of unparametrized flowlines from $[c]$ to $\mathfrak{C}_{i}$ is empty when $i>0$ and consists of a single flowline $\gamma$ when $i=0$, which satisfies
\[\lim_{t\to +\infty}\gamma(t)=[\psi^{+}]\in \mathfrak{C}_{0}.\] Taking its conjugate, we obtain the unique flowline $\bar{\gamma}$ from $[\bar{c}]$ to $\mathfrak{C}_{0}$ that limits to $[\psi^{-}]\in \mathfrak{C}_{0}$. See Figure \ref{fig:flows} for a depiction of these.

\begin{figure}[h!]

    \centering
 \includegraphics[scale=0.15]{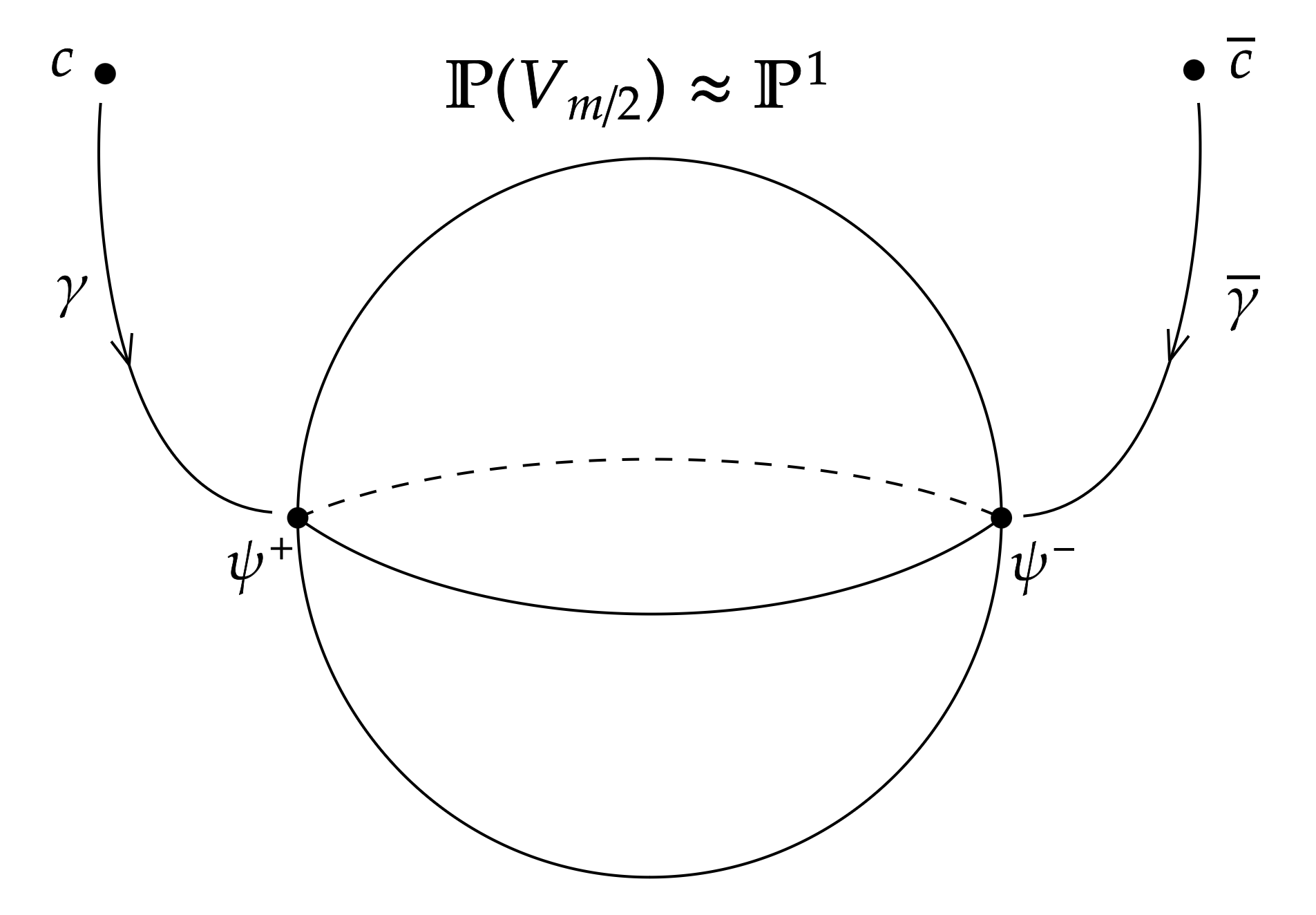}
\caption{The irreducible critical points $c$, $\overline{c}$ and the bottom reducible critical manifold $\mathbb{P}(V_{m/2})$, with the two flowlines $\gamma$, $\overline{\gamma}$ }
  \label{fig:flows}
\end{figure}

\begin{lemma}\label{lem: flowline transverse}
The flowlines $\gamma$ and $\bar{\gamma}$ are transversely cut-out.    
\end{lemma}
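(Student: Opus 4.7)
The plan is to invoke Corollary~\ref{corollary:flowscontact}. Since $\lfloor K_C/2 \rfloor \cong N^{-\lfloor m/2 \rfloor}$, the relevant integer there is $n = \lfloor m/2 \rfloor$, and by Lemma~\ref{lem: lambda_1} the first positive eigenvalue of the Dirac operator at the reducible monopole in $E = \mathbb{C}$ is $\lambda_0 = m/2 = \delta_0^+ + n$, which is precisely the required eigenvalue. Thus Corollary~\ref{corollary:flowscontact} reduces the transversality of $\gamma$ to the vanishing
\[
\bigoplus_{j = 1}^{\lfloor m/2 \rfloor} H^1(C, N^{-j}) = 0.
\]
By orbifold Serre duality on the genus-zero orbifold $C$, using $K_C \cong N^{-m}$, each summand satisfies $H^1(C, N^{-j}) \cong H^0(C, N^{-(m-j)})^*$, so the vanishing becomes $H^0(C, N^{-l}) = 0$ for $l$ in the ``upper half'' range $[\lceil m/2 \rceil, m-1]$.

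To obtain this extended vanishing, I would revisit the spectral-sequence argument from the proof of Lemma~\ref{lem: H0 vanishing}, but accounting for the two charge-conjugate components more precisely. By Proposition~\ref{proposition:MOYirreducible}, the topology of $\mathfrak{C}^-(N^{-l})$ is $\mathbb{P}(H^0(C, N^{-(m-l)}))$, which in general differs from that of $\mathfrak{C}^+(N^{-l}) = \mathbb{P}(H^0(C, N^{-l}))$. Incorporating both components into the bound $\dim(\bigoplus_{k \text{ odd}} E^1_k) \leq 2$ forced by the Floer-simplicity hypothesis $\widecheck{HM}^{\mathrm{red}}(Y, \mathfrak{s}_c) \cong \mathbb{Z}$, and observing that $\mathfrak{c} \in \mathfrak{C}^+(\mathcal{O}_C)$ and $\bar{\mathfrak{c}} \in \mathfrak{C}^-(\mathcal{O}_C)$ already saturate the bound with contribution $2$, all other components $\mathfrak{C}^\pm(N^{-l})$ must be empty. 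This yields $H^0(C, N^{-l}) = 0$ for every $l \in [1, m-1] \setminus \{m/2\}$, and the remaining case $l = m/2$ (when $m$ is even) is covered by Lemma~\ref{lem: vanishing homology 1}.

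Transversality of $\bar{\gamma}$ follows from that of $\gamma$ by the charge-conjugation symmetry of the Seiberg--Witten configuration space, under which $\gamma$ and $\bar{\gamma}$ are exchanged and all relevant linearised operators commute with conjugation. The main technical point is the careful tracking of the distinct topologies of the two charge-conjugate components $\mathfrak{C}^\pm(N^{-l})$ in the spectral-sequence bound, which extracts $H^0$-vanishing in the upper-half range not addressed by the statement of Lemma~\ref{lem: H0 vanishing} but implicit in its proof.
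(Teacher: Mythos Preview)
Your approach has a genuine gap in the crucial step. You claim that $\mathfrak{C}^-(N^{-l}) \cong \mathbb{P}(H^0(C, N^{-(m-l)}))$, so that the spectral-sequence bound from Lemma~\ref{lem: H0 vanishing} would also force $H^0(C, N^{-l}) = 0$ in the upper-half range $l \in [\lceil m/2 \rceil, m-1]$. But this is not correct: both $\mathfrak{C}^+(N^{-l})$ and $\mathfrak{C}^-(N^{-l})$ are diffeomorphic to $\mathbb{P}(H^0(C, N^{-l}))$, as the paper states just after \eqref{eq: Co}. Concretely, the irreducible locus consists of positive vortices in $N^{-l}$ for $0 \le l < m/2$ together with negative vortices in $N^{-l'}$ for $m/2 < l' \le m$; by the duality \eqref{eq:vortexduality} the latter are in bijection with positive vortices in $N^{l'-m} = N^{-(m-l')}$, and as $l'$ ranges over $(m/2, m]$ the exponent $m-l'$ ranges again over $[0, m/2)$. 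Thus the two halves contribute \emph{the same} cohomology groups, and the bound $\dim\bigl(\bigoplus_{k\text{ odd}} E^1_k\bigr) \le 2$ only yields $H^0(C, N^{-l}) = 0$ for $1 \le l \le m/2$. It gives no information about $H^0(C, N^{-l})$ for $l \in (m/2, m-1]$, so the vanishing $\bigoplus_{j=1}^{\lfloor m/2 \rfloor} H^1(C, N^{-j}) = 0$ needed for Corollary~\ref{corollary:flowscontact} does not follow. (Your reading is perhaps encouraged by the wording of Proposition~\ref{proposition:MOYirreducible}, but tracing through the proof and the vortex duality shows the components pair up as above; as a sanity check, under your identification $\mathfrak{C}^-(\mathcal{O}_C)$ would be $\mathbb{P}(H^0(C,K_C)) = \emptyset$ since $|K_C|$ has degree $-2$, contradicting the existence of $\bar{\mathfrak{c}}$.)

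The paper sidesteps this obstruction entirely with an indirect grading argument: if $\gamma$ were not transverse then $H^1_{sw}$ would be a nonzero complex vector space, forcing $\grad([c]) - \grad(\mathfrak{C}_0) = 1 - \dim_{\mathbb{R}} H^1_{sw} \le -1$. Then $[c]$ and $[\bar{c}]$ sit in gradings strictly below $\grad(\mathfrak{C}_0)$, so no differential in the spectral sequence can involve them (there are no flows from reducibles to irreducibles, and the grading rules out flows into reducibles), whence both survive to $E^\infty$ and $\dim\bigl(\bigoplus_{k\text{ odd}} E^\infty_k\bigr) \ge 2$, contradicting \eqref{eq: E-infinity-odd}.
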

\begin{proof}
By conjugation symmetry, it suffices to consider $\gamma$ only. 
Let $H^{0}_{sw}$ and $H^{1}_{sw}$ be the kernel and cokernel of the Seiberg-Witten equations linearized at $[\gamma]$. Since $\gamma$ is an isolated flowline, $H^{0}_{sw} = \mathbb{R}$ by Corollary \ref{cor: Kuranishi}. Thus 
\[
\grad([c])-\grad(\mathfrak{C}_{0})=1-\operatorname{dim}_{\mathbb{R}}H^{1}_{sw}
\]
Suppose $H^{1}_{sw}\neq 0$. Then $\grad([c])\leq \grad(\mathfrak{C}_{0})$. As a result, the spectral sequence in the proof of Lemma \ref{lem: H0 vanishing} collapses on $E^1$-page and we have 
\[\bigoplus_{k \text{ odd }} E^{\infty}_{k}\cong \bigoplus_{k \text{ odd }} E^{1}_{k}\cong \mathbb{Q}^{2}.\]
This is a contradiction with (\ref{eq: E-infinity-odd}). 
\end{proof}

\begin{corollary}\label{cor: moduli spaces regular} Given any two critical manifolds $\mathfrak{C}$ and $\mathfrak{C}'$, the moduli space of unparametrized flowlines from $\mathfrak{C}$ to  $\mathfrak{C}'$ is transversely cut-out. 
\end{corollary}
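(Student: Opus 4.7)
The plan is to decompose the proof of Corollary \ref{cor: moduli spaces regular} according to the types (irreducible, boundary-stable, boundary-unstable) of the critical manifolds $\mathfrak{C}$ and $\mathfrak{C}'$, and to handle each combination using the preceding results together with the charge-conjugation symmetry. For flows between two irreducibles, we have $\textsf{C}^o = \{[c], [\bar c]\}$ with $[c]$ a $+$-type vortex and $[\bar c]$ a $-$-type vortex, so Proposition \ref{proposition:MOYflowsirreducibles} rules out any non-constant flowlines between them and transversality holds vacuously.

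For flows from an irreducible to a boundary-stable reducible, I would combine Corollary \ref{corollary:flowscontact} with Lemma \ref{lem: H0 vanishing} and Lemma \ref{lem: lambda_1} to conclude that the only non-empty moduli spaces are the singletons $\{\gamma\} = \breve M([c], \mathfrak{C}_0)$ and $\{\bar \gamma\} = \breve M([\bar c], \mathfrak{C}_0)$, both already shown to be transversely cut-out in Lemma \ref{lem: flowline transverse}. The symmetric case of flows from a boundary-unstable reducible to an irreducible reduces to this one by applying the charge-conjugation symmetry already used to deduce the existence and uniqueness of $\bar \gamma$ from that of $\gamma$ (and similarly in Proposition \ref{proposition:MOYflowsreducible}).

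The substantive remaining case is that of reducible-to-reducible flowlines, i.e.\ $\mathfrak{C}, \mathfrak{C}' \subset \textsf{C}^s \cup \textsf{C}^u$. Here I plan to exploit that in the blow-up model such a trajectory fixes the connection at the unique reducible monopole $B$ (which has $\ker D_B = 0$ by Corollary \ref{cor: Dirac no kernel}) and reduces to a trajectory of the linear Dirac flow on the projectivized spinor space. Consequently the linearized Seiberg--Witten operator at such a trajectory decouples: the connection component contributes no kernel or cokernel since $b_1(Y)=0$ and the connection is pinned at $B$, and the spinor component is the linear Dirac evolution, whose Morse--Bott non-degeneracy (again by Corollary \ref{cor: Dirac no kernel}) matches the obstruction space exactly with the normal bundle to the target $\mathbb{P}(V_{\lambda_j})$.

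The main obstacle I expect is making this decoupling fully rigorous within Lin's Morse--Bott framework, and I would proceed by combining the abstract Kuranishi-vanishing argument of Corollary \ref{cor: Kuranishi} with the explicit spectral description of reducible moduli spaces in \cite{FLINthesis}: concretely, I would describe each such moduli space as an open subset of a product of projective subspaces of eigenspaces of $D_B$ (modulo the translation $\mathbb{R}$-action), verify that its actual dimension matches the expected one (a dimension count analogous to the one in Lemma \ref{lem: flowline transverse}), and invoke the fact that $\ker D_B = 0$ together with the simplicity of the spectral gaps to conclude that the Kuranishi map vanishes identically, yielding transversality.
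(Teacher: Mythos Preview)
Your approach is essentially the same as the paper's: a case analysis by type, with the irreducible-to-reducible case handled by Lemma \ref{lem: flowline transverse} and the reducible-to-reducible case handled by the linear structure of the Dirac flow in the blowup. The paper's proof is terser: it simply observes that non-empty moduli spaces occur only in the two cases $\mathfrak{C}\in\{[c],[\bar c]\}$, $\mathfrak{C}'=\mathfrak{C}_0$ and $\mathfrak{C}=\mathfrak{C}_i$, $\mathfrak{C}'=\mathfrak{C}_j$ with $j<i$, and for the latter cites directly the argument on \cite[p.~58]{FLINthesis} (which is precisely the decoupling you describe: since the flow projects to the single reducible $(B,0)$ downstairs and $\ker D_B=0$, transversality reduces to the linear spectral situation).

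One minor correction: your treatment of flows from a boundary-unstable reducible to an irreducible via charge conjugation is not quite right. Charge conjugation on the self-conjugate spin-c structure swaps $[c]\leftrightarrow[\bar c]$ and $\mathcal{U}^+_\lambda\leftrightarrow\mathcal{U}^-_\lambda$ within the \emph{same} eigenspace $V_\lambda$; it does not exchange boundary-stable with boundary-unstable critical manifolds. The correct reason this case is vacuous is simpler: any such blown-up trajectory would project downstairs to a flowline from the reducible to an irreducible, contradicting that $\mathcal{L}>0$ on irreducibles and $\mathcal{L}=0$ on the reducible locus. Since the case is empty anyway, this does not affect your overall argument.
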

\begin{proof}This moduli space is non-empty only in the following two cases:
\begin{itemize}
    \item $\mathfrak{C}=[c]$ or $[\bar{c}]$ and $\mathfrak{C}'=\mathfrak{C}_{0}$. This case follows from Lemma \ref{lem: flowline transverse}.
    \item $\mathfrak{C}=\mathfrak{C}_{i}$ and $\mathfrak{C}'=\mathfrak{C}_{j}$ for some $j<i$. In this case, the whole moduli space projects to a single critical point $(B,0)$ downstairs. The transversality follows from the argument \cite[Page 58]{FLINthesis}. 
\end{itemize}
\end{proof}

By Corollary \ref{cor: moduli spaces regular}, we can define all three flavors of monopole Floer homology using the Chern--Simons--Dirac functional (\ref{eq: CSD}), without any further perturbations.\footnote{To define the Chern-Simons-Dirac functional, one must choose an orthogonal connection on $TY$. It is customary to choose the Levi-Civita connection here, but instead we used the adiabatic connection in (\ref{eq: CSD}), which is the more natural choice. The Dirac operators associated to each of the two connections differ by a constant (Lemma 5.2.1 \cite{MOY}); hence the Chern--Simons--Dirac functional of the adiabatic connection is a \textit{tame} perturbation (in the sense of \cite{FLINthesis}) of the Chern--Simons--Dirac functional of the Levi-Civita connection. By Corollary \ref{cor: moduli spaces regular}, this is an \textit{admissible} perturbation in the sense of \cite{FLINthesis}.} Furthermore, $C^{o}$ is freely generated by $[c]$, $[\bar{c}]$ and we have $\check{\partial}([c])= \pm [\psi^{+}]$ and $\check{\partial}([\bar{c}])= \pm [\psi^{-}]$. After possibly replacing $[ \psi^{+} ]$ by $- [\psi^{+}]$ (and similarly for $[\psi^{-}]$) we may assume
\begin{equation}\label{eq: partial c}
\check{\partial}([c])=[\psi^{+}],\quad \check{\partial}([\bar{c}])=[\psi^{-}].
\end{equation}
Using (\ref{eq: partial c}), it is straightforward to compute $\widecheck{HM}(Y,\mathfrak{s}_{c})$ as a $\mathbb{Q}$-graded $\mathbb{Z}[U]$-module. For $d\in \mathbb{Q}$ and $n\in \mathbb{N}$, we use the notation 
\[
T^{+}_{d}=\frac{\mathbb{Z}[U,U^{-1}]}{U\cdot \mathbb{Z}[U]},\quad T^{+}_{d}(n)=\frac{\mathbb{Z}[U^{-n+1},U^{-n+2},\cdots]}{U\cdot \mathbb{Z}[U]}\]
with $\grad(U^{k})=d-2k$.

\begin{proposition}\label{prop: HM-check Y}
$\widecheck{HM}(Y,\mathfrak{s}_{c})\cong T^{+}_{-2h}\oplus T^{+}_{-2h-1}(1)$. Furthermore, a generator $\widecheck{HM}_{-2h}(Y,\mathfrak{s}_{c})\cong \mathbb{Z}$ (denoted by $\check{1}_{Y}$ henceforth) is represented by $(\mathfrak{C}_{0},\operatorname{id})$, and a generator of \[\widecheck{HM}_{-2h-1}(Y,\mathfrak{s}_{c})=HM^{\redu}(Y,\mathfrak{s}_{c})\cong \mathbb{Z}\] is represented by $[c]-[\bar{c}]+\ell$, where $\ell: [0,1]\to \mathfrak{C}_{0}$ is any path from $\psi^{+}$ to $\psi^{-}$. 
\end{proposition}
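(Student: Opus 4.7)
The plan is to analyse the Morse--Bott Floer chain complex $\check{C}_\ast = C^o \oplus C^s$ directly, using Corollary \ref{cor: moduli spaces regular}, which ensures that the unperturbed Chern--Simons--Dirac functional already gives transverse flowline moduli spaces. The irreducible part $C^o$ is freely generated over $\mathbb{Z}$ by $[c]$ and $[\bar{c}]$ in a single odd degree equal to $\mathrm{gr}(\mathfrak{C}_0) + 1$, and the reducible part is $C^s = \bigoplus_{i \geq 0} C_\ast(\mathfrak{C}_i)$, where each $\mathfrak{C}_i = \mathbb{P}(V_{\lambda_i}) \cong \mathbb{P}^{d_i - 1}$ contributes cellular homology $\mathbb{Z}$ in each even degree from $\mathrm{gr}(\mathfrak{C}_i)$ to $\mathrm{gr}(\mathfrak{C}_i) + 2(d_i - 1)$, where $d_i = \dim_{\mathbb{C}} V_{\lambda_i}$. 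By matching the top cycle $(\mathfrak{C}_0, \mathrm{id})$ to the expected generator of $\widecheck{HM}_{-2h}$, one pins down $\mathrm{gr}(\mathfrak{C}_0) = -2h - 2$ and hence $\mathrm{gr}([c]) = \mathrm{gr}([\bar{c}]) = -2h - 1$.

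First I would compute the homology of the reducible part $\bar{C}$, i.e.\ $\overline{HM}(Y, \mathfrak{s}_c)$. By equation (\ref{eq: relative grading between reducible}) the grading ranges of consecutive strata abut perfectly, so $\bar{C}$ has exactly one $\mathbb{Z}$ in every integer degree $\geq -2h-2$ on the stable side and dually on the unstable side. Together with the charge-conjugation symmetry acting on the pieces $\mathcal{U}_{\lambda_i}^{\pm}$, and the standard computation of $\overline{HM}$ for self-conjugate spin-c structures (cf.\ \cite{MOY}, \cite{KM}), this yields $\overline{HM}(Y,\mathfrak{s}_c)$ as the sum of two copies of $\mathbb{Z}[U, U^{-1}]$, graded so that the ``check'' image lands in $T^+_{-2h} \oplus T^+_{-2h-2}$ before the irreducibles are taken into account.

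Next I would fold in the irreducibles using $\check{\partial}[c] = [\psi^+]$ and $\check{\partial}[\bar{c}] = [\psi^-]$ from (\ref{eq: partial c}); Corollary \ref{corollary:flowscontact} confirms these are the only non-trivial components of $\check{\partial}$ on $C^o$. These relations kill the two point classes of $\mathfrak{C}_0 = \mathbb{P}^1$ at degree $-2h-2$, collapsing one of the towers, and produce a new odd-degree cycle $[c] - [\bar{c}] + \ell$ in degree $-2h - 1$: since $\check{\partial}\ell = [\psi^-] - [\psi^+]$, we get $\check{\partial}([c] - [\bar{c}] + \ell) = 0$, and the class is non-trivial in $\widecheck{HM}$ because $[c]$ and $[\bar{c}]$ cannot themselves lie in the image of $\check{\partial}$ (no irreducible critical manifold of higher dimension exists). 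The $U$-action sends this class to the hyperplane class in $H_0(\mathfrak{C}_0)$, which is now a boundary of $[c] - [\bar{c}]$ (up to sign), forcing $U \cdot ([c] - [\bar{c}] + \ell) = 0$; this gives the truncated tower $T^+_{-2h-1}(1)$. The remaining summand $T^+_{-2h}$ is generated by the fundamental class $(\mathfrak{C}_0, \mathrm{id})$ of $\mathbb{P}^1$, which survives as a cycle and matches onto the infinite tower inherited from $\overline{HM}$ via the cap-product description of $U$.

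The main technical obstacle I anticipate is verifying the detailed structure of the internal $\bar{\partial}$-differential on $\bar{C}$ connecting adjacent strata $\mathfrak{C}_i \to \mathfrak{C}_{i+1}$ and the precise identification of the $U$-action at the Morse--Bott chain level; this requires the intersection-theoretic description of the flowline moduli spaces from Corollary \ref{cor:topologymoduli} together with the clean-intersection statement of Corollary \ref{cor: Kuranishi}, and matches the known picture for Seifert-fibered rational homology spheres established in \cite{MOY}.
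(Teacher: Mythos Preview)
Your overall plan --- compute $\check{C}_\ast$ directly from the known critical manifolds and the differential \eqref{eq: partial c} --- is exactly what the paper has in mind when it says the computation is ``straightforward''. The identification of the cycle $[c]-[\bar{c}]+\ell$ is correct and is the heart of the matter: $\check\partial([c]-[\bar{c}]+\ell)=[\psi^+]-[\psi^-]+\partial\ell=0$, and since $\partial^o_o=\partial^u_o=0$ (no flows from reducibles to irreducibles, Lemma~4.9.5 in \cite{MOY}), no boundary can have nonzero $C^o$-component, so the class is nontrivial and primitive.

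There is, however, a genuine error in your middle step. For a rational homology sphere, $\overline{HM}(Y,\mathfrak{s})\cong\mathbb{Z}[U,U^{-1}]$ is \emph{one} tower, not two, regardless of whether $\mathfrak{s}$ is self-conjugate. The $U$-action is cap product with the hyperplane class inside each $\mathfrak{C}_i\cong\mathbb{P}^{d_i-1}$ together with the boundary-crossing contributions; this threads all of $H_\ast(\mathfrak{C}_i)$ into a single tower rather than leaving the $H_0$ and $H_{2(d_i-1)}$ pieces in separate towers. Consequently your picture ``$T^+_{-2h}\oplus T^+_{-2h-2}$ before irreducibles, then collapse one tower'' cannot be right: two point relations $[\psi^\pm]$ cannot kill an entire infinite tower $T^+_{-2h-2}$ --- they would leave everything in degrees $\leq -2h-4$ untouched, contradicting the stated answer. (Relatedly, your description of $U\cdot([c]-[\bar{c}]+\ell)$ as the ``hyperplane class in $H_0(\mathfrak{C}_0)$'' is off by one grading step: $U$ has degree $-2$, so it lands in degree $-2h-3$, which is simply zero in $\check{C}$.)

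The clean fix is to bypass $\overline{HM}$ altogether. By the general structure theory for rational homology spheres and the Floer-simple hypothesis, one already knows $\widecheck{HM}(Y,\mathfrak{s}_c)\cong T^+_{-2h}\oplus HM^{\redu}(Y,\mathfrak{s}_c)\cong T^+_{-2h}\oplus\mathbb{Z}$ as $\mathbb{Z}[U]$-modules. The tower $T^+_{-2h}$ lives in even degrees, so the nontrivial odd-degree cycle $[c]-[\bar{c}]+\ell$ at degree $\grad(\mathfrak{C}_0)+1=-2h-1$ must generate the $\mathbb{Z}$ summand, pinning down $HM^{\redu}\cong T^+_{-2h-1}(1)$. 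For the other generator, $(\mathfrak{C}_0,\operatorname{id})$ is a $\check\partial$-cycle (no lower boundary-stable manifolds, and $\partial^u_o=0$), sits in degree $-2h$, and is in the image of the map from $\overline{HM}$, hence generates the bottom of the tower. This is the argument the paper leaves implicit.
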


Let $Z=[0,1]\times Y$, the trivial cylinder cobordism. We consider the family of cylinder cobordisms $\widetilde{Z}$ from $Y$ to itself (which we call the \textit{twisted family of cylinders}), given by the trivial bundle $S^{1}\times Z$ with the \textit{non-trivial boundary parametrisation} 
\[
\partial\widetilde{Z}\xrightarrow{\cong} S^{1}\times (Y\sqcup Y)
\]
defined by 
\begin{equation}\label{eq: boundary parametrization}
(e^{i\theta},0,y)\mapsto (e^{i\theta},y),\quad (e^{i\theta},1,y)\mapsto (e^{i\theta}, e^{i\theta}\cdot y).    
\end{equation}
Here $\cdot$ denotes the circle action. Let $\widetilde{\mathfrak{s}}_{c}$ be the pull back of $\mathfrak{s}_{c}$ to $\widetilde{Z}$.

\begin{proposition}\label{prop: twisted cobordism W}
The cobordism induced map 
\[
\widecheck{HM}(\widetilde{Z},\widetilde{\mathfrak{s}}_{c}): \widecheck{HM}_{-2h-1}(Y,\mathfrak{s}_{c})\to \widecheck{HM}_{-2h}(Y,\mathfrak{s}_{c}). 
\]
sends $[c]-[\bar{c}]+\ell$ to $m \cdot \check{1}_{Y}$. 
\end{proposition}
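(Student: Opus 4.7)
The plan is to compute $\widecheck{HM}(\widetilde{Z},\widetilde{\mathfrak{s}}_c)$ at the chain level in the Morse--Bott model of Section \ref{section:familiescobordism}. The key observation is that a Seiberg--Witten solution on a fiber of $\widetilde{Z}/S^1$ over $\theta \in S^1$ is simply a Chern--Simons--Dirac flowline on $Y$, but the twisted right-boundary parametrisation (\ref{eq: boundary parametrization}) inserts the Seifert $S^1$-action into the evaluation map at $t=1$. Schematically, for a $\delta$-chain $\sigma \subset \mathfrak{C}$ the family cobordism map takes the form
\[
\widecheck{HM}(\widetilde{Z})(\sigma) = \sum_{\mathfrak{C}'}\big[\mathrm{ev}^\theta_+ : \sigma \times_{\mathfrak{C}} M_Z(\mathfrak{C},\mathfrak{C}') \times S^1 \to \mathfrak{C}'\big],
\]
with $\mathrm{ev}^\theta_+(x,\gamma,\theta) = e^{i\theta}\cdot\gamma(+\infty)$, summed over critical submanifolds $\mathfrak{C}'$. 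The constant-flow sector ($\mathfrak{C}=\mathfrak{C}'$, zero-energy solutions) contributes the \emph{Seifert $S^1$-sweep} $(S^1 \times \sigma,\,(\theta,x)\mapsto e^{i\theta}x)$ inside $\mathfrak{C}$.

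I next evaluate on each of the three cycle representatives. For $\sigma=[c]$, the constant-flow contribution is automatically negligible, because $[c]$ is a point in the zero-dimensional critical submanifold $\mathfrak{C}^+(\mathcal{O}_C)=\{[c]\}$ and the sweep factors through a point. The only non-constant contribution comes from the unique transverse flowline $\gamma:[c]\to[\psi^+]\in\mathfrak{C}_0$ provided by Corollary \ref{corollary:flowscontact}, which produces an $S^1$-parametrised chain in $\mathfrak{C}_0$; but by Proposition \ref{proposition:weights} and Lemma \ref{proposition:selfconj} the point $[\psi^+]$ is a fixed point of the Seifert $S^1$-action on $\mathfrak{C}_0$, so $\mathrm{ev}^\theta_+\equiv[\psi^+]$ and the contribution is again a negligible chain. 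Hence $\widecheck{HM}(\widetilde{Z})([c])=0$, and by charge-conjugation symmetry $\widecheck{HM}(\widetilde{Z})([\bar c])=0$.

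For $\sigma=\ell$, the crucial point is that no flowline exits $\mathfrak{C}_0$: within the pure-reducible blowup the Chern--Simons--Dirac gradient strictly decreases Dirac eigenvalues, and by Lemma \ref{lem: lambda_1} the eigenvalue $\lambda_0=m/2$ on $\mathfrak{C}_0$ is the smallest positive one; there are also no flows from reducibles to irreducibles \cite{MOY}. Only the constant-flow sweep survives, giving the 2-chain $(\theta,t)\mapsto e^{i\theta}\ell(t)$ in $\mathfrak{C}_0\cong\mathbb{P}^1$. By Proposition \ref{proposition:weights} and Lemma \ref{proposition:selfconj}, the Seifert $S^1$-action on $V_{m/2}=\mathcal{U}^+_{m/2}\oplus\mathcal{U}^-_{m/2}$ has weights $(0,m)$ on $(\psi^+,\psi^-)$, so in homogeneous coordinates $e^{i\theta}\cdot[a:b]=[a:e^{im\theta}b]$ with fixed points $[\psi^\pm]$. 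The sweep map collapses the two boundary circles of $S^1\times[0,1]$ to these fixed points and thus factors through a map $S^2\to\mathbb{P}^1$ whose degree, by a direct preimage count at a generic point, is $\pm m$. Hence this 2-chain represents $\pm m\cdot[\mathfrak{C}_0]=\pm m\cdot\check 1_Y$, the sign being fixed to $+m$ by the orientation conventions of Section \ref{section:familiescobordism}. Summing yields $\widecheck{HM}(\widetilde{Z})([c]-[\bar c]+\ell) = m\cdot\check 1_Y$.

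The main obstacle is rigorously justifying the chain-level formula for the twisted family cobordism map, since the full Kronheimer--Mrowka framework involves additional correction terms (such as $\partial^u_o\bar m^s_u$ coming from coupled reducible moduli) beyond the principal contribution used above. For the three generators at hand, however, these correction terms are treated by the same mechanism: each either involves the unique flowlines $\gamma,\bar\gamma$ whose target endpoints $[\psi^\pm]$ are $S^1$-fixed, or starts from $\mathfrak{C}_0$ from which no further flowlines emanate, so the resulting chains are negligible in $\mathfrak{C}_0$ or in other critical submanifolds. Transversality in the family is inherited from Corollary \ref{cor: moduli spaces regular}, with the $S^1$-parameter being generic.
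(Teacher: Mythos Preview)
Your argument is essentially the paper's own proof: the paper likewise computes $\check{m}([c])$, $\check{m}([\bar{c}])$, and $\check{m}(\ell)$ at the chain level, finding the first two negligible (because $\mathrm{ev}_+$ is constant at the fixed points $[\psi^\pm]$ of the Seifert action on $\mathfrak{C}_0$) and identifying the third with the degree-$m$ sweep $S^1\times[0,1]\to\mathfrak{C}_0\cong\mathbb{P}^1$ induced by the rotation action. One minor imprecision: the absolute weights on $V_{m/2}$ from Proposition~\ref{proposition:weights} are $(c-\tfrac{m}{2},\,c+\tfrac{m}{2})$ rather than $(0,m)$, but only their difference $m$ matters for the projective action, so your degree computation and conclusion are unaffected.
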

\begin{proof}
The cobordism map is induced by a chain map $\check{m}: \check{C}_{-2h-1}\to \check{C}_{-2h}$. We care about the elements
\[\check{m}([c]),\ \check{m}([c]),\ \check{m}(\ell).\] By constructions in \cite[Page 113]{FLINthesis}, we have the following explicit descriptions
\begin{enumerate}
    \item $\check{m}([c])$ is given by the $\delta$-chain 
    \[
    \operatorname{ev}_{+}: \mathcal{M}^{+}([c], \widetilde{Z},\mathfrak{C}_{0})\to \mathfrak{C}_{0}
    \]
    Here $\mathcal{M}([c], \widetilde{Z},\mathfrak{C}_{0})$ is  moduli space of solutions on $\widetilde{Z}$ that converge to $[c]$ and $\mathfrak{C}_{0}$ on the two ends. We have 
    \[
    \mathcal{M}([c], \widetilde{Z},\mathfrak{C}_{0})=\mathbb{R}\times S^1
    \]
    given by translations of $\gamma$ on each fiber of $\widetilde{Z}$. And its compactification $\mathcal{M}^{+}([c], \widetilde{Z},\bp)$ is just $[0,1]\times S^{1}$. The map $\operatorname{ev}_{+}$ is the constant map with value $\psi^{+}$. So $\check{m}([c])$ is a negligible $\delta$-chain. We see that $\check{m}([c])=0$. 
    
    \item Similarly, we have $\check{m}([\bar{c}])=0$. 
    \item We pick $\ell$ to be the geodesic from $\psi^{+}$ to $\psi^{-}$.  To compute  $\check{m}(\ell)$, we consider the moduli space 
    \[
    \mathcal{M}^{+}(\mathfrak{C}_0, \widetilde{Z},\mathfrak{C}_0)
    \]
    of solutions converging to $\mathfrak{C}_0$ on both ends. Such solutions have zero energy so must be constant solutions, so we see that 
    \[
    \mathcal{M}^{+}(\mathfrak{C}_0, \widetilde{Z},\mathfrak{C}_0)\cong S^{1}\times \mathfrak{C}_0.
    \]
    By (\ref{eq: boundary parametrization}),
    the evaluation maps are 
    \[
    \operatorname{ev}_{-} :S^1\times \mathfrak{C}_0 \to \mathfrak{C}_0, (e^{i\theta},[\psi])\mapsto [\psi]
    \]
and 
\[
    \operatorname{ev}_{+} :S^1\times \mathfrak{C}_0 \to \mathfrak{C}_0,\quad (e^{i\theta},[\psi])\mapsto  e^{i\theta}\cdot [\psi].
    \]
    Here $\cdot$ denotes the induced circle action on $\mathfrak{C}_{0}=\mathbb{P}(V_{\frac{m}{2}})$. By Proposition \ref{proposition:weights}, we have 
    \[
    e^{i\theta}\cdot [x\psi^{+}+y\psi^{-}]=[e^{im\theta}x\psi^{+}+y\psi^{-}].
    \]
In other words $e^{i\theta}\in S^{1}$ acts as a rotation on $\mathfrak{C}_{0}$ by the angle $m\theta$, with $\psi^{\pm}$ as fixed points. By definition $\check{m}(\ell)$ is given by the $\delta$-chain 
    \[
    \operatorname{ev}_{+}: \operatorname{ev}^{-1}_{-}(\ell)\to \mathfrak{C}_0.
    \]
Note that $\operatorname{ev}^{-1}_{-}(\ell)\cong S^{1}\times [0,1]$ and the induced map 
\begin{equation}\label{eq: ev_{+}}
\operatorname{ev}_{*}:H_{2}(S^{1}\times [0,1];S^{1}\times \{0,1\})\to H_2(\mathfrak{C}_0,\{\psi^{+},\psi^{-}\})
    \end{equation}
is just a multiplication by $m$. So $\check{m}(\ell)$ is homologous to $m\check{1}_{Y}$, see Figure \ref{fig:rotation}. This finishes the proof.
\end{enumerate}
\end{proof}
\begin{figure}[h!]

    \centering
 \includegraphics[scale=0.15]{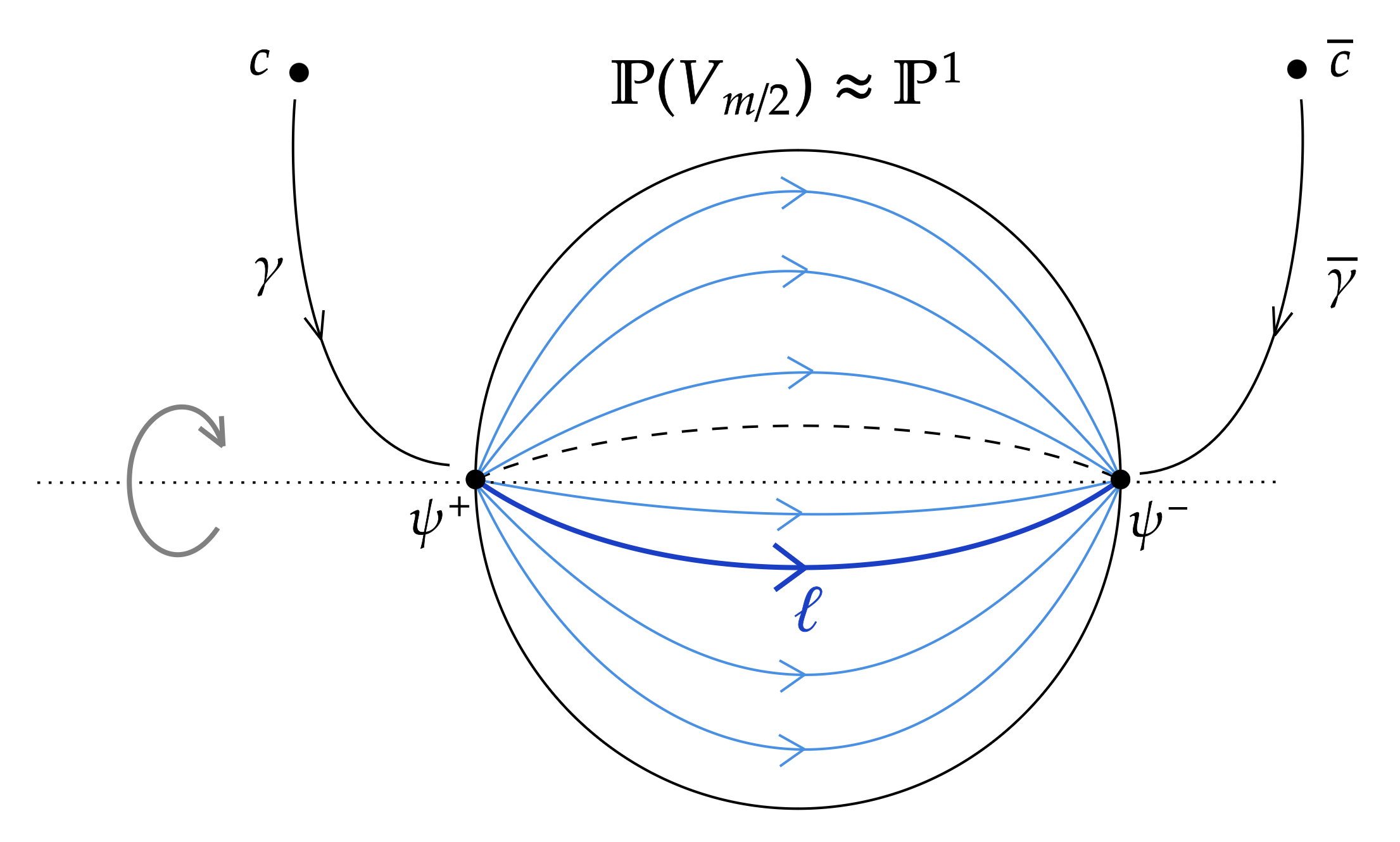}
\caption{The effect of the cobordism map induced by the Dehn twist on the chain $\ell$ is a rotation, tracing out $m$ copies of the fundamental class of the critical manifold $\mathbb{P}(V_{m/2})$}
  \label{fig:rotation}
\end{figure}


\begin{proof}[Proof of Theorem \ref{thm: main}] Let $\mathfrak{s}_{M}$ be the canonical spin-c structure on $M$. 
It suffices to show that $\FSW(M,\mathfrak{s}_{M},\tau^{k}_{M})\neq 0$ for any $k\neq 0$. 

Let $\widetilde{M}$ be the mapping torus of $\tau_{M}$. Then, as a family of 4-manifolds with boundary $\widetilde{M}$ is isomorphic to a  trivial bundle with non-trivial boundary parametrisation 
\[
\partial \widetilde{M}\cong S^{1}\times Y,\quad (e^{i\theta},1,y)\mapsto (e^{i\theta}, e^{i\theta}\cdot y).
\]
By removing small disks in each fiber, we obtain a family cobordism $\widetilde{M}^{\circ}$ from $S^{3}$ to $Y$. We have a decomposition 
\[
\widetilde{M}^{\circ}\cong \overline{M}^{\circ}\cup \widetilde{Z},
\]
where $\overline{M}^{\circ}$ denotes the trivial bundle $S^{1}\times M^{\circ}$ with trivial boundary parametrization. Let $\mathfrak{s}_{\widetilde{M}^{\circ}}$ be any family spin-c structure that restricts to $\mathfrak{s}_{M}$ on fibers. Then by the gluing formula (\ref{eq: hm arrow gluing}), we have 
\[
\FSW(M,\mathfrak{s}_{M},\tau_{M})=\widecheck{HM}(\widetilde{Z},\widetilde{\mathfrak{s}}_{c})\circ  \overrightarrow{HM}(M^{\circ},\mathfrak{s}_{M^{\circ}})(\hat{1})
\]

Since $M$ is a weak symplectic filling of a rational homology $3$-sphere then by a result of Eliashberg \cite{eliashberg} the symplectic form can be modified to give a \textit{strong} symplectic filling (i.e. the symplectic form near boundary is modelled on a neighbourhood of $(Y, \xi )$ in its symplectisation). By gluing $M$ with a symplectic cap with $b^{+}>0$, one obtains a closed symplectic $4$-manifold, whose Seiberg-Witten invariant (in the canonical spin-c structure) is $\pm 1$ \cite{Taubes94}. Therefore, $\overrightarrow{HM}(M^{\circ},\mathfrak{s}_{M^{\circ}})(\hat{1})$ must be a primitive element of $HM^{\redu}(Y,\mathfrak{s}_{c})$, i.e. we have 
\begin{equation}\label{eq: HM-arrow M}
\overrightarrow{HM}(M^{\circ},\mathfrak{s}_{M^{\circ}})(\hat{1})=\pm([c]-[\bar{c}]+\ell).    
\end{equation}
Then by Proposition \ref{prop: twisted cobordism W}, we have 
\[
\FSW(M,\mathfrak{s}_{M},\tau_{M})=\pm \widecheck{HM}(\widetilde{Z},\widetilde{\mathfrak{s}}_{c})([c]-[\bar{c}]+\ell)=\pm m \check{1}_{Y}\in \widecheck{HM}(Y,\mathfrak{s}_{c}).
\]
For any $k\neq 0$, by (\ref{eq: FSW additive}), we have 
\begin{equation}\label{eq: FSW tau nonzero}
FSW(M,\mathfrak{s}_{M},\tau^k_{M})=\pm mk \check{1}_{Y}\neq 0.    
\end{equation}
\end{proof}

The following result, together with (\ref{eq: FSW tau nonzero}), implies Theorem \ref{thm: factorization}. It is also the additional ingredient needed in the proof of Corollary \ref{symplectic torelli}.
\begin{proposition}\label{prop: FSW vanishing for Dehn-Seidel twists}
Let $M$ be a smooth compact $4$-manifold with connected boundary $Y$ (which is \textbf{not} assumed to be Floer-simple). Let $S_{i}$ ($1\leq i\leq n$) be a collection $2$-spheres smoothly embedded in the interior of $M$, with $S_{i}\cdot S_{i}<0$. Let $\delta_{i} \in \MCG(M)$ be a diffeomorphism supported in a tubular neighborhood of $S_{i}$, and let $\mathfrak{s}$ be a spin-c structure on $M$ such that $ c_{1}(\mathfrak{s}) \cdot S_{i} =0$ for all $i$. Assume $b^{+}(M)\geq 3$. Then one has 
\[
\FSW(M,\mathfrak{s},\delta_{1}\circ\cdots\circ \delta_{n})=0\in \widecheck{HM}(Y,\mathfrak{s}|_{Y}).
\]
\end{proposition}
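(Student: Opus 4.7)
The plan is to prove vanishing in three steps: additivity to reduce to each $\delta_i$, a gluing along $\partial \nu(S_i)$ to localise the invariant to a neighbourhood of $S_i$, and then a vanishing argument for the local piece using the spin constraint on $\mathfrak{s}|_{\nu(S_i)}$.

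Since $b^+(M)\geq 3$, the additivity property (\ref{eq: FSW additive}) gives
\[
\FSW(M,\mathfrak{s},\delta_1\circ\cdots\circ\delta_n) = \sum_{i=1}^n \FSW(M,\mathfrak{s},\delta_i),
\]
so it suffices to show $\FSW(M,\mathfrak{s},\delta_i)=0$ for each $i$. Note that the parity $c_1(\mathfrak{s})\cdot S_i \equiv S_i\cdot S_i \pmod 2$ combined with the hypothesis $c_1(\mathfrak{s})\cdot S_i=0$ forces $S_i\cdot S_i$ to be even (in particular $\leq -2$). Fix $i$ and set $\nu=\nu(S_i)$, $\Sigma=\partial\nu$ (a lens space determined by $S_i\cdot S_i$), and $M'=\overline{M\setminus\nu}$. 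Since $\delta_i$ is supported in the interior of $\nu$, the mapping torus decomposes as $\widetilde{M}=\overline{M'}\cup_{S^1\times\Sigma}\widetilde{\nu}$ with $\overline{M'}=S^1\times M'$ trivial and $\widetilde{\nu}$ the mapping torus of $\delta_i|_\nu$. Placing the basepoint ball in a region of $\nu$ where $\delta_i$ is the identity, and observing that $b^+(M')=b^+(M)\geq 3>0$ (since $[S_i]$ only contributes to $b^-$), the gluing formula (\ref{eq: hmhat gluing}) yields
\[
\FSW(M,\mathfrak{s},\delta_i)=\overrightarrow{HM}(M',\mathfrak{s}|_{M'})\bigl(\alpha\bigr),
\]
where $\alpha := \widehat{HM}(\widetilde{\nu}^\circ,\mathfrak{s}_{\widetilde{\nu}^\circ})(\hat{1}) \in \widehat{HM}(\Sigma,\mathfrak{s}|_\Sigma)$ and $\widetilde{\nu}^\circ$ is $\widetilde{\nu}$ with a basepoint disk removed, viewed as a family cobordism from $S^3$ to $\Sigma$.

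The proof is thus reduced to showing $\alpha=0$. The hypothesis $c_1(\mathfrak{s})\cdot S_i=0$, combined with $H^2(\nu;\mathbb{Z})\cong\mathbb{Z}$ (with pairing against $[S_i]$ an isomorphism onto $\mathbb{Z}$), forces $c_1(\mathfrak{s}|_\nu)=0$, so $\mathfrak{s}|_\nu$ is a spin structure. Since $\nu$ is negative-definite ($b^+(\nu)=0$), one expects the Seiberg--Witten moduli spaces on the family $\widetilde{\nu}$ to contain no irreducible solutions (as in the classical case of spin structures on negative-definite 4-manifolds), so that $\alpha$ is entirely determined by reducible contributions. The main obstacle will be controlling these reducible contributions: via a neck-stretching argument along $\Sigma$, the computation should reduce to an explicit calculation in the Morse--Bott Floer complex of $\Sigma$ in the spin structure, where the relevant chain must be shown to be a boundary. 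Since $\delta_i$ is arbitrary (not necessarily a Dehn twist), any such argument must rely only on the topology of $\nu$ and the spin condition on $\mathfrak{s}|_\nu$, not on specific structure of $\delta_i$; handling the wall-crossing phenomena inherent in 1-parameter families with $b^+(\text{fiber})=0$ will be the most delicate point.
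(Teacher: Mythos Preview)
Your first two steps---reducing via additivity to a single $\delta_i$, and then using the gluing formula (\ref{eq: hmhat gluing}) along $\Sigma=\partial\nu$ to localise to the element $\alpha=\widehat{HM}(\widetilde{\nu}^\circ,\mathfrak{s}_{\widetilde{\nu}^\circ})(\hat{1})\in\widehat{HM}(\Sigma,\mathfrak{s}|_\Sigma)$---match the paper's proof exactly.

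The divergence is in the final step, where you propose to show $\alpha=0$ by analysing moduli spaces on the family $\widetilde{\nu}$ (absence of irreducibles, control of reducibles via neck-stretching and the Morse--Bott chain complex of $\Sigma$). This is unnecessary, and as you yourself flag, the wall-crossing issues when $b^+(\nu)=0$ make this route genuinely delicate and your sketch does not resolve them. The paper bypasses all of this with a one-line grading argument: the family cobordism map shifts the absolute $\mathbb{Q}$-grading by $\tfrac{c_1^2(\mathfrak{s}|_{\nu^\circ})-\sigma(\nu)}{4}+\dim S^1$, and since $c_1(\mathfrak{s}|_\nu)=0$ (your spin observation) and $\sigma(\nu)=-1$, one finds $\mathrm{gr}(\alpha)=\tfrac{1}{4}$. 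But $\Sigma\cong L(p,1)$ is an $L$-space, and in the self-conjugate spin-c structure $\widehat{HM}(\Sigma)$ is supported in gradings $-\tfrac{3}{4}+2\mathbb{Z}$; since $\tfrac{1}{4}\notin -\tfrac{3}{4}+2\mathbb{Z}$, we get $\alpha=0$ immediately. This argument uses only the topology of $\nu$ and the spin condition, exactly as you correctly anticipated it must, but avoids any direct analysis of moduli spaces or wall-crossing.
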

\begin{proof} We first prove that $\FSW(M,\mathfrak{s},\delta_{i})$ is well-defined. Let $N_{i}$ be a tubular neighborhood of $S_{i}$ such that $f_{i}$ is supported on $\operatorname{int}(N_{i})$. Since $b^{+}(N_{i})=b^{1}(N_{i})=0$, $\delta_{i}$ preserves the homological orientation on $M$. Since $\mathfrak{s}|_{N_{i}}$ is the unique self-conjugate spin-c structure on $N_{i}$, $\delta_i$ also preserves $\mathfrak{s}$.

By (\ref{eq: FSW additive}), it suffices to show  $\FSW(M,\mathfrak{s},\delta_{i})=0$. Let $Y_{i}=\partial N_{i}$ and let $M'_{i}=M\setminus \operatorname{int}(N_{i})$. Let $\widetilde{M}_{i}$ and $\widetilde{N}_{i}$ be the mapping torus of $\delta_{i}$ and $\delta_{i}|_{N_{i}}$ respectively. We take a constant section of $\widetilde{N}_{i}$. By removing small balls around this section, we obtain a family cobordism $\widetilde{N}^{\circ}_{i}$ from $S^3$ to $Y_{i}$ and a family cobordism $\widetilde{M}^{\circ}$ from $S^{3}$ to $Y$. Let $\mathfrak{s}_{\widetilde{M}_{i}}$ be a family spin-c structure on $\widetilde{M}_{i}$ that restricts to $\mathfrak{s}$ on each fiber. We use $\mathfrak{s}_{X}$ to denote restriction of  $\mathfrak{s}_{\widetilde{M}_{i}}$ to various $X\subset \widetilde{M}_{i}$. Note the decomposition 
\[
\widetilde{M}_{i}^{\circ}\cong \widetilde{N}^{\circ}_{i}\cup (M'_{i}\times S^1).
\]
By (\ref{eq: hmhat gluing}), we have
\[
\FSW(M,\mathfrak{s},\delta_{i})=\overrightarrow{HM}(\widetilde{M}_{i}^{\circ},\mathfrak{s}_{\widetilde{M}_{i}^{\circ}})(\hat{1})=\overrightarrow{HM}(M'_{i},\mathfrak{s}_{M'_{i}})\circ \widehat{HM}(\widetilde{N}_{i}, \mathfrak{s}_{\widetilde{N}^{\circ}_{i}})(\hat{1})
\]
By the grading shift formula, one has 
\[
\grad(\widehat{HM}(\widetilde{N}_{i}, \mathfrak{s}_{\widetilde{N}^{\circ}_{i}})(\hat{1}))=\grad(\hat{1})+\frac{c^2_{1}(\mathfrak{s}_{N^{\circ}_{i}})-\sigma(N_{i})}{4}+\operatorname{dim}(S^1)=\frac{1}{4}.
\]
But $Y_{i} \cong L(p,1)$ (where $p = - S_i \cdot S_i$) is an $L$-space and $\widehat{HM}(Y_{i},\mathfrak{s}_{Y_{i}})$ is supported in grading $-\frac{3}{4}+2\mathbb{Z}$ \cite[Proposition 3.3]{monolens}. So $\widehat{HM}(\widetilde{N}_{i}, \mathfrak{s}_{\widetilde{N}^{\circ}_{i}})(\hat{1})=0$. This shows that $\FSW(M,\mathfrak{s},\delta_{i})=0$. The proof is hence finished.
\end{proof}



\section{Proofs of Theorems \ref{thm: MCG exotic R4}, \ref{thm: Mazur} and \ref{thm: closed}}
\label{section other main results}
In this section, we will prove Theorems \ref{thm: MCG exotic R4}, \ref{thm: Mazur} and \ref{thm: closed}. We start with the following proposition, which directly implies Theorem \ref{thm: closed}.

\begin{proposition}\label{prop: theorem D refined}
Let $X$, $Y$ and $\mathfrak{s}$ be as in Theorem \ref{thm: closed}. Then we have 
\[
\FSW(X,\mathfrak{s},\tau_{(X,Y)})\neq 0.
\]
In particular, for any $k\neq 0$, $\tau^k_{(X,Y)}$ is not smoothly isotopic to a diffeomorphism supported on $D^{4}$.     
\end{proposition}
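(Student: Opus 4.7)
The idea is to apply the family gluing formulas of Section \ref{section:familiescobordism} in order to reduce $\FSW(X,\mathfrak{s},\tau_{(X,Y)})$ to the computations already carried out in the proof of Theorem \ref{thm: main}, plus one extra contribution coming from the negative-definite cap $W$. The ``in particular'' clause will then follow formally from the general principle recalled at the end of Section \ref{section:familiescobordism}: additivity of $\FSW$ together with its vanishing on ball-supported diffeomorphisms forces $\FSW(X,\mathfrak{s},\tau_{(X,Y)}^{k})=k\cdot\FSW(X,\mathfrak{s},\tau_{(X,Y)})\neq 0$ for every $k\neq 0$.

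The first step is to set up the gluing. Because $\tau_{(X,Y)}$ is supported in a collar of $Y\subset X$, the family mapping torus decomposes as $\widetilde{X}=\overline{M}\cup_{Y}\widetilde{Z}\cup_{Y}\overline{W}$, where $\overline{M},\overline{W}$ are trivial families and $\widetilde{Z}$ is the twisted family of cylinders of Proposition \ref{prop: twisted cobordism W}. Removing a constant section of small balls from both the $M$-side and the $W$-side yields a family cobordism $\widetilde{X}^{\circ}$ from $S^{3}$ to $S^{3}$; the hypothesis $d(\mathfrak{s})=-1$ permits applying the family Seiberg--Witten formula (\ref{eq: FSW formula}):
\[
\FSW(X,\mathfrak{s},\tau_{(X,Y)})\cdot\check{1}=\overrightarrow{HM}(\widetilde{X}^{\circ},\mathfrak{s}_{\widetilde{X}^{\circ}})(\hat{1}).
\]
Since $\overline{M}^{\circ}$ is a trivial family with $b^{+}(M)>0$ and $\overline{W}^{\circ}$ is a trivial family, successive applications of the gluing formulas (\ref{eq: hm arrow gluing}) and (\ref{eq: hmcheck gluing}) factor the right-hand side as
\[
\widecheck{HM}(W^{\circ},\mathfrak{s}|_{W^{\circ}})\circ\widecheck{HM}(\widetilde{Z},\widetilde{\mathfrak{s}}_{c})\circ\overrightarrow{HM}(M^{\circ},\mathfrak{s}|_{M^{\circ}})(\hat{1}).
\]
By (\ref{eq: HM-arrow M}) together with Proposition \ref{prop: twisted cobordism W} -- exactly the computation underlying Theorem \ref{thm: main} -- the composition of the two rightmost maps sends $\hat{1}$ to $\pm m\,\check{1}_{Y}$, where $m=\chi(C)/\deg(N)\in\mathbb{Z}_{>0}$.

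Everything therefore reduces to showing $\widecheck{HM}(W^{\circ})(\check{1}_{Y})=\pm\check{1}_{S^{3}}$. By Proposition \ref{prop: HM-check Y} the generator $\check{1}_{Y}$ lies at the bottom of the tower $T^{+}_{-2h}\subseteq\widecheck{HM}(Y,\mathfrak{s}_{c})$, so it is the image of the corresponding tower generator under $\overline{HM}(Y,\mathfrak{s}_{c})\to\widecheck{HM}(Y,\mathfrak{s}_{c})$. By naturality of the exact triangle under the cobordism $W^{\circ}$, the question reduces to computing the bar map $\overline{HM}(W^{\circ})$ on tower generators. For an oriented spin-c cobordism between rational homology spheres this bar map is, up to grading shift, multiplication by $\pm 1$ (both $\overline{HM}(Y,\mathfrak{s}_{c})$ and $\overline{HM}(S^{3})$ are copies of $\mathbb{Z}[U,U^{-1}]$), and the condition $d(\mathfrak{s})=-1$ forces the grading shift of $\widecheck{HM}(W^{\circ})$ to equal exactly $+2h$, so $\check{1}_{Y}\in\widecheck{HM}_{-2h}$ is sent to a unit multiple of $\check{1}_{S^{3}}\in\widecheck{HM}_{0}$. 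Assembling all the steps,
\[
\FSW(X,\mathfrak{s},\tau_{(X,Y)})=\pm m\neq 0.
\]

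The principal technical obstacle is this final identification when $b^{+}(W)=0$ and in particular when $b^{1}(W)>0$: in that regime the reducible moduli space on $W$ becomes a non-trivial Jacobian torus and wall-crossing for the un-reduced invariant is potentially relevant. At the bar level, however, these effects wash out, since $\overline{HM}$ of a rational homology sphere is rigidly $\mathbb{Z}[U,U^{-1}]$ up to shift and the cobordism-induced bar map on towers is determined entirely by grading data together with the chosen homological orientations -- both of which are pinned down by the hypotheses $d(\mathfrak{s})=-1$ and the Floer-simplicity of $Y$.
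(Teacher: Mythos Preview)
Your proof is correct and takes essentially the same approach as the paper: decompose the mapping torus as $\overline{M}\cup\widetilde{Z}\cup\overline{W}$, apply the gluing formulas (\ref{eq: hm arrow gluing}), (\ref{eq: hmcheck gluing}), (\ref{eq: FSW formula}), and reduce to the computations (\ref{eq: HM-arrow M}) and Proposition~\ref{prop: twisted cobordism W} from the proof of Theorem~\ref{thm: main}, leaving only the evaluation of $\widecheck{HM}(W^{\circ})(\check{1}_{Y})$. For that final step the paper is slightly more direct than your bar-map route: it simply asserts that $b^{+}(W)=0$ forces $\widecheck{HM}(W^{\circ})(\check{1}_{Y})=U^{-l}\check{1}$ for some $l\geq 0$ (the standard Fr\o yshov-type statement for negative-definite cobordisms), and then observes that since the left-hand side $\FSW\cdot\check{1}$ already lies in $\widecheck{HM}_{0}(S^{3})$, the grading comparison forces $l=0$---so no separate computation of the grading shift of $\widecheck{HM}(W^{\circ})$ from $d(\mathfrak{s})=-1$ is needed, and the $b_{1}(W)>0$ issue you flag is not addressed separately in the paper either.
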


\begin{proof} Let $\widetilde{X} \rightarrow S^1$ be the mapping torus of $\tau_{(X,Y)}$ and let $\mathfrak{s}_{\widetilde{X}}$ be any family spin-c structure that restricts to $\mathfrak{s}$ on fibers. As an $S^1$-family of manifolds, the mapping torus of $\tau_{(X,Y)}$ has a decomposition
\[
\widetilde{X}=\overline{M}\cup \widetilde{Z}\cup \overline{W}.
\]
Here $\overline{M}$ and $\overline{W}$ denote trivial families with fiber $M$ and $W$ respectively. And $\widetilde{Z}$ is the twisted family as defined in Section \ref{section: proof main}. 
By removing tubular neighborhoods of constant sections in $\overline{M}$ and   $\overline{W}$, we obtain family cobordisms $\overline{M}^{\circ}$ and $\overline{W}^{\circ}$ respectively. Set $\widetilde{X}^{\circ}=\overline{M}^{\circ}\cup \widetilde{Z}\cup \overline{W}^{\circ}$. Then by equations (\ref{eq: hm arrow gluing}), (\ref{eq: hmcheck gluing}) and (\ref{eq: FSW formula}), we have 
\begin{equation}\label{eq: FSW calculation}
\begin{split}
  \FSW(\widetilde{X}, \mathfrak{s}_{\widetilde{X}})\cdot \check{1}=&\overrightarrow{HM}(\overline{M}^{\circ}\cup \widetilde{Z}\cup \overline{W}^{\circ},\mathfrak{s}_{\overline{M}^{\circ}}\cup \mathfrak{s}_{\widetilde{Z}}\cup \mathfrak{s}_{\overline{W}^{\circ}})(\hat{1})\\
  =& \widecheck{HM}(\widetilde{Z}\cup \overline{W}^{\circ},\mathfrak{s}_{\widetilde{Z}}\cup \mathfrak{s}_{\overline{W}^{\circ}})\circ \overrightarrow{HM}(M^{\circ},\mathfrak{s}_{M^{\circ}})(\hat{1})\\
  =& \widecheck{HM}(W^{\circ},\mathfrak{s}_{W^{\circ}})\circ \widecheck{HM}(\widetilde{Z},\mathfrak{s}_{\widetilde{Z}})\circ \overrightarrow{HM}(M^{\circ},\mathfrak{s}_{M^{\circ}})(\hat{1})\\
  =& \pm m \cdot \widecheck{HM}(W^{\circ},\mathfrak{s}_{W}^{\circ})(\check{1}_{Y})
\end{split}
\end{equation}
On the other hand, since $b^{+}(W)=0$, we have 
\begin{equation}\label{eq: hm-check W}
 \widecheck{HM}(W^{\circ},\mathfrak{s}_{W}^{\circ})(\check{1}_{Y})=U^{-l}\cdot \check{1}   
\end{equation}
 for some $l\geq 0$. Comparing (\ref{eq: FSW calculation}) and (\ref{eq: hm-check W}), we see that $l=0$ and \[\FSW(\widetilde{X},\mathfrak{s}_{\widetilde{X}})=\pm m\neq 0.\] 
\end{proof}
Proof of Theorem \ref{thm: Mazur} relies on the following key proposition. 

\begin{proposition}\label{prop: S1-cobordism}Let $Y$ be one of the families.
\begin{equation}\label{eq: subcollection of Casson-Harer (2)}
 \left\{
\begin{array}{ll}
\Sigma(p,ps+1,ps+2) & \text{($s\geq1$, and odd $p\geq3$)},\\
\Sigma(p,ps-1, ps-2) & \text{($s \geq 2$, and odd $p\geq5$),}
\end{array}
\right.   
\end{equation}
Then there exists a Floer simple manifold $Y'$ and a cobordism $N$ from $Y'$ to $Y$ that satisfies the following conditions:
\begin{enumerate}[(i)]
     \item $b_{1}(N)=0$ and $b^{+}_{2}(N)=0$.
    \item The Seifert $S^1$-action on $\partial N$ extends to a smooth $S^1$-action on $N$.
    \item There exists a spin-c structure $\mathfrak{s}_{N}$ that restricts to the canonical spin-c structure $\mathfrak{s}_{Y'}$ on $Y'$ and satisfies 
    \begin{equation}\label{eq: grading shift N}
    2h(Y',\mathfrak{s}_{Y'})=\frac{c^2_{1}(\mathfrak{s}_{N})-2\chi(N)-3\sigma(N)}{4}.    
    \end{equation}
\end{enumerate}
\end{proposition}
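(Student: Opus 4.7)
The plan is to construct $N$ as an $S^1$-equivariant negative-semidefinite plumbed $4$-manifold, obtained from the minimal good resolution of $Y$ by excising the interior of a suitable sub-plumbing. I would fix the negative-definite star-shaped plumbing graph $\Gamma$ associated to the minimal good resolution of the quasi-homogeneous singularity with link $Y = \Sigma(p, ps \pm 1, ps \pm 2)$, and let $W_\Gamma$ be the associated plumbed $4$-manifold. This $W_\Gamma$ carries a natural $S^1$-action by rotation on the $D^2$-bundle factors, which restricts to the Seifert $S^1$-action on $\partial W_\Gamma = Y$. I would then select a connected subgraph $\Gamma' \subset \Gamma$, obtained by pruning outermost leaves from the three chains emanating from the central vertex, so that the boundary $\partial W_{\Gamma'}$ is a Seifert integer homology sphere $Y'$ whose associated quasi-homogeneous surface singularity is minimally elliptic; this forces $HM^{\redu}(Y', \mathfrak{s}_{Y'}) \cong \mathbb{Z}$ by \cite{nemethiplumbed}, and self-conjugacy of $\mathfrak{s}_{Y'}$ is automatic since $Y'$ is an integer homology sphere, making $Y'$ Floer simple. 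Setting $N := W_\Gamma \setminus \interior(W_{\Gamma'})$ gives an $S^1$-equivariant cobordism with $\partial N = -Y' \sqcup Y$.

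Next, I would verify the required properties as follows: condition (i) ($b_1(N) = b^+(N) = 0$) follows from $W_\Gamma$ being simply connected and negative definite, together with a Mayer--Vietoris argument applied to the decomposition $W_\Gamma = N \cup W_{\Gamma'}$; condition (ii) is immediate from the $S^1$-equivariance of the plumbing operation, which is compatible with excision of an equivariant sub-plumbing. For condition (iii), I would take $\mathfrak{s}_N$ to be the restriction to $N$ of a characteristic spin-c structure on $W_\Gamma$ whose restriction to both boundary components $Y$ and $Y'$ agrees with the canonical spin-c structure $\mathfrak{s}_c$; such a spin-c structure exists because both $\mathfrak{s}_c$ on $Y$ and $\mathfrak{s}_c$ on $Y'$ arise from the same $S^1$-equivariant structure on the ambient plumbing $W_\Gamma$.

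The grading identity \eqref{eq: grading shift N} then becomes a comparison between $2h(Y', \mathfrak{s}_{Y'})$ and the degree shift $\tfrac14(c_1^2(\mathfrak{s}_N) - 2\chi(N) - 3\sigma(N))$; both sides are explicit combinatorial quantities in the Seifert invariants of $Y$ and $Y'$, and the Fr\o yshov invariant of a minimally elliptic Seifert integer homology sphere can be extracted from the lattice cohomology description of $HM^{\redu}$ in \cite{nemethiplumbed}. The main obstacle is twofold: (a) making the choice of subgraph $\Gamma' \subset \Gamma$ explicit and uniform across the two sub-families in \eqref{eq: subcollection of Casson-Harer (2)} so that $Y' = \partial W_{\Gamma'}$ is always minimally elliptic, and (b) verifying the arithmetic identity \eqref{eq: grading shift N} in each case. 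An alternative candidate for $Y'$, should the direct pruning strategy prove intractable, is to take $Y' = S(TC')$ for an orbifold sphere $C'$ whose resolution graph embeds into $\Gamma$ (cf. Example \ref{example:S(TC)}); such $Y'$ are automatically Floer simple, and the ambient $S^1$-action on the complementary plumbing furnishes the required equivariant cobordism structure. The arithmetic verification in part (b) will likely proceed by a direct calculation using continued fraction expansions associated to the Seifert invariants of the two families.
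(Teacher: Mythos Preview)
Your overall architecture matches the paper: excise a connected sub-plumbing $W_{\Gamma'}$ from the minimal resolution plumbing $W_\Gamma$, obtaining an $S^1$-equivariant negative-definite cobordism $N$. The paper's choice of $\Gamma'$ is precisely your ``alternative candidate'': they take $\Gamma'$ to be the central vertex together with the first vertex on each of the three arms, so $Y' = \partial W_{\Gamma'}$ is the link of a Dolgachev triangle singularity (hence Floer simple by Example~\ref{example:S(TC)}). Conditions (i) and (ii) are handled as you describe.

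The substantive difference is in (iii). You propose to compute $h(Y',\mathfrak{s}_{Y'})$ directly from lattice homology and then check the arithmetic identity by hand---you flag this as the main obstacle, and indeed it would be a tedious case analysis. The paper sidesteps this entirely by a squeeze argument. They explicitly diagonalize the intersection form of $W_\Gamma$ over $\mathbb{Z}$ via a change of basis, then choose a spin-c structure $\mathfrak{s}_P$ on $W_\Gamma$ whose $c_1$ is a \emph{sharp} characteristic vector (i.e.\ $c_1^2(\mathfrak{s}_P)=\sigma(W_\Gamma)$) and which simultaneously satisfies the adjunction formula on each vertex of $\Gamma'$, so that $\mathfrak{s}_P|_{W_{\Gamma'}}$ is the canonical spin-c structure. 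Since both $N$ and $W_{\Gamma'}$ are negative definite, the Fr\o yshov inequality applied to each piece gives two inequalities whose sum is exactly $\tfrac14(c_1^2(\mathfrak{s}_P)-\sigma(W_\Gamma))=0$; hence both inequalities are equalities, and \eqref{eq: grading shift N} drops out without ever computing $h(Y')$. This is the key idea your plan is missing: rather than computing the Fr\o yshov invariant and the degree shift separately, one arranges a global spin-c structure that is sharp on the whole plumbing and canonical on the sub-plumbing, then uses additivity to force equality.
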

\begin{proof} First consider the case $Y=(p,ps+1,ps+2)$. Let $p=2r+1$. Then the plumbing diagram $\Gamma$ for $Y$ is given in Figure \ref{fig: plumbing Sigma(p,ps+1,ps+2)} (left), with vertices labeled as Figure \ref{fig: plumbing Sigma(p,ps+1,ps+2)} (right).

\begin{figure}[h!]
    \centering
 \includegraphics[scale=0.27]{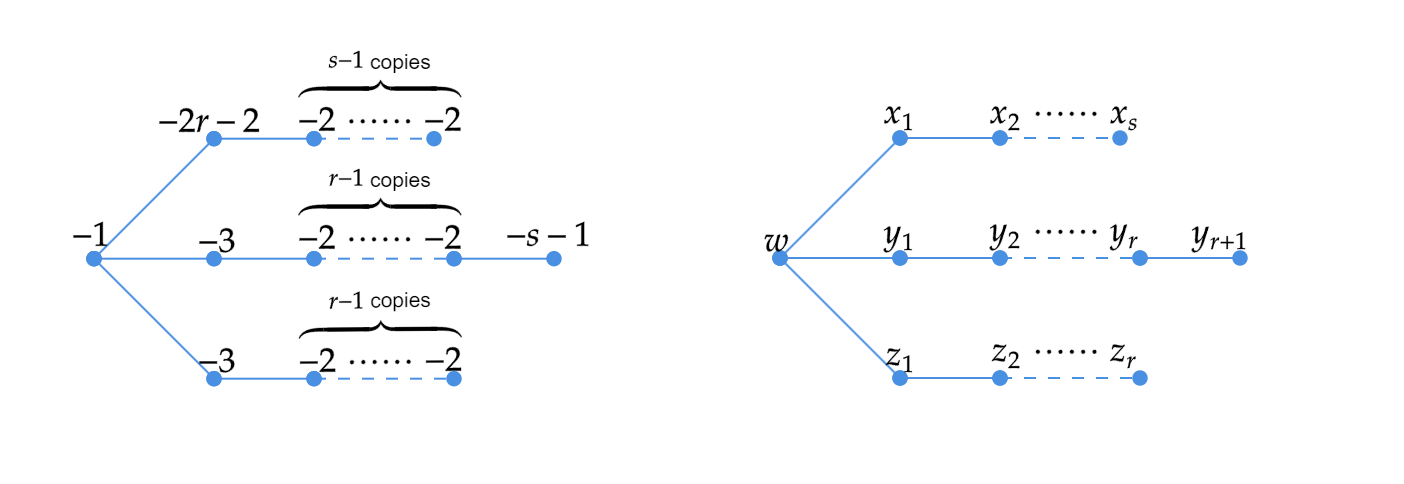}
\caption{Plumbing diagram for $\Sigma(p,ps+1,ps+2)$ (left) with labels on the vertices (right)}
  \label{fig: plumbing Sigma(p,ps+1,ps+2)}
\end{figure}

Let $P$ be the plumbed 4-manifold for $\Gamma$. Then each vertex $v$ of $\Gamma$ corresponds to an element $v\in H_{2}(P;\mathbb{Z})$, represented by an embedded sphere $S(v)\hookrightarrow P$. The $S^1$-action on $Y$ naturally extends to an $S^1$-action on $P$ that fixes each $S(v)$ as a set. (See discussions in \S 2.3.3.) Take an $S^1$-invariant regular neighborhood of 
\[S(w)\cup S(x_1)\cup S(y_1)\cup S(z_1)\] and denote its closure by $P'$. Then $P'$ is diffeomorphic to the plumbed 4-manifold for $\Gamma'$, the subgraph with vertices $w,x_1,y_1,z_1$. We let $Y'=\partial P'$ and let $N=P\setminus \operatorname{int}(P')$. Then $Y'$ is  the link of a triangle singularity and hence Floer simple. Condition (i) and (ii) are obvious from the construction of $N$. So it remains to find $\mathfrak{s}_{N}$ that satisfies (iii). For this, we consider the following equations
\begin{equation}\label{eq: new generators}
\left\{
\begin{array}{ll}
w=w' &\\
x_1=-w'+x'_1+(y'_2+\cdots+y'_{r+1})-(z'_1+\cdots+z'_{r}) &\\
y_1=-w'+y'_1-y'_{2} &\\
z_1=-w'+z'_1-y'_{1} &\\
x_{i}=x'_{i}-x'_{i-1} & \text{(for $2 \leq i\leq s$)}\\
y_{i}=y'_{i}-y'_{i+1} & \text{(for $2 \leq i\leq r$)}\\
z_{i}=z'_{i}-z'_{i-1} & \text{(for $2 \leq i\leq r$)}\\
y_{r+1}=y_{r+1}'-(x'_1+\cdots+x'_{s}) &
\end{array}
\right..
\end{equation}
Assume $x'_*,y'_*,z'_*$ and $w'$ are orthonormal. Using (\ref{eq: new generators}), it is straightforward to calculate the intersection numbers between $x_*,y_*,z_*, w$  and check that they agree with those given by the diagram $\Gamma$.  Therefore, the unique set of solutions  
\[
S'=\{w', x'_{1},\cdots,x'_{s},y'_{1},\cdots,y'_{r+1},z'_{1},\cdots,z'_{r}\}
\]
gives a new set of generators for  for $H_{2}(P;\mathbb{Z})$, under which the intersection form is diagonalized. Now we consider a spin-c structure $\mathfrak{s}_{P}$ on $P$ that satisfies 
\[
c_{1}(\mathfrak{s}_{P})=-w'+(x'_{1}+\cdots + x'_{s})-y'_{1}-y'_{2}+(y'_{3}+\cdots+y'_{r+1})-(z'_{1}+\cdots+z'_{r}).
\]
We set $\mathfrak{s}_{N}=\mathfrak{s}_{P}|_{N}$ and $\mathfrak{s}_{P'}=\mathfrak{s}_{P}|_{P}$. Note that the adjunction formula 
\begin{equation}\label{eq: adjunction}
c_{1}(\mathfrak{s}_{P})\cdot v=v\cdot v+2    
\end{equation}
is satisfied for any vertex $v$ of $\Gamma'$. Therefore, $\mathfrak{s}_{P'}$ is just the canonical spin-c structure on $P'$. This implies $\mathfrak{s}_{N}|_{Y'}=\mathfrak{s}_{P'}|_{Y'}=\mathfrak{s}_{Y'}$.  

Since $N$ and $P'$ are negative definite and since $h(S^3)=h(Y)=0$, the Fr\o yshov invariants satisfy the inequalities 
\begin{equation}\label{eq: Froyshov inequality}
\frac{c^2_{1}(\mathfrak{s}_{N})-2\chi(N)-3\sigma(N)}{4}\leq 2h(Y',\mathfrak{s}_{Y'})
    \end{equation}
and 
\[
\frac{c^2_{1}(\mathfrak{s}_{P'})-2\chi(P'\setminus D^4)-3\sigma(P')}{4}\leq -2h(Y',\mathfrak{s}_{Y'}).
\]
On the other hand, since 
\begin{equation}\label{eq: minimal length}
c^{2}_{1}(\mathfrak{s}_{P})=\sigma(P),    
\end{equation}
we have 
\[
\begin{split}
&\frac{c^2_{1}(\mathfrak{s}_{P'})-2\chi(P'\setminus D^4)-3\sigma(P')}{4}+\frac{c^2_{1}(\mathfrak{s}_{N})-2\chi(N)-2\sigma(N)}{4}\\=&
\frac{c^2_{1}(\mathfrak{s}_{P})-2\chi(P\setminus D^4)-3\sigma(P)}{4}
\\=&\frac{c^2_{1}(\mathfrak{s}_{P})-\sigma(P)}{4}\\=&0.    
\end{split}
\]
Hence ``='' in (\ref{eq: Froyshov inequality}) is achieved. This completes the proof for $Y=\Sigma(p,ps+1,ps+2)$.

The case $Y=\Sigma(p,ps-1,ps-2)$ is similar. Let $p=2r+1$. The plumbing diagram $\Gamma$ is shown in Figure \ref{fig: plumbing Sigma(p,ps-1,ps-2)}. 

\begin{figure}[h!]
    \centering
 \includegraphics[scale=0.27]{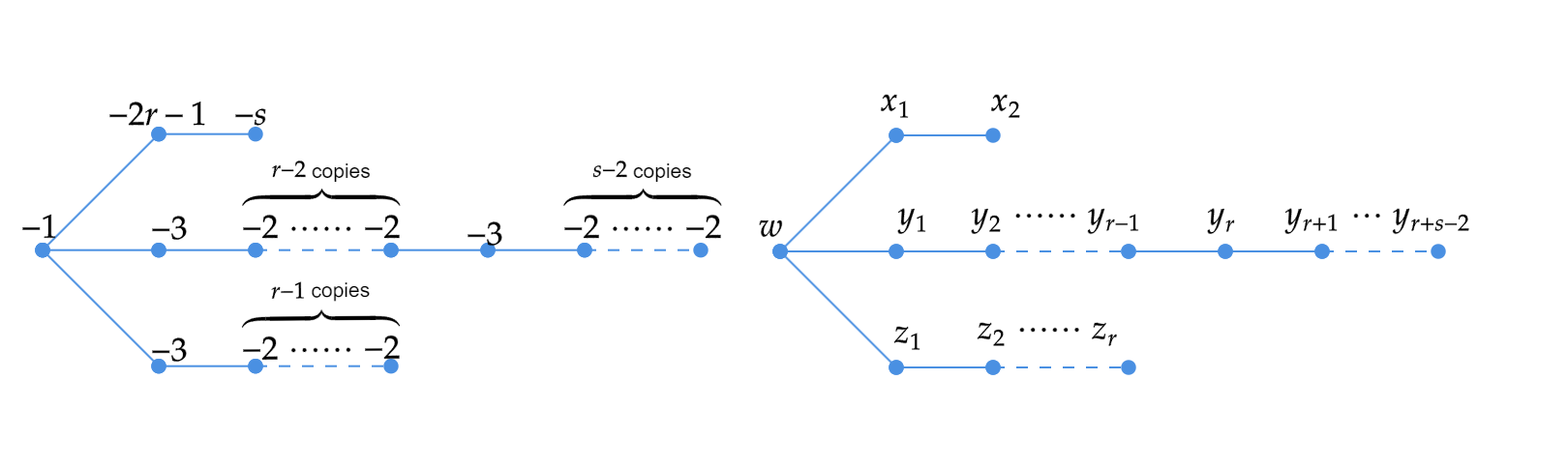}\caption{Plumbing diagram for $\Sigma(p,ps-1,ps-2)$ (left), with labels on the vertices (right)}
  \label{fig: plumbing Sigma(p,ps-1,ps-2)}
\end{figure}

We again let $\Gamma'$ be the subdiagram with vertices $\{w,x_1,y_1,z_1\}$ and let $P$ (resp. $P'$) be the plumbed manifold for $\Gamma$ (resp. $\Gamma'$). Set $N=P\setminus \operatorname{int}(P')$. The intersection form on $H_{2}(P;\mathbb{Z})$ is diagonalized under the generators
\[
S'=\{w', x'_{1},x'_{2},y'_{1},\cdots,y'_{r+s-2},z'_{1},\cdots,z'_{r}\}
\]
determined by the equations.
\[
\left\{
\begin{array}{ll}
w=w' &\\
x_1=-w'-x'_1+x'_2+(y'_1+\cdots+y'_{r-1})-(z'_2+\cdots+z'_{r}) &\\
y_1=-w'-y'_1+z'_{1} &\\
z_1=-w'-z'_1+z'_{2} &\\
x_2=-x'_{2}-(y'_{r}+\cdots+y'_{r+s-2})&\\
y_{i}=y'_{i-1}-y'_{i} & \text{(for $2 \leq i\leq r-1$)}\\
y_{r}=y'_{r}+y'_{r-1}-x'_{2} \\
y_{i}=y'_{i}-y'_{i-1} & \text{(for $r+1 \leq i\leq r+s-2$)}\\
z_{i}=z'_{i+1}-z'_{i} & \text{(for $2 \leq i\leq r-1$)}\\
z_{r}=x'_{1}-z'_{r}
\end{array}
\right.\]
Consider spin-c structure $\mathfrak{s}_{P}$ on $P$ that satisfies 
\[
c_{1}(\mathfrak{s}_{P})=-w'-x'_{1}+x'_{2}-y'_{1}+(y'_{2}\cdots +y'_{r+s-2})-(z'_{1}+\cdots+z'_{r}).
\]
We let $\mathfrak{s}_{N}=\mathfrak{s}_{P}|_{N}$. Then $\mathfrak{s}_{P}$ satisfies (\ref{eq: adjunction}) and (\ref{eq: minimal length}). As a result, we can repeat the argument as the previous case and prove that $\mathfrak{s}_{N}$ satisfies condition (iii).    \end{proof}
Now we are ready to state and prove a slight refinement of Theorem $B$. 
\begin{proposition}\label{prop: theorem B refined} Let $Y$ be one of the families in (\ref{eq: subcollection of Casson-Harer (2)}) and $W$ be a rational homology ball bounded by $Y$. Then there exists a simply-connected smooth 4-manifold $M$ bounded by $Y$ with $b^+(M)>1$, and a spin-c structure $\mathfrak{s}$ on $X=M\cup_{Y}(-W)$ such that
\[
\FSW(X,\mathfrak{s},\tau_{(X,Y)})\neq 0.
\]
In particular, for any $k\neq 0$, the Dehn twists $\tau_{(X,Y)}$ (hence $\tau_{W}$) is not isotopic to a diffeomorphism supported on $D^4$.
\end{proposition}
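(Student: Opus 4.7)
\textbf{Proof plan for Proposition \ref{prop: theorem B refined}.}

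The strategy is to leverage Proposition \ref{prop: S1-cobordism} to reduce the problem to an application of Proposition \ref{prop: theorem D refined} (i.e. Theorem \ref{thm: closed}) with a Floer simple 3-manifold in place of the Casson--Harer family. First I would apply Proposition \ref{prop: S1-cobordism} to obtain a Floer simple Seifert-fibered rational homology sphere $Y'$ together with a cobordism $N : Y' \to Y$ enjoying properties (i)--(iii). Using the discussion after Definition \ref{defi: Floer simple} (i.e. Lisca--Stipsicz's result giving taut foliations on Floer simple Seifert manifolds, Eliashberg--Thurston perturbation, and the Eliashberg--Mishachev--Mohsen construction of simply-connected fillings), I would then choose a simply-connected strong symplectic filling $M'$ of $(Y', \xi_c)$ with $b^+(M') > 1$. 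Define $M := M' \cup_{Y'} N$, so that $\partial M = Y$ and $b^+(M) = b^+(M') > 1$ by property (i). To see that $M$ is simply-connected, I would use that $M' \cup P' = M'$ contributes trivially, and that by van Kampen in the plumbing $P = P' \cup N$ from the proof of Proposition \ref{prop: S1-cobordism} one has $\pi_1(N)/\langle\!\langle \mathrm{im}\,\pi_1(Y') \rangle\!\rangle = \pi_1(P) = 1$, which is precisely $\pi_1(M)$.

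Next I would set $X = M \cup_Y (-W)$ and exploit the decomposition $X = M' \cup_{Y'} W''$ where $W'' := N \cup_Y (-W)$. This decomposition has $b_1(W'') = 0$ and $b^+(W'') = 0$, since both $N$ and $-W$ have $b^+ = 0$ and they are glued along the rational homology sphere $Y$. I would construct the spin-c structure $\mathfrak{s}$ on $X$ by gluing the canonical spin-c structure $\mathfrak{s}_{M'}$ on $M'$ with the spin-c structure $\mathfrak{s}_N$ from condition (iii) of Proposition \ref{prop: S1-cobordism} (these agree on $Y'$ since $\mathfrak{s}_N|_{Y'} = \mathfrak{s}_{Y'}$ by construction, and $\mathfrak{s}_{M'}|_{Y'} = \mathfrak{s}_{Y'}$ as $M'$ is a symplectic filling of $(Y', \xi_c)$), then extending across $-W$ (possible since $W$ is a rational homology ball, so the obstruction in $H^3(W, Y) \cong H_1(W) = 0$ vanishes).

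The crucial step is the identification $\tau_{(X, Y)} = \tau_{(X, Y')}$ in $\MCG(X)$. This follows from condition (ii) of Proposition \ref{prop: S1-cobordism}: the $S^1$-action on $N$ extending both Seifert actions furnishes a loop $\gamma_N \in \pi_1 \operatorname{Diff}(N)$ restricting to $\gamma_{Y'}$ on $Y'$ and $\gamma_Y^{-1}$ on $-Y$. By the connecting map of the fibration $\operatorname{Diff}(N, \partial) \to \operatorname{Diff}(N) \to \operatorname{Diff}(\partial N)$ (Lemma \ref{lemma:loopextension}), this forces $\tau_{(N, Y')} \cdot \tau_{(N, Y)}^{-1} = 1$ in $\MCG(N, \partial N)$, and extending by the identity yields the desired equality in $\MCG(X)$.

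Finally I would apply Proposition \ref{prop: theorem D refined} to the decomposition $X = M' \cup_{Y'} W''$ to conclude $\FSW(X, \mathfrak{s}, \tau_{(X, Y')}) \neq 0$, whence $\FSW(X, \mathfrak{s}, \tau_{(X, Y)}) \neq 0$ by the previous step. The main technical point to verify is the grading condition $d(\mathfrak{s}) = -1$ required by Theorem \ref{thm: closed}, or equivalently that $l = 0$ in the calculation (\ref{eq: hm-check W}) applied to $W''$ in place of $W$. Since $c_1^2(\mathfrak{s}|_{-W}) = 0$ (as $W$ is a rational homology ball), $\chi(W^\circ) = 0$, and $\sigma(W) = 0$, one computes
\[
\frac{c_1^2(\mathfrak{s}_{W''^\circ}) - 2\chi(W''^\circ) - 3\sigma(W'')}{4} = \frac{c_1^2(\mathfrak{s}_N) - 2\chi(N) - 3\sigma(N)}{4} = 2h(Y', \mathfrak{s}_{Y'}),
\]
where the second equality is precisely condition (iii) of Proposition \ref{prop: S1-cobordism}. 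This is exactly the grading shift that forces $l = 0$ in the analogue of (\ref{eq: hm-check W}), so the family Seiberg--Witten computation (\ref{eq: FSW calculation}) reproduces $\pm m \cdot \check{1} \neq 0$. The ``in particular'' conclusion about $\tau_{(X, Y)}^k$ not being smoothly isotopic to a diffeomorphism supported in $D^4$ follows from the additivity of FSW and the general property recorded in item (5) at the end of \S \ref{section:familiescobordism}. The main obstacle is organizing the grading bookkeeping cleanly; the design of Proposition \ref{prop: S1-cobordism}(iii) is precisely tailored to make this work.
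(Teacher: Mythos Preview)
Your proposal is correct and follows essentially the same approach as the paper: apply Proposition~\ref{prop: S1-cobordism} to produce $(Y',N,\mathfrak{s}_N)$, take a simply-connected filling $M'$ of $Y'$ with $b^+>1$, set $M=M'\cup_{Y'}N$ and $X=M\cup_Y(-W)$, glue the spin-c structures, verify the grading condition, apply Proposition~\ref{prop: theorem D refined} at $Y'$, and then transfer to $Y$ using the $S^1$-action on $N$. Two small remarks: (1) the simply-connected fillings with large $b^+$ furnished by \cite{EMM} are fillings of the contact structure obtained by perturbing a transverse foliation (Lisca--Stipsicz), not necessarily of $\xi_c$ itself---this does not affect the argument since Theorem~\ref{thm: closed} only requires $\mathfrak{s}_\xi=\mathfrak{s}_c$, but you should label the contact structure accordingly; (2) your van Kampen verification that $M$ is simply-connected is a detail the paper's proof leaves implicit, so including it is a plus. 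The paper organizes the grading bookkeeping by directly summing the three contributions to obtain $d(\mathfrak{s})=-1$, whereas you compute the grading shift of $W''=N\cup_Y(-W)$ to force $l=0$; these are equivalent reorganizations of the same arithmetic.
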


\begin{proof}By applying Proposition \ref{prop: S1-cobordism} to $Y$, we obtain a spin-c-cobordism $(N,\mathfrak{s}_{N})$ from some Floer simple $Y'$ to $Y$. Let $\xi$ be the contact structure on $Y'$ obtained by perturbing a transverse foliation on $Y$, whose existence was established in \cite{lisca-stipsicz}. Let $M'$ be a simply-connected strong filling of $\xi$ with $b^{+}(M')>1$. We set 
\[
M=M'\cup_{Y'}N.
\]
Consider the spin-c structure $\mathfrak{s}=\mathfrak{s}_{M'}\cup \mathfrak{s}_{N}\cup \mathfrak{s}_{W}$, where $\mathfrak{s}_{M'}$ denotes the canonical spin-c structure on $M'$ and $\mathfrak{s}_{W}$ is any spin-c structure on $-W$. 

By Proposition \ref{prop: HM-check Y}, $HM^{\redu}(Y',\mathfrak{s}_{Y'})$ is supported in the grading $-2h(Y',\mathfrak{s}_{Y'})-1$. By applying the grading shift formula to (\ref{eq: HM-arrow M}), we obtain 
\begin{equation}\label{eq: grading shift M}
-2h(Y',\mathfrak{s}_{Y'})=\frac{c^2_{1}(\mathfrak{s}_{M'})-2\chi(M'\setminus D^4)-3\sigma(M')}{4}=\frac{c^2_{1}(\mathfrak{s}_{M'})-2\chi(M')-3\sigma(M')}{4}+\frac{1}{2}.    
\end{equation}
On the other hand, we have 
\begin{equation}\label{eq: grading shift W}
-\frac{1}{2}=\frac{c^2_{1}(\mathfrak{s}_{W})-2\chi(-W)-3\sigma(-W)}{4}  
\end{equation}
Adding together (\ref{eq: grading shift M}), (\ref{eq: grading shift N}) and (\ref{eq: grading shift W}), we obtain 
\[
-1=\frac{c^2_{1}(\mathfrak{s})-2\chi(X)-3\sigma(X)}{4}.  
\]
This allows us to apply Proposition \ref{prop: theorem D refined} to $(X,Y')$ and conclude that 
\[\FSW(X,\mathfrak{s},\tau_{(X,Y')})\neq 0.\] 
On the other hand, by a straightforward modification of Lemma \ref{lemma:loopextension}, the fact that the Seifert $S^1$-action on $\partial N$ extends smoothly over $N$ gives us
\[\tau_{(X,Y')}=\tau_{(X,Y)}\in \MCG(X).\] This finishes the proof.
\end{proof}

\begin{proof}[Proof of Corollary \ref{cor: exotic disk}] Consider the handle decomposition of the Mazur manifold $M$ into $S^1\times D^3$ and a $2$-handle. Let $D_0$ be the core of the $2$-handle and set 
$D_{i}=\tau^{i}_{M}(D_0)$. Then $D_{i}$ is topologically isotopic to $D_0$ since $\tau^{i}_{M}$ is topologically isotopic to $\operatorname{id}$. It remains to show that $D_{i}$ and $D_{j}$ are not smoothly isotopic if $i\neq j$. Since $\tau_{M}^{j-i}$ sends $D_0$ and $D_i$ to $D_{j-i}$ and $D_j$, respectively, it suffices to show that $D_{i}$ is not smoothly isotopic to $D_0$ for any $i\neq 0$. Suppose that this is not the case. Then by the isotopy extension theorem, one can smoothly isotope $\tau^{i}_{M}$ such that it fixes all points in a neighborhood of $\nu(\partial M)$ and a neighborhood $\nu(D_0)$ (see \cite[Lemma 3.1]{LinMukherjee}), i.e. $\tau^{i}_{M}$ can be smoothly isotoped to be supported in $M\setminus(\nu(D_0)\cup \nu(\partial M))$. On the other hand, there is a diffeomorphism on $M$ that takes $M\setminus(\nu(D_0)\cup \nu(\partial M))$ to a tubular neighborhood of $\gamma=S^1\times \{0\}$. Observe that $\gamma$ is unknotted in $M$ since  $\pi_{1}(M)=0$. Therefore, there exists an embedded $D^4\subset \operatorname{int}(M)$ that contains $M\setminus(\nu(D_0)\cup \nu(\partial M))$. So $\tau^{i}_{M}$ is smoothly isotopic to a diffeomorphism supported in $D^4$. This contradicts Theorem \ref{thm: Mazur}.
\end{proof}

\begin{proof}[Proof of Theorem \ref{thm: MCG exotic R4}] 
Take any $Y$ in the families (\ref{eq: subcollection of Casson-Harer (2)}) and any Mazur manifold $W$ bounded by $Y$. Let $M$, $X$, $\mathfrak{s}$ be obtained by applying Proposition \ref{prop: theorem B refined} to $W$. Consider the smooth manifold $M'=X\# W$. Then both $M$ $M'$ are simply-connected, have the same boundary and intersection form. Therefore, there exists a homeomorphism $\rho': M\to M'$ inducing the identity map on the boundary (see \cite{Boyer86,Stong93}). Extending $\rho'$ by the identity on $W$, we obtain the homeomorphism 
\[
\rho: X\to M'\cup_{Y}-W= (M\setminus \operatorname{int}(D^4))\cup_{S^3}((W\cup_{Y}-W)\setminus \operatorname{int}(D^4)).
\]
Consider the open subset $\mathcal{R} \subset X$ 
\[\mathcal{R}=\rho^{-1}((W\cup_{Y}-W)\setminus D^4),\] equipped with the smooth structure induced from $X$. Since $W$ is Mazur, its double $W\cup-W$ is diffeomorphic to $S^4$ and hence $(W\cup_{Y}-W)\setminus D^4$ is diffeomorphic to $\mathbb{R}^4$. Therefore, $\rho$ restricts to a homeomorhism from $\mathcal{R}$ to $\mathbb{R}^4$. Thus, we have obtained a smooth $4$-manifold $\mathcal{R}$ homeomorphic to $\mathbb{R}^4$ equipped with smooth embeddings
\[
W \hookrightarrow \mathcal{R}\hookrightarrow X.
\]

Consider now the homomorphism $\delta: \MCG_{c}(\mathcal{R})\to \mathbb{Z}$
given by
\[
\delta(f):=\FSW(X,\mathfrak{s},f\cup \operatorname{id}_{X\setminus \mathcal{R}}).
\]
By Proposition \ref{prop: theorem D refined} we have $\delta(\tau_{(\mathcal{R},Y)})\neq 0$. By changing the codomain to $\operatorname{image}(\delta)$, we obtain a surjective homomorphism from $\MCG_{c}(\mathcal{R})$ to $\mathbb{Z}$. Furthermore, there does not exists a smooth embedding $D^4\hookrightarrow \mathcal{R}$ that contains the compact set $M$, since otherwise $\tau_{(X,Y)}$ would be supported in $D^4$ and thus contradicting Proposition \ref{prop: theorem D refined}. Hence $\mathcal{R}$ is not diffeomorphic to $\mathbb{R}^4$. 
\end{proof}

\appendix

\section{Deformation theory of effective divisors}\label{section:divisors}

We provide here some background material related to the deformation theory of effective divisors on complex orbifolds from the complex analytic viewpoint. In particular, we want to discuss Lemma \ref{lemma:effsurface}, which should be well-known to experts but we didn't find a convenient reference in the literature.

\subsection{Effective divisors}

Fix an orbifold complex line bundle $L \rightarrow X$ over a compact complex orbifold $X$. A holomorphic structure on $L$ can be regarded as a Cauchy--Riemann operator $\overline{\partial} : \Omega^{0 , \ast} (X, L) \rightarrow \Omega^{0 , \ast +1}(X , L )$ (or ``half connection") on $L$ satisfying the Newlander--Nirenberg integrability condition $\overline{\partial}^2 = 0$. We recall the following

\begin{definition}\label{definition:divisor}
An \textit{effective orbifold divisor} on $X$ in the orbifold line bundle $L$ consists of a pair $(\overline{\partial} , s )$ where $\overline{\partial}$ is a holomorphic structure on $L \rightarrow X$, and $s$ is a \textit{non-trivial} holomorphic section of $L \rightarrow X$, i.e. $\overline{\partial} s = 0$. We denote by $\mathcal{D}(X, L )$ the quotient of the space of such pairs $(\overline{\partial} , s )$ by the group $\mathrm{Aut}L = \mathrm{Map}(X , \mathbb{C}^\ast )$ of complex $C^\infty$ automorphisms of $L$, equipped with the Whitney $C^\infty$ topology.
\end{definition}


Associated to an effective divisor $(\overline{\partial}, s)$ we have the zero set $D   = s^{-1}(0)$. We may regard $D$ as a (possibly non-reduced) complex-analytic orbispace (see Remark \ref{remark:orbispace}), but this won't be needed for our purposes. Let $\mathcal{L}$ be the orbifold sheaf on $X$ of holomorphic sections of $(L , \overline{\partial} )$. We tacitly
\textit{define} the sheaf $\mathcal{L}|_D$ by the short exact sequence (i.e. as the cokernel of a morphism of orbifold sheaves) 
\[
0 \rightarrow \mathcal{O}_X \xrightarrow{s} \mathcal{L} \rightarrow \mathcal{L}|_D \rightarrow 0 .
\]

We refer to \cite{boyer} for the notion of an orbifold sheaf and related notions. One fact that we shall use below (see \cite{boyer}, Lemma 4.2.4) is that there is a one-to-one correspondence between orbifold sheaves $\mathcal{E}$ on $X$ and sheaves $|\mathcal{E}|$ on the underlying complex-analytic space $|X|$, together with natural isomorphisms $H^{i}(X, \mathcal{E}) \cong H^{i}(|X|, |\mathcal{E}|)$ for all $i \in \mathbb{Z}$. If $X = C$ is a one-dimensional complex orbifold, then an orbifold sheaf $\mathcal{E}$ is invertible (i.e. it corresponds to an orbifold holomorphic line bundle) if and only if $|\mathcal{E}|$ is invertible, but this statement does not hold more generally.




\subsubsection{Effective divisors on orbifold Riemann surfaces}

When $X = C$ is a compact orbifold Riemann surface we have a firm hold on both the local and the global structure of the moduli space: 
\begin{lemma}\label{lemma:effRiemannsurface}
The space $\mathcal{D}(C, L )$ of effective divisors is a complex manifold of dimension $e$ in a natural way, where $e$ is the background degree of $L$ (i.e. the degree of the desingularisation $|L| \rightarrow |C|$). Furthermore, $\mathcal{D}(C, L )$ is biholomorphic to the $e\mathrm{th}$ symmetric power $\mathrm{Sym}^{e} |C|$.
\end{lemma}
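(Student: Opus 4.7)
The plan is to reduce to the classical case of ordinary compact Riemann surfaces by establishing a natural biholomorphism between $\mathcal{D}(C, L)$ and the ordinary divisor moduli $\mathcal{D}(|C|, |L|)$, and then invoking the classical Abel--Jacobi identification $\mathcal{D}(|C|, |L|) \cong \mathrm{Sym}^{e} |C|$.

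First, I would verify that $\mathcal{D}(C, L)$ is a smooth complex manifold of dimension $e$ via the local Kuranishi model discussed in Remark \ref{remark:orbispace}. Since $C$ is one-dimensional, the zero locus $D$ of any effective orbifold divisor is a zero-dimensional complex-analytic orbispace, so the obstruction space $H^{1}(D, \mathcal{L}|_D)$ vanishes and the Kuranishi map is identically zero. The short exact sequence $0 \to \mathcal{O}_C \to \mathcal{L} \to \mathcal{L}|_D \to 0$, combined with the sheaf identification $H^{i}(C, \mathcal{F}) \cong H^{i}(|C|, |\mathcal{F}|)$ and ordinary Riemann--Roch for $|L|$, yields $\chi(\mathcal{L}|_D) = \chi(\mathcal{L}) - \chi(\mathcal{O}_C) = e$, hence $\mathrm{dim}_{\mathbb{C}} H^{0}(D, \mathcal{L}|_D) = e$.

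Next, I would construct a holomorphic bijection $\mathcal{D}(C, L) \to \mathrm{Sym}^{e} |C|$ by sending $(\overline{\partial}, s)$ to the zero divisor on $|C|$ of the desingularised section of $|L|$ corresponding to $s$. Locally at an orbifold point $x_i$ of isotropy $\alpha_i$, where $\mathbb{Z}/\alpha_i$ acts on the fiber of $L$ with weight $\beta_i$, an invariant holomorphic section has the form $s(z) = z^{\beta_i} h(z^{\alpha_i})$, and the rule $s \mapsto h$ produces a local section of $|L|$ in the coordinate $w = z^{\alpha_i}$ on $|C|$. This descends from the general orbifold sheaf identification $\mathcal{L} \cong |\mathcal{L}|$ and is compatible with holomorphic structures and with complex gauge transformations (which are smooth $\mathbb{C}^{*}$-valued functions on $|C|$ in either setting). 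The inverse map sends $|D| \in \mathrm{Sym}^{e} |C|$ to the orbifold divisor $D := |D| + \sum_i (\beta_i / \alpha_i)\, x_i$ of orbifold degree $\deg L$, equipped with the canonical section of $\mathcal{O}(D)$; that $\mathcal{O}(D)$ is topologically isomorphic to $L$ follows from Lemma \ref{lemma:seifertdata}, since the two orbifold line bundles have matching Seifert data.

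To conclude, both sides are smooth complex manifolds of dimension $e$ ($\mathrm{Sym}^{e} |C|$ classically; $\mathcal{D}(C, L)$ by the first step), and the bijection above is holomorphic because one can take Kuranishi local parameters on $\mathcal{D}(C, L)$ near a divisor $\sum_j m_j p_j$ to be the positions $p_j$ of its zeros on $|C|$, which coincide with the natural local coordinates on $\mathrm{Sym}^{e} |C|$. A holomorphic bijection between smooth complex manifolds of the same dimension is automatically a biholomorphism. The main technical obstacle will be carefully setting up the naturality of the orbifold-to-ordinary correspondence at the level of both holomorphic structures and complex gauge transformations near orbifold points, where one must work with invariant sections; the sheaf-theoretic viewpoint sidesteps much of this bookkeeping.
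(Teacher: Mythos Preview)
Your proposal is correct and essentially matches the paper's approach: vanishing of the obstruction space $H^{1}(D, \mathcal{L}|_D)$ (since $D$ is zero-dimensional) gives smoothness with tangent space $H^{0}(D,\mathcal{L}|_D)$, and the desingularisation correspondence $s \leftrightarrow |s|$ yields the biholomorphism with $\mathrm{Sym}^{e}|C|$. The only difference is that the paper writes out the two-step elliptic deformation complex explicitly (as the mapping cone of $\Omega^{0,\ast}(C,\mathbb{C})\xrightarrow{s}\Omega^{0,\ast}(C,L)$) and applies the Implicit Function Theorem directly, rather than invoking a pre-packaged Kuranishi model---which is the right choice here, since this lemma \emph{is} the curve case of the Kuranishi setup that Remark~\ref{remark:orbispace} forward-references to the appendix, so your citation of that remark is mildly circular.
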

\begin{proof}
The deformation theory of an effective divisor $(\overline{\partial} , s ) \in \mathcal{D}(X, L )$ is governed by the elliptic cochain complex (where the left-most term is taken to be in degree $-1$)
\newcommand{\sbm}[1]{{\let\amp=&\left[\begin{smallmatrix}#1\end{smallmatrix}\right]}}
\[
\begin{tikzcd}
\Omega^0 (C ,  \mathbb{C} )  \arrow{r}{ \sbm{ - \overline{\partial}\\ s }} &  \Omega^{0,1}(C ,  \mathbb{C}) \oplus \Omega^{0}(C, L ) \arrow{r} \arrow{r}{ \sbm{ 
s \amp \overline{\partial}  }} & \Omega^{0,1}(C, L )  . \label{effcomplex1}
\end{tikzcd}
\]
Here, the first map is obtained by linearising the action $(\overline{\partial} , s ) \mapsto ( \overline{\partial} - u^{-1} \partial u , u s )$ of $\mathrm{Aut}L = \mathrm{Map}(C , \mathbb{C}^\ast )$ on the space of pairs, and the second by linearising the holomorphicity equation $\overline{\partial} s = 0$ (and recall that the integrability condition $\overline{\partial}^2 = 0$ is automatic in complex dimension $1$, so we don't consider it here). 

Observe that the complex (\ref{effcomplex1}) is just the mapping cone associated to the chain map $\Omega^{0 , \ast} (C ,  \mathbb{C} ) \xrightarrow{s} \Omega^{0,\ast}(C , L )$. By considering the Dolbeault resolutions of the sheaves $\mathcal{O}_C$ and $\mathcal{L}$,  we see that the cohomology of (\ref{effcomplex1}) at the $j$th step calculates the sheaf cohomology group $H^{j} ( D , \mathcal{L}|_D )$, where $\mathcal{L}$ is the orbifold sheaf of holomorphic sections of $(L , \overline{\partial} )$ and $D = s^{-1}(0)$. In particular, the cohomology of \ref{effcomplex1} vanishes at the step $j = -1$. In addition, because $\mathcal{L}|_D$ is supported on the \textit{zero-dimensional} locus where $s$ vanishes, then the cohomology of \ref{effcomplex1} at the step $j = 1$ also vanishes. 

Thus, the upshot is that $\mathcal{D}(C, L )$ is a \textit{complex manifold} with tangent space $H^0 ( D , \mathcal{L}|_D )$ at $(\overline{\partial} , s )$. This follows by a standard application of the Implicit Function Theorem for Banach spaces, which applies after completing with respect to suitable Sobolev norms, together with elliptic regularity. 
Furthermore, there is a one-to-one correspondence between holomorphic sections $s$ of the orbifold line bundle $\mathcal{L}$ and holomorphic sections $|s|$ of its desingularisation $|\mathcal{L}|$, the assignment $(\overline{\partial} , s )  \mapsto Z(|s|) \in \mathrm{Sym}^{e} |C|$, where $Z(|s|)$ is the divisor of zeros of $|s|$, gives the required biholomorphism of $\mathcal{D}(X, L )$ with $\mathrm{Sym}^{e}|C|$. 
\end{proof}

\subsubsection{Effective divisors on orbifold complex surfaces}

We consider a compact orbifold complex \textit{surface} $X$, i.e. $\mathrm{dim}_\mathbb{C} X = 2$.

\begin{lemma}\label{lemma:effsurface}
The space $\mathcal{D}(X, L )$ of effective divisors is a complex-analytic space in a natural way. A neighbourhood of an effective divisor $(\overline{\partial} , s ) $ in $\mathcal{D}(X, L )$ is biholomorphic to the zero locus of a holomorphic map $\kappa_{div} : H^0 ( D , \mathcal{L}|_D ) \rightarrow H^{1} (D , \mathcal{L}|_D )$ with $\kappa_{div}(0) = 0$ and $(d \kappa_{div})_{0} = 0$, where $\mathcal{L}$ is the orbifold sheaf of holomorphic sections of $(L ,\overline{\partial} )$ and $D = s^{-1} (0 )$.
\end{lemma}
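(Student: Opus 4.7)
The plan is to cast $\mathcal{D}(X,L)$ near $(\overline{\partial}, s)$ as the zero locus of a holomorphic Fredholm problem modulo gauge and then apply the Kuranishi method in the holomorphic category. Parametrise nearby divisors by $(\overline{\partial} + a, s + \sigma)$ with $(a, \sigma) \in V := \Omega^{0,1}(X,\mathbb{C}) \oplus \Omega^{0}(X,L)$. Since $L$ is a line bundle we have $a \wedge a = 0$, so integrability of the perturbed Cauchy--Riemann operator reduces to the linear condition $\overline{\partial} a = 0$, while holomorphicity of $s + \sigma$ reads $\overline{\partial} \sigma + as + a\sigma = 0$. These equations assemble into a holomorphic map $F: V \to W := \Omega^{0,2}(X,\mathbb{C}) \oplus \Omega^{0,1}(X,L)$, $F(a,\sigma) = (\overline{\partial} a, \overline{\partial} \sigma + as + a\sigma)$, which together with the linearised gauge action $u \mapsto (-\overline{\partial} u, us)$ fits into the elliptic cochain complex (placed in degrees $-1, 0, 1, 2$)
\[
\Omega^{0}(X,\mathbb{C}) \xrightarrow{d_{-1}} V \xrightarrow{d_{0}} W \xrightarrow{d_{1}} \Omega^{0,2}(X,L),
\]
with $d_{0}(a,\sigma) = (\overline{\partial} a, \overline{\partial}\sigma + as)$ and $d_{1}(b,\tau) = \overline{\partial} \tau - bs$. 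I would then observe that this is the Dolbeault resolution of the total complex of orbifold sheaves $[\mathcal{O}_{X} \xrightarrow{s} \mathcal{L}]$; since $s$ is an injective morphism with cokernel $\mathcal{L}|_{D}$, the complex is quasi-isomorphic to $\mathcal{L}|_{D}[-1]$, and its hypercohomology computes $H^{\bullet}(D,\mathcal{L}|_{D})$. Hence the cohomology of the complex above is concentrated in degrees $0$ and $1$, equal to $H^{0}(D,\mathcal{L}|_{D})$ and $H^{1}(D,\mathcal{L}|_{D})$ respectively, while $H^{-1}$ and $H^{2}$ both vanish (the former because $s \neq 0$ on a connected $X$, the latter because $D$ has complex dimension one).

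First I would $L^{2}_{k}$-complete the setup for $k$ large, so that $F$ becomes a holomorphic map between Hilbert spaces, and then fix the Coulomb slice $V_{0} = \ker d_{-1}^{\ast}$ to eliminate the gauge action locally. Projecting $F|_{V_{0}}$ onto $\ker d_{0}^{\ast} \subset W$ (which is finite-dimensional by ellipticity) and applying the holomorphic Implicit Function Theorem in the standard manner yields a holomorphic Kuranishi germ
\[
\widetilde\kappa : H^{0}(D,\mathcal{L}|_{D}) \longrightarrow \mathrm{coker}(d_{0})
\]
with $\widetilde\kappa(0) = 0$ and $d\widetilde\kappa_{0} = 0$, whose zero locus is biholomorphic to a neighbourhood of $(\overline{\partial}, s)$ in $\mathcal{D}(X,L)$. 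This already realises $\mathcal{D}(X,L)$ as a complex-analytic space, but the target of $\widetilde\kappa$ is a priori larger than $H^{1}(D,\mathcal{L}|_{D})$.

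The hard part will be refining the target from $\mathrm{coker}(d_{0})$ down to $H^{1}(D,\mathcal{L}|_{D})$, and this is where the vanishing $H^{2}(D,\mathcal{L}|_{D}) = 0$ is essential. Hodge theory for the elliptic complex splits $\mathrm{coker}(d_{0}) = \mathcal{H}^{1} \oplus \mathrm{im}(d_{1}^{\ast})$ $L^{2}$-orthogonally, with $\mathcal{H}^{1} \cong H^{1}(D,\mathcal{L}|_{D})$, and the surjectivity of $d_{1}$ (equivalent to $H^{2} = 0$) makes $d_{1}: \mathrm{im}(d_{1}^{\ast}) \to \Omega^{0,2}(X,L)$ an isomorphism. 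A direct calculation using $\overline{\partial} s = 0$ gives
\[
d_{1}\circ F(a,\sigma) = \overline{\partial} a \cdot \sigma - a \wedge \overline{\partial} \sigma,
\]
which vanishes on $\ker d_{0}$ since there $\overline{\partial} a = 0$ and $\overline{\partial}\sigma = -as$, forcing $d_{1}\circ F = a \wedge a \cdot s = 0$. Hence the primary (quadratic) obstruction already lies in $\mathcal{H}^{1}$. Iterating the Kuranishi construction order by order, one shows that at each stage the vanishing $H^{2} = 0$ allows the $\mathrm{im}(d_{1}^{\ast})$-component of the obstruction to be absorbed by a correction within the kernel of the linearisation without altering either the zero locus or the $\mathcal{H}^{1}$-component, yielding the desired holomorphic Kuranishi map $\kappa_{div}: H^{0}(D,\mathcal{L}|_{D}) \to H^{1}(D,\mathcal{L}|_{D})$. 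A more conceptual alternative that sidesteps this iterative argument would be to identify $\mathcal{D}(X,L)$ locally with the Douady space of effective Cartier divisors on $X$ of the given topological type, whose classical deformation theory directly yields the $H^{0} \to H^{1}$ local model.
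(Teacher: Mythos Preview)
Your setup of the elliptic complex and its identification with $H^\ast(D, \mathcal{L}|_D)$ via the mapping cone of $s: \mathcal{O}_X \to \mathcal{L}$ matches the paper exactly, including the vanishing at degrees $-1$ and $2$. The divergence is in what you call ``the hard part'' --- refining the target of the Kuranishi map from $\mathrm{coker}(d_0)$ down to $H^1(D,\mathcal{L}|_D)$.

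The paper avoids your iterative argument entirely by a Bianchi-type trick. Rather than fixing the basepoint and projecting $F$ to $\ker d_0^\ast$, the paper lets $d_1$ \emph{vary} with the point $(\overline{\partial}, s)$: since $H^2 = 0$ makes $d_1$ surjective in an open neighbourhood, the kernels $\ker d_1$ assemble into an infinite-dimensional holomorphic subbundle $\mathcal{V}$ of the target. The full nonlinear section $\sigma(\overline{\partial},s) = (\overline{\partial}^2, \overline{\partial}s)$ then satisfies the identity $d_1 \sigma = s\,\overline{\partial}^2 - \overline{\partial}(\overline{\partial}s) = 0$ at \emph{every} pair $(\overline{\partial},s)$ (not just holomorphic ones), so $\sigma$ is automatically a section of $\mathcal{V}$. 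Standard Kuranishi theory applied to this Fredholm section of $\mathcal{V}$ then directly produces a local model with domain $H^0$ and target $H^1$, with no further refinement needed.

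Your computation that $d_1 \circ F$ vanishes on $\ker d_0$ at quadratic order is a shadow of this same identity (it is what remains after expanding about the basepoint and using the fixed $d_1$), but since the Kuranishi chart $\phi: H^0 \to V_0$ does not land in $\ker d_0$ beyond first order, it does not by itself force $d_1 \tilde\kappa = 0$. The order-by-order absorption you sketch would need a genuine argument together with a convergence check, whereas the paper's varying-subbundle device handles all orders at once in a single line. Your alternative via the Douady space is correct in principle but heavier than what is needed here.
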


\begin{proof}
Consider now the 4-step elliptic cochain complex, where the left-most term is again in degree -1
\newcommand{\sbm}[1]{{\let\amp=&\left[\begin{smallmatrix}#1\end{smallmatrix}\right]}}
\[
\begin{tikzcd}
\Omega^0 (X , \mathbb{C} )  \arrow{r}{ \sbm{ - \overline{\partial} \\ s }} &  \Omega^{0,1}(X ,  \mathbb{C}) \oplus \Omega^{0}(X, L ) \arrow{r} \arrow{r}{ \sbm{ \overline{\partial } \amp 0 \\ s \amp \overline{\partial}  }} & \Omega^{0,2}(X, \mathbb{C} ) \oplus \Omega^{0,1} (X , L ) \arrow{r}{\sbm{s \amp -\overline{\partial}}} &  \Omega^{0,2}(X , L ). \label{effcomplex1}
\end{tikzcd}
\]
As before, the first map is the linearisation of the action of $\mathrm{Aut}(L) = \mathrm{Map}(X , \mathbb{C}^\ast )$, and the second map is the linearisation of the holomorphicity equations $\overline{\partial}^2 = 0$ and $\overline{\partial} s = 0$. 

By the same argument as in Lemma \ref{lemma:effRiemannsurface}, we identify the cohomology of this complex at the $j$th step with $H^j ( D , \mathcal{L}|_D )$. Cohomology vanishes at $j =-1$ for degree reasons, and at $j = 2$ because $\mathcal{L}|_{\mathcal{D}}$ is supported on the \textit{$1$-dimensional} complex-analytic subspace $D = s^{-1}(0)$.

Thus, the differential $d^1 := \sbm{ s \amp -\overline{\partial}}$ in the elliptic complex is surjective at an effective divisor $(\overline{\partial} , s )$. After completing with respect to suitable Sobolev norms, it follows that its kernel defines an infinite-dimensional Banach vector bundle $\mathcal{V}$ over the space $\mathcal{D}(X, L )$. Let $\mathcal{B}^\ast$ denote the space of all Sobolev pairs $(\overline{\partial} , s )$, not necessarily satisfying any equation but with $s$ non-trivial, modulo $\mathrm{Aut}L$. Since the map $d^1$ is still surjective on a small neighbourhood $\mathcal{N}$ of $\mathcal{D}(X, L) \subset \mathcal{B}^\ast$, then $\mathcal{V}$ extends to an infinite-dimensional holomorphic vector bundle over $\mathcal{N}$. The space $\mathcal{D}(X, L )$ is given by the zero locus of the holomorphic Fredholm section $\sigma$ of $\mathcal{V} \rightarrow \mathcal{N}$ given by $\sigma (\overline{\partial} , s ) := (\overline{\partial}^2 , \overline{\partial}s ) $. Note that, indeed $\sigma$ is a section of $\mathcal{V} \rightarrow \mathcal{N}$, since $d^1 \sigma (\overline{\partial} , s ) = s \overline{\partial}^2 - \overline{\partial}^2 s = 0$ for any pair $(\overline{\partial} , s )$ (even if this pair does not necessarily satisfying the holomorphicity equations).

At this point, the required result follows from the general procedure for constructing Kuranishi charts for the zero locus of a Fredholm section of a Banach vector bundle (see \cite{DK}, \S 4.3). That these Kuranishi charts make $\mathcal{D}(X , L )$ into a complex-analytic space is a consequence of the holomorphicity of the infinite-dimensional bundle $\mathcal{V}$ and its section $\sigma$.
\end{proof}

\section{Laufer's list}

The $2$-dimensional hypersurface singularities with $p_g = 1$ are precisely the $2$-dimensional minimally elliptic singularities with degree $\leq 3$ classified by Laufer (see Theorem 3.13 and Tables 1-3 in \cite{laufer}). Here, the the degree of a singularity is defined as minus the self-intersection number of the fundamental cycle associated to the minimal resolution. Those which are weighted-homogeneous and whose link is a rational homology $3$-sphere are recorded below. 

\begin{figure}[h!]

\caption{List of $2$-dimensional weighted-homogeneous hypersurface singularities with $p_g = 1$, rational homology $3$-sphere link, and ...}
\label{figure:table}

$ $

\centering 
\begin{tabular}{ c c c}
\multicolumn{3}{c}{... of degree $1$}\\
\hline\\ 
 $x^2 + y^3 + z^7 = 0$ & $x^2 + y^3 + z^8 = 0$ & $x^2 + y^3 + x^9 = 0$ \\
 $x^2 + y^3 + z^{10}  = 0$ & $x^2 + y^3 + y z^7 = 0$ & $x^2 + y^3 + z^{11} = 0$  \\ 
\end{tabular}

\end{figure}

\begin{figure}[h!]
\centering
\begin{tabular}{ c c c}
\multicolumn{3}{c}{... of degree $2$}\\
\hline\\
$x^2 + z(y^3 + z^4) = 0$ & $x^2 + z(y^5 + z^5) = 0$ & $x^2 + z(y^3 + z^6 ) = 0$ \\
 $x^2 + z (y^3 + z^7)  = 0$ & $x^2 + z(y^3 + y z^5= 0$ & $x^2 + z(y^3 + z^8 )= 0$  \\ 
 $x^2 + y^4 + z^4 y = 0 $ & $x^2 + y^4 + z^6 = 0$ & $x^2 + y^4 + y z^5 = 0 $ \\
 $x^2 + y^4 + z^7 = 0$ & $x^2 + y^5 + z^5 = 0$ & $ x^2 + y(z^4 + y^5 ) = 0$ \\
 $x^2 + y(z^4 + zy^4 ) = 0$ & $x^2 + y^5 + z^6 = 0$ & $x^2 + y^6 + y z^5 = 0$
\end{tabular}

\end{figure}

\begin{figure}[h!]
\centering
\begin{tabular}{ c c c}
\multicolumn{3}{c}{... of degree $3$}\\
\hline\\
 $zx^2 + y^3 + z^4  = 0$ & $zx^2 + y^3 + z^5 = 0$ & $zx^2 + y^3 + z^6 = 0$ \\
 $zx^2 + y^3 + z^7  = 0$ & $zx^2 + y^3 + y z^5 = 0$ & $zx^2 + y^3 + z^8= 0$  \\ 
 $z(xz+y^2 ) + x^3 y = 0 $ & $z(xz +y^2) + x^5 = 0$ & $z(xz + y^2 ) + x^4 y = 0 $ \\
 $z(xz +y^2 ) + x^6 = 0$ & $x^3 + y^3 + z^4 = 0$ & $ x^3 + y^3 + y z^3 = 0$ \\
 $x^3 + y^3 + z^5 =  0$ & $x^4 + y^4 + yz^2 = 0$ & $x^4 + y^5 + y z^2 = 0$\\
 $xy^3 + x^3 z  + yz^2 = 0$ & $ x^4 + xy^4 + yz^2 = 0$ & $x y^3 + x^5 + yz^2 = 0 $\\
 $ x^4 + y^3 z + xz^2 = 0$ & $ x^4 + y^5 + xz^2=0 $ & $ x^3 + y^4 + x z^3 = 0$
\end{tabular}

\end{figure}

\newpage

\printbibliography

\end{document}